\numberwithin{equation}{section}
\DeclarePairedDelimiter{\set}{\{}{\}}
\DeclarePairedDelimiter{\ang}{\langle}{\rangle}
\def\instring#1#2{TT\fi\begingroup
  \edef\x{\endgroup\noexpand\in@{#1}{#2}}\x\ifin@}
\def\isuppercase#1{%
  \instring{#1}{ABCDEFGHIJKLMNOPQRSTUVWXYZ}%
}%
\newcommand{\C@lIfUpper}[1]{
 \if\isuppercase{#1}\mathscr{#1}%
 \else #1%
 \fi
}
\newcommand{\cat}[1]{\mathit{\@tfor\next:=#1\do{\C@lIfUpper{\next}}}}
\newcommand{\Z}{\mathbb Z}
\newcommand{\U}{\mathrm U}
\newcommand{\Spin}{\mathrm{Spin}}
\newcommand{\String}{\mathrm{String}}
\newcommand{\KO}{\mathit{KO}}
\newcommand{\KU}{\mathit{KU}}
\newcommand{\TMF}{\mathit{TMF}}
\newcommand{\Tmf}{\mathit{Tmf}}
\newcommand{\Sph}{\mathbb S}
\newcommand{\MTSpin}{\mathit{MTSpin}}
\newcommand{\MTPin}{\mathit{MTPin}}
\newcommand{\MTString}{\mathit{MTString}}
\newcommand{\ko}{\mathit{ko}}
\newcommand{\ku}{\mathit{ku}}
\newcommand{\tmf}{\mathit{tmf}}
\newcommand{\Sq}{\mathrm{Sq}}
\newcommand{\abs}[1]{\lvert #1 \rvert}
\newcommand{\spinc}{spin\textsuperscript{$c$}\xspace}
\newcommand{\SU}{\mathrm{SU}}
\newcommand{\Sp}{\mathrm{Sp}}
\newcommand{\SO}{\mathrm{SO}}
\newcommand{\PSO}{\mathrm{PSO}}
\newcommand{\SL}{\mathrm{SL}}
\newcommand{\GL}{\mathrm{GL}}
	\renewcommand{\O}{\mathrm O}
\newcommand{\id}{\mathrm{id}}
\newcommand{\RP}{\mathbb{RP}}
\newcommand{\bl}{\text{--}}
\newcommand{\R}{\mathbb R}
\newcommand{\Det}{\mathrm{Det}}
\newcommand{\Pin}{\mathrm{Pin}}
\newcommand{\cN}{\mathcal N}
\newcommand{\SH}{\mathit{SH}}
\newcommand{\SK}{\mathit{SK}}
\newcommand{\term}{\emph}
\newcommand{\cA}{\mathcal A}
\newcommand{\cB}{\mathcal B}
\newcommand{\cE}{\mathcal E}
\newcommand{\cP}{\mathcal P}
\newcommand{\Ext}{\mathrm{Ext}}
\newcommand{\Tor}{\mathrm{Tor}}
\newcommand{\Hom}{\mathrm{Hom}}
\newcommand{\End}{\mathrm{End}}
\newcommand{\MTSO}{\mathit{MTSO}}
\newcommand{\MTO}{\mathit{MTO}}
\newcommand{\Map}{\mathrm{Map}}
\newcommand{\wG}{\widetilde G}
\newcommand\MAILTO[1]{\href{mailto:#1}{\nolinkurl{#1}}}
\newtheorem{thm}[equation]{Theorem}
\newtheorem*{thm*}{Theorem}
\newtheorem{lem}[equation]{Lemma}
\newtheorem{prop}[equation]{Proposition}
\newtheorem{cor}[equation]{Corollary}
\theoremstyle{definition}
\newtheorem{defn}[equation]{Definition}
\newtheorem{exm}[equation]{Example}
\theoremstyle{remark}
\newtheorem{rem}[equation]{Remark}
\newtheorem{question}[equation]{Question}
	\crefname{thm}{Theorem}{Theorems}
	\crefname{lem}{Lemma}{Lemmas}
	\crefname{cor}{Corollary}{Corollaries}
	\crefname{exm}{Example}{Examples}
	\Crefname{thm}{Theorem}{Theorems}
	\Crefname{lem}{Lemma}{Lemmas}
	\Crefname{cor}{Corollary}{Corollaries}
	\Crefname{exm}{Example}{Examples}
\DeclareDocumentCommand{\shortexact}{s O{} O{} mmmm}{
\IfBooleanTF{#1}{ % if star
\begin{tikzcd}[ampersand replacement=\&]
	{1} \& {#4} \& {#5} \& {#6} \& {1#7}
	\arrow[from=1-1, to=1-2]
	\arrow["#2", from=1-2, to=1-3]
	\arrow["#3", from=1-3, to=1-4]
	\arrow[from=1-4, to=1-5]
\end{tikzcd}
}{ % no star
\begin{tikzcd}[ampersand replacement=\&]
	{0} \& {#4} \& {#5} \& {#6} \& {0#7}
	\arrow[from=1-1, to=1-2]
	\arrow["#2", from=1-2, to=1-3]
	\arrow["#3", from=1-3, to=1-4]
	\arrow[from=1-4, to=1-5]
\end{tikzcd}
}
}
\newcommand{\Mzero}[3]{
	\tikzpt{#1}{#2}{#3}{};
	\foreach \y in {2, 3, 5} {
		\tikzpt{#1}{#2 + \y}{}{};
	}
	\sqone(#1, #2 + 2);
	\sqtwoL(#1, #2);
	\sqtwoL(#1, #2 + 3);
}
\title{Adams spectral sequences for non-vector-bundle Thom spectra}
\date{\today}
\author{Arun Debray}
\address{Department of Mathematics, University of Kentucky,
719 Patterson Office Tower,
Lexington, KY 40506-0027}
\email{\href{mailto:a.debray@uky.edu}{a.debray@uky.edu}}
\author{Matthew Yu}
\address{Mathematical Institute, University of Oxford,  Woodstock Road, Oxford, UK}
\email{\href{mailto:yumatthew70@gmail.com}{yumatthew70@gmail.com}}
\thanks{It is a pleasure to thank
Andrew Baker,
Ivano Basile,
Jonathan Beardsley,
Bob Bruner,
Matilda Delgado,
Sanath Devalapurkar,
Dan Isaksen,
Theo Johnson-Freyd,
Cameron Krulewski,
Miguel Montero,
Natalia Pacheco-Tallaj,
Luuk Stehouwer,
Mayuko Yamashita, and the winter 2023 electronic Computational Homotopy Theory reading seminar
for helpful discussions related to this work. Part of this project was completed while AD visited the Perimeter
Institute for Theoretical Physics; research at Perimeter is supported by the Government of
Canada through Industry Canada and by the Province of Ontario through the Ministry
of Research \& Innovation.}
\begin{document}
\maketitle

\begin{abstract}
When $R$ is one of the spectra $\ku$, $\ko$, $\tmf$, $\MTSpin^c$, $\MTSpin$, or $\MTString$, there is a standard approach to computing twisted $R$-homology groups of a space $X$ with the Adams spectral sequence, by using a change-of-rings isomorphism to simplify the $E_2$-page. This approach requires the assumption that the twist comes from a vector bundle, i.e.\ the twist map $X\to B\GL_1(R)$ factors through $B\O$. We show this assumption is unnecessary by working with Baker-Lazarev's Adams spectral sequence of $R$-modules and computing its $E_2$-page for a large class of twists of these spectra. We then work through two example computations motivated by anomaly cancellation for supergravity theories.
\end{abstract}

\tableofcontents

%TODOs:
%\begin{itemize}
%    \item Fix proof of main theorem ($p = 2$: incorporate referee's suggestions; $p = 3$ explain why a different idea is needed)
%    \item Edit introduction re: these changes
%\end{itemize}
%Other changes made: Split Cor.\ 2.37: one part as an approximation of the big sibling based sseq; another that breaks he module str but does work in all degrees, modified \cref{level_qn,level_str} to cite~\cite{Dev22, DY24}; citing~\cite{Kur25} and~\cite{BLM23}, incorporating ref's simplification of the $H_R^*$-Thom isomorphism; fixing attribution of \cref{KU_comparison,KO_comparison} as per referee's suggestion, added \cref{stolz_BL} relating $H_R^*$ to Stolz' indecomposables construction

\setcounter{section}{-1}
\addtocontents{toc}{\protect\setcounter{tocdepth}{1}}
\section{Introduction}
There is a standard formula for computing Steenrod squares in the cohomology of the Thom space or spectrum of a vector bundle $V\to X$: if $U$ is the Thom class,
\begin{equation}
\label{vb_steen}
    \Sq^n(Ux) = \sum_{i+j=n}Uw_i(V)\Sq^j(x).
\end{equation}
The ubiquity of the Steenrod algebra in computational questions in algebraic topology means this formula has been
applied to questions in topology and geometry, and recently even in physics, where it is used to run the Atiyah-Hirzebruch and Adams spectral sequences computing groups of invertible field theories.

It is possible to build Thom spectra using more general data than vector bundles, and recently these Thom spectra
have appeared in questions motivated by anomaly cancellation in supergravity theories~\cite{DY22, Deb23}. Motivated
by these applications (which we discuss more in \S\ref{section:applications}), our goal in this paper is to
understand the analogue of~\eqref{vb_steen} for non-vector-bundle twists of commonly studied generalized cohomology
theories. We found that the most direct generalization of~\eqref{vb_steen} is true; in a sense, for the theories we study, these more general Thom spectra behave just like vector bundle Thom spectra for the purpose of computing their homotopy groups with the Adams spectral sequence.

%\begin{comment}
%\TODO
%Some things to mention:
%\begin{itemize}
%	\item There is a standard formula to compute twisted $\ko$-, $\ku$-, $\tmf$-, etc.\ homology using the Adams
%	spectral sequence, whenever the twist is by a vector bundle
%	\item Many applications, but recent interest due to computations in physics [citations]
%	\item The physics applications do not include this vector bundle assumption a priori -- and in fact, in [cite
%	examples], this assumption is false!
%	\item But in those examples, the spectral sequence behaves as if the assumption were true. Our goal in writing
%	this paper was to get to the bottom of this. (And then, main theorem I think?)
%	\item Maybe briefly the background for the objects we need for the proof of that theorem: ABGHR-style Thom
%	spectra (or just the motivation from $\GL_1(Z)$), Baker-Lazarev's Adams spectral sequence?
%	\item Mention applications: for those interested in seeing the applications of main theorem for computations see section 3.
%	\item End with questions? (e.g.\ what about $\tmf_1(3)$?)
%\end{itemize}
%\end{comment}

%(prelude/overview, ending with ``same formulas different reasons, as if fake vb'')
%(mention, we did some computations with physical relevance)
\subsection*{Statement of results}
Now for a little more detail: our main theorem and the language needed to define it. We use
Ando-Blumberg-Gepner-Hopkins-Rezk's approach to twisted generalized cohomology theories~\cite{ABGHR14a, ABGHR14b},
which generalizes the notion of a local system. Twists of $\Z$-valued cohomology on a pointed, connected space $X$
are specified by \term{local systems} with fiber $\Z$, which are equivalent data to homomorphisms $\pi_1(X)\to
\mathrm{Aut}(\Z)$, or, since $\Z$ is discrete, to maps $X\to B\mathrm{Aut}(\Z)$.

Ando-Blumberg-Gepner-Hopkins-Rezk generalize this to $E_\infty$-ring spectra.\footnote{An \term{$E_\infty$-ring
spectrum} is the avatar in stable homotopy theory of a generalized cohomology theory with a commutative ring
structure.  Examples include ordinary cohomology, real and complex $K$-theory, and many cobordism
theories.}\textsuperscript{,}\footnote{In fact, much of Ando-Blumberg-Gepner-Hopkins-Rezk's theory works in greater
generality, but we only need $E_\infty$-ring spectra in this article.} If $R$ is an $E_\infty$-ring spectrum,
Ando-Blumberg-Gepner-Hopkins-Rezk define a notion of local system of free rank-$1$
$R$-module spectra that is classified by maps to an object called $B\GL_1(R)$, making $B\GL_1(R)$ the classifying
space for twists of $R$-homology. Given a twist $f\colon X\to B\GL_1(R)$, they then define a \term{Thom spectrum}
$Mf$, and the homotopy groups of $Mf$ are the $f$-twisted $R$-homology groups of $X$. This construction
simultaneously generalizes twisted ordinary homology, twisted $K$-theory, and the vector bundle twists mentioned
above.

We are interested in twisted $R$-homology for several $E_\infty$-ring spectra, so our first step is to give
examples of twists. Most of these examples are known, but by using a result of May-Quinn-Ray~\cite[Lemma IV.2.6]{MQRT77}, one can produce them in a unified way.%\footnote{\TODO: mention our maps are very far from surjections, cite people who study higher twists}

%Q: when introduce bordism notation? (and say MT vs M)
%\TODO: signpost more and then xref here
\begin{thm*}\hfill
\begin{enumerate}
	%\item There are maps $K(\Z/2, 1)\to B\GL_1(\MTSO)\to B\GL_1(H\Z)$ --- meaning that 
	\item (\cref{splitting_spinc,KU_comparison}) There is a map $K(\Z/2, 1)\times K(\Z, 3)\to B\GL_1(\MTSpin^c)$, meaning
	\spinc bordism can be twisted on a space $X$ by $H^1(X; \Z/2)\times H^3(X;\Z)$. The induced maps to
	$B\GL_1(\ku)$ and $B\GL_1(\KU)$ recover the usual notion of $K$-theory twisted by $H^1(X; \Z/2)\times
	H^3(X;\Z)$.
	\item~(\cref{splitting_spin,KO_comparison}) There is a map $K(\Z/2, 1)\times K(\Z/2, 2)\to B\GL_1(\MTSpin)$, meaning
	spin bordism can be twisted on a space $X$ by $H^1(X; \Z/2)\times H^2(X;\Z/2)$. The induced maps to
	$B\GL_1(\ko)$ and $B\GL_1(\KO)$ recover the usual notion of $\KO$-theory twisted by $H^1(X; \Z/2)\times
	H^2(X;\Z/2)$.
	\item (\cref{splitting_string,tmf_comparison}) Let $\SK(4)$ be a classifying space for degree-$4$ supercohomology classes. Then
	there is a map $K(\Z/2, 1)\times \SK(4)\to B\GL_1(\MTString)$, meaning
	string bordism can be twisted on a space $X$ by $H^1(X; \Z/2)\times \SH^4(X)$. The induced map
	\begin{equation}
		K(\Z, 4)\longrightarrow \SK(4)\longrightarrow B\GL_1(\MTString)\longrightarrow B\GL_1(\tmf)
	\end{equation}
	recovers the Ando-Blumberg-Gepner twist of $\tmf$ (and $\Tmf$ and $\TMF$) by degree-$4$ cohomology classes.
\end{enumerate}
\end{thm*}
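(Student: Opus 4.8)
The plan is to treat the three parts uniformly. Write $G$ for the structure group in question ($\Spin^c$, $\Spin$, or $\String$), so that $\MTSpin^c$, $\MTSpin$, and $\MTString$ are the Thom spectra of the stable (Madsen--Tillmann) tangential structures $\xi_G\colon BG\to B\O\to B\GL_1(\Sph)$. The key input is the standard presentation of $BG$ as the fiber of a map out of $B\SO$ to a small Postnikov stage: $B\Spin^c=\mathrm{fib}(B\SO\xrightarrow{W_3}K(\Z,3))$; $B\Spin=\mathrm{fib}(B\SO\xrightarrow{w_2}K(\Z/2,2))$; and, combining $w_2$ with $p_1/2$, $B\String=\mathrm{fib}(B\SO\to\SK(4))$, where $\SK(4)$ is the degree-$4$ supercohomology classifying space, realized as a $K(\Z,4)$-fibration over $K(\Z/2,2)$. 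Equivalently there is a principal fibration $\Omega P\to BG\to B\SO$ with $P$ equal to $K(\Z,3)$, $K(\Z/2,2)$, or $\SK(4)$ respectively, and because $\xi_G$ factors through the quotient map $BG\to B\SO$, the induced $\Omega P$-action on $BG$ is compatible with $\xi_G$. I would then invoke Beardsley's theorem \cite[Theorem 1]{Bea17}; applied here, it produces from the $\Omega P$-action on $BG$ an $E_\infty$-map $\Omega P\to\GL_1(\mathit{MT}G)$, which I then deloop to the desired twist $P\to B\GL_1(\mathit{MT}G)$. (For the $K(\Z,3)$-factor in the \spinc case one can bypass Beardsley: the isomorphism $\Spin^c\cong(\Spin\times\U(1))/(\Z/2)$ gives an $E_\infty$-map $B\Spin\times B\U(1)\to B\Spin^c$, hence an $E_\infty$-ring map $\MTSpin\wedge\Sigma^\infty_+ B\U(1)\to\MTSpin^c$; restricting to $\Sigma^\infty_+ B\U(1)$ and using the adjunction between $\Sigma^\infty_+$ and $\GL_1$ gives $B\U(1)=K(\Z,2)\to\GL_1(\MTSpin^c)$, which deloops to $K(\Z,3)$.) The remaining $K(\Z/2,1)$-factor in each part is pulled back from the universal twist $K(\Z/2,1)\to B\GL_1(\Sph)$ along the unit $\Sph\to\mathit{MT}G$, and so is automatically compatible with everything below.

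For the comparison statements, the $K$-theory and $\tmf$ orientations $\MTSpin^c\to\ku$, $\MTSpin\to\ko$, $\MTString\to\tmf$ are maps of $E_\infty$-rings, as are $\ku\to\KU$, $\ko\to\KO$, and $\tmf\to\Tmf\to\TMF$; so they induce compatible maps $B\GL_1(\mathit{MT}G)\to B\GL_1(\ku)\to B\GL_1(\KU)$, and likewise in the other two cases. I would identify the composite $P\to B\GL_1(\mathit{MT}G)\to B\GL_1(\ku)$ by delooping it back down to $\Omega P\to\GL_1(\ku)$ and recognizing the result explicitly: for \spinc, tracing the construction shows $B\U(1)\to\GL_1(\MTSpin^c)\to\GL_1(\ku)$ is the classical map sending a complex line bundle to its class as a unit of $\ku^0$ — this uses that, under the Atiyah--Bott--Shapiro orientation, shifting a \spinc structure by a line bundle $L$ multiplies the (family) Dirac index by $[L]$ — and that map deloops to the Donovan--Karoubi and Atiyah--Segal twist by $H^3(X;\Z)$, which can be seen cleanly via the Snaith presentation $\KU\simeq\Sigma^\infty_+\C P^\infty[\beta^{-1}]$. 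The $\KO$ case is entirely parallel, and the $K(\Z/2,1)$-factor matches on the nose since it is pulled back from $\Sph$ on both sides. For $\tmf$, the Ando--Blumberg--Gepner twist by degree-$4$ cohomology is itself built from the string orientation and the $p_1/2$-fibration, so the factorization $K(\Z,4)\to\SK(4)\to B\GL_1(\MTString)\to B\GL_1(\tmf)$ recovers it once conventions are matched.

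I expect two points to require genuine care. First, the honest invocation of Beardsley's theorem: one must check that the fibrations above are principal fibrations of grouplike $E_\infty$-spaces, that $\xi_G$ factors through $B\SO$ at the $E_\infty$-level, and that the resulting map $\Omega P\to\GL_1(\mathit{MT}G)$ deloops correctly; and for the comparison, one must track a map of Thom spectra all the way through $\GL_1$ and a delooping. Second — and I think this is the harder of the two — the $\String$ case: identifying the $(w_2,p_1/2)$-Postnikov piece with the supercohomology space $\SK(4)$ requires pinning down its $k$-invariant ($\beta\Sq^2$ applied to the degree-$2$ class, which encodes $p_1\equiv w_2^2\pmod 2$) and checking there is no cross-term with $w_1$ so that the twisting data splits as a product, and then matching the outcome against the Ando--Blumberg--Gepner construction. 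The remainder should reduce to assembling standard facts.
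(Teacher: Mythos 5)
Your proposal is correct and follows essentially the same route as the paper: the paper phrases the construction via the cofiber sequences $BG\to B\O\to B\O/BG$ and descent of a null-homotopy (\cref{cofiber_twists}), which for these abelian $\infty$-groups is equivalent to your fiber-sequence/delooping formulation, and it likewise splits off the $K(\Z/2,1)$ factor via the tautological-line-bundle section, identifies $B\SO/B\String$ with $\SK(4)$ by showing the extension is non-split while $[\Sigma^2 H\Z/2,\Sigma^5 H\Z]\cong\Z/2$, and compares with Ando--Blumberg--Gepner by tracing their construction through $\lambda$. The one place you are slightly too quick is the $\KO$ comparison, where ``entirely parallel'' hides the absence of a Snaith-type presentation; the paper instead invokes Antieau--Gepner--G\'omez's theorem that there is a unique nontrivial twist of $\KO$ over $K(\Z/2,2)$.
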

\term{Supercohomology} refers to a generalized cohomology theory $\SH$ introduced by Freed~\cite[\S 1]{Fre08} and Gu-Wen~\cite{GW14}: $\pi_{-2}\SH = \Z/2$ and $\pi_0\SH = \Z$, with the unique nontrivial $k$-invariant, and no other nonzero homotopy groups. We explicitly define $\SK(4)$ in~\eqref{sk_defn}.

Though twists of $\tmf$ by degree-$4$ cohomology classes are relatively well-studied, this supercohomology
generalization appears to only be suggested at in the literature by various authors including~\cite{FHT10, JF20, BLM23, TY23ASD, TY21, TY25},
%}
%~\footnote{See \cite[Section 2]{FHT10} which gives an early hint off this twist.},
and sees more of the homotopy type of $B\GL_1(\tmf)$. It would be interesting to study instances of this twist.

We call the twists in the above theorem \term{fake vector bundle twists}: when the twist is given by a vector bundle $V$, these cohomology classes appear as characteristic classes of $V$, but these twists exist whether or not there is a vector bundle with the prescribed characteristic classes.

If $R$ is one of the spectra mentioned in the above theorem, the Thom spectrum $Mf$ of a fake vector bundle twist $f\colon X\to B\GL_1(R)$ is an $R$-module spectrum. This grants us access to Baker-Lazarev's variant of the Adams spectral sequence~\cite{baker2001adams}.
\begin{thm*}[Baker-Lazarev~\cite{baker2001adams}]
Let $p$ be a prime number and $R$ be an $E_\infty$-ring spectrum such that $\pi_0(R)$ surjects onto $\Z/p$, so that
$H \coloneqq H\Z/p$ acquires the structure of an $R$-algebra. For $R$-module spectra $M$ and $N$, let $N_R^*M\coloneqq \pi_{-*}\mathrm{Map}_R(M, N)$. Then there is an Adams-type spectral sequence with signature
\begin{equation}
    E_2^{s,t} = \Ext_{H_R^*H}^{s,t}(H_R^*(M), \Z/p) \Longrightarrow \pi_{t-s}(M)_p^\wedge,
\end{equation}
which converges for all $M$ and all $E_\infty$-ring spectra $R$ we consider in this paper.
\end{thm*}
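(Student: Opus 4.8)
The plan is to follow Baker--Lazarev~\cite{baker2001adams}: build the spectral sequence as the $H$-based Adams spectral sequence internal to the stable homotopy category of $R$-modules, identify its $E_2$-page, and then verify convergence under the connectivity and finiteness hypotheses that our Thom spectra satisfy. First I would record that $H = H\Z/p$ is canonically an $E_\infty$-$R$-algebra: since $H\Z/p$ is $0$-truncated, an $E_\infty$-ring map $R\to H\Z/p$ factors uniquely through $\tau_{\le 0}R\simeq H\pi_0 R$, and the ring surjection $\pi_0 R\twoheadrightarrow\Z/p$ furnishes one through the (lax symmetric monoidal) Eilenberg--Mac\,Lane functor. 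Then set $\overline{H} := \mathrm{fib}(R\xrightarrow{\eta} H)$ and form the canonical Adams tower $\cdots\to\overline{H}^{\wedge_R 2}\wedge_R M\to\overline{H}\wedge_R M\to M$ of $R$-modules, whose successive cofibers are the $H\wedge_R\overline{H}^{\wedge_R s}\wedge_R M$; the exact couple obtained by applying $\pi_*$ produces the spectral sequence, with $E_1^{s,t} = \pi_{t-s}(H\wedge_R\overline{H}^{\wedge_R s}\wedge_R M)$.

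For the $E_2$-page the decisive simplification is that $\pi_* H = \Z/p$ is a field: then $\pi_*(H\wedge_R H)$ is automatically flat over $\pi_* H$, so no extra hypothesis is needed for the cobar complex to compute $\mathrm{Cotor}$, and moreover the two unit maps $\Z/p\rightrightarrows\cA^R_* := \pi_*(H\wedge_R H)$ coincide (everything being concentrated in degree $0$), so $\cA^R_*$ is a genuine Hopf algebra rather than merely a Hopf algebroid. A Künneth argument for $H\wedge_R(\bl)$ then identifies the $E_1$-complex with the cobar complex of $\cA^R_*$ with coefficients in the comodule $H^R_*(M) := \pi_*(H\wedge_R M)$, giving $E_2^{s,t} = \mathrm{Cotor}^{s,t}_{\cA^R_*}(\Z/p, H^R_* M)$. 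When $H^R_*(M)$ and $\cA^R_* = H^R_*(H)$ are finite in each degree one dualizes to reach the stated form $E_2^{s,t} = \Ext^{s,t}_{H^*_R H}(H^*_R M, \Z/p)$; this degreewise finiteness holds for $M = Mf$, the Thom spectrum of a fake vector bundle twist over a space $X$ of finite type, because each of the ring spectra $R$ under consideration has degreewise-finite mod $p$ homology.

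The substantive point, and the step I expect to be the main obstacle, is convergence. The tower above is an $H$-Adams resolution in $R$-modules, so the standard machinery (Bousfield's treatment of the $E$-based Adams spectral sequence; Ravenel's green book, Chapter~2; or the discussion in~\cite{baker2001adams} itself) shows it converges conditionally to $\pi_*$ of the $H$-nilpotent completion $M^\wedge_H = \mathrm{holim}_s\,\mathrm{cofib}(\overline{H}^{\wedge_R s}\wedge_R M\to M)$. Two refinements remain. First, \emph{strong} convergence: because $R$ and $M$ are connective and $H^R_*(M)$ is degreewise finite, $E_2$ acquires an Adams-type vanishing line ($E_2^{s,t} = 0$ when $0 < t-s < cs$ for some $c > 0$), so the filtration is finite in each total degree and the $\varprojlim^1$-obstruction vanishes. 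Second, \emph{identification of the target}: forgetting down to spectra, the tower is an ordinary $H\Z/p$-Adams resolution of a connective spectrum of finite type, and for such spectra the $H\Z/p$-nilpotent completion coincides with $p$-completion, so $\pi_* M^\wedge_H = \pi_*(M)^\wedge_p$. For the specific $R$ in the paper one can, if desired, make the vanishing line explicit after identifying $\cA^R$ (e.g.\ as $\cA(1)$, $\cA(2)$, or an exterior algebra on Milnor primitives), but the general finite-type argument already suffices.
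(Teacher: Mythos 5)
This statement is quoted from Baker--Lazarev~\cite{baker2001adams}; the paper offers no proof of its own beyond the citation (and, for the identification of the target, a pointer to Bousfield and to~\cite[Example 1.16]{Rav84}). Your reconstruction follows Baker--Lazarev's actual route faithfully: the canonical $H$-Adams tower internal to $\cat{Mod}_R$, the cobar identification of $E_2$ using that $\pi_*H=\Z/p$ is a field (so flatness is automatic and the Hopf algebroid is a Hopf algebra), and dualization from $\mathrm{Cotor}$ to $\Ext$ under degreewise finiteness. All of that is correct and is the same decomposition of the problem.

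Two points in your convergence discussion need repair. First, the claimed vanishing line ``$E_2^{s,t}=0$ for $0<t-s<cs$'' is false in the cases at hand, and with it the assertion that the filtration is finite in each total degree: already for $R=M=\ko$ the group $\Ext_{\cA(1)}^{s,t}(\Z/2,\Z/2)$ contains infinite $h_0$-towers in stems $t-s=0,4,8,\dots$, reflecting the infinite $2$-adic filtration on $\pi_{4k}(\ko)\cong\Z$. Strong convergence should instead be deduced from conditional convergence plus degreewise finiteness of $E_2$ (so that each $E_r^{s,t}$ stabilizes and Boardman's criterion $RE_\infty=0$ applies); the filtration on the abutment is complete Hausdorff but not finite. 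Second, the identification of the target is the genuinely delicate step, and ``forgetting down to spectra, the tower is an ordinary $H\Z/p$-Adams resolution'' is not automatic: the cofibers are indeed $H\Z/p$-modules, but one must still verify that the tower is pro-trivial in the appropriate sense for the \emph{classical} $H\Z/p$-nilpotent completion, and since $\overline{H}=\mathrm{fib}(R\to H\Z/p)$ has $\pi_0=\ker(\pi_0R\to\Z/p)\neq 0$, the terms $\overline{H}^{\wedge_R s}\wedge_R M$ do not become highly connected. A cleaner argument is to observe that the $R$-module nilpotent completion is a limit of $H\Z/p$-modules, hence $p$-complete, and then show the natural map from $M_p^\wedge$ is an $H\Z/p_*$-equivalence of bounded-below finite-type $p$-complete spectra; this (or an equivalent comparison) is the content of Baker--Lazarev's convergence section and should not be elided.
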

What Baker-Lazarev prove is more general than what we state here: we stated only the generality we need.

For $H\Z$, $\ko$, and $\ku$ ($p = 2$), and $\tmf$ ($p = 2$ and $p = 3$), $H_R^*H$ is known due to work of various
authors: let $\cA(n)$ be the subalgebra of the mod $2$ Steenrod algebra generated by $\Sq^1,\dotsc,\Sq^{2^n}$. Then, at $p = 2$,
\begin{enumerate}
    \item $H_{H\Z}^*H\cong\cA(0)$,
    \item $H_{\ku}^*H\cong\cE(1)\coloneqq\ang{\Sq^1, \Sq^2\Sq^1 + \Sq^1\Sq^2}$,
    \item $H_{\ko}^*H\cong\cA(1)$, and
    \item $H_{\tmf}^*H\cong\cA(2)$.
\end{enumerate}
See~\eqref{eq:commonCOR} and the surrounding text.
For $\tmf$ at $p = 3$, see \cref{intro_Atmf}. These algebras are small enough for computations to be tractable, so
if we can compute the $H_R^*H$-module structure on $H_R^*(Mf)$ for $f$ a fake vector bundle twist, we can run the
Adams spectral sequence and hope to compute $\pi_*(Mf)$. This is the content of our main theorem,
\cref{thom_module_calc}.

The first step is to understand $H_R^*(Mf)$ as a vector space. In \cref{Rmod_Thom_iso}, we establish a Thom isomorphism
\begin{equation}
    H_R^*(Mf)\overset\cong\longrightarrow H^*(X;\Z/2)\cdot U,
\end{equation}
where $U\in H^0_R(Mf)$ is the Thom class. Using this, we can state our main theorem:
\begin{thm*}[\Cref{thom_module_calc}]
Let $X$ be a topological space. 
\begin{enumerate}
    \item Given $a \in H^1(X;\Z/2)$ and $c \in H^3(X; \Z)$, let $f_{a,c}: X \rightarrow B\GL_1(\ku)$ be the corresponding fake vector bundle twist.   $H^*_{\ku}(M^{\ku}f_{a,c})$ is a $\mathcal{E}(1)$-module with $Q_0$-and $Q_1$-actions by
    \begin{equation*}
    \begin{aligned}
		Q_0(Ux) &\coloneqq Uax + U Q_0(x)\\
		Q_1(Ux) &\coloneqq U(c\bmod 2+a^3)x + UQ_1(x).
	\end{aligned}
    \end{equation*}
    \item\label{pt:thm_ko} Given $a\in H^1(X; \Z/2)$ and $b \in H^2(X; \Z/2)$, let $f_{a,b}\colon X\to B\GL_1(\ko)$ be the
	corresponding fake vector bundle twist. $H^*_{\ko}(M^{\ko}f_{a,b})$ is an $\mathcal{A}(1)$-module with $\Sq^1$-and $\Sq^2$-actions
	\begin{equation*}
	\begin{aligned}
		\Sq^1(Ux) &\coloneqq U(ax + \Sq^1(x))\\
		\Sq^2(Ux) &\coloneqq U(bx + a\Sq^1(x) + \Sq^2(x)).
	\end{aligned}
	\end{equation*}
	\item Given $a\in H^1(X;\Z/2)$, and $d\in SH^4(X)$, let $f_{a,d}\colon X\to
	B\GL_1(\tmf)$ be the corresponding fake vector bundle twist. $H_\tmf^*(M^\tmf f_{a,d})$ is an $\cA(2)$-module with $\Sq^1$-and $\Sq^2$-action the same as (\ref{pt:thm_ko}) above, and $\Sq^4$-action
	\begin{equation*}
			\Sq^4(Ux) = U(\delta x + (t(d) a+\Sq^1(t(d))) \Sq^1(x) + t(d)\Sq^2(x) + a\Sq^3(x) + \Sq^4(x)).
	\end{equation*}
	Furthermore, $H^*_{\tmf}(M^{\tmf}f_{0,d};\Z/3)$ is an $\cA^\tmf$-module with $\beta$ and $\mathcal{P}^1$ actions
	\begin{equation*}
		\begin{aligned}
		\beta(Ux) &\coloneqq U\beta(x)\\
		\cP^1(Ux) &\coloneqq U((d\bmod 3)x + \cP^1(x)).
	\end{aligned}
	\end{equation*} 
\end{enumerate}
\end{thm*}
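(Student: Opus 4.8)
The plan is to reduce, by naturality, to a small collection of universal twists, and there to compute the action of the generators of $H_R^*H$ on the Thom class. Write $\Lambda\coloneqq H_R^*H$, so that $\Lambda$ is one of $\cE(1)$, $\cA(1)$, $\cA(2)$, or (at $p=3$) $\cA^\tmf$, and recall the Thom isomorphism $H_R^*(Mf)\cong H^*(X;\Z/2)\cdot U$ of \cref{Rmod_Thom_iso}. This is an isomorphism of modules over $H^*(X;\Z/2)$, the latter acting on $H_R^*(Mf)$ through the Thom diagonal, and that module structure is compatible with the $\Lambda$-action through the coproduct of $\Lambda$. For each of our four algebras the coproduct is the restriction of the coproduct of the Steenrod algebra ($Q_0,Q_1,\beta,\cP^1$ primitive; $\psi(\Sq^2)=\Sq^2\otimes 1+\Sq^1\otimes\Sq^1+1\otimes\Sq^2$; the analogous formula for $\Sq^4$), so there is a Cartan formula $\theta(Ux)=\sum\theta'(U)\,\theta''(x)$.

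Consequently the entire $\Lambda$-module structure on $H_R^*(Mf)$ is determined by the classes $\theta(U)$ as $\theta$ ranges over a generating set of $\Lambda$ --- $\{Q_0,Q_1\}$, $\{\Sq^1,\Sq^2\}$, $\{\Sq^1,\Sq^2,\Sq^4\}$, $\{\beta,\cP^1\}$. Expanding the Cartan formula and substituting the values below reproduces each displayed formula verbatim: the ``mixed'' terms, such as $a\,\Sq^1(x)$ in the $\ko$-case or $(t(d)a+\Sq^1(t(d)))\Sq^1(x)$ in the $\tmf$-case, come from the non-primitive part of the coproduct together with $\Sq^3=\Sq^1\Sq^2$ (so that $\Sq^3(U)=\Sq^1\Sq^2(U)=U(a\,t(d)+\Sq^1(t(d)))$ once $\Sq^1(U)=Ua$ and $\Sq^2(U)=Ut(d)$ are known). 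So it remains to compute $\theta(U)$ for these generators.

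By naturality of the Thom spectrum and of the Thom isomorphism in the twist, it suffices to do this for the universal twist $u$ on the relevant product of Eilenberg--MacLane and $\SK$ spaces: a general $f_{a,b}$ factors through $u$, and the induced $\Lambda$-linear map $H_R^*(Mu)\to H_R^*(Mf_{a,b})$ sends $Ux\mapsto U\cdot(a,b)^*x$. On the universal space the Thom isomorphism identifies $H_R^*(Mu)$ with a free module over a polynomial algebra, so each $\theta(U)$ lands in an explicitly spanned low degree: for instance $\Sq^1(U)\in\langle Ua\rangle$ and $\Sq^2(U)\in\langle Ua^2,\,Ut(d)\rangle$ for the $\tmf$-twist. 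The vanishing cases then hold automatically, the target being zero: $\Sq^1(U)$ on the degree-$2$ twist, $Q_0(U)$ on the degree-$3$ twist, and $\beta(U)$ on the mod-$3$ degree-$4$ twist all live in $H^1$ of a simply connected space. To pin down the remaining coefficients I would restrict $u$ along maps from spaces carrying genuine vector bundles whose $R$-twist agrees, through the relevant range, with the restricted universal twist, and then quote \eqref{vb_steen}. Restriction to a $K(\Z/2,1)$-factor recovers the honest vector-bundle twist of the tautological line bundle on $\RP^\infty$, giving $\Sq^1(U)=Ua$, $\Sq^2(U)=0$, $Q_0(U)=Ua$, $Q_1(U)=Ua^3$, which accounts for all the $a$-contributions. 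For the higher factors I would use: $\mathbb{CP}^\infty\xrightarrow{c_1\bmod 2}K(\Z/2,2)$ with the realification of $\mathcal O(1)$, so $\Sq^2(U)=Uw_2=U(c_1\bmod 2)\neq 0$; $B\SO(3)\xrightarrow{W_3}K(\Z,3)$ with the tautological rank-$3$ bundle, so $Q_1(U)=Uw_3=U(W_3\bmod 2)\neq 0$; and, for the degree-$4$ twist, $B\Spin$ with its tautological bundle, so $\Sq^4(U)=Uw_4=U(\tfrac{p_1}{2}\bmod 2)$ and (at $p=3$) $\cP^1(U)=U(\tfrac{p_1}{2}\bmod 3)\neq 0$, together with an $S^2$ carrying a real rank-$2$ bundle with $w_2\neq 0$ to detect the degree-$2$ shadow $t(d)$. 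In each case \eqref{vb_steen} on the test space forces the coefficient to be $1$, identifying $\Sq^4(U)$ with $U\delta$ for the class $\delta$ of \eqref{sk_defn}. Feeding these values into the Cartan expansion of the previous paragraph and pulling back along $(a,b)$ yields all four formulas.

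The step I expect to be the main obstacle is the assertion that the $R$-twist of each test bundle agrees, in the relevant range, with the pullback of the universal twist. One must check that a map out of a simply connected test space into $B\GL_1(R)$ is, in low degrees, detected by a single characteristic class ($w_2$, $W_3$, or $\tfrac{p_1}{2}$), i.e.\ that the low-degree Postnikov $k$-invariants of $B\GL_1(R)$ --- and of $B\GL_1(\MTSpin^c)$, $B\GL_1(\MTSpin)$, $B\GL_1(\MTString)$ --- do not obstruct this, and that the resulting class matches the one carried by the chosen bundle; this should follow from the construction of the universal twists via Beardsley's theorem in the first theorem above. For the supercohomology twist there is the extra bookkeeping of the degree-$2$ shadow $t(d)$ of $d\in\SH^4$ and of the degree-$4$ class $\delta$, which is where the explicit definition \eqref{sk_defn} enters and where I would expect the argument to require the most care. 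Convergence of the Baker--Lazarev spectral sequence is not needed here --- only the identification of its $E_2$-input --- so no further subtlety arises on that front.
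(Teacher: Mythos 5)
Your proposal shares its skeleton with the paper's proof --- the Thom isomorphism of \cref{Rmod_Thom_iso}, the Thom diagonal plus the Cartan formula to reduce everything to the values $\theta(U)$ for generators $\theta$ of $H_R^*H$, and naturality to reduce to the universal twist --- but you pin down those values by a genuinely different route. The paper determines several coefficients by running the Baker--Lazarev spectral sequence backwards from known homotopy groups of universal Thom spectra ($\pi_0\MTO$ for $Q_0(U)$; $\MTSO$ and $\MTPin^c$ for $Q_1(U)$; $\pi_3\MTSpin=0$ for the coefficient of $\delta$ in $\Sq^4(U)$), reserving vector-bundle restrictions for the remaining $\lambda_i$. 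You use vector-bundle restrictions throughout: the tautological line bundle over $B\O_1$, the tautological rank-$3$ bundle over $B\SO_3$ (whose twist is the pullback of the universal $K(\Z,3)$-twist along $W_3=\beta w_2$), and the tautological bundle over $B\Spin$ (whose twist is the pullback of the universal $K(\Z,4)\to\SK(4)$-twist along $\lambda$, with $\lambda^*\delta=w_4\neq 0$ by \cref{delta_SH_defn} and $\lambda^*t(d)=w_2=0$). This is legitimate and in places cleaner: since the composite $B\O\to B\O/B\Spin^c\to B\GL_1(R)$ (and its analogues) is by construction the vector-bundle twist, \eqref{vb_steen} applies on the nose to any restriction of the universal twist along a map factoring through $B\O$, and it replaces the paper's Ext computations. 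Your closing worry about the Postnikov tower of $B\GL_1(R)$ is unnecessary for the same reason: the factorization through $B\O/B\Spin^c$, not any property of $B\GL_1(R)$, does all the work.

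The one genuine gap is in the $\Sq^4$ computation. On $K(\Z/2,1)\times\SK(4)$ the class $\Sq^4(U)$ lies in the five-dimensional space spanned by $a^4$, $a^2t(d)$, $a\Sq^1t(d)$, $\delta$, and $t(d)^2$ (cf.\ \eqref{eq:Sq4tmf}), so there are five coefficients to determine. Your test spaces see only two of them: on $\RP^\infty$ one has $t(d)=\delta=0$, on $B\Spin$ one has $a=t(d)=0$, and $H^4(S^2)=0$. No space on your list carries a bundle with $w_1$ and $w_2$ simultaneously nonzero, so the coefficients of $a^2t(d)$, $a\Sq^1t(d)$, and $t(d)^2$ are left undetermined; your blanket claim that \eqref{vb_steen} ``forces the coefficient to be $1$'' is moreover false for these three, which all vanish. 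The fix stays entirely within your framework: add test bundles such as $\mathcal{O}(1)\oplus\mathcal{O}(2)\to\mathbb{CP}^2$ and sums of line bundles over $\RP^2\times\RP^2$ and $\RP^1\times\RP^3$, as the paper does, or restrict the universal twist along $B\O\to B\O/B\String$ itself, where the five classes pull back to a basis of $H^4(B\O;\Z/2)$ and the single identity $\Sq^4(U)=Uw_4$ determines all five coefficients at once.
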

This theorem computes the inputs to the Baker-Lazarev Adams spectral sequences for $H\Z$, $\ku$, $\ko$, and $\tmf$ for the Thom spectra we study. We find three avatars of this fact:
\begin{enumerate}
    \item In \cref{little_sibling_Adams_corollary}, we describe in all degrees the $E_2$-page of the Baker-Lazarev Adams spectral sequence for a fake vector bundle twist of $H\Z$, $\ku$, $\ko$, or $\tmf$.
    \item In \cref{big_sibling_Adams_estimate}, we describe in low degrees the $E_2$-page of the Baker-Lazarev Adams spectral sequence for a fake vector bundle twist of $\MTSO$, $\MTSpin^c$, $\MTSpin$, or $\MTString$.
    \item In \cref{big_sibling_Adams_corollary}, we describe variants of the Baker-Lazarev Adams spectral sequence for fake vector bundle twists of $\MTSO$, $\MTSpin^c$, and $\MTSpin$, and compute the $E_2$-pages in all degrees.
\end{enumerate}
%With this, we have the input to the Baker-Lazarev spectral sequence in
%\cref{little_sibling_Adams_corollary,big_sibling_Adams_estimate,big_sibling_Adams_corollary}, from which many computations open up.
We then give three examples of applications of our techniques.
\begin{enumerate}
    \item In \S\ref{sub:uduality}, we use \cref{big_sibling_Adams_estimate} to compute low-dimensional $G$-bordism groups for $G = \Spin\times_{\set{\pm 1}}\SU_8$. These are the twisted spin bordism groups for a twist over $B(\SU_8/\set{\pm 1})$ which is not a vector bundle twist. In~\cite{DY22}, we discussed an application of $\Omega_5^G$ to an anomaly cancellation question in $4$-dimensional $\cN = 8$ supergravity; using \cref{big_sibling_Adams_estimate}, we can give a much simpler calculation of $\Omega_5^G$ than appears in~\cite[Theorem 4.26]{DY22}. See Kuroda~\cite{Kur25} for more computations of twisted spin bordism groups using \cref{thom_module_calc}.
    \item In \S\ref{sub:stringbord}, we study twisted string bordism groups for a non-vector bundle twist over $B((E_8\times E_8)\rtimes\Z/2)$, where $\Z/2$ acts on $E_8\times E_8$ by swapping the factors. These bordism groups have applications in the study of the $E_8\times E_8$ heterotic string; see~\cite{Deb23} for more information. Here, we work through the $3$-primary calculation, simplifying a computation in~\cite{Deb23}.
    \item In \S\ref{sub:kumod}, we reprove a result of Devalapurkar~\cite[Remark 2.3.16]{Dev} describing $H\Z/2$ as a $\ku$-module Thom spectrum; Devalapurkar's proof uses different methods.
\end{enumerate}
Our theorems proceed similarly for several different families of spectra. One naturally wonders if there are more families out there. Specifically, there is a spectrum for which many but not all of the ingredients of our proofs were present at the time we wrote the first version of this paper.
\begin{question}[\cref{level_str}]
\label{level_qn}
Let $\tmf_1(3)$ denote the connective spectrum of topological modular forms with a level structure for the congruence subgroup $\Gamma_1(3)\subset\SL_2(\Z)$~\cite{HL16}. Is there a tangential structure $\xi\colon B\to B\O$ such that $\mathit{MT\xi}$ is an $E_\infty$-ring spectrum with an $E_\infty$-ring map $\mathit{MT\xi}\to \tmf_1(3)$ which is an isomorphism on low-degree homotopy groups after $2$-completion?

If such a spectrum exists, then one could use our approach to run the Baker-Lazarev Adams spectral sequence to compute twisted $\tmf_1(3)$-homology; the needed change-of-rings formula for $\tmf_1(3)$ is due to Mathew~\cite[Theorem 1.2]{Mat16}. %We would be interested in learning if such a spectrum $\mathit{MT\xi}$ exists.
\end{question}
Devalapurkar~\cite{Dev22} constructed a tangential structure called a \term{string$^h$ structure} and an $E_\infty$-ring map $\sigma_1(3)\colon \MTString^h\to\tmf_1(3)_{(2)}$, answering most of \cref{level_qn}; that any such orientation is an isomorphism on low-degree homotopy groups after $2$-completion was shown in~\cite[Corollary 3.53]{DY24}. We plan to study the Baker-Lazarev Adams spectral sequence for $\tmf_1(3)$-module Thom spectra in future work.

%Then, instead of cochains
%on $X$, one builds a cochain complex $C^*(X;\Z_\rho)$ out of cochains on the universal cover $\widetilde X$ of $X$
%which transform under the $\pi_1(X)$-action on $\widetilde X$ by $\rho$. $C^*(X; \Z_\rho)$ can be thought of as the
%derived functor of sections of the local system

%For a connected space $X$, we can consider the $\infty$-group $\GL_1(\Z)$ and a homomorphism $\rho: \pi_1(X) \rightarrow \GL_1(\Z)$. This gives a $\Z$ bundle over $X$ whose monodropy around a loop in $\pi(X)$ is given by the map $\rho$. 
%This gives a local coefficient system which opens the doors for computing twisted cohomology groups, something that is ubiquitous in theories with time-reversal symmetry. The authors describe the generalized twists as $R$-module Thom spectra associated to maps to $B\GL_1(R)$, and this is the perspective we also take. The second piece involves using the  $R$-module spectral sequence introduced in \cite{baker2001adams}.

%Let $R$ be an $E_\infty$-ring
%s%pectrum. Ando-Blumberg-Gepner-Hopkins-Rezk~\cite[Definition 2.1]{ABGHR14b} define the \term{group
%of units} of $R$, a space $\GL_1(R)$ carrying the homotopy-coherent version of an abelian group structure. They
%adopt the perspective that twists of $R$-cohomology over a space $X$ are classified by maps $X\to B\GL_1(R)$
%
%(statement of results)
%\begin{itemize}
%	\item ABGHR
%	\item fake vector bundle twists
%	\item Baker-Lazarev
%	\item our theorems
%	\item applications and questions
%\end{itemize}
%
%(outline of the paper)
\subsection*{Outline}
\S\ref{section:reviewthom} is about twists and Thom spectra. First, in \S\ref{sub:ABGHR}, we review
Ando-Blumberg-Gepner-Hopkins-Rezk's theory of Thom spectra~\cite{ABGHR14a, ABGHR14b} and discuss some constructions
and lemmas we need later in the paper. Then, in \S\ref{sub:faketwist}, we construct fake vector bundle twists for
the four families of ring spectra that we study in this paper: $\MTSO$ and $H\Z$ in \S\ref{so_twists}; $\MTSpin^c$,
$\ku$, and $\KU$ in \S\ref{spinc_twists}; $\MTSpin$, $\ko$, and $\KO$ in \S\ref{spin_twists}; and $\MTString$,
$\tmf$, $\Tmf$, and $\TMF$ in \S\ref{string_twists}.

In \S\ref{s:BL} we study the Adams spectral sequence for the Thom spectra of these twists.  We begin in
\S\ref{VB_change_of_rings} by reviewing how the change-of-rings story simplifies Adams computations for vector
bundle Thom spectra. Then, in \S\ref{sub:BLASS}, we introduce Baker-Lazarev's $R$-module Adams spectral
sequence~\cite{baker2001adams}. In \S\ref{sub:mainthm} we prove \cref{thom_module_calc} computing the input to the
Baker-Lazarev Adams spectral sequence for the Thom spectra of our fake vector bundle twists.

We conclude in \S\ref{section:applications} with some applications and examples of computations using the main
theorem: a twisted spin bordism example in \S\ref{s:u_duality} and an application to U-duality anomaly
cancellation; a twisted string bordism example in \S\ref{sub:stringbord} motivated by anomaly cancellation in
heterotic string theory; and a twisted $\ku$-homology example in \S\ref{sub:kumod} exhibiting $H\Z/2$ as the
$2$-completion of a $\ku$-module Thom spectrum.

\addtocontents{toc}{\protect\setcounter{tocdepth}{2}}
\section{Thom spectra and twists à la Ando-Blumberg-Gepner-Hopkins-Rezk}\label{section:reviewthom}
	%\input{review_Thom}
%\begin{comment}
%Goals of this section include:
%\begin{itemize}
%	\item Review of how ABGHR define $R$-module Thom spectra
%	\item Producing the maps $B\O/BG\to B\GL_1(\mathit{MTG})$, $G = \SO$, $\Spin^c$, $\Spin$, $\String$, and
%	explaining how these relate to usual twists by $K(\Z/2, 2)$, $K(\Z, 3)$, $K(\Z, 4)$, and twists of $\KO$,
%	$\KU$, and $\TMF$
%	\begin{itemize}
%		\item Probably we should say a little more about the ``groups'' $B\O/BG$
%		\item Identifying the universal twists of the big siblings over $B\O/BG$, as well over smaller things like
%		$K(\Z, 4)$, 
%	\end{itemize}
%\end{itemize}
%\end{comment}
%%%%%%%%%%%%%%%%%%%%%%%%%%%%%%%%%
%%%%%%%%%%%%%%%%%%%%%%%%%%%%%%%%%

\subsection{The Ando-Blumberg-Gepner-Hopkins-Rezk approach to Thom spectra}\label{sub:ABGHR}
In this subsection we introduce Ando-Blumberg-Gepner-Hopkins-Rezk's theory of Thom spectra~\cite{ABGHR14a,
ABGHR14b} and recall the key facts we need for our theorems.\footnote{Here and throughout the paper, we work with
the symmetric monoidal $\infty$-category of spectra constructed by Lurie~\cite[\S 1.4]{Lurie:HA}, where by
``$\infty$-category'' we always mean quasicategory. In \S\ref{s:BL}, we use work of
Baker-Lazarev~\cite{baker2001adams}, who work with a different model of spectra, the $\Sph$-modules of
Elmendorf-Kriz-Mandell-May~\cite[S 2.1]{EKMM97}. The equivalence between the $\infty$-category presented by the
model category of $\Sph$-modules and Lurie's $\infty$-category of spectra follows from work of
Mandell-May-Schwede-Shipley~\cite{MMSS:2001}, Schwede~\cite{Sch01}, and Mandell-May~\cite{MM02}. Likewise, these
papers show that commutative algebras in the category of $\Sph$-modules correspond to $E_\infty$-rings in the
$\infty$-category of spectra.
\label{footnote:spectra}} In this paper, we only need to work
with $E_\infty$-ring spectra, and we will state some theorems in only the generality we need, which is less general
than what Ando-Blumberg-Gepner-Hopkins-Rezk prove.

By an \term{$\infty$-group} we mean a grouplike $E_1$-space, which is a homotopically invariant version of topological group.
%An $n$-group in the categorical sense is equivalent to
%an $\infty$-group whose homotopy groups vanish above degree $n$. [\TODO: this is a theorem, I think!]
By an
\term{abelian $\infty$-group} we mean a grouplike $E_\infty$-space.
\begin{defn}[{May~\cite[\S III.2]{MQRT77}}]
Let $R$ be an $E_\infty$-ring spectrum. The \term{group of units} of $R$ is the abelian $\infty$-group $\GL_1(R)$
defined to be the following pullback:
% https://q.uiver.app/?q=WzAsNCxbMCwwLCJcXEdMXzEoUikiXSxbMCwxLCJcXHBpXzAoUileXFx0aW1lcyJdLFsxLDEsIlxccGlfMChSKSJdLFsxLDAsIlxcT21lZ2FeXFxpbmZ0eSBSIl0sWzAsM10sWzAsMV0sWzEsMl0sWzMsMl0sWzAsMiwiIiwxLHsic3R5bGUiOnsibmFtZSI6ImNvcm5lciJ9fV1d
\begin{equation}
\label{GL1_defn}
\begin{tikzcd}
	{\GL_1(R)} & {\Omega^\infty R} \\
	{\pi_0(R)^\times} & {\pi_0(R)}\,.
	\arrow[from=1-1, to=1-2]
	\arrow[from=1-1, to=2-1]
	\arrow[from=2-1, to=2-2]
	\arrow[from=1-2, to=2-2]
	\arrow["\lrcorner"{anchor=center, pos=0.125}, draw=none, from=1-1, to=2-2]
\end{tikzcd}
\end{equation}
\end{defn}
The pullback~\eqref{GL1_defn} takes place in the $\infty$-category of abelian $\infty$-groups.
% idk if we ever need this
%Since the forgetful
%functor $G$ from abelian $\infty$-groups to spaces has a left adjoint [\TODO: cite!] \cite{LMSM86,ABGHR14b}, $G$ commutes with limits, so
%as a topological space $\GL_1(R)$ is also the pullback of spaces~\eqref{GL1_defn}.
As the three legs of the
pullback diagram~\eqref{GL1_defn} are functorial in $R$, $\GL_1(R)$ is also functorial in $R$.

Since $\GL_1(R)$ is an $\infty$-group, it has a classifying space $B\GL_1(R)$; we refer to a map $X\to B\GL_1(R)$
as a \term{twist} of $R$ over $X$. There is a sense in which $B\GL_1(R)$ carries the universal local system of
\term{$R$-lines}, or free $R$-module spectra of rank $1$: see~\cite[Corollary 2.14]{ABGHR14a}.
\begin{exm}
\label{discrete_example}
If $A$ is a commutative ring and $R = HA$, then the equivalence of abelian $\infty$-groups $\pi_0\colon
\Omega^\infty HA\overset\simeq\to A$ induces an equivalence of abelian $\infty$-groups $\GL_1(R)\simeq
A^\times$.
\end{exm}

Let $\cat{Mod}_R$ denote the $\infty$-category of $R$-module spectra and $\cat{Line}_R$ denote the
$\infty$-category of $R$-lines, and let $\pi_{\le\infty}(X)$ denote the
fundamental $\infty$-groupoid of a space $X$. The
identification $\abs{\cat{Line}_R}\overset\simeq\to B\GL_1(R)$~\cite[Corollary 2.14]{ABGHR14a} allows us to
reformulate the inclusion $\cat{Line}_R\hookrightarrow\cat{Mod}_R$ as a functor $M\colon
\pi_{\le\infty}(B\GL_1(R))\to \cat{Mod}_R$, which one can think of as sending a point in $B\GL_1(R)$ to the
$R$-line which is the fiber of the universal local system of $R$-lines on $B\GL_1(R)$. In the rest of this paper,
we will simply write $X$ for $\pi_{\le\infty}(X)$, as we will never be in a situation where this causes ambiguity.
\begin{defn}[{\cite[Definition 2.20]{ABGHR14a}}]
\label{thom_defn_not_eqn}
Let $R$ be an $E_\infty$-ring spectrum and $f\colon X\to B\GL_1(R)$ be a twist of $R$. The \term{Thom spectrum}
$M^Rf$ of the map $f$ is the colimit of the $X$-shaped diagram
\begin{equation}
\label{Thom_defn}
	X\overset{f}{\longrightarrow} B\GL_1(R)\longrightarrow \cat{Mod}_R\,.
\end{equation}
When $R$ is clear from context, we will write $Mf$ for $M^Rf$.
\end{defn}
By construction, $Mf$ is an $R$-module spectrum. If the reader is familiar with the definition of a Thom spectrum
associated to a virtual vector bundle, this definition is related but more general.

\begin{exm}[Thom spectra from vector bundles]
\label{VB_Thom}
Let $V\to X$ be a virtual stable vector bundle of rank zero; $V$ is classified by a map $f_V\colon X\to B\O$. There
is a map of abelian $\infty$-groups $J\colon B\O\to B\GL_1(\Sph)$ called the \term{$J$-homomorphism}, where $B\O$
has the abelian $\infty$-group structure induced by direct sum of (rank-zero virtual) vector bundles \cite{Whitehead42}. Theorems of Lewis~\cite[Chapter IX]{LMSM86} and
Ando-Blumberg-Gepner-Hopkins-Rezk~\cite[Corollary 3.24]{ABGHR14a} together imply that the Thom spectrum $X^V$ in
the usual sense is naturally equivalent to the Thom spectrum $M(J\circ f_V)$ in the
Ando-Blumberg-Gepner-Hopkins-Rezk sense.
\end{exm}
\begin{exm}[Trivial twists]
\label{trivial_twists}
Suppose that the map $f\colon X\to B\GL_1(R)$ is null-homotopic. Then by definition, the colimit
of~\eqref{Thom_defn} is $R\wedge X_+$; more precisely, a null-homotopy of $f$ induces an equivalence of $R$-module
spectra $Mf\simeq R\wedge X_+$.
\end{exm}
We will need the following fact a few times. We believe it is known, but were unable to locate a proof in this generality in the literature.
\begin{lem}
\label{Thom_change_of_rings}
Let $g\colon R_1\to R_2$ be a map of $E_\infty$-ring spectra and $f\colon X\to B\GL_1(R_1)$ be a twist. Then there
is an equivalence of $R_2$-module spectra
\begin{equation}
	M^{R_2}(g\circ f)\overset\simeq\longrightarrow M^{R_1}f\wedge_{R_1} R_2.
\end{equation}
\end{lem}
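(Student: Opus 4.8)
The plan is to realize both sides as colimits of $X$-shaped diagrams valued in $\cat{Mod}_{R_2}$ and then match the diagrams. Write $g_!\coloneqq(\bl)\wedge_{R_1}R_2\colon\cat{Mod}_{R_1}\to\cat{Mod}_{R_2}$ for extension of scalars along $g$; it is left adjoint to restriction of scalars along $g$, hence preserves all colimits, and it sends the unit $R_1$ to $R_1\wedge_{R_1}R_2\simeq R_2$. By \cref{thom_defn_not_eqn}, $M^{R_1}f$ is the colimit of $X\xrightarrow{f}B\GL_1(R_1)\xrightarrow{M}\cat{Mod}_{R_1}$, so applying $g_!$ and using that it preserves colimits,
\begin{equation*}
	M^{R_1}f\wedge_{R_1}R_2\;\simeq\;\operatorname*{colim}_{X}\bigl(g_!\circ M\circ f\bigr),
\end{equation*}
whereas $M^{R_2}(g\circ f)$ --- where $g\circ f$ abbreviates $B\GL_1(g)\circ f\colon X\to B\GL_1(R_2)$ --- is by definition $\operatorname*{colim}_X\bigl(M'\circ B\GL_1(g)\circ f\bigr)$, with $M'\colon B\GL_1(R_2)\to\cat{Mod}_{R_2}$ the classifying functor for $R_2$. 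So it is enough to construct an equivalence of functors $g_!\circ M\simeq M'\circ B\GL_1(g)\colon B\GL_1(R_1)\to\cat{Mod}_{R_2}$.

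To do this, recall that $M$ is the composite of the identification $B\GL_1(R_1)\simeq\abs{\cat{Line}_{R_1}}$ of \cite[Corollary 2.14]{ABGHR14a} with the inclusion $\cat{Line}_{R_1}\hookrightarrow\cat{Mod}_{R_1}$, and similarly for $M'$. Since $g_!$ carries objects equivalent to $R_1$ to objects equivalent to $R_2$, it restricts to a functor $\cat{Line}_{R_1}\to\cat{Line}_{R_2}$ lying in a square with the two module-category inclusions that commutes up to equivalence. Now $\cat{Line}_R$ is a connected $\infty$-groupoid with distinguished vertex $R$ whose automorphism $\infty$-group is $\GL_1(R)=\mathrm{Aut}_{\cat{Mod}_R}(R)$: this is exactly the identification $\abs{\cat{Line}_R}\simeq B\GL_1(R)$, together with the defining pullback~\eqref{GL1_defn}, which exhibits $\mathrm{Aut}_{\cat{Mod}_R}(R)$ --- the invertible components of $\mathrm{Map}_{\cat{Mod}_R}(R,R)\simeq\Omega^\infty R$ --- as $\GL_1(R)$. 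A functor of pointed connected $\infty$-groupoids realizes to $B$ of the map it induces on automorphism $\infty$-groups of the basepoints; the map $g_!$ induces on $\mathrm{Aut}_{\cat{Mod}_{R_1}}(R_1)\to\mathrm{Aut}_{\cat{Mod}_{R_2}}(R_2)$ is the restriction of $\Omega^\infty(g)\colon\Omega^\infty R_1\to\Omega^\infty R_2$ to units, which by~\eqref{GL1_defn} is exactly $\GL_1(g)$. Hence the realization of $g_!|_{\cat{Line}}$ is $B\GL_1(g)$, and combining with the commuting square yields $g_!\circ M\simeq M'\circ B\GL_1(g)$.

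Finally, feeding this equivalence into the colimit formulas above gives $M^{R_2}(g\circ f)\simeq M^{R_1}f\wedge_{R_1}R_2$, an equivalence of $R_2$-modules since $g_!$ lands in $\cat{Mod}_{R_2}$ and the colimit is formed there. The main point requiring care is the identification in the second paragraph at the level of functors rather than merely objects --- that $g_!$ restricted to lines realizes to the classifying map $B\GL_1(g)$ --- which amounts to the naturality in $R$ of the correspondence of \cite[Corollary 2.14]{ABGHR14a}; one can either argue as sketched, staying inside the sub-$\infty$-groupoids $\cat{Line}_R$, or extract this naturality directly from \cite{ABGHR14a}. Everything else is formal manipulation of adjunctions and colimits.
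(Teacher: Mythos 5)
Your proposal is correct and follows essentially the same route as the paper: both reduce the claim to the (homotopy) commutativity of the square relating $B\GL_1(g)$ to the extension-of-scalars functor $\bl\wedge_{R_1}R_2$, and both resolve the key tension — that $B\GL_1(g)$ is defined spectrum-theoretically while $\bl\wedge_{R_1}R_2$ is module-theoretic — by identifying $\Omega^\infty R$ with $\End_R(R)$ (equivalently, working inside $\cat{Line}_R$ and matching automorphism $\infty$-groups of the basepoint), then concluding via the colimit definition of the Thom spectrum. Your write-up is somewhat more explicit about the colimit-preservation of the left adjoint and the naturality of the identification $\abs{\cat{Line}_R}\simeq B\GL_1(R)$, but the argument is the same.
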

When $R_1 = \Sph$, Ando-Blumberg-Gepner-Hopkins-Rezk~\cite[\S 1.2]{ABGHR14b} mention that this lemma is a
straightforward consequence of a different, equivalent definition of the Thom spectrum~\cite[Definition
3.13]{ABGHR14b}.
\begin{proof}
We will show that the diagram
% https://q.uiver.app/?q=WzAsNCxbMCwwLCJCXFxHTF8xKFJfMSkiXSxbMSwwLCJCXFxHTF8xKFJfMikiXSxbMCwxLCJcXGNhdHtNb2R9X3tSXzF9Il0sWzEsMSwiXFxjYXR7TW9kfV97Ul8yfSJdLFswLDEsImciXSxbMCwyLCJNIiwyXSxbMiwzLCJcXHRleHR7LS19XFx3ZWRnZV97Ul8xfSBSXzIiLDJdLFsxLDMsIk0iXV0=
\begin{equation}
\label{Thom_mod}
\begin{tikzcd}
	{B\GL_1(R_1)} & {B\GL_1(R_2)} \\
	{\cat{Mod}_{R_1}} & {\cat{Mod}_{R_2}}
	\arrow["g", from=1-1, to=1-2]
	\arrow["M^{R_1}"', from=1-1, to=2-1]
	\arrow["{\text{--}\wedge_{R_1} R_2}"', from=2-1, to=2-2]
	\arrow["M^{R_2}", from=1-2, to=2-2]
\end{tikzcd}
\end{equation}
is (homotopy) commutative, where just as above we identify the spaces $B\GL_1(R_i)$ with their
fundamental $\infty$-groupoids. Once we know this, the lemma is immediate from the colimit definition of $M^{R_2}(g\circ
f)$ in \cref{thom_defn_not_eqn}: replace $M^{R_2}\circ g\circ f$ with $(\bl\wedge_{R_1}R_2)\circ M^{R_1}\circ f$.

The key obstacle in establishing commutativity of~\eqref{Thom_mod} is that $g\colon B\GL_1(R_1)\to B\GL_1(R_2)$
comes from maps of spectra via~\eqref{GL1_defn}, but $\bl\wedge_{R_1}R_2$ has a more module-theoretic flavor. The
resolution, which is the same as in the proof of~\cite[Proposition 2.9]{ABGHR14a}, is that the three other pieces
of the pullback~\eqref{GL1_defn} defining $\GL_1$, namely $\Omega^\infty$, $\pi_0$, and $\pi_0(\bl)^\times$, have
module-theoretic interpretations: there are homotopy equivalences of abelian $\infty$-groups $\Omega^\infty
R\overset\simeq\to \End_R(R)$, and likewise $\pi_0(R)\overset\simeq\to\pi_0(\End_R(R))$ and
$\pi_0(R)^\times\overset\simeq\to \pi_0(\End_R(R))^\times$. And all of these identifications are compatible with
the tensor product functor $\cat{Mod}_{R_1}\to\cat{Mod}_{R_2}$, thus also likewise for their classifying spaces,
establishing commutativity of~\eqref{Thom_mod}.
\end{proof}

The usual Thom diagonal for a Thom space $X^V$ gives $H^*(X^V;\Z/2)$ the structure of a module over
$H^*(X;\Z/2)$. One can generalize this for $R$-module Thom spectra as follows.
\begin{defn}[{Thom diagonal~\cite[\S 3.3]{ABGHR14b}}]
\label{def:ThomDiag}
Let $R$ be an $E_\infty$-ring spectrum and $f\colon X\to B\GL_1(R)$ be a twist. The \term{Thom diagonal} for $Mf$
is an $R$-module map
\begin{equation}
\label{TD_equation}
	Mf\overset{\Delta^t}{\longrightarrow} Mf\wedge R\wedge X_+
\end{equation}
defined by applying the Thom spectrum functor to the maps $f\colon X\to B\GL_1(R)$ and $(f, 0)\colon X\times X\to
B\GL_1(R)$: if $\Delta\colon X\to X\times X$ is the diagonal map, then $f = \Delta^*(f, 0)$, so $\Delta$ induces
the desired map $\Delta^t$ of $R$-module Thom spectra in~\eqref{TD_equation}.
%A $R$-module Thom diagonal  recognizes the Thom spectrum $Mf  \rightarrow \Sigma^\infty_+ X\wedge Mf $.
\end{defn}
See Beardsley~\cite[\S 4.3]{Bea18} for a nice coalgebraic interpretation of the Thom diagonal.
\subsection{Constructing non-vector-bundle twists}\label{sub:faketwist}
Let $X$ and $Y$ be $E_\infty$-spaces and $f_1\colon X\to Y$ and $f_2\colon Y\to B\GL_1(\Sph)$ be $E_\infty$-maps. Ando-Blumberg-Gepner~\cite[Theorem 1.7]{ABG11} show that the $E_\infty$-structure on $f_2\circ f_1$ induces an $E_\infty$-ring structure on $M(f_2\circ f_1)$.
\begin{thm}[{May-Quinn-Ray~\cite[Lemma IV.2.6]{MQRT77}}]
%[{\cite[Theorem 1]{Bea17}}]
\label{cofiber_twists}
Let $R$ be an $E_\infty$-ring spectrum. The data of an $E_\infty$-ring map $\rho\colon M(f_2\circ f_1)\to R$ induces a map
$T_{f_1,f_2}\colon Y/X\to B\GL_1(R)$ of abelian $\infty$-groups.\footnote{May-Quinn-Ray proves that $T_{f_1,f_2}$ is a map of \term{$\mathscr I_*$-functors}. This implies it is a map of $E_\infty$-spaces, as discussed in~\cite[\S I.1]{MQRT77}.}
\end{thm}
An $E_\infty$-ring map $\rho$ of this kind is often called an \term{$M(f_2\circ f_1)$-orientation} of $R$.
\begin{rem}
May-Quinn-Ray state this result only for $R = M(f_2\circ f_1)$ and $\rho = \id$; Beardsley~\cite[Theorem 1]{Bea17} provides another, quite different, proof of this case and uses it to obtain many commonly-studied twists of various cohomology theories. We will usually apply it for maps to $B\O$ and implicitly compose with the $E_\infty$-map $J\colon B\O\to B\GL_1(\Sph)$, like in \cref{VB_Thom}.

The more general version of May-Quinn-Ray's theorem appearing in~\cref{cofiber_twists} follows immediately from the version in~\cite{MQRT77}: the abelian $\infty$-group $B\GL_1(R)$ is natural in the $E_\infty$-ring spectrum $R$, so given $\rho\colon M(f_2\circ f_1)\to R$ as in the statement of \cref{cofiber_twists}, we may compose May-Quinn-Ray's map $Y/X\to B\GL_1(M(f_2\circ f_1))$ with the base change map $B\GL_1(M(f_2\circ f_1))\to B\GL_1(R)$ to finish.
\end{rem}
\begin{rem}
Just as the map $J\colon B\O\to B\GL_1(\Sph)$ is related to the classical $J$-homomorphism $\pi_*(\O)\to\pi_*(\Sph)$, the map $T_{f_1,f_2}\colon Y/X\to B\GL_1(M(f_2\circ f_1))$ from \cref{cofiber_twists} is related to Harris' generalized $J$-homomorphisms~\cite{Har69}: see May-Quinn-Ray~\cite[\S IV.2]{MQRT77}. These generalized $J$-homomorphisms also appear in work of Ray~\cite{Ray71, Ray74}, Gozman~\cite{Goz77}, and Bier-Ray~\cite{BR78}.
\end{rem}
%\begin{proof}
%Ando-Blumberg-Gepner-Hopkins-Rezk~\cite[Theorem 3.19]{ABGHR14b} show that the $M(f_2\circ f_1)$-orientation of $R$ is equivalent to a null-homotopy of the map
%\begin{equation}
%	X \overset{f_1}{\longrightarrow} Y\overset{f_2}{\longrightarrow}
%	B\GL_1(\mathbb S)\overset{1_R}{\longrightarrow} B\GL_1(R),
%\end{equation}
%where $1_R$ denotes the unit map for $R$. That is, we have a map $g\colon Y\to B\GL_1(R)$ and a nullhomotopy of
%$f_1\circ g\colon X\to Y\to B\GL_1(R)$, which is precisely the %data needed to descend $g$ to the cofiber of $f_1$,
%giving us a map $Y/X\to B\GL_1(R)$ as desired.
%\end{proof}
%A theorem of Beardsley~\cite{Bea17} uses a special case of \cref{cofiber_twists} 
%NOTE: $Y$ appears
%here so that we can do stuff like $B\SO/B\Spin^c$, which is $K(\Z, 3)$, which will be helpful in examples below
%when we want to relate to twists that have already been considered
%(\TODO: can we please define the notation $X^{f_1 \circ f_2}$?)
\begin{thm}[Beardsley~{\cite[Theorem 1]{Bea17}}]
\label{universal_twist}
For $R = M(f_2\circ f_1)$, there is a natural equivalence %$Y^{f_2}\overset\simeq\to MT_{f_1,f_2}$.
$M^RT_{f_1,f_2}\overset\simeq\to M^{\Sph}f_2$.
\end{thm}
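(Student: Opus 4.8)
The plan is to realize $Y/X$ as a homotopy pushout and push the Thom spectrum functor through it. Recall from the proof of \cref{cofiber_twists} that $Y/X$ is the homotopy pushout of $\mathrm{pt}\leftarrow X\xrightarrow{f_1}Y$, and that $T_{f_1,f_2}$ is the map out of this pushout assembled from the basepoint (on $\mathrm{pt}$), the twist $g = 1_R\circ f_2$ (on $Y$), and the nullhomotopy $\eta$ of $g\circ f_1$ supplied by the identity $R$-orientation of $R=M(f_2\circ f_1)$. Since $M^R$ is a colimit (\cref{thom_defn_not_eqn}), it preserves pushouts, so $M^R T_{f_1,f_2}$ is the homotopy pushout of
\begin{equation*}
	M^R(\text{basepoint})\longleftarrow M^R(g\circ f_1)\longrightarrow M^R(g)
\end{equation*}
in $\cat{Mod}_R$.

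Next I would identify the three terms. The basepoint twist has Thom spectrum the unit $R$-line, i.e.\ $R$. Since $g\circ f_1$ is nullhomotopic via $\eta$, \cref{trivial_twists} gives $M^R(g\circ f_1)\simeq R\wedge X_+$. For $g=1_R\circ f_2$, \cref{Thom_change_of_rings} applied to the unit $\Sph\to R$ gives $M^R(g)\simeq M^\Sph f_2\wedge_\Sph R$; one can also note that $M^\Sph f_2\wedge_\Sph R$ is itself trivialized, since the $E_\infty$-ring map $R=M(f_2\circ f_1)\to M^\Sph f_2$ induced by $f_1$ is an $M^\Sph f_2$-orientation of $R$. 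The lemma then reduces to identifying the two legs and computing the pushout: I expect the leg to $M^R(\text{basepoint})$ to collapse a copy of $X_+$, and, after using the Thom isomorphism for $M(f_2\circ f_1)$ over itself to reconcile the two descriptions of $M^R(g\circ f_1)$, the leg to $M^R(g)$ to be the base change along $\Sph\to R$ of the map $M^\Sph(f_2\circ f_1)\to M^\Sph f_2$ induced by $f_1$. Feeding these into the pushout and simplifying — the basepoint leg is split, coming from the basepoint of $X$ — should produce $M^\Sph f_2$, with the equivalence carrying Thom class to Thom class and hence natural.

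The main obstacle is pinning the two legs down precisely, and in particular the interaction of $\eta$ with the change-of-rings and trivial-twist identifications. The subtlety is that $\eta$ is \emph{not} the naive nullhomotopy: it is the one classified by the identity orientation of $M(f_2\circ f_1)$, so in the ``obvious'' coordinates the leg $M^R(g\circ f_1)\to M^R(\text{basepoint})$ is not literally $R\wedge(X_+\to S^0)$, and the leg $M^R(g\circ f_1)\to M^R(g)$ is not literally $\iota\wedge X_+$; choosing compatible coordinates so that the pushout comes out to $M^\Sph f_2$ rather than a larger spectrum is exactly where the counit property of the Thom diagonal (\cref{def:ThomDiag}) must be used. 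An alternative route that sidesteps this bookkeeping is Beardsley's original argument \cite{Bea17}, which exhibits the $R$-module Thom spectrum functor as an operadic left Kan extension and deduces the relative statement formally from the $\Sph$-module Thom spectrum functor.
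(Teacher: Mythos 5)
The paper does not prove this statement at all: it is quoted verbatim from Beardsley~\cite[Theorem 1]{Bea17}, so the only thing to assess is whether your argument would constitute an independent proof. It would not, because it founders on the very first step. The quotient $Y/X$ appearing in \cref{cofiber_twists} and \cref{universal_twist} is the cofiber of $f_1$ in the category of abelian $\infty$-groups (equivalently, of connective spectra) --- the bar construction $B(\pt, X, Y)\simeq Y_{hX}$ --- and \emph{not} the homotopy pushout of $\pt\leftarrow X\to Y$ in spaces. These genuinely differ: for $f_1\colon B\SO\to B\O$ the group-theoretic quotient is $B\O/B\SO\simeq K(\Z/2,1)$, whereas the space-level mapping cone $C$ has, e.g., $H_3(C;\Z/2)$ of rank $2$ (from the cokernel of $H_3(B\SO;\Z/2)\to H_3(B\O;\Z/2)$), so $C\not\simeq K(\Z/2,1)$. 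Since $M^R$ is a colimit over the underlying $\infty$-groupoid of the base, your three-term pushout computes the Thom spectrum of a twist over the mapping cone, which is the wrong spectrum; in the $\MTSO$ example it would not produce $\MTO$.

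The correct colimit decomposition is over $BX$ rather than over the span $\pt\leftarrow X\to Y$: writing $Y/X\simeq\operatorname*{colim}_{BX}Y$ exhibits $M^RT_{f_1,f_2}$ as the homotopy $X$-orbits of $M^R(g)$, and one must then identify $R=M^{\Sph}(f_2\circ f_1)$ itself as a suitable bar construction to conclude. Carrying this out coherently is exactly the content of Beardsley's operadic-Kan-extension argument, which you relegate to an ``alternative route'' in your last sentence; it is in fact the necessary route, not an optional shortcut around bookkeeping. Your instinct that the nullhomotopy $\eta$ is ``not the naive one'' is correct, but the issue is more basic than coordinates on the legs of a pushout: a map out of $Y$ together with a single nullhomotopy of its restriction to $X$ only defines a map out of the mapping cone, whereas descending to $Y/X$ uses that $\eta$ is a nullhomotopy through $E_\infty$-maps, which is precisely what \cite[Theorem 3.19]{ABGHR14b} supplies and what forces the bar-construction formulation.
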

%TODO: why is this choice of $R$ even a ring spectrum? [This is in ABGHR: that $f_1\circ f_2$ is an infinite loop
%map], we just have to mention/cite somewhere
%\begin{proof}
%\TODO:
%there should be a nice, clean proof of this, but we have to find it. In %some sense this follows because if we
%figure out what $Y/X$-twisted $X$-bordism actually is, that's $Y$-bordism, %but we have to figure out how to make
%this precise.
%\end{proof}
%(\TODO: mention in the into this thm is a key step)
%
%TODO: where do we introduce the various ``big siblings'' and ``little siblings''? Since we want to cite that %there
%really are ring maps between them, and that was hard work of various people. maybe introduce here%
%
%TODO: this should probably be ``Notation 1 point 1'' or such
%
In this paper we consider twisted $R$-(co)homology for several different ring spectra $R$. These spectra are organized into several families: in each family there is a Thom spectrum $Mf$, another ring spectrum $R$, and a map of ring spectra $Mf\to R$ which is an isomorphism on homotopy groups in low degrees. In the context of a specific family, we will refer to $Mf$ as the \term{big sibling} and $R$ as the \term{little sibling}. The four families we consider in this paper are $(\MTSO, H\Z)$, $(\MTSpin, \ko)$, $(\MTSpin^c, \ku)$, and
$(\MTString, \tmf)$:\footnote{In the homotopy theory literature, it is common to refer to bordism spectra $\mathit{MSO}$, $\mathit{MSpin}$, etc., corresponding to the bordism groups of manifolds with orientations, resp.\ spin structures, on the stable normal bundle. In the mathematical physics literature, one sees $\MTSO$, $\MTSpin$, etc., corresponding to the same structures on the stable \emph{tangent} bundle. If $\xi\colon B\to B\O$ is a tangential structure such that the map $\xi$ is a map of abelian $\infty$-groups, as is the case for $\O$, $\SO$, $\Spin^c$, $\Spin$, and $\String$, there is a canonical equivalence $\mathit{M\xi}\overset{\simeq}{\to}\mathit{MT\xi}$. For other tangential structures, this is not necessarily true: in particular, $\mathit{MPin}^\pm\simeq\MTPin^\mp$.}
\begin{itemize}
    \item The map $\Omega_0^\SO\overset\cong\to\Z$ counting the number of points refines to a map of $E_\infty$-ring spectra $\MTSO\to H\Z$. Work of Thom~\cite[Théorème IV.13]{ThomThesis} shows this map is an isomorphism on homotopy groups in degrees $3$ and below.
    \item The Atiyah-Bott-Shapiro map $\MTSpin^c\to\ku$~\cite{ABS64} was shown to be a map of $E_\infty$-ring spectra by Joachim~\cite{Joa04}, and Anderson-Brown-Peterson~\cite{ABP67} showed this map is an isomorphism on homotopy groups in degrees $3$ and below.
    \item Joachim~\cite{Joa04} also showed the real Atiyah-Bott-Shapiro map $\MTSpin\to\ko$~\cite{ABS64} is a map of $E_\infty$-ring spectra, and Milnor~\cite{Mil63} showed this map is an isomorphism on homotopy groups in degrees $7$ and below.
    \item Ando-Hopkins-Rezk~\cite{AHR10} produced a map of $E_\infty$-ring spectra $\sigma\colon \MTString\to\tmf$, which Hill~\cite[Theorem 2.1]{Hil09} showed is an isomorphism on homotopy groups in degrees $15$ and below.
\end{itemize}
For all of these cases but $\MTString$, one can $2$-locally decompose the big sibling into a sum of modules over the little sibling: Wall~\cite{Wal60} produced a $2$-local equivalence
\begin{subequations}\label{eq:bigandlittle}
\begin{equation}
    \MTSO_{(2)}\overset\simeq\longrightarrow H\Z_{(2)} \vee \Sigma^4 H\Z_{(2)} \vee \Sigma^5 H\Z/2\vee\dotsb,
\end{equation}
and Anderson-Brown-Peterson~\cite{ABP67} produced $2$-local equivalences
\begin{align}
    \MTSpin_{(2)} &\overset\simeq\longrightarrow \ko_{(2)}\vee \Sigma^8 \ko_{(2)}\vee \Sigma^{10}(\ko\wedge J)_{(2)}\vee \dots\\
    \MTSpin^c_{(2)} &\overset\simeq\longrightarrow \ku_{(2)}\vee \Sigma^4 \ku_{(2)} \vee\Sigma^8 \ku_{(2)} \vee \Sigma^8 \ku_{(2)}\vee\dotsb,
\end{align}
\end{subequations}
where $J$ is a certain spectrum such that $\Sigma^2\ko\wedge J$ is the Postnikov $2$-connected cover of $\ko$.\footnote{The \spinc decomposition is implicit in~\cite{ABP67}; see Bahri-Gilkey~\cite{BG87b} for an explicit reference.}
%via the the ring spectral maps:
%\begin{subequations}\label{eq:bigandlittle}
%\begin{align}
%MT\SO & \longrightarrow H \Z \vee \Sigma^4 H\Z \vee \Sigma^5 H \Z/2 \vee \ldots \\ 
%    MT\Spin &\longrightarrow \ko \vee \Sigma^8 \ko \vee  \Sigma^{10}\ko \vee \ldots \\
%    MT\Spin^c & \longrightarrow \ku \vee \Sigma^4 \ku \vee \Sigma^8 \ku \vee \cdots\,. \label{eq:MTspinc}
%\end{align}
%\end{subequations}
%The last map $MT\String \rightarrow \tmf$ is 15-connected, so we can and will replace string bordism with
%tmf-homology for the purposes of low degree computations.
It is not known whether $\tmf$ is a summand of
$\MTString$ (see, e.g., \cite{Lau04, Dev19, LS19, Pet19, Dev20}) so we do not know if there is a splitting like in
the three other cases.
\begin{rem}[String bordism with level structures?]
\label{level_str}
Associated to congruence subgroups $\Gamma\subset\SL_2(\Z)$ there are ``topological modular forms with level
structure:'' Hill-Lawson~\cite{HL16} and Meier~\cite{Mei21} construct
$E_\infty$-ring spectra $\TMF(\Gamma)$, $\Tmf(\Gamma)$, and $\tmf(\Gamma)$ with maps between them like
for vanilla $\tmf$.\footnote{Before the general constructions of Hill-Laswon and Meier, various examples of $\TMF(\Gamma)$, $\Tmf(\Gamma)$, and $\tmf(\Gamma)$ were constructed by Behrens~\cite{behrens2006modular, Beh07}, Mahowald-Rezk~\cite{MR09}, and Stojanoska~\cite{Sto12}.} The case $\Gamma = \Gamma_1(3)$ is especially interesting,
as many of the several ingredients we need for the proof of our main theorem are known to be true for
$\tmf(\Gamma_1(3))$ (usually written $\tmf_1(3)$): by work of Mathew~\cite[Theorem 1.2]{Mat16}, there is a change-of-rings theorem allowing one to simplify $2$-primary Adams spectral sequence computations to an easier subalgebra (see \S\ref{VB_change_of_rings}),
but at the time we originally wrote this paper, it was not yet known how to construct an $E_\infty$-ring Thom spectrum $M$ with an orientation $M\to\tmf_1(3)$
that is an isomorphism on low-degree homotopy groups.\footnote{See Wilson~\cite{Wil15} for results on closely
related questions.}
The existence of such a spectrum $M$ would lead to
generalizations of our main theorems to twists of $\tmf_1(3)$-homology.\footnote{A theorem of Meier~\cite[Theorem
1.4]{Mei21} suggests this may also apply to twists of $\tmf_1(n)$-homology for other values of $n$.}%and we would
%be interested in learning whether this is possible.

Since we finished the first version of this paper, we learned of a tangential structure, called a ``string$^h$ structure,'' whose Thom spectrum has an $E_\infty$-ring map to $\tmf_1(3)$, as shown by Devalapurkar~\cite{Dev22} (see also~\cite{DY24} for another construction). In future work, we (the authors of this paper) will use this orientation to study $\tmf_1(3)$-module Thom spectra.
\end{rem}
%There are also topological modular forms with level structure such as $TMF_1(n)$,
%that classify
%elliptic curves with a chosen point of exact order $n \in \mathbb{N}$.
%The is a particular  connective cover given by $tmf_1(3)$ whose big sibling is unknown, but identifying the big sibling would lead to more computations that can be done with the tools introduced in \S\ref{section:BLSS}.

\subsubsection{Twists of $\MTSO$ and $H\Z$}\label{so_twists}
We walk through the implications of \cref{cofiber_twists,universal_twist} in a relatively simple setting, addressing
\begin{itemize}
    \item what cohomology classes define twists of $\MTSO$ and $H\Z$ by way of \cref{cofiber_twists},
    \item what the corresponding twisted bordism and cohomology groups are, and
    \item what \cref{universal_twist} implies the Thom spectrum of the universal twist is.
\end{itemize}
Letting $f_2\colon X\to B\O$ be the identity and $f_1\colon X\to Y$ be $B\SO\to B\O$, we obtain twists of
$\MTSO$-oriented ring spectra, notably $\MTSO$ and $H\Z$, by maps to $B\SO/B\O\simeq K(\Z/2, 1)$, recovering a perspective of Hebestreit-Sagave~\cite[\S 1]{HS20}. The map $B\O\to
B\O/B\SO$ admits a section defined by regarding a map to $K(\Z/2, 1)$ as a real line bundle, so these twists are
given by real line bundles in the sense of \cref{VB_Thom}. Specifically, a class $a\in H^1(X;\Z/2)$ defines
a twist $f_a\colon X\to B\GL_1(\MTSO)$ by interpreting $a$ as a map $X\to B\O/B\SO$ and invoking
\cref{cofiber_twists}, and $a$ defines a second twist $g_a$ by choosing a real line bundle $L_a$ with $w_1(L_a) =
a$ (a contractible choice) and making the vector bundle twist as in \cref{VB_Thom}, but $f_a\simeq g_a$ and so
$M^\MTSO f_a\simeq \MTSO\wedge X^{L_a-1}$. Thus in a sense this example is redundant, as the main theorems of this
paper are long known for vector bundle twists, but we include this example because we found it a useful parallel to
to other families we study.

Let $\Omega_*^\SO(X, a)\coloneqq\pi_*(M^\MTSO f_a)$. Using the vector bundle interpretation of this twist,
$\Omega_*^\SO(X, a)$ has an interpretation as twisted oriented bordism groups, specifically the bordism groups
of manifolds $M$ with a map $h\colon M\to X$ and an orientation on $TM\oplus h^*L_a$. Alternatively, one could
think of this as the bordism groups of manifolds $M$ with a map $h\colon M\to X$ and a trivialization of the class
$w_1(M) - h^*a$; this perspective will be useful in later examples of non-vector-bundle twists.

%
%. Thus in some sense this example is redundant, as the
%main theorems of this paper are long known for vector bundle twists. But it is still worth keeping this example in
%in parallel with the other cases.

\Cref{universal_twist} then implies the Thom spectrum of
\begin{equation}
\label{MSO_O1}
	K(\Z/2,1)\overset{\simeq}{
	\rightarrow}B\O_1\overset\sigma\longrightarrow B\O\longrightarrow B\GL_1(\mathbb S)\longrightarrow B\GL_1(\MTSO),
\end{equation}
is equivalent to $\MTO$. \Cref{Thom_change_of_rings} implies the Thom spectrum of~\eqref{MSO_O1} is $\MTSO\wedge
(B\O_1)^{\sigma-1}$, so we have reproved a theorem of Atiyah: $\MTSO\wedge
(B\O_1)^{\sigma-1}\simeq\MTO$~\cite[Proposition 4.1]{Ati61}.

The twist of $H\Z$ defined by $a$ recovers the usual notion of integral cohomology twisted by a class in $H^1(X; \Z/2)$.

\subsubsection{Twists of $\MTSpin^c$, $\ku$, and $\KU$}\label{spinc_twists}
Our next family of examples includes \spinc bordism and complex $K$-theory following the perspective of Hebestreit-Sagave~\cite{HS20}.\footnote{Hebestreit-Sagave use a different model for parametrized homotopy theory than Ando-Blumberg-Gepner-Hopkins-Rezk's; these two perspectives are shown to be equivalent by Hebestreit-Sagave-Schlichtkrull~\cite[Theorems 1.6 and 1.8]{HSS20}.} In \cref{splitting_spinc} we use \cref{cofiber_twists} to construct a map $K(\Z/2, 1)\times K(\Z, 3)\to B\GL_1(\MTSpin^c)$, defining twists of $\MTSpin^c$, $\ku$, and $\KU$ by classes in $H^1(\bl;\Z/2)$ and $H^3(\bl;\Z)$. These recover the usual twists of $K$-theory by these cohomology classes studied by~\cite{DK70, Ros89, AS04, ABG10} (\cref{KU_comparison}), and in \cref{twisted_spinc_bordism_comparison} we use work of Hebestreit-Joachim~\cite[Proposition 3.3.6]{HJ20} to describe the homotopy groups of the corresponding $\MTSpin^c$-module Thom spectra as bordism groups of manifolds with certain kinds of twisted \spinc structures.

The Atiyah-Bott-Shapiro orientation~\cite{ABS64, Joa04} defines ring homomorphisms $\mathit{Td} \colon
\MTSpin^c\to\ku\to\KU$, so by \cref{cofiber_twists} there are maps
\begin{equation}
\label{ku_cofib_twist}
	B\O/B\Spin^c\longrightarrow B\GL_1(\MTSpin^c)\overset{\mathit{Td}}{\longrightarrow} B\GL_1(\ku)\longrightarrow
	B\GL_1(\KU),
\end{equation}
i.e.\ twists of $\MTSpin^c$, $\ku$, and $\KU$ by maps to $B\O/B\Spin^c$.\footnote{The map~\eqref{ku_cofib_twist} is
nowhere near a homotopy equivalence; for example, it misses the ``higher twists'' of $\KU$ studied in, e.g.,~\cite{DP15, Pen16}. However, Gómez~\cite{Gom10} has proven that if $G$ is a compact Lie group, the map $[BG, BO/B\Spin^c]\to [BG, B\GL_1(\KU)]$ induced by~\eqref{ku_cofib_twist} is an equivalence: there are no higher twists of $\KU$ over $BG$.}
\begin{prop}
\label{splitting_spinc}
The map $K(\Z/2, 1)\to B\O$ defined by the tautological line bundle induces a homotopy equivalence of spaces
\begin{equation}
\label{ospinc_equiv}
	B\O/B\Spin^c\overset\simeq\longrightarrow K(\Z/2, 1)\times K(\Z, 3),
\end{equation}
implying that $\MTSpin^c$, $\ku$, and $\KU$ can be twisted over a space $X$ by classes $a\in H^1(X;\Z/2)$ and $c\in
H^3(X;\Z)$.
\end{prop}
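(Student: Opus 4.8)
The plan is to identify $Y \coloneqq B\O/B\Spin^c$ (the cofiber of \cref{cofiber_twists} for $B\Spin^c\to B\O$, hence an infinite loop space, being the cofiber of a map of abelian $\infty$-groups, and in particular an $H$-space) as a space whose homotopy is concentrated in degrees $1$ and $3$, and then to assemble the equivalence out of the tautological line bundle together with the $H$-space structure on $Y$.

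First I would compute $\pi_*Y$. The central extension $1\to\U(1)\to\Spin^c\to\SO\to1$ is classified by $W_3=\beta w_2\in H^3(B\SO;\Z)\cong\Z/2$, and this class is nonzero because the extension does not split---already on $\pi_1$ one has $\pi_1\Spin^c\cong\Z$ rather than $\Z/2\times\Z$. Delooping produces a fiber sequence of infinite loop spaces $B\U(1)\to B\Spin^c\to B\SO\xrightarrow{W_3}K(\Z,3)$. Combining this with the facts that $B\SO\to B\O$ is the universal cover, that $\SO\to\O$ is an isomorphism on $\pi_{\ge1}$, and that $\Spin^c\to\SO$ is an isomorphism on $\pi_{\ge2}$ and reduction mod $2$ on $\pi_1$, the long exact sequence of $B\Spin^c\to B\O\to Y$ yields $\pi_1Y\cong\Z/2$, $\pi_3Y\cong\Z$, and $\pi_iY=0$ for all other $i$. (Equivalently, the octahedral axiom exhibits $Y$ as an extension of $B\O/B\SO\simeq K(\Z/2,1)$ by $B\SO/B\Spin^c\simeq K(\Z,3)$.)

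Next I would produce the two factors. For the $K(\Z/2,1)$ factor, take $\phi_1\colon K(\Z/2,1)=B\O_1\xrightarrow{\sigma}B\O\to Y$, the tautological line bundle followed by the quotient map; since $w_1$ of the tautological bundle generates $H^1(K(\Z/2,1);\Z/2)$ and $B\O\to Y$ is an isomorphism on $\pi_1$ (immediate, as $\pi_1B\Spin^c=0$), the composite $\phi_1$ is an isomorphism on $\pi_1$. For the $K(\Z,3)$ factor, the $2$-connected cover of $Y$ is a $K(\Z,3)$ (its homotopy is $\Z$ in degree $3$ and zero above), so the covering map gives $\phi_2\colon K(\Z,3)\to Y$ which is an isomorphism on $\pi_3$. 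Using the $H$-space multiplication $\mu$ on $Y$, set $\phi\coloneqq\mu\circ(\phi_1\times\phi_2)\colon K(\Z/2,1)\times K(\Z,3)\to Y\times Y\to Y$. As $\mu_*$ is addition on homotopy groups and the two factors hit $\pi_1$ and $\pi_3$ respectively, $\phi$ induces $\phi_1$ on $\pi_1$ and $\phi_2$ on $\pi_3$, hence is an isomorphism on both, while all other homotopy groups of both sides vanish. By Whitehead's theorem $\phi$ is a homotopy equivalence, and the equivalence asserted in the proposition is its inverse.

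The step I expect to be the actual obstacle---more conceptual than computational---is seeing why one must use the tautological line bundle at all. The space $Y$ is \emph{not} split as a spectrum: its unique $k$-invariant is the nonzero stable operation $\beta\Sq^2$, so there is no infinite-loop splitting available. What rescues the statement is that $\beta\Sq^2$ kills a degree-$1$ class (because $\Sq^2a=0$ when $\abs{a}=1$), so the \emph{unstable} Postnikov $k$-invariant of $Y$, which lives in $H^4(K(\Z/2,1);\Z)\cong\Z/2$, vanishes; indeed one could finish the proof directly from this observation in place of the $H$-space argument above. The only point requiring vigilance throughout is to keep the stable and unstable pictures apart.
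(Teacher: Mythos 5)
Your proof is correct and follows essentially the same route as the paper: exhibit $B\O/B\Spin^c$ as an extension of $B\O/B\SO\simeq K(\Z/2,1)$ by $B\SO/B\Spin^c\simeq K(\Z,3)$ via the octahedral axiom, and split it using the section coming from the tautological line bundle $B\O_1\to B\O$. The paper simply asserts that the section splits the cofiber sequence, whereas you spell out that step (combining the section and the fiber inclusion via the $H$-space multiplication and applying Whitehead), and your closing observation---that the splitting is an unstable accident because $\Sq^2$ annihilates degree-$1$ classes---is exactly the mechanism that fails in the paper's later $B\SO/B\String$ example (\cref{string_not_split}).
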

See Beardsley-Luecke-Morava~\cite[Propositions 4.1 and 5.15]{BLM23} for a closely related splitting result.
\begin{proof}
We want to apply the third isomorphism theorem to the sequence of maps of abelian
$\infty$-groups $B\Spin^c\to B\SO\to B\O$ to obtain a short exact sequence
\begin{equation}
\label{OSpinc_SES}
	\shortexact*{B\SO/B\Spin^c}{B\O/B\Spin^c}{B\O/B\SO}.
\end{equation}
It is not immediate how to do this in the $\infty$-categorical setting, but we can do it. Instead of a short exact
sequence, we obtain a cofiber sequence, and in a stable $\infty$-category, the third isomorphism theorem for
cofiber sequences is a consequence of the octahedral axiom.
%[TODO: this is in DAG-I, Remark 3.12, maybe also in
%HTT/HA?].
The $\infty$-category of abelian $\infty$-groups is not stable, as it is equivalent to the $\infty$-category of
connective spectra, but this $\infty$-category embeds in the stable $\infty$-category $\cat{Sp}$ of all spectra,
allowing us to make use of stability in certain settings: specifically, cofiber sequences $A\to B\to C$ of abelian
$\infty$-groups for which the induced map $\pi_0(B)\to\pi_0(C)$ is surjective; these cofiber diagrams map to
cofiber diagrams in $\cat{Sp}$, so we may invoke the octahedral axiom in $\cat{Sp}$. All cofiber sequences of
abelian $\infty$-groups we
discuss in this paper satisfy this $\pi_0$-surjectivity property, so we will not discuss it further. In particular,
we obtain the cofiber sequence~\eqref{OSpinc_SES}. 
Throughout this paper, whenever we write a short exact sequence of abelian $\infty$-groups, we mean a cofiber
sequence.

A similar argument allows one to deduce that fiber and cofiber sequences coincide for abelian $\infty$-groups from
the analogous fact for stable $\infty$-categories, assuming the same $\pi_0$-surjectivity hypothesis.
%(\TODO: think through this again!). \cite[Definition 1.1.1.9]{Lurie:HA}.
Since $B\Spin^c$ is the
fiber of $\beta w_2\colon B\SO\to K(\Z, 3)$, which is a map of abelian $\infty$-groups since $\beta w_2$ satisfies
the Whitney sum formula for oriented vector bundles, the cofiber $B\SO/B\Spin^c$ is equivalent, as abelian
$\infty$-groups, to $K(\Z, 3)$.\footnote{\label{HS_foot}This approach to the $K(\Z, 3)$ twist appears in Hebestreit-Sagave~\cite[\S 1]{HS20}.} Here, $\beta\colon H^k(\bl;\Z/2) \rightarrow H^{k+1}(\bl; \Z) $ is the Bockstein.
Likewise, $B\SO$ is the fiber of $w_1\colon
B\O\to K(\Z/2, 1)$, which is a map of abelian $\infty$-groups, so $B\O/B\SO\simeq K(\Z/2, 1)$.

The quotient $B\O\to B\O/B\SO\simeq K(\Z/2, 1)$ admits a section given by the tautological real line bundle
$K(\Z/2, 1)\simeq B\O_1\to B\O$; composing $K(\Z/2, 1)\to B\O$ with the quotient $B\O\to B\O/B\Spin^c$ we obtain a
section of~\eqref{OSpinc_SES}. That section splits~\eqref{OSpinc_SES}, which implies the proposition statement.
\end{proof}
%
%That is, we have produced a homotopy equivalence of spaces
%\begin{equation}
%\label{ospinc_equiv}%
%	B\O/B\Spin^c\overset\simeq\longrightarrow K(\Z/2, 1)\times K(\Z, 3).
%\end{equation}
%That is: 
\begin{defn}
Given classes $a\in H^1(X;\Z/2)$ and $c\in H^3(X;\Z)$, we call the twist $f_{a,c}\colon X\to B\GL_1(\MTSpin^c)$ that \cref{splitting_spinc} associates to $a$ and $c$ the \term{fake
vector bundle twist} for $a$ and $c$, and likewise for the induced twists of $\ku$ and $\KU$.
\end{defn}
%\TODO: $a$ and $c$, not $a$ and $b$!
The twist $f_{a,c}$ arises from a vector bundle twist if there is a vector bundle $V\to X$ such that $w_1(V) = a$ and
$\beta(w_2(V)) = c$, but there are choices of $X$, $a$, and $c$ for which no such vector bundle exists, e.g.\ if $c$ is not $2$-torsion.

Now that we have defined these twists, we get to the business of interpreting them.
\begin{defn}
\label{geometric_spinc}
Given $X$, $a$, and $c$ as above, let $\Omega_*^{\Spin^c}(X, a, c)$ denote the groups of bordism classes of manifolds $M$ with a map $f\colon M\to X$ and trivializations of
$w_1(M) - f^*(a)$ and $\beta(w_2(M)) - f^*(c)$. 
\end{defn}
This notion of twisted \spinc bordism, in the special case $a=0$, was first studied by Douglas~\cite[\S 5]{Dou06}, and implicitly appears in Freed-Witten's work~\cite{FW99} on anomaly cancellation.
\begin{lem}[{Hebestreit-Joachim~\cite[Corollary 3.3.8]{HJ20}}]
 \label{twisted_spinc_bordism_comparison}
 There is a natural isomorphism $\pi_*(M^{\MTSpin^c}f_{a,c})\overset\cong\to \Omega_*^{\Spin^c}(X, a, c)$.
\end{lem}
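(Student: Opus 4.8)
The plan is to realize both sides as the Pontryagin--Thom invariant of a single tangential structure. By construction (\cref{cofiber_twists,splitting_spinc}), the twist $f_{a,c}$ is the composite
\[
	X\xrightarrow{\ (a,c)\ } K(\Z/2, 1)\times K(\Z, 3)\xrightarrow{\ \simeq\ } B\O/B\Spin^c\xrightarrow{\ T\ } B\GL_1(\MTSpin^c),
\]
where $T$ is the universal fake vector bundle twist of \cref{cofiber_twists} (so $M^{\MTSpin^c}T\simeq\MTO$ by \cref{universal_twist}), and the quotient $p\colon B\O\to B\O/B\Spin^c$ has fibre $B\Spin^c$ and, under \eqref{ospinc_equiv}, is the map $(w_1,\beta w_2)\colon B\O\to K(\Z/2, 1)\times K(\Z, 3)$. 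Let $\widetilde X$ be the homotopy pullback of $(a,c)$ along $p$, with projections $\pi\colon\widetilde X\to X$ and $j\colon\widetilde X\to B\O$; then $j$ exhibits $\widetilde X$ as a tangential structure, with pulled-back universal bundle $V\coloneqq j^*\gamma$.

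First I would identify $\Omega_*^{\Spin^c}(X,a,c)$ with the tangential bordism group of $(\widetilde X, j)$. Unwinding the homotopy pullback, a lift of the stable tangent classifying map $TM\colon M\to B\O$ through $j$ is precisely a map $h\colon M\to X$ together with trivializations of $w_1(M)-h^*a$ and of $\beta w_2(M)-h^*c$ --- exactly the structure in \cref{geometric_spinc} --- and this correspondence is compatible with bordisms. By the Pontryagin--Thom theorem, the bordism group of manifolds carrying such a $(\widetilde X, j)$-structure on the stable tangent bundle is $\pi_*$ of the Madsen--Tillmann Thom spectrum $(\widetilde X)^{-V}$.

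It then remains to produce an equivalence $M^{\MTSpin^c}f_{a,c}\simeq(\widetilde X)^{-V}$, a base-changed form of Beardsley's theorem. When $X=B\O/B\Spin^c$ and $(a,c)=\id$ one has $\widetilde X=B\O$, $j=\id$, and both sides are $\MTO$; for general $X$ one pulls this back along $(a,c)$. Concretely, since $T$ descends $1_{\MTSpin^c}\circ J$ along $p$, the twist $f_{a,c}\circ\pi$ over $\widetilde X$ equals $1_{\MTSpin^c}\circ J\circ j$, so \cref{Thom_change_of_rings} gives $M^{\MTSpin^c}(f_{a,c}\circ\pi)\simeq(\widetilde X)^{V}\wedge\MTSpin^c$; one then descends from this $\widetilde X$-shaped colimit to the $X$-shaped colimit computing $M^{\MTSpin^c}f_{a,c}$, the ``extra'' $\Sigma^\infty_+ B\Spin^c$ contributed by the fibre of $\pi$ getting absorbed because $\MTSpin^c=M\Spin^c=(B\Spin^c)^{-\gamma}$ --- this is the mechanism behind \cref{universal_twist}. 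Rather than redo this bookkeeping I would cite Hebestreit--Joachim's \cite[Corollary 3.3.8]{HJ20}, which establishes exactly this equivalence once $f_{a,c}$ is matched with their twist via \eqref{ospinc_equiv}; naturality in $(X,a,c)$ then follows from that of the homotopy pullback, of Pontryagin--Thom, and of \cite[Corollary 3.3.8]{HJ20}.

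The hard part is this last step: one must track the Madsen--Tillmann (tangent-versus-normal) conventions and the sign in $-V$ so as to land on $(\widetilde X)^{-V}$ rather than $(\widetilde X)^{-V}\wedge\MTSpin^c$, and one must check that the trivialization-style definition of twisted \spinc bordism allowing $a\neq 0$ in \cref{geometric_spinc} agrees with the definition in \cite{HJ20} (the case $a=0$ being the classical one studied by Douglas). These are the reasons invoking \cite{HJ20} is the expedient route.
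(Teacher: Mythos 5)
Your proposal is correct and takes essentially the same route as the paper: the paper gives no independent argument, but simply cites Hebestreit--Joachim~\cite[Corollary 3.3.8]{HJ20} and disposes of exactly the two issues you flag at the end in \cref{HJ_rmk} --- the comparison of frameworks is handled by Ando--Blumberg--Gepner's comparison theorem between May--Sigurdsson's parametrized homotopy theory and the ABGHR setup, and one must additionally note that \cite{HJ20} treat the real (spin/$\KO$) case, the complex case being an essentially identical adaptation. Your Pontryagin--Thom/homotopy-pullback sketch is a faithful account of the geometric content that the citation is carrying, so there is nothing to correct.
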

\begin{rem}
\label{HJ_rmk}
Hebestreit-Joachim~\cite{HJ20} use a different framework for twists based on May-Sigurdsson's parametrized homotopy
theory~\cite{MS06}; Ando-Blumberg-Gepner~\cite[Appendix B]{ABG11} prove a comparison theorem that allows us to pass between May-Sigurdsson's framework and Ando-Blumberg-Gepner-Hopkins-Rezk's. Additionally, Hebestreit-Joachim work with twisted spin bordism and $\KO$-theory, but for the complex case the arguments are essentially the same.
\end{rem}
\begin{rem}
\label{HJ_lift}
Though Hebestreit-Joachim~\cite[Corollary 3.3.8]{HJ20} state their results as isomorphisms of bordism groups, their proof actually proves an equivalence of spectra. Focusing on the \spinc case, given $X$, $a$, and $c$ as above, let $\xi_{a,c}\colon B(a,c)\to B\O$ be the fiber of the map
\begin{equation}
    (w_1 - a, \beta(w_2 - c))\colon B\O\times X\to K(\Z/2, 1)\times K(\Z, 3),
\end{equation}
so that $\Omega_*^{\xi_{a,c}}\cong\Omega_*^{\Spin^c}(X,a,c)$. In the twisted \spinc setting, Hebestreit-Joachim's techniques in fact prove that there is a canonical $\MTSpin^c$-module equivalence
\begin{equation}
    M^{\MTSpin^c}f_{a,c} \overset\simeq\longrightarrow \mathit{MT\xi}_{a,c},
\end{equation}
where the $\MTSpin^c$-module structure on $\mathit{MT\xi}_{a,c}$ arises from the canonical $\xi_{a,c}$-structure on the sum of a \spinc vector bundle and a $\xi_{a,c}$-structured vector bundle.

Similar considerations are true for the twisted spin and string structures we study later in this section.
\end{rem}
%The $(X, a,
%c)$-twisted \spinc bordism groups, i.e.\ the homotopy groups of $M^{\MTSpin^c}f_{a,b}$,
%the $\MTSpin^c$-module Thom spectrum associated to
%the map
%\begin{equation}%
%	X\overset{(a, c)}{\longrightarrow} K(\Z/2, 1)\times K(\Z, 3)\simeq B\O/B\Spin^c\longrightarrow
%	B\GL_1(\MTSpin^c),
%\end{equation}
%can be interpreted as We denote these twisted \spinc bordism groups by $\Omega_*^{\Spin^c}(X, a, c)$.
\begin{lem}[{Hebestreit-Sagave~\cite{HS20}}]
\label{KU_comparison}
With $X$, $a$, and $c$ as above, the homotopy groups of $M^\KU f_{a,c}$ are naturally isomorphic to the twisted $K$-theory groups of~\cite{DK70, Ros89, AS04, ABG10}.
\end{lem}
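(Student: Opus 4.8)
The plan is to reduce the statement to a comparison of classifying maps. By functoriality of the Thom spectrum construction (as in \cref{Thom_change_of_rings}), $M^{\KU}f_{a,c}$ is the pullback along $(a,c)\colon X\to K(\Z/2,1)\times K(\Z,3)$ of the Thom spectrum of the universal twist $\tau\colon K(\Z/2,1)\times K(\Z,3)\to B\GL_1(\KU)$ produced in \cref{splitting_spinc}. On the other side, by the main results of~\cite{ABG10} the twisted $K$-theory of~\cite{AS04} (for the $H^3$-part), refined by an $H^1(\bl;\Z/2)$-grading, is the twisted $\KU$-homology associated to a classifying map $\tau_{\mathrm{std}}\colon K(\Z/2,1)\times K(\Z,3)\to B\GL_1(\KU)$, with which the definitions of~\cite{DK70, Ros89} agree via the comparison results recorded in those references; the fact that the homotopy groups of the Thom spectra (and not merely the spectra) compute these twisted $K$-groups is the complex analogue of Hebestreit--Joachim's $\KO$-theoretic theorem~\cite{HJ20} (cf.\ \cref{twisted_spinc_bordism_comparison} and \cref{HJ_rmk}). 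Thus the lemma reduces to showing $\tau\simeq\tau_{\mathrm{std}}$.

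Since $B\GL_1(\KU)$ is an infinite loop space, the canonical splitting of the suspension of a product gives a decomposition $[K(\Z/2,1)\times K(\Z,3),B\GL_1(\KU)]\cong[K(\Z/2,1),\bl]\oplus[K(\Z,3),\bl]\oplus[K(\Z/2,1)\wedge K(\Z,3),\bl]$. Both $\tau$ and $\tau_{\mathrm{std}}$ are products of their factor-restrictions followed by the multiplication of $B\GL_1(\KU)$ (for $\tau$, because the splitting in \cref{splitting_spinc} exhibits $B\O/B\Spin^c$ via the section $s$ and the inclusion of $B\SO/B\Spin^c$, and $B\O/B\Spin^c\to B\GL_1(\KU)$ is a map of abelian $\infty$-groups), and the multiplication factors through a further delooping, so each of $\tau,\tau_{\mathrm{std}}$ has trivial cross term and it suffices to compare the restrictions to the two factors. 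On $K(\Z/2,1)$, $\tau$ is, by \cref{splitting_spinc} and \cref{VB_Thom}, the $J$-homomorphism of the tautological real line bundle followed by the unit $B\GL_1(\Sph)\to B\GL_1(\KU)$; this is precisely the ``complex-conjugation'' $\Z/2$-grading twist of $K$-theory, which agrees with $\tau_{\mathrm{std}}|_{K(\Z/2,1)}$ essentially by definition. On $K(\Z,3)=B\SO/B\Spin^c$, $\tau$ is the composite $B\SO/B\Spin^c\hookrightarrow B\O/B\Spin^c\xrightarrow{\mathit{Td}}B\GL_1(\KU)$, and tracing through Beardsley's identification (\cref{universal_twist}) together with the fact that $\MTSpin^c\to\KU$ is an isomorphism on $\pi_2$ shows this is an isomorphism on $\pi_3$, as is $\tau_{\mathrm{std}}|_{K(\Z,3)}$ by its normalization.

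The main obstacle is upgrading ``isomorphism on $\pi_3$'' to ``homotopic'' on the $K(\Z,3)$-factor: a priori $\tau|_{K(\Z,3)}$ and $\tau_{\mathrm{std}}|_{K(\Z,3)}$ could differ by a class of higher Atiyah--Hirzebruch filtration, necessarily torsion, fed by the torsion in $H^{>3}(K(\Z,3);\Z)$ and the $k$-invariants of $B\GL_1(\KU)$ against the $2$-periodic homotopy of $\KU$. I would resolve this either (a) by a direct computation pinning down those $k$-invariants and showing the ambiguity group is controlled, or (b) --- the cleaner route, and the one I expect to use in the write-up --- by importing the identification wholesale from Hebestreit--Joachim's comparison theorem~\cite{HJ20} in the complex case, which already matches the ABGHR-style $H^3$-twist of $\KU$ with the $PU(\mathcal{H})$-bundle twist of~\cite{AS04}; what then remains is only to observe that Hebestreit--Joachim's twist is our $\tau|_{K(\Z,3)}$, which holds because both are constructed from the same map $B\SO/B\Spin^c\to B\GL_1(\KU)$ refining the $\pi_3$-isomorphism.
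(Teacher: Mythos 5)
Your proposal is genuinely different in structure from the paper's argument, and its acknowledged gap is precisely what the paper's alternative route is designed to dissolve. The paper gives two routes: the primary one is a direct comparison of constructions --- Ando--Blumberg--Gepner in \cite{ABG10} build their $K(\Z,3)$-twist by essentially the same mechanism (an orientation $\MTSpin^c\to\KU$ and a cofiber argument), so one simply traces the two constructions in parallel; the footnoted alternative is to invoke Antieau--Gepner--G\'omez \cite[Theorem 1.1]{AGG14}, which gives $[K(\Z,3),B\GL_1(\KU)]\cong\Z$, reducing the whole comparison to one example. Your factor-by-factor decomposition and $\pi_3$-check are fine as far as they go, but you then correctly observe that ``isomorphism on $\pi_3$'' does not a priori pin down the homotopy class on the $K(\Z,3)$-factor, and this is exactly where the AGG theorem would close the gap cleanly: once $[K(\Z,3),B\GL_1(\KU)]\cong\Z$, the Hurewicz map to $\Hom(\pi_3 K(\Z,3),\pi_3 B\GL_1(\KU))\cong\Z$ is injective, so agreement on $\pi_3$ forces homotopy. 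Your proposed resolution (b) via Hebestreit--Joachim is less convincing as stated, because you justify the identification of their twist with yours by saying ``both are constructed from the same map $\ldots$ refining the $\pi_3$-isomorphism,'' which either presupposes the very coincidence you are trying to establish or re-opens the same ambiguity; you would need to verify that the two constructions of the $K(\Z,3)$-twist literally agree as maps, not merely on $\pi_3$. I would recommend either switching to the AGG route (which you essentially rediscover the need for) or carrying out the direct construction-level comparison the paper alludes to, modeled on the argument given for $\tmf$ in \cref{tmf_comparison}.
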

%This is because the methods in~\cite{ABG10} are nearly the same as ours, allowing for a direct comparison.\footnote{Alternatively, one could use a uniqueness result of Antieau-Gepner-Gómez~\cite[Theorem 1.1]{AGG14} that $[K(\Z, 3), B\GL_1(\KU)]\cong\Z$ to reduce to checking whether these notions of twisted $K$-theory agree on a single example.}
\begin{exm}
\label{universal_spinc_twists}
\Cref{universal_twist} computes a few example of $\MTSpin^c$-module Thom spectra for us.%\footnote{These
%equivalences also follow from a result of Beardsley~\cite{Bea17}, together with the associated bundle perspective
%on Thom spectra~\cite[Definition 3.13]{ABGHR14b}.}
\begin{enumerate}
	\item Letting $X = Y = B\O/B\Spin^c$ and $f_1 = \id$, \cref{universal_twist} implies that the Thom spectrum of
	the universal twist $B\O/B\Spin^c\to B\GL_1(\MTSpin^c)$ is $\MTO$. From a bordism point of view, this is the
	fact that since $a$ and $c$ pull back from $K(\Z/2, 1)\times K(\Z, 3)$, they can be arbitrary classes, so the
	required trivializations of $w_1(M) - f^*(a)$ and $\beta(w_2(M)) - f^*(c)$ are uniquely specified by $a =
	w_1(M)$ and $c = \beta(w_2(M))$, so this notion of twisted \spinc structure is no structure at all.
	\item Let $Y$ be as in the previous example and let $f_1\colon X\to Y$ be the map $K(\Z, 3)\simeq
	B\SO/B\Spin^c\to B\O/B\Spin^c$. \Cref{universal_twist} says the Thom spectrum of
	\begin{equation}
		K(\Z, 3)\longrightarrow B\O/B\Spin^c\longrightarrow B\GL_1(\MTSpin^c)
	\end{equation}
	is equivalent to $\MTSO$. We stress that this  twist by $K(\Z,3)$ does not come from a vector bundle because
	all vector bundle twists of $\MTSpin^c$ are torsion and of the form $\beta(w_2(M))$, but the universal twist over $K(\Z,3)$ is not. 
\end{enumerate}
\end{exm}
\begin{lem}
\label{ospinc_not_gp}
    The equivalence of spaces $B\O/B\Spin^c\simeq K(\Z/2, 1)\times K(\Z, 3)$ from~\eqref{ospinc_equiv} is not an equivalence of $\infty$-groups.
\end{lem}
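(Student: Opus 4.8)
The plan is to recognize $B\O/B\Spin^c$, as a connective spectrum (abelian $\infty$-group), as a genuinely nonsplit extension. The proof of \cref{splitting_spinc} produces a cofiber sequence of abelian $\infty$-groups $B\SO/B\Spin^c \to B\O/B\Spin^c \to B\O/B\SO$, i.e.\ $\Sigma^3 H\Z \to B\O/B\Spin^c \to \Sigma H\Z/2$. Since each end has a single nonzero homotopy group, $B\O/B\Spin^c$ has exactly $\pi_1 = \Z/2$ and $\pi_3 = \Z$, so its homotopy type as an abelian $\infty$-group is determined by one $k$-invariant, a desuspension of which is the connecting map $\delta\colon H\Z/2 \to \Sigma^3 H\Z$ of the above sequence. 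Thus the sequence splits — equivalently $B\O/B\Spin^c \simeq K(\Z/2,1)\times K(\Z,3)$ as abelian $\infty$-groups — if and only if $\delta = 0$ in $[H\Z/2, \Sigma^3 H\Z] = H^3(H\Z/2;\Z)$, and the whole problem reduces to showing $\delta \neq 0$.

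The next step is to identify $\delta$. Applying the octahedral axiom to the composite of abelian $\infty$-group maps $B\Spin^c \to B\SO \to B\O$, and using that the equivalence $B\SO/B\Spin^c \simeq K(\Z,3)$ is realized by the map $\beta w_2\colon B\SO\to K(\Z,3)$ with fiber $B\Spin^c$ (as in the proof of \cref{splitting_spinc}), identifies $\delta$ with the composite $H\Z/2 \xrightarrow{\partial} B\SO \xrightarrow{\beta w_2}\Sigma^3 H\Z$, where $\partial$ is the connecting map of the cofiber sequence $B\SO\to B\O\to\Sigma H\Z/2$. (Equivalently — a useful sanity check — splitting would mean $\beta w_2$ extends along $B\SO\hookrightarrow B\O$ to a map of abelian $\infty$-groups.) Since $H^3(H\Z/2;\Z)\cong\Z/2$, multiplication by $2$ is zero on it, so its mod-$2$ reduction into $H^3(H\Z/2;\Z/2) = \cA^3$ is injective; hence it suffices to prove that $\rho(\delta) = \partial^*(\Sq^1 w_2)$ is nonzero in $H^3(H\Z/2;\Z/2)$, where $\partial^*\colon H^*(B\SO;\Z/2)\to H^*(H\Z/2;\Z/2)$ is induced by $\partial$ and $\rho\circ\beta w_2 = \Sq^1 w_2$ is the standard relation for the integral Bockstein.

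The final step is the computation, carried out in the mod-$2$ cohomology of the \emph{spectra} $B\O$, $B\SO$, $H\Z/2$. The cofiber sequence $H\Z/2 \xrightarrow{\partial} B\SO \to B\O$ yields a long exact sequence $\cdots \to H^n(B\O;\Z/2) \to H^n(B\SO;\Z/2)\xrightarrow{\partial^*}H^n(H\Z/2;\Z/2)\to H^{n+1}(B\O;\Z/2)\to\cdots$; the crucial fact is that $H^3(B\O;\Z/2) = 0$ for the spectrum $B\O$. To see this I would use that $B\O$ is the $0$-connected cover $\tau_{\ge1}\ko$ of connective real $K$-theory (real Bott periodicity, $\Omega^\infty\ko \simeq \Z\times BO$), giving a cofiber sequence $B\O\to\ko\to H\Z$, and combine it with the classical descriptions $H^*(\ko;\Z/2)\cong \cA/(\cA\Sq^1+\cA\Sq^2)$ and $H^*(H\Z;\Z/2)\cong\cA/\cA\Sq^1$; chasing the resulting long exact sequence gives $H^1(B\O;\Z/2) = \langle w_1\rangle$, $H^2(B\O;\Z/2) = \langle\Sq^1 w_1\rangle$, and $H^3(B\O;\Z/2) = 0$. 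Meanwhile $B\SO$ as a spectrum is $1$-connected with $\pi_2 = \Z/2$, so by Hurewicz $H^2(B\SO;\Z/2) = \Z/2$, generated by $w_2$. From $H^3(B\O;\Z/2) = 0$ the map $\partial^*$ is onto in degree $2$, hence (both sides $\Z/2$) an isomorphism, so $\partial^*(w_2) = \Sq^2$, and $\partial^*$ is injective in degree $3$; by $\cA$-linearity $\partial^*(\Sq^1 w_2) = \Sq^1\Sq^2 = \Sq^3 \neq 0$ in $H^3(H\Z/2;\Z/2) = \cA^3$. Hence $\delta \neq 0$, and $B\O/B\Spin^c$ is not equivalent to $K(\Z/2,1)\times K(\Z,3)$ as abelian $\infty$-groups. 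The steps I expect to be most delicate are extracting this precise description of $\delta$ from the octahedral axiom (tracking the equivalence $B\SO/B\Spin^c\simeq K(\Z,3)$ and the desuspensions of the connecting maps) and the low-degree cohomology computation for $B\O$ as a spectrum, which requires abandoning characteristic-class intuition in favor of the identification $B\O\simeq\tau_{\ge1}\ko$.
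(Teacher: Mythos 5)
Your computation is correct, and it takes a genuinely different route from the paper. The paper assumes the splitting is an equivalence of $\infty$-groups, pulls the universal twist back along the resulting $E_1$-map $K(\Z/2,1)\to B\GL_1(\MTSpin^c)$, identifies the Thom spectrum with $\MTPin^c$, and derives a contradiction from $\pi_0(\MTPin^c)\cong\Z/2$ and $\pi_2(\MTPin^c)\cong\Z/4$. You instead compute the stable $k$-invariant of $B\O/B\Spin^c$ directly: the octahedral identification of the connecting map as $(\beta w_2)\circ\partial$, the vanishing of $H^3$ of the spectrum $B\O\simeq\tau_{\ge 1}\ko$ via the cofiber sequence $\tau_{\ge1}\ko\to\ko\to H\Z$, and the chain $\partial^*(w_2)=\Sq^2\Rightarrow\partial^*(\Sq^1w_2)=\Sq^1\Sq^2=\Sq^3\neq0$ are all right, and the passage from $\rho(\delta)\neq0$ to $\delta\neq0$ is fine because $[H\Z/2,\Sigma^3H\Z]$ is all $2$-torsion. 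Your argument buys more than the paper's: it identifies the $k$-invariant as $\beta\Sq^2$, i.e.\ it exhibits $B\O/B\Spin^c$ as (the infinite loop space of) $\Sigma^3\SH$, a degree-shifted analogue of the supercohomology phenomenon in \S\ref{string_twists}.

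There is, however, a gap between what you prove and what the lemma asserts. In the paper's conventions an $\infty$-group is a grouplike $E_1$-space, so the lemma claims the splitting is not an equivalence of $E_1$-groups; you prove it is not an equivalence of \emph{abelian} $\infty$-groups (connective spectra), which is strictly weaker — an equivalence of $E_1$-groups between two infinite loop spaces need not respect the chosen $E_\infty$-refinements, so nonvanishing of the stable $k$-invariant does not by itself rule one out. (The paper's proof really does operate at the $E_1$ level: it only needs the pulled-back twist to be an $E_1$-map to make $\MTPin^c$ an $E_1$-ring.) The fix is short: an $E_1$-equivalence would deloop once to an equivalence of spaces $B(B\O/B\Spin^c)\simeq K(\Z/2,2)\times K(\Z,4)$, so it suffices to show the destabilized $k$-invariant $\beta\Sq^2(\iota_2)=\beta(\iota_2^2)\in H^5(K(\Z/2,2);\Z)$ is nonzero; this holds because $H^4(K(\Z/2,2);\Z)=0$, so the integral Bockstein is injective on $H^4(K(\Z/2,2);\Z/2)=\langle\iota_2^2\rangle$. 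You should add this step, or else rephrase your conclusion as the (still useful, but weaker) statement about abelian $\infty$-groups.
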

Beardsley-Luecke-Morava~\cite[Corollary 4.9]{BLM23} prove a closely related result.
\begin{proof}
Suppose that this is an equivalence of $\infty$-groups. Then the inclusion $K(\Z/2, 1)\to K(\Z/2, 1)\times K(\Z, 3)\to B\O/B\Spin^c$ is a map of $\infty$-groups, so the composition
\begin{equation}
\label{pullback_to_BZ2}
\varphi\colon K(\Z/2, 1)\longrightarrow K(\Z/2, 1)\times K(\Z, 3)\longrightarrow B\O/B\Spin^c\longrightarrow B\GL_1(\MTSpin^c)
\end{equation}
is a map of $\infty$-groups. By work of Ando-Blumberg-Gepner~\cite[Theorem 1.7]{ABG11}, this implies the Thom spectrum $M\varphi$ is an $E_1$-ring spectrum. We will explicitly identify $M\varphi$ and show this is not the case.

We saw above that the map $K(\Z/2, 1)\to B\O/B\Spin^c$ factors through the map $K(\Z/2, 1)\to B\O$ defined by the tautological line bundle $\sigma\to B\O_1\simeq K(\Z/ 2, 1)$, meaning that the twist~\eqref{pullback_to_BZ2} is the vector bundle twist of $\MTSpin^c$ for the tautological line bundle $\sigma\to B\O_1$. Applying \cref{Thom_change_of_rings} with $R_1 = \Sph$ and $R_2 = \MTSpin^c$, we conclude $M\varphi\simeq \MTSpin^c\wedge (B\O_1)^{\sigma-1}$. Bahri-Gilkey~\cite{BG87a, BG87b} identify this spectrum with $\MTPin^c$, which is known to not be an $E_1$-ring spectrum: for example, a $E_1$-ring structure induces a graded ring structure on homotopy groups, making $\pi_k(\MTPin^c)$ into a $\pi_0(\MTPin^c)$-module for all $k$, but $\pi_0\MTPin^c\cong\Z/2$ and $\pi_2(\MTPin^c)\cong\Z/4$~\cite[Theorem 2]{BG87b}.
\end{proof}
%\TODO: Cite who considered these kinds of twisted \spinc
%ordism and $K$-theory

\subsubsection{Twists of $\MTSpin$, $\ko$, and $\KO$}
\label{spin_twists}
The real analogue of \S\ref{spinc_twists} is very similar; we summarize the story here, highlighting the
differences. Once again this perspective is due to Hebestreit-Sagave~\cite{HS20}. Again there are $E_\infty$ ring spectrum maps $\MTSpin\overset{\widehat A}{\to}
\ko\to\KO$~\cite{ABS64, Joa04, AHR10}, allowing us to use \cref{cofiber_twists} to produce a sequence of maps
\begin{equation}
	B\O/B\Spin\longrightarrow B\GL_1(\MTSpin)\overset{\widehat A}{\longrightarrow} B\GL_1(\ko)\longrightarrow
	B\GL_1(\KO).
\end{equation}
Hebestreit-Sagave~\cite{HS20} and Freed-Hopkins~\cite[\S 10]{FH21a} use the $\infty$-group $B\O/B\Spin$ to study twists of spin bordism; Freed-Hopkins call it $\mathbf P$.
\begin{prop}
\label{splitting_spin}
The map $K(\Z/2, 1)\to B\O$ defined by the tautological line bundle induces a homotopy equivalence of spaces
\begin{equation}
\label{OSpin}
	B\O/B\Spin\overset\simeq\longrightarrow K(\Z/2, 1)\times K(\Z/2, 2),
\end{equation}
implying $\MTSpin$, $\ko$, and $\KO$ can be twisted over a space $X$ by classes $a\in H^1(X;\Z/2)$ and $b\in
H^2(X;\Z/2)$.
\end{prop}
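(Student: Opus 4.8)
The plan is to run the proof of \cref{splitting_spinc} almost verbatim, with $\Spin$ in place of $\Spin^c$ and the second Stiefel-Whitney class $w_2$ in place of its Bockstein $\beta w_2$. First I would apply the third isomorphism theorem for cofiber sequences of abelian $\infty$-groups — legitimate here because the relevant $\pi_0$-surjectivity hypothesis holds, exactly as discussed in the proof of \cref{splitting_spinc} — to the maps of abelian $\infty$-groups $B\Spin\to B\SO\to B\O$, obtaining a cofiber sequence
\begin{equation}
	\shortexact*{B\SO/B\Spin}{B\O/B\Spin}{B\O/B\SO}.
\end{equation}

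Next I would identify the two outer terms as Eilenberg-MacLane spaces. As in \cref{splitting_spinc}, $B\SO$ is the fiber of $w_1\colon B\O\to K(\Z/2,1)$, which is a map of abelian $\infty$-groups by the Whitney sum formula, so $B\O/B\SO\simeq K(\Z/2,1)$. At the other end, $B\Spin$ is the fiber of $w_2\colon B\SO\to K(\Z/2,2)$; since $w_2$ satisfies $w_2(V\oplus W) = w_2(V) + w_2(W)$ on \emph{oriented} bundles — the would-be cross term $w_1(V)w_1(W)$ vanishes — this too is a map of abelian $\infty$-groups, so $B\SO/B\Spin\simeq K(\Z/2,2)$ as abelian $\infty$-groups. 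Finally, the quotient $B\O\to B\O/B\SO\simeq K(\Z/2,1)$ admits a section given by the tautological real line bundle $K(\Z/2,1)\simeq B\O_1\to B\O$; composing this section with the quotient $B\O\to B\O/B\Spin$ produces a section of the cofiber sequence above, splitting it and yielding the equivalence $B\O/B\Spin\simeq K(\Z/2,1)\times K(\Z/2,2)$. The statement about twists of $\MTSpin$, $\ko$, and $\KO$ then follows from \cref{cofiber_twists} together with the Atiyah-Bott-Shapiro orientations recorded just above.

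I do not anticipate a serious obstacle: the proposition is explicitly the real analogue of \cref{splitting_spinc} and every step transfers. The one point deserving a moment's care is the assertion that $w_2$ defines a map of abelian $\infty$-groups $B\SO\to K(\Z/2,2)$, rather than merely a natural additive assignment on isomorphism classes of oriented bundles; but this is the same kind of claim (for $\beta w_2$) already made and used in \cref{splitting_spinc}, so the identical justification applies. As with \cref{ospinc_not_gp}, one should not expect this splitting to respect the $\infty$-group structures, but since the proposition asserts only an equivalence of spaces this causes no trouble.
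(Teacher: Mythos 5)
Your proposal is correct and follows exactly the route the paper indicates: the paper's proof of \cref{splitting_spin} is precisely the argument of \cref{splitting_spinc} with $w_2$ in place of $\beta w_2$, splitting the cofiber sequence of $B\SO/B\Spin\simeq K(\Z/2,2)$ and $B\O/B\SO\simeq K(\Z/2,1)$ via the tautological-line-bundle section. Your side remark that $w_2$ is additive on oriented bundles is the right justification for it being a map of abelian $\infty$-groups, matching the paper's parenthetical.
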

The proof is nearly the same as the proof of \cref{splitting_spinc}: fit $B\O/B\Spin$ into a split cofiber sequence with $B\SO/B\Spin\simeq K(\Z/2, 2)$ (because $B\Spin\to B\O$ is the fiber of $w_2\colon B\SO\to
K(\Z/2, 2)$) and $B\O/B\SO\simeq K(\Z/2, 1)$. % Thus there is an equivalence of spaces
See also Beardsley-Luecke-Morava~\cite[Propositions 4.1 and 5.19]{BLM23}, who prove a closely related splitting result, and Carmeli-Luecke~\cite[Theorem C]{CL24} for an analogous splitting result in $B\GL_1(K(\Z))$.
\begin{defn}
    We call the twist $f_{a,b}\colon X\to B\GL_1(\MTSpin)$ associated to $a$ and $b$ the \term{fake
vector bundle twist} for $a$ and $b$, and likewise for the induced twists of $\ko$ and $\KO$.
\end{defn}
\begin{rem}
\label{pinm_pinp}
The space of homotopy self-equivalences of $K(\Z/2, 1)\times K(\Z/2, 2)$ is not connected: for example, if $a$
denotes the tautological class in $H^1(K(\Z/2, 1);\Z/2)$ and $b$ is the tautological class in $H^2(K(\Z/2,
2);\Z/2)$, the homotopy class of maps $\Phi\colon K(\Z/2, 1)\times K(\Z/2, 2) \to K(\Z/2, 1)\times K(\Z/2, 2)$
defined by the classes $(a, a^2+b)$ is not the identity and is invertible. The choice of identification we made
in~\eqref{OSpin} matters: if one uses a different identification, one obtains a different notion of fake vector
bundle twist and a different formula in \cref{the_twisted_modules} to make \cref{thom_module_calc} true.
\end{rem}
\begin{lem}
\label{ospin_not_gp}
\eqref{OSpin} is not an equivalence of $\infty$-groups.
\end{lem}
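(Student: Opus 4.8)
The plan is to transcribe the proof of \cref{ospinc_not_gp}, replacing $\Spin^c$ by $\Spin$ and the \spinc pin bordism spectrum by its real analogue. Suppose for contradiction that \eqref{OSpin} is an equivalence of $\infty$-groups. Then the inclusion of the first factor $K(\Z/2,1)\to K(\Z/2,1)\times K(\Z/2,2)\simeq B\O/B\Spin$ is a map of $\infty$-groups, so composing it with the fake vector bundle twist $B\O/B\Spin\to B\GL_1(\MTSpin)$ produces a map of $\infty$-groups
\[
\varphi\colon K(\Z/2,1)\longrightarrow K(\Z/2,1)\times K(\Z/2,2)\overset{\simeq}{\longrightarrow}B\O/B\Spin\longrightarrow B\GL_1(\MTSpin).
\]
By Ando-Blumberg-Gepner \cite[Theorem 1.7]{ABG11} this would force the Thom spectrum $M\varphi$ to be an $E_1$-ring spectrum; the strategy is to identify $M\varphi$ explicitly and see that it is not one.

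As in the proof of \cref{splitting_spin}, the splitting \eqref{OSpin} is set up so that its first factor $K(\Z/2,1)$ is the image of the tautological line bundle map $K(\Z/2,1)\simeq B\O_1\overset{\sigma}{\to}B\O\to B\O/B\Spin$. Hence $\varphi$ is exactly the vector bundle twist of $\MTSpin$ by $\sigma$, and \cref{Thom_change_of_rings} with $R_1=\Sph$ and $R_2=\MTSpin$ identifies $M\varphi\simeq\MTSpin\wedge(B\O_1)^{\sigma-1}$. By the real analogue of the Bahri-Gilkey identification invoked in \cref{ospinc_not_gp} — a $\Spin$ structure on $TM\oplus L$ for a real line bundle $L$ forces $L\cong\det(TM)$ and is the same datum as a $\Pin^-$ structure on $TM$ — this Thom spectrum is $\MTPin^-$, which under the standard convention $\mathit{MPin}^\pm\simeq\MTPin^\mp$ is $\mathit{MPin}^+$.

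Finally, $\mathit{MPin}^+$ is not an $E_1$-ring spectrum: $\pi_0(\mathit{MPin}^+)\cong\Omega_0^{\Pin^+}\cong\Z/2$, while $\pi_4(\mathit{MPin}^+)\cong\Omega_4^{\Pin^+}\cong\Z/16$, and an $E_1$-ring structure would make $\pi_*(\mathit{MPin}^+)$ a graded ring with unit in degree $0$, so $\pi_4$ would be a module over $\pi_0\cong\Z/2$, which $\Z/16$ is not. (Under the opposite sign convention, where $M\varphi\simeq\MTPin^+\simeq\mathit{MPin}^-$, one argues identically using $\pi_0\cong\Z/2$ and $\pi_2\cong\Omega_2^{\Pin^-}\cong\Z/8$.) This contradiction proves the lemma. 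There is no serious obstacle here, since every step mirrors the \spinc case; the only point requiring care is correctly identifying $\MTSpin\wedge(B\O_1)^{\sigma-1}$ as a real pin bordism spectrum and quoting a homotopy group of order divisible by $4$, which is what makes the ``module over $\pi_0$'' obstruction bite.
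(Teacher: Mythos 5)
Your overall strategy --- pull back along the section $K(\Z/2,1)\to B\O/B\Spin$, identify the resulting Thom spectrum as a pin bordism spectrum, and observe that its homotopy groups are incompatible with an $E_1$-ring structure --- is exactly the first proof the paper sketches for this lemma. (The paper's displayed proof is actually a different, more elementary argument: it compares the group structures on $[B\Z/2, B\O/B\Spin]$ and $[B\Z/2, K(\Z/2,1)\times K(\Z/2,2)]$, which are $\Z/4$ by the Whitney sum formula and $\Z/2\oplus\Z/2$ respectively, so no bordism computation is needed.) So your route is legitimate and endorsed by the paper.

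However, your final step quotes the wrong homotopy group, and as written the contradiction evaporates. You correctly identify $M\varphi\simeq\MTSpin\wedge(B\O_1)^{\sigma-1}\simeq\MTPin^-$ (a spin structure on $TM\oplus\det TM$ is a $\Pin^-$ structure on $TM$), and correctly record the convention $\MTPin^-\simeq\mathit{MPin}^+$. But the homotopy groups of this spectrum are the \emph{tangential} $\Pin^-$ bordism groups: $\pi_4(\MTPin^-)=\Omega_4^{\Pin^-}=0$, not $\Omega_4^{\Pin^+}\cong\Z/16$. The group $\Z/16$ in degree $4$ belongs to the \emph{other} pin spectrum $\MTPin^+\simeq\MTSpin\wedge(B\O_1)^{3\sigma-3}$, which is what one gets from the alternative splitting $(a,a^2+b)$ of \cref{pinm_pinp}, not from the twist $\varphi$ you constructed. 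The group you need is $\pi_2(\MTPin^-)=\Omega_2^{\Pin^-}\cong\Z/8$; this does appear in your parenthetical, but attached to the wrong spectrum, and that ``opposite convention'' branch is itself inconsistent since $\pi_2(\MTPin^+)=\Omega_2^{\Pin^+}\cong\Z/2$, which \emph{is} a $\Z/2$-module. With the single correction --- use $\pi_0(M\varphi)\cong\Z/2$ and $\pi_2(M\varphi)\cong\Z/8$ --- your proof is complete; as you yourself anticipated, the sign-convention bookkeeping was the one point requiring care, and it is where the slip occurred.
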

This is closely related to a theorem of Beardsley-Luecke-Morava~\cite[Proposition 4.4]{BLM23}.

One can prove \cref{ospin_not_gp} in the same way as \cref{ospinc_not_gp}, by pulling back along the section $K(\Z/2, 1)\to
B\O/B\Spin$ and observing that the Thom spectrum $\MTSpin\wedge (B\O_1)^{\sigma-1}$ is not a ring spectrum in much
the same way:\footnote{This is the first place where the choice of identification~\eqref{OSpin} has explicit
consequences, as promised in \cref{pinm_pinp}: if we compose with the identification of $K(\Z/2, 1)\times K(\Z/2,
2)$ given by the classes$(a, a^2+b)$ described in that remark, we would instead obtain $\MTSpin\wedge
(B\O_1)^{3\sigma-3}$. This is not a ring spectrum either, as it can be identified with $\MTPin^+$~\cite[\S
8]{Sto88}, and $\pi_0(\MTPin^+)\cong\Z/2$ and $\pi_4(\MTPin^+)\cong\Z/16$~\cite{Gia73a}.} using the equivalence
$\MTSpin\wedge (B\O_1)^{\sigma-1}\simeq\MTPin^-$~\cite[\S 7]{Pet68} and the groups $\pi_0(\MTPin^-)\cong\Z/2$ and
$\pi_2(\MTPin^-)\cong\Z/8$~\cite{ABP69, KT90} to show $\MTPin^-$ is not a ring spectrum. There is also another nice
proof, which we give below.
\begin{proof}
If $X$ is a space and $Y$ is an $\infty$-group, the set $[X, Y]$ has a natural group structure. Therefore it
suffices to find a space such that $[X, B\O/B\Spin]$ and $[X, K(\Z/2, 1)\times K(\Z/2, 2)]$ are non-isomorphic
groups.

To calculate the addition in $[\bl, B\O/B\Spin]$, we use the fact that if two maps $f,g\colon X\to \O/B\Spin$ factor through $B\O$, meaning they are represented by rank-zero virtual vector bundles $V_f, V_g\to X$, then $f+g$ is the image of $V_f\oplus V_g$ under $B\O\to B\O/B\Spin$. This implies that for classes in the image of that quotient map, if we use~\eqref{OSpin} to identify two classes $\phi_1,\phi_2\in[X,
B\O/B\Spin]$ with pairs $\phi_i = (a_i\in H^1(X;\Z/2), b_i\in H^2(X;\Z/2))$, then addition follows the Whitney sum
formula:
\begin{equation}
\label{whitney_sum_OSpin}
	(a_1, b_1) \oplus (a_2, b_2) = (a_1 + a_2, b_1 + b_2 + a_1a_2).
\end{equation}
This is different from the componentwise addition on $K(\Z/2, 1)\times K(\Z/2, 2)$: for example, $[B\Z/2, K(\Z/2,
1)\times K(\Z/2, 2)]\cong\Z/2\oplus\Z/2$, but the map $[B\Z/2, B\O]\to [B\Z/2, B\O/B\Spin]$ is surjective, so using~\eqref{whitney_sum_OSpin}, one can show that $[B\Z/2, B\O/B\Spin]\cong\Z/4$.
\end{proof}
\begin{defn}
\label{geometric_spin}
Given $X$, $a$, and $b$ as above, let $\Omega_*^{\Spin}(X, a, b)$ denote the groups of bordism classes of manifolds $M$ with a map $f\colon M\to X$ and trivializations of
$w_1(M) - f^*(a)$ and $w_2(M) - f^*(b)$. 
\end{defn}
B.L.\ Wang~\cite[Definition 8.2]{Wan08} first studied these twists of spin bordism in the case $a = 0$.
\begin{lem}[{Hebestreit-Joachim~\cite[Corollary 3.3.8]{HJ20}}]
 \label{twisted_spin_bordism_comparison}
 There is a natural isomorphism $\pi_*(M^{\MTSpin}f_{a,b})\overset\cong\to \Omega_*^{\Spin}(X, a, b)$.
\end{lem}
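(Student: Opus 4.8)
The plan is to reduce to Hebestreit-Joachim's computation~\cite[Corollary 3.3.8]{HJ20}, exactly as we did for the \spinc case in \cref{twisted_spinc_bordism_comparison}; in fact Hebestreit-Joachim treat twisted spin bordism directly (it was the complex case of \cref{twisted_spinc_bordism_comparison} that required a minor adaptation, per \cref{HJ_rmk}), so here the argument is if anything more direct. First I would recall that Hebestreit-Joachim work in May-Sigurdsson's framework of parametrized homotopy theory~\cite{MS06}, where a twist of $\MTSpin$ over $X$ is encoded by a map $X\to B\O/B\Spin$ and the associated twisted bordism groups are the homotopy groups of a parametrized Thom spectrum. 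Their Corollary 3.3.8 identifies those homotopy groups with the bordism groups of manifolds $M$ equipped with a map $f\colon M\to X$ and trivializations of $w_1(M) - f^*a$ and $w_2(M) - f^*b$, where $(a, b)\in H^1(X;\Z/2)\times H^2(X;\Z/2)$ is the pair corresponding to $X\to B\O/B\Spin$ under the identification of \cref{splitting_spin}; this is precisely $\Omega_*^{\Spin}(X, a, b)$ of \cref{geometric_spin}.

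Second, I would invoke the comparison theorem of Ando-Blumberg-Gepner~\cite[Appendix B]{ABG11}, as recalled in \cref{HJ_rmk}, to translate between the two models of twisted Thom spectra: it identifies the parametrized Thom spectrum of a twist $X\to B\O/B\Spin\to B\GL_1(\MTSpin)$ with the Ando-Blumberg-Gepner-Hopkins-Rezk Thom spectrum $M^{\MTSpin}$ of the same twist. Since the map $B\O/B\Spin\to B\GL_1(\MTSpin)$ produced by \cref{cofiber_twists} is the tautological one (the relevant orientation being $\id\colon\MTSpin\to\MTSpin$), the twist $f_{a,b}$ is exactly the composite $X\to B\O/B\Spin\to B\GL_1(\MTSpin)$, and combining the two preceding observations yields the natural isomorphism $\pi_*(M^{\MTSpin}f_{a,b})\overset\cong\to\Omega_*^{\Spin}(X, a, b)$.

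The point requiring the most care — and the main potential obstacle — is checking that the twist $f_{a,b}$ built via \cref{cofiber_twists,splitting_spin} really is the same homotopy class of map $X\to B\GL_1(\MTSpin)$ that Hebestreit-Joachim feed into their machine; this reduces to confirming that our splitting $B\O/B\Spin\simeq K(\Z/2,1)\times K(\Z/2,2)$ in \cref{splitting_spin} agrees with theirs. Since both are assembled from the section given by the tautological line bundle $K(\Z/2,1)\simeq B\O_1\to B\O$ and the fibration sequences defining $B\SO$ and $B\Spin$, they coincide; one must also recall (as in the footnote on tangential versus normal structures) that $M\Spin\simeq\MTSpin$ canonically, so no discrepancy arises there. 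Naturality in $X$ is then automatic, since pullback of twists, the Thom spectrum functors, and the Ando-Blumberg-Gepner comparison are all natural.
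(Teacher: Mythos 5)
Your proposal is correct and follows essentially the same route the paper takes: the paper does not give a standalone proof but cites Hebestreit-Joachim and explains the two ingredients you spell out (the May-Sigurdsson framework they use, and Ando-Blumberg-Gepner's comparison theorem) in \cref{HJ_rmk}, attached to the \spinc analogue \cref{twisted_spinc_bordism_comparison}. You also correctly flag the one point requiring care — that the identification in \cref{splitting_spin} must match the one implicit in Hebestreit-Joachim, which is a real issue given the nontrivial self-equivalences of $K(\Z/2,1)\times K(\Z/2,2)$ noted in \cref{pinm_pinp} — though the paper leaves this implicit.
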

%Twisted spin bordism over a space $X$ can be described just as in \cref{spinc_twists}, as bordism of manifolds $M$ with a map $f\colon M\to X$, where we trivialize $w_1(M) - f^*(a)$ and $w_2(M) - f^*(b)$; we denote these groups by $\Omega_*^\Spin(X, a, b)$.  
\begin{lem}[{Hebestreit-Sagave~\cite{HS20}}]
\label{KO_comparison}
    With $X$, $a$, and $b$ as above, the homotopy groups of $M^\KO f_{a,b}$ are naturally isomorphic to the twisted $\KO$-theory groups of~\cite{DK70, HJ20}.
\end{lem}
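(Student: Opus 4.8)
The plan is to mimic the complex case in \cref{KU_comparison}, reducing the claim to work of Hebestreit-Joachim just as that lemma reduced to work of Ando-Blumberg-Gepner. First I would recall (see \cref{HJ_rmk}) that Hebestreit-Joachim work with May-Sigurdsson's parametrized homotopy theory, and that Ando-Blumberg-Gepner's comparison theorem \cite[Appendix B]{ABG11} identifies their framework for twists and Thom spectra with the Ando-Blumberg-Gepner-Hopkins-Rezk framework of \S\ref{sub:ABGHR}, taking twists to twists and Thom spectra to Thom spectra, compatibly with ring maps such as the Atiyah-Bott-Shapiro orientation $\widehat A\colon \MTSpin\to\KO$. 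Under this dictionary it suffices to show that our twist $f_{a,b}\colon X\to B\GL_1(\KO)$ corresponds to the twist of $\KO$-homology that Hebestreit-Joachim attach to $a$ and $b$; the lemma then follows from their identification of that twisted $\KO$-homology with the twisted $\KO$-groups of Donovan-Karoubi.

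To match the twists, I would factor through $\MTSpin$. By construction $f_{a,b}$ is the composite $X\to B\O/B\Spin\to B\GL_1(\MTSpin)\xrightarrow{\widehat A} B\GL_1(\ko)\to B\GL_1(\KO)$, where the first arrow uses the splitting \eqref{OSpin}; Hebestreit-Joachim's $\KO$-twist for $(a,b)$ is obtained in the same way, by transporting along $\widehat A$ the universal twisted spin structure over $B\O/B\Spin$, which is precisely the object whose $\MTSpin$-module Thom spectrum is $M^{\MTSpin}f_{a,b}$ in \cref{twisted_spin_bordism_comparison}. Since the spin twists are therefore identified (via the same Ando-Blumberg-Gepner comparison that underlies \cref{twisted_spin_bordism_comparison}), so are their images under $\widehat A$, hence so are the Thom spectra and their homotopy groups. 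All maps in sight are natural in $X$, giving the naturality asserted in the statement, and \cref{Thom_change_of_rings} reconciles the various models via $M^\KO f_{a,b}\simeq M^{\MTSpin}f_{a,b}\wedge_{\MTSpin}\KO$.

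The step demanding the most care is the bookkeeping around the identification \eqref{OSpin}, flagged in \cref{pinm_pinp}: since $K(\Z/2,1)\times K(\Z/2,2)$ has disconnected self-equivalences, the twist associated to a pair $(a,b)$ is well-defined only after fixing this identification, so I would check explicitly that Hebestreit-Joachim and Donovan-Karoubi use the same one we do, namely the Whitney-sum-incompatible splitting of \eqref{OSpin}, in which a vector bundle $V$ contributes the pair $(w_1(V),w_2(V))$. Alternatively, in the spirit of the footnote to \cref{KU_comparison}, one could sidestep this by computing the group $[K(\Z/2,1)\times K(\Z/2,2), B\GL_1(\KO)]$ of homotopy classes of twists and then comparing the two notions of twist on a single generating example such as $X=\RP^\infty$ or $X=K(\Z/2,2)$; but pinning down that group is more work than tracking the identification directly, so I expect the direct comparison to be the cleaner route.
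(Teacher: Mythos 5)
Your proposal is a reasonable strategy, but it is not the route the paper takes, and I would push back on your assessment of which route is ``cleaner.'' The paper's entire argument is the alternative you consider and then discard: it appeals to Antieau--Gepner--G\'omez~\cite[Theorem 1.1]{AGG14}, who show there is a unique nontrivial twist of $\KO$ over $K(\Z/2,2)$, so that the fake vector bundle twist and the twist of~\cite{DK70, HJ20} must coincide once one knows both are nontrivial. You say ``pinning down that group is more work than tracking the identification directly,'' but that work is already done in the literature---exactly the point of citing~\cite{AGG14}---whereas your direct comparison silently assumes several things that would need checking: that Hebestreit--Joachim's $\KO$-twist is literally obtained by transporting the universal spin twist along $\widehat A$ (their paper is not phrased this way), and that their identification of this with the Donovan--Karoubi groups is for exactly the twist you produced. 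You are right to flag the convention ambiguity from \cref{pinm_pinp} as the delicate point of a direct comparison; but that worry is precisely what the uniqueness argument sidesteps, since a $\Z/2$ has no automorphisms to track. So the two approaches are genuinely different: yours trades a two-line appeal to uniqueness for a more structural but bookkeeping-heavy chase through the May--Sigurdsson/ABGHR dictionary, and I think you have the relative costs backwards in this instance.
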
%For $\ko$ and $\KO$, our fake vector bundle twists recover the notions of twisted $\ko$- and $\KO$-theory from, as can be 
%One can show this by appealing to Antieau-Gepner-Gómez' calculation~\cite[Theorem 1.1]{AGG14} that there is a unique nontrivial twist of $\KO$ over $K(\Z/2, 2)$.
\begin{exm}
\label{universal_spin_twists}
\Cref{universal_twist} implies the Thom spectrum of the universal twist of $\MTSpin$ over $B\O/B\Spin$ is $\MTO$, and of the universal twist over $K(\Z/2, 2)\simeq B\SO/B\Spin$ is $\MTSO$. The former equivalence is due to Hebestreit-Joachim~\cite[Observation 3.3.5]{HJ20}, and latter equivalence is due to Beardsley~\cite[\S 3]{Bea17}.
\end{exm}
%
%\TODO: cite who considered these kinds of twisted spin bordism and $\KO$-theory.%
%
\subsubsection{Twists of $\MTString$, $\tmf$, $\Tmf$, and $\TMF$}\label{string_twists}
The final family we consider in this paper is string bordism and topological modular forms. The story has a similar shape: we obtain twists by $B\O/B\String$, and we simplify $B\O/B\String$ to define fake vector bundle twists. However, in \cref{string_not_split} we learn that $B\O/B\String$ is not homotopy equivalent to a product of Eilenberg-Mac Lane spaces. For this reason, the fake vector bundle twist uses a generalized cohomology theory called \term{supercohomology} and denoted $\SH$ (\cref{supercoh_defn}); we finish this subsubsection by studying cohomology classes associated to a degree-$4$ supercohomology class, which we will need in the proof of \cref{thom_module_calc}.

If $V\to X$ is a spin
vector bundle, it has a characteristic class $\lambda(V)\in H^4(X;\Z)$ such that $2\lambda(V) = p_1(V)$; a
\term{string structure} on $V$ is a trivialization of $\lambda$. It is not hard to check that $\lambda$ is additive
in direct sums,
%\footnote{This would directly follow from $p_1$ being additive in direct sums, but this is not
%actually true for all vector bundles! Fortunately, this is not a problem for oriented vector bundles, as
%Brown~\cite[Theorem 1.6]{Bro82} showed (see also Thomas~\cite{Tho62}), so for spin vector bundles $p_1$ is additive
%in direct sums, and therefore $\lambda$ is too. See also~\cite[Lemma 1.6]{Deb23}.}
so defines a map of abelian
$\infty$-groups $\lambda\colon B\Spin\to K(\Z, 4)$. The fiber of this map is an $\infty$-group
$B\String$, which is the classifying space for string structures.%\footnote{Though we did not show it, this is in
%fact the classifying space of a group $\String$.}

Unlike for $K$-theory, there are three different kinds of topological modular forms: a connective spectrum $\tmf$,
a periodic spectrum $\TMF$, and a third spectrum $\Tmf$ which is neither connective nor periodic. All three are
$E_\infty$-ring spectra, and there are ring spectrum maps $\tmf\to\Tmf\to\TMF$. %[\TODO: some citations probably]
  Ando-Hopkins-Rezk~\cite{AHR10} constructed a ring spectrum map $\sigma\colon\MTString\to\tmf$, so
\cref{cofiber_twists} gives us twists of $\tmf$, $\Tmf$, and $\TMF$ from $B\O/B\String$:
\begin{equation}
	B\O/B\String\to B\GL_1(\MTString)\overset\sigma\to B\GL_1(\tmf)\to B\GL_1(\Tmf)\to B\GL_1(\TMF).
\end{equation}
%\TODO: who else has studied $B\O/B\String$?

Like in \S\ref{spinc_twists} and \S\ref{spin_twists}, the section $B\O/B\SO\to B\O$ defines a homotopy equivalence of spaces
\begin{equation}
\label{ostring_SES}
	B\O/B\String\overset\simeq\longrightarrow K(\Z/2, 1)\times B\SO/B\String,
\end{equation}
and there is a short exact sequence of abelian $\infty$-groups
\begin{equation}
\label{spin_string_SES}
	\shortexact*[\iota][]{\underbracket{B\Spin/B\String}_{K(\Z, 4)}}{B\SO /
	B\String}{\underbracket{B\SO/B\Spin}_{K(\Z/2, 2)}},
\end{equation}
but now something new happens.
\begin{prop}
\label{string_not_split}
\eqref{spin_string_SES} is not split.
\end{prop}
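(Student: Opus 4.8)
The plan is to show that the cofiber sequence \eqref{spin_string_SES} cannot be split by exhibiting a single space $X$ on which the induced extension of abelian groups $[X, \bl]$ is non-split, or equivalently by identifying the extension class in $H^5(K(\Z/2,2); \Z)$ and showing it is nonzero. I would pursue the cohomological route, since it is cleanest. A splitting of \eqref{spin_string_SES} as abelian $\infty$-groups would in particular give a section $s\colon K(\Z/2, 2) \to B\SO/B\String$ of the quotient map; then the composite $\lambda$-type class classifying the $K(\Z,4)$-extension, pulled back along $s$, would have to vanish. So it suffices to compute the $k$-invariant of \eqref{spin_string_SES}, which is a class $\kappa \in H^5(K(\Z/2,2);\Z)$, and show $\kappa \ne 0$.

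First I would recall that $B\SO/B\String$ fits, as an abelian $\infty$-group (connective spectrum), into a principal fibration with fiber $K(\Z,4)$ over $K(\Z/2,2)$, classified by a map $\kappa\colon K(\Z/2, 2)\to K(\Z, 5)$, i.e.\ $\kappa \in H^5(K(\Z/2, 2);\Z)$. Splitting \eqref{spin_string_SES} is equivalent to $\kappa = 0$. Next I would identify $\kappa$ concretely. The class $\lambda\colon B\Spin\to K(\Z,4)$ does \emph{not} extend to $B\SO$ — indeed on $B\SO$ the relevant integral class is $\beta\mathcal{P}$ where $\mathcal P\in H^4(B\SO;\Z/2)$ is the relevant Pontryagin/Wu combination, and the obstruction to lifting through $B\Spin \to B\SO$ interacts with $w_2$. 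Concretely, I would use that on $B\SO$ we have $2\lambda = p_1$ only after restricting to $B\Spin$, and the "defect" is measured by a Bockstein of $w_2^2$: the relevant computation shows $\kappa$ is (up to sign) $\beta(\iota_2^2) = \beta \Sq^2 \iota_2$ where $\iota_2\in H^2(K(\Z/2,2);\Z/2)$ is the fundamental class, equivalently the image of $\iota_2^2 = \Sq^2\iota_2$ under the integral Bockstein $\beta\colon H^4(\bl;\Z/2)\to H^5(\bl;\Z)$.

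Then I would verify this class is nonzero in $H^5(K(\Z/2,2);\Z)$. By Serre's computation of $H^*(K(\Z/2,2);\Z/2)$, the group $H^4(K(\Z/2,2);\Z/2)$ is spanned by $\iota_2^2$ and $\Sq^2\iota_2 = \iota_2^2$ (these coincide) together with $\Sq^2\Sq^1$-type classes — more precisely $H^4$ is two-dimensional with basis $\{\iota_2^2,\ \Sq^1\Sq^2\iota_2 \cdot(\text{nothing in degree }4)\}$; in any case $\iota_2^2$ is not in the image of $\Sq^1$, so its integral Bockstein $\beta(\iota_2^2)$ is a nonzero $2$-torsion class in $H^5(K(\Z/2,2);\Z)\cong\Z/4$ (or $\Z/2$, depending on conventions; the nonvanishing is what matters). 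Hence $\kappa\ne 0$ and \eqref{spin_string_SES} does not split.

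\textbf{The main obstacle} I anticipate is pinning down the $k$-invariant $\kappa$ precisely — the fact that it equals $\beta(\iota_2^2)$ rather than, say, the image of the integral lift of $\iota_2^2$ or some other combination — and doing so cleanly in the $\infty$-categorical/spectrum setting rather than via a manifold-geometric argument. One safe way to nail it down: restrict the extension along the unique nontrivial map $B\Z/2 = K(\Z/2,1) \to K(\Z/2,2)$... that is null, so instead pull back along a map from a space detecting $\iota_2^2$, e.g.\ $\RP^\infty\times\RP^\infty \to K(\Z/2,2)$ classifying $x_1 x_2$, or better $K(\Z/2,2)$ itself via the tautological class, and compare with the known fact (from the literature on string structures, e.g.\ the failure of $w_2 = 0$ bundles to admit string structures compatibly) that the obstruction is the integral Bockstein of $w_2^2$. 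Alternatively, and perhaps most efficiently for the paper, one can sidestep identifying $\kappa$ exactly by the same style of argument as in \cref{ospin_not_gp}: find an explicit space $X$ and show $[X, B\SO/B\String]$ is a non-split extension of $[X, K(\Z/2,2)]$ by $[X, K(\Z,4)]$ using the Whitney-sum addition formula on vector-bundle-representable classes, which forces a nontrivial extension just as $(a_1,b_1)\oplus(a_2,b_2) = (a_1+a_2, b_1+b_2+a_1a_2)$ did there. I would present whichever of these two is shorter, but expect the cohomological identification of $\kappa$ to be the conceptually central point.
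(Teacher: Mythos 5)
Your overall strategy---reduce splitness to the vanishing of the $k$-invariant $\kappa\in H^5(K(\Z/2,2);\Z)$ of the principal fibration $K(\Z,4)\to B\SO/B\String\to K(\Z/2,2)$, identify $\kappa$ as $\beta\Sq^2\iota_2$, and check that this class is nonzero---is sound in outline, and the claimed value of $\kappa$ is in fact correct. But the identification of $\kappa$ is precisely the step you do not carry out: you write that ``the relevant computation shows'' $\kappa=\beta(\iota_2^2)$, and the routes you sketch for nailing this down either appeal to ``the known fact from the literature'' (i.e.\ to the conclusion) or are themselves left incomplete. This matters here because in this paper the logical order is the reverse of yours: \cref{SO_String_is_supercoh} \emph{deduces} that the extension class is $\beta\circ\Sq^2$ from the non-splitness asserted in \cref{string_not_split} together with the computation $[\Sigma^2 H\Z/2,\Sigma^5H\Z]\cong\Z/2$ (\cref{HZHZ2cor}). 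So as written your argument risks circularity, and the central identification would need an independent proof. A secondary, fixable point: your criterion for $\beta(\iota_2^2)\ne 0$ (``$\iota_2^2$ is not in the image of $\Sq^1$'') is not the right one; the exact sequence $H^4(\bl;\Z)\xrightarrow{\rho}H^4(\bl;\Z/2)\xrightarrow{\beta}H^5(\bl;\Z)$ shows that what you need is that $\iota_2^2$ is not the mod $2$ reduction of an integral class, which holds because $H^4(K(\Z/2,2);\Z)=0$. (Note also that the mod $2$ reduction of $\beta(\iota_2^2)$ is $\Sq^1\Sq^2\iota_2=\Sq^3\iota_2=0$, so $\beta(\iota_2^2)$ is the order-$2$ element of $H^5(K(\Z/2,2);\Z)\cong\Z/4$; a mod $2$ argument alone cannot detect it.)

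For comparison, the paper's proof avoids computing $\kappa$ entirely. A splitting of~\eqref{spin_string_SES} gives a retraction $s\colon B\SO/B\String\to B\Spin/B\String\simeq K(\Z,4)$ with $s\circ\iota=\id$, so the class $\lambda\in H^4(B\Spin;\Z)$ would extend to a class $\mu\in H^4(B\SO;\Z)$. But $H^4(B\SO;\Z)\cong\Z$ generated by $p_1$, so any class pulled back from $B\SO$ to $B\Spin$ is an integer multiple of $p_1=2\lambda$, and $\lambda$ is not. If you want to salvage your route without the $k$-invariant identification, your final suggestion---exhibiting a space $X$ on which $[X,B\SO/B\String]$ is a non-split extension of $[X,K(\Z/2,2)]$ by $[X,K(\Z,4)]$, in the style of \cref{ospin_not_gp}---is viable, but it too needs to be executed: one must produce $X$ and actually compute the group structure via Whitney-sum additivity of $(w_2,\lambda)$ on bundle-representable classes.
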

\begin{proof}
%We will show that a splitting of~\eqref{spin_string_SES} defines a class $\mu\in H^4(B\SO;\Z)$ with $2\mu = p_1$,
%which would imply that the first Pontrjagin class of any oriented vector bundle is twice another class. This is
%false: since $p_1(V)\bmod 2 = w_2(V)^2$ for any vector bundle $V$, it suffices to find an oriented vector bundle
%$V\to X$ with $w_2(V)^2\ne 0$, so that $p_1(V)\bmod 2\ne 0$, so by the Bockstein long exact sequence $p_1(V)$ is
%not twice another class. For example, $V = T\RP^5\to\RP^5$ works.%
%
%Returning to~\eqref{spin_string_SES},
A splitting of~\eqref{spin_string_SES} defines a section $s\colon B\SO/B\String\to B\Spin/B\String$, meaning $s\circ\iota = \id$. Therefore the map $\lambda\colon B\Spin\to B\Spin/B\String\overset\simeq\to K(\Z, 4)$ factors through $B\SO$:
% https://q.uiver.app/?q=WzAsNSxbMCwwLCJCXFxTcGluIl0sWzEsMCwiQlxcU3Bpbi9CXFxTdHJpbmciXSxbMiwwLCJLKFxcWiwgNCkiXSxbMCwxLCJCXFxTTyJdLFsxLDEsIkJcXFNPL0JcXFN0cmluZyJdLFsxLDIsIlxcc2ltZXEiLDJdLFswLDFdLFswLDNdLFszLDRdLFsxLDQsIiIsMix7InN0eWxlIjp7InRhaWwiOnsibmFtZSI6ImFycm93aGVhZCJ9fX1dLFswLDIsIlxcbGFtYmRhIiwwLHsiY3VydmUiOi0zfV1d

\begin{equation}\begin{tikzcd}
	B\Spin & {B\Spin/B\String} & {K(\Z, 4)}. \\
	B\SO & {B\SO/B\String}
	\arrow["\simeq"', from=1-2, to=1-3]
	\arrow[from=1-1, to=1-2]
	\arrow[from=1-1, to=2-1]
	\arrow[from=2-1, to=2-2]
	\arrow["\iota"', shift right, from=1-2, to=2-2]
	\arrow["s"', shift right, from=2-2, to=1-2]
        \arrow["\lambda", curve={height=-18pt}, from=1-1, to=1-3]
\end{tikzcd}\end{equation}
We let $\mu$ denote the extension of $\lambda$ to $B\SO$. Brown~\cite[Theorem 1.5]{Bro82} shows that $H^4(B\SO;\Z)\cong\Z$ with generator $p_1$, so for any class $x\in H^4(B\SO;\Z)$, the pullback of $x$ to $B\Spin$ is some integer multiple of $p_1$. But the pullback of $\mu$ is $\lambda$, which is not an integer multiple of $p_1$, so we have found a contradiction.
\end{proof}
We want an analogue of the fake vector bundle twists from \S\ref{spinc_twists} and \S\ref{spin_twists} for $\MTString$, $\tmf$,
$\Tmf$, and $\TMF$, but since we just saw that $B\SO/B\String$ is not a product of Eilenberg-Mac Lane spaces, we
have to figure out what exactly it is. The answer turns out to be the analogue of an Eilenberg-Mac Lane space for a
relatively simple generalized cohomology theory.

Postnikov theory implies that if $E$ is a spectrum with only two nonzero homotopy groups
$\pi_m(E) = A$ and $\pi_n(E) = B$ (assume $m < n$ without loss of generality), then $E$ is classified by the data
of $m$, $n$, $A$,
$B$, and the \term{$k$-invariant} $k_E\in [\Sigma^m HA, \Sigma^{n+1}HB]$, a stable cohomology operation.
\begin{defn}[{Freed~\cite[\S 1]{Fre08}, Gu-Wen~\cite{GW14}}]
\label{supercoh_defn}
Let $\SH$ be the spectrum with $\pi_{-2}(\SH) = \Z/2$, $\pi_0(\SH) = \Z$, and the $k$-invariant $k_\SH =
\beta\circ\Sq^2\colon H^*(\bl;\Z/2)\to H^{*+3}(\bl;\Z)$. The generalized cohomology theory defined by $\SH$ is
called \term{(restricted) supercohomology}.\footnote{The adjective ``restricted'' is to contrast this theory with
``extended'' supercohomology of Kapustin-Thorngren~\cite{KT17} and Wang-Gu~\cite{WG20}. See~\cite[\S 5.3,
5.4]{GJF19}.}
\end{defn}
%\TODO: who introduced/studied this?
%\begin{itemize}
%	\item Freed (``pions and generalized cohomology''), then used by Jenquin for spin-CS
%	\item Gu-Wen, Johnson-Freyd, maybe
%\end{itemize}
%others (Kitaev) \cite{GJF19}.
Just as the Eilenberg-Mac Lane spectrum $H\Z$ is assembled from Eilenberg-Mac Lane spaces $K(\Z, n)$ and there is a natural isomorphism $H^n(X, \Z)\overset\cong\to [X, K(\Z, n)]$, if one defines $\SK(n)$ to be the abelian $\infty$-group which is the extension
\begin{equation}
\label{sk_defn}
    \shortexact{K(\Z, n)}{\SK(n)}{K(\Z/2, n-2)}{}
\end{equation}
classified by $\beta(\Sq^2(T))\in H^{n+1}(K(\Z/2, n-2); \Z)$, where $T\in H^{n-2}(K(\Z/2, n-2);\Z/2)$ is the tautological class and $\beta$ is the integral Bockstein, then the spaces $\SK(n)$ assemble into a model for the spectrum $\SH$ and there is a natural isomorphism $\SH^n(X)\overset\cong\to [X, \SK(n)]$.

Like Eilenberg-Mac Lane spaces, the spaces $\SK(n)$ are related by loops.
\begin{lem}
\label{SK_loop}
If $n\ge 3$, there is a canonical homotopy class of homotopy equivalences $\Omega\SK(n)\overset\simeq\to\SK(n-1)$ compatible with the identifications $\Omega K(A, n)\overset\simeq\to K(A, n-1)$ and the maps in~\eqref{sk_defn}.
\end{lem}
\begin{proof}
This follows by applying $\Omega$ to the cofiber sequence~\eqref{sk_defn}, then observing that this preserves the $k$-invariant $\beta\circ\Sq^2$.
\end{proof}
\begin{prop}
\label{SO_String_is_supercoh}
There is an equivalence of abelian $\infty$-groups $B\SO/B\String\overset\simeq\to \SK(4)$. Moreover, the space of such equivalences is connected. Therefore there is a natural isomorphism of abelian groups $[X, B\SO/B\String]\cong\SH^4(X)$.
%The abelian $\infty$-group $B\SO/B\String$ represents the functor $\SH^4\colon\cat{Top}\to\cat{Ab}$. That is, for
%every space $X$ there is a natural isomorphism of abelian groups
%\begin{equation}
%	[X, B\SO/B\String]\overset\cong\longrightarrow \SH^4(X).
%\end{equation}
%Moreover, the space of identifications of $\mathrm{Map}(\bl, B\SO/B\String)$ with supercohomology is connected.
\end{prop}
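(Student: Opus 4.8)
The plan is to identify $E\coloneqq B\SO/B\String$ via the Postnikov classification of two-stage abelian $\infty$-groups recalled above, to read off its $k$-invariant from \cref{string_not_split}, and then to establish the naturality assertion by computing the relevant space of equivalences. From the long exact sequence of the extension \eqref{spin_string_SES}, together with $B\Spin/B\String\simeq K(\Z,4)$ and $B\SO/B\Spin\simeq K(\Z/2,2)$, one reads off $\pi_2(E)\cong\Z/2$, $\pi_4(E)\cong\Z$, and $\pi_i(E)=0$ for all other $i$. Thus $E$ is a two-stage abelian $\infty$-group, hence classified by a $k$-invariant $k_E\in[\Sigma^2 H\Z/2,\Sigma^5 H\Z]$.

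A short computation identifies $[\Sigma^2 H\Z/2,\Sigma^5 H\Z]\cong[H\Z/2,\Sigma^3 H\Z]\cong\Z/2$, with nonzero element the integral operation $\beta\Sq^2$: every element of this group is $2$-torsion, so mod-$2$ reduction embeds it into the degree-$3$ part $\langle\Sq^3,\Sq^2\Sq^1\rangle$ of the Steenrod algebra, and its image is spanned by $\Sq^1\Sq^2\,(=\Sq^3)$, which is the mod-$2$ reduction of $\beta\Sq^2$. Since $\mathrm{Aut}(\Z/2)=1$ and $\mathrm{Aut}(\Z)=\{\pm1\}$ acts trivially here (because $2\beta\Sq^2=0$), there are exactly two abelian $\infty$-groups with these homotopy groups: the split one $K(\Z/2,2)\times K(\Z,4)$, and $\SK(4)$, which by \eqref{sk_defn} is the one with $k$-invariant $\beta\Sq^2$. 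By \cref{string_not_split} the extension \eqref{spin_string_SES} does not split, so $k_E\neq 0$, and therefore $E\simeq\SK(4)$.

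For the connectedness clause I would read ``such equivalences'' as equivalences respecting the presentations of $B\SO/B\String$ and $\SK(4)$ as extensions of $K(\Z/2,2)$ by $K(\Z,4)$, i.e.\ commuting with the inclusions of $K(\Z,4)$ and the projections to $K(\Z/2,2)$. The space of extensions of $K(\Z/2,2)$ by $K(\Z,4)$ in abelian $\infty$-groups is $\mathrm{Map}_{\cat{Sp}}(\Sigma^2 H\Z/2,\Sigma^5 H\Z)$, whose $\pi_0$ is the $\Z/2$ found above; a path between two extensions is an equivalence of extensions, so the space of equivalences between two extensions with a common class is, when nonempty, a torsor over $\Omega\,\mathrm{Map}_{\cat{Sp}}(\Sigma^2 H\Z/2,\Sigma^5 H\Z)\simeq\mathrm{Map}_{\cat{Sp}}(\Sigma^2 H\Z/2,\Sigma^4 H\Z)$, which is connected since $\pi_0$ of it is $[\Sigma^2 H\Z/2,\Sigma^4 H\Z]\cong[H\Z/2,\Sigma^2 H\Z]=0$ (there are no degree-$2$ stable operations $H\Z/2\to H\Z$). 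Together with the previous paragraph, which shows $B\SO/B\String$ and $\SK(4)$ have the same extension class, this gives that the space of equivalences of extensions $B\SO/B\String\overset\simeq\to\SK(4)$ is nonempty and connected. Any such $\psi$ then induces an isomorphism $\psi_*\colon[X,B\SO/B\String]\overset\cong\to[X,\SK(4)]$ of abelian groups, natural in $X$, which composed with the natural isomorphism $[X,\SK(4)]\cong\SH^4(X)$ recorded just before the proposition yields $[X,B\SO/B\String]\cong\SH^4(X)$; connectedness of the space of $\psi$'s makes this independent of the choice, hence canonical.

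The one genuine subtlety is this last step. The space of \emph{all} abelian $\infty$-group equivalences $B\SO/B\String\to\SK(4)$ is \emph{not} connected: because $2\beta\Sq^2=0$, there is a self-equivalence of $\SK(4)$ that is the identity on $\pi_2$ but $-1$ on $\pi_4$. So it really is necessary to restrict to equivalences compatible with the extension structure, and it is exactly the vanishing of $[\Sigma^2 H\Z/2,\Sigma^4 H\Z]$ that makes that restricted space connected; everything else is bookkeeping with the long exact sequence of \eqref{spin_string_SES}, the Postnikov classification, and \cref{string_not_split}.
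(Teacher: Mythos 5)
Your proof is correct and follows essentially the same route as the paper's: reduce to stable Postnikov theory for the two-stage connective spectrum underlying $B\SO/B\String$, compute $[\Sigma^2 H\Z/2,\Sigma^5H\Z]\cong\Z/2$ and $[\Sigma^2H\Z/2,\Sigma^4H\Z]=0$, and use \cref{string_not_split} to see the $k$-invariant is the nonzero class, hence equals $\beta\Sq^2$. The only computational difference is cosmetic: you identify $[H\Z/2,\Sigma^3H\Z]$ by embedding it via mod $2$ reduction into $\cA^3$ and checking which operations lift integrally (for completeness you should note that $\Sq^2\Sq^1$ does \emph{not} lift, since $\Sq^1\Sq^2\Sq^1=\Sq^3\Sq^1\neq 0$, so the image really is just $\langle\Sq^3\rangle$), whereas the paper flips to $[H\Z,\Sigma^{k-1}H\Z/2]=H^{k-1}(H\Z;\Z/2)$ and reads the answer off $\cA\otimes_{\cA(0)}\Z/2$. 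Both give the same groups.

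Your discussion of the connectedness clause is a genuine refinement rather than a deviation. The paper's proof asserts that $\pi_0$ of the space of equivalences is a torsor over $[\Sigma^2H\Z/2,\Sigma^4H\Z]$, which is only true after restricting to equivalences compatible with the extension data (equivalently, inducing the identity on $\pi_2$ and $\pi_4$ under the fixed identifications); you make that restriction explicit, and your observation that the space of \emph{all} abelian $\infty$-group equivalences has a second component, coming from the self-equivalence of $\SK(4)$ acting by $-1$ on $\pi_4$ (which preserves the $2$-torsion $k$-invariant), is correct. This is exactly the kind of ambiguity \cref{pinm_pinp} warns about; it is invisible to the mod $2$ classes $t(d)$ and $\delta$ but would flip the sign of $d\bmod 3$, so pinning down the extension-compatible identification, as you do, is the right way to make the final isomorphism canonical.
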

The point of the last sentence in \cref{SO_String_is_supercoh} is that in our proof, we do not specify an
isomorphism, so a priori there could be ambiguity like in \cref{pinm_pinp}.  But since the space of such
identifications is connected, there is a unique identification in the homotopy category, which suffices for the
calculations we make in this paper.
\begin{proof}[Proof of \cref{SO_String_is_supercoh}]
We are trying to identify the extension~\eqref{spin_string_SES} of abelian $\infty$-groups to relate it to $\SH$.
Because $B\SO/B\String$ is an abelian $\infty$-group, this extension, a priori classified by $H^5(K(\Z/2, 2), \Z)$,
actually is classified by the stabilization $[\Sigma^2 H\Z/2, \Sigma^5 H\Z]$: this extension is equivalent data to a
fiber sequence of connective spectra, so we get to use stable Postnikov theory. Our first step is to understand
$[\Sigma^2 H\Z/2, \Sigma^5 H\Z]$.
\begin{lem}
\label{turn_it_around}
For all $k\in\Z$, $[H\Z/2, \Sigma^k H\Z]\cong [H\Z, \Sigma^{k-1}H\Z/2]$.
\end{lem}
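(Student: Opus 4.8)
The plan is to produce the isomorphism by a cofiber-sequence (equivalently, universal-coefficient) argument, using the standard short exact sequence of coefficient groups $0 \to \Z \overset{2}{\to} \Z \to \Z/2 \to 0$. Concretely, this sequence is realized by a cofiber sequence of spectra $H\Z \overset{2}{\to} H\Z \to H\Z/2 \overset{\delta}{\to} \Sigma H\Z$, where $\delta$ is (a shift of) the integral Bockstein. Applying the cohomology functor $[-, \Sigma^k H\Z]$ to this cofiber sequence gives a long exact sequence relating $[H\Z/2, \Sigma^k H\Z]$ to $[H\Z, \Sigma^k H\Z]$ and $[H\Z, \Sigma^{k-1}H\Z]$; since multiplication by $2$ on $H\Z$ induces multiplication by $2$ on each of these Hom-groups, and these groups are known ($[H\Z, \Sigma^j H\Z] = H^j(H\Z;\Z)$ is torsion for $j > 0$ and is $\Z$ for $j=0$), one can read off $[H\Z/2, \Sigma^k H\Z]$. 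The same bookkeeping applied to the cofiber sequence mapped into $\Sigma^{k-1}H\Z/2$ — or, more symmetrically, just applying $[H\Z/2, -]$ to $H\Z \overset{2}{\to} H\Z \to H\Z/2$ rotated appropriately — computes $[H\Z, \Sigma^{k-1}H\Z/2]$, and the two answers match degree by degree.

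The cleanest route is probably the following. First I would write down the cofiber sequence $H\Z \xrightarrow{2} H\Z \to H\Z/2$ and note its rotation $H\Z/2 \to \Sigma H\Z \xrightarrow{2} \Sigma H\Z$. Second, apply $[-,\Sigma^k H\Z]$ to get the exact sequence
\begin{equation*}
[H\Z, \Sigma^{k-1}H\Z] \xrightarrow{2} [H\Z, \Sigma^{k-1}H\Z] \to [H\Z/2, \Sigma^k H\Z] \to [H\Z, \Sigma^k H\Z] \xrightarrow{2} [H\Z, \Sigma^k H\Z].
\end{equation*}
Third, apply $[H\Z/2, -]$ to the rotated sequence $\Sigma^{k-1}H\Z \xrightarrow{2} \Sigma^{k-1}H\Z \to \Sigma^{k-1}H\Z/2$ to get
\begin{equation*}
[H\Z/2, \Sigma^{k-1}H\Z] \xrightarrow{2} [H\Z/2, \Sigma^{k-1}H\Z] \to [H\Z/2, \Sigma^{k-1}H\Z/2] \to [H\Z/2, \Sigma^k H\Z] \xrightarrow{2} [H\Z/2, \Sigma^k H\Z].
\end{equation*}
Wait — more directly: I would instead compare $[H\Z/2,\Sigma^k H\Z]$ and $[H\Z,\Sigma^{k-1}H\Z/2]$ each to the common group $[H\Z/2,\Sigma^{k-1}H\Z/2] = (\cA/\!/\cA(0))^{*}$-type data, but the honest shortcut is: the cofiber of $2\colon H\Z\to H\Z$ is $H\Z/2$ and the fiber of $2\colon \Sigma^{k-1}H\Z/2 \to \Sigma^{k-1}H\Z/2$... is not quite what we want. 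The robust version is simply to observe that smashing the cofiber sequence $H\Z \xrightarrow{2} H\Z \to H\Z/2$ with the dual, or rather applying $\Hom$ into $H\Z$ on one side and mapping $H\Z$ into the shifted sequence on the other, exhibits both $[H\Z/2,\Sigma^k H\Z]$ and $[H\Z,\Sigma^{k-1}H\Z/2]$ as the $2$-torsion subgroup of $[H\Z,\Sigma^{k-1}H\Z] = H^{k-1}(H\Z;\Z)$ together with the cokernel of $2$ on $[H\Z,\Sigma^k H\Z] = H^k(H\Z;\Z)$, glued by the same connecting map; since these ingredients and the gluing data are literally identical in the two computations, the groups are isomorphic.

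The main obstacle — and the reason this lemma is stated separately rather than dismissed — is making sure the two long exact sequences genuinely assemble the answer in the same way, i.e.\ that there is no hidden extension problem or sign/naturality subtlety distinguishing $[H\Z/2,\Sigma^k H\Z]$ from $[H\Z, \Sigma^{k-1}H\Z/2]$. I would handle this by phrasing everything in terms of the single cofiber sequence $H\Z \xrightarrow{2} H\Z \to H\Z/2 \xrightarrow{\beta} \Sigma H\Z$: applying $[-,\Sigma^{k-1}H\Z/2]$ to it and applying $[H\Z,-]$ to $\Sigma^{k-1}(H\Z \xrightarrow{2} H\Z \to H\Z/2 \xrightarrow{\beta}\Sigma H\Z)$ produces two exact sequences whose outer terms are $[H\Z,\Sigma^{j}H\Z/2]$ for $j = k-1,k$, and chasing the maps $2$ and $\beta$ shows directly that the "difference" between $[H\Z/2,\Sigma^k H\Z]$ and $[H\Z,\Sigma^{k-1}H\Z/2]$ is governed by the self-map $2$ of $H\Z/2$, which is null; hence the two groups coincide. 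This is elementary diagram-chasing once the cofiber sequence is in place, so I expect no genuinely hard step, only the need to be careful that the identification is natural enough to be used later (as the authors will want when they identify the $k$-invariant with $\beta\Sq^2$).
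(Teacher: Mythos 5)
Your argument is correct in outline but takes a genuinely different route from the paper. The paper identifies both groups with $\pi_{k-1}(H\Z\wedge H\Z/2)$ via the universal coefficient theorem: $[H\Z/2,\Sigma^k H\Z]=H^k(H\Z/2;\Z)\cong H_{k-1}(H\Z/2;\Z)$ because the integral homology of $H\Z/2$ is torsion, while $[H\Z,\Sigma^{k-1}H\Z/2]=H^{k-1}(H\Z;\Z/2)\cong H_{k-1}(H\Z;\Z/2)$ by duality over the field $\Z/2$; both right-hand sides are $\pi_{k-1}(H\Z\wedge H\Z/2)$. That route sidesteps the extension problem entirely, which is exactly the issue your cofiber-sequence approach has to confront. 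Your approach does work, but two points need tightening. First, a bookkeeping slip: applying $[-,\Sigma^k H\Z]$ to $H\Z\xrightarrow{2}H\Z\to H\Z/2$ presents $[H\Z/2,\Sigma^k H\Z]$ as an extension of the $2$-torsion of $H^k(H\Z;\Z)$ (the quotient) by the cokernel of $2$ on $H^{k-1}(H\Z;\Z)$ (the sub) --- you have the roles of the two degrees reversed; since the second long exact sequence has literally the same outer terms, this does not affect the conclusion, but it should be stated correctly. Second, your resolution of the extension problem ("the difference is governed by the self-map $2$ of $H\Z/2$, which is null") is gesturing at the right fact but is not yet an argument: the clean statement is that both middle terms are annihilated by $2$ (the first because $2\cdot\mathrm{id}_{H\Z/2}=0$ in $[H\Z/2,H\Z/2]\cong\Z/2$, the second because $\Sigma^{k-1}H\Z/2$ is an $H\Z/2$-module), and since $H\Z$ has finite type, both are finite $\Z/2$-vector spaces sitting in short exact sequences with identical sub and quotient, hence of equal dimension, hence isomorphic. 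With that finiteness observation made explicit, your proof closes; note, though, that neither your argument nor the paper's produces a canonical isomorphism, and none is needed, since the lemma is only used to count elements of $[\Sigma^2 H\Z/2,\Sigma^4 H\Z]$ and $[\Sigma^2 H\Z/2,\Sigma^5 H\Z]$.
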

\begin{proof}
This follows by using the universal coefficient theorem to relate both groups to homology groups: the short exact sequences in the universal coefficient theorem simplify to identify the two groups in the lemma statement with $H_{k-1}(H\Z; \Z/2)$, resp.\ $H_{k-1}(H\Z/2;\Z)$ (the latter because the homology of $H\Z/2$ is torsion). Both of these groups are isomorphic to $\pi_{k-1}(H\Z\wedge H\Z/2)$, so the lemma follows.
\end{proof}
%(note: maybe this lemma can be cited and its proof removed, and maybe we put it somewhere else. here for now)
%\begin{proof}
%Let $\cat{Sp}$ denote the category of spectra and $I_\Z\colon\cat{Sp}^{\mathrm{op}}\to\cat{Sp}$ denote the \term{Anderson duality} functor [\TODO: cite]. Then $I_\Z H\Z\simeq H\Z$ and $I_\Z H\Z/2\simeq \Sigma^{-1}H\Z/2$~\cite[Example 2.1]{HS14}. For any spectrum $X$, there is a natural map $X\to I_\Z I_\Z X$, and for $H\Z$ and $H\Z/2$, this map is an equivalence~\cite[Example 3.2]{HS14}. Therefore the composition of the following two maps on mapping spaces
%\begin{equation}
%   \mathrm{Map}(H\Z/2, H\Z)\overset{I_\Z}{\longrightarrow} \mathrm{Map}(I_\Z(H\Z), I_\Z(H\Z/2)) \overset{I_\Z}{\longrightarrow}\mathrm{Map} (I_\Z I_\Z H\Z/2, I_\Z I_\Z H\Z)
%\end{equation}
%is also a homotopy equivalence, meaning $\mathrm{Map}(H\Z/2, H\Z)$ is a deformation retract of $\mathrm{Map}(I_\Z(H\Z), I_\Z(H\Z/2)) = \mathrm{Map}(H\Z, \Sigma^{-1}H\Z/2)$. Now run the same argument with $\mathrm{Map}(H\Z, \Sigma^{-1}H\Z/2)$ in place of $\mathrm{Map}(H\Z/2, H\Z)$ to show $\mathrm{Map}(H\Z, \Sigma^{-1}H\Z/2)$ is a deformation retract of $\mathrm{Map}(H\Z/2, H\Z)$.\footnote{For the second part, we need to know that Anderson duality exchanges $\Sigma^k$ with $\Sigma^{-k}$, which follows from its universal property (\TODO: cite Anderson, Yosimura).} Thus these two spaces are homotopy equivalent, and taking homotopy groups we obtain the lemma statement.
%\end{proof}
\begin{cor}
\label{HZHZ2cor}
$[\Sigma^2 H\Z/2, \Sigma^4 H\Z] = 0$ and $[\Sigma^2 H\Z/2, \Sigma^5 H\Z]\cong\Z/2$.
\end{cor}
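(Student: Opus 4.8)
The plan is to compute both groups by combining \cref{turn_it_around} with knowledge of the mod-$2$ homology of $H\Z$ (equivalently, of the dual Steenrod algebra). By \cref{turn_it_around}, $[\Sigma^2 H\Z/2, \Sigma^4 H\Z] = [H\Z/2, \Sigma^2 H\Z]\cong[H\Z, \Sigma^1 H\Z/2]$ and $[\Sigma^2 H\Z/2, \Sigma^5 H\Z] = [H\Z/2, \Sigma^3 H\Z]\cong [H\Z, \Sigma^2 H\Z/2]$. The proof of \cref{turn_it_around} already identifies $[H\Z, \Sigma^{k-1}H\Z/2]$ with $\pi_{k-1}(H\Z\wedge H\Z/2)\cong H_{k-1}(H\Z;\Z/2)$, so it suffices to read off $H_1(H\Z;\Z/2)$ and $H_2(H\Z;\Z/2)$.

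First I would recall that $H_*(H\Z;\Z/2)$ is the subalgebra $\mathcal A/\mathcal A\Sq^1$ of the dual Steenrod algebra, or more concretely that it is the polynomial algebra $\Z/2[\xi_1^2, \xi_2, \xi_3, \dots]$ with $\abs{\xi_i} = 2^i - 1$; this is classical (Milnor), and is the same computation used implicitly in \S\ref{VB_change_of_rings} where $H_{H\Z}^*H\cong\cA(0)$. In low degrees this gives $H_0 = \Z/2$, $H_1 = 0$ (since the lowest-degree generator $\xi_1^2$ sits in degree $2$), and $H_2 = \Z/2$ generated by $\xi_1^2$. Therefore $[\Sigma^2 H\Z/2, \Sigma^4 H\Z]\cong H_1(H\Z;\Z/2) = 0$ and $[\Sigma^2 H\Z/2, \Sigma^5 H\Z]\cong H_2(H\Z;\Z/2)\cong\Z/2$, which is exactly the claim.

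The only real obstacle is making sure the degree bookkeeping in \cref{turn_it_around} is applied with the correct shift — i.e.\ that $[\Sigma^2 H\Z/2, \Sigma^5 H\Z]$ unwinds to $H_2$ rather than $H_3$ or $H_1$; this is a matter of carefully tracking suspensions, and once done the result is immediate from the structure of $H_*(H\Z;\Z/2)$. Alternatively, and perhaps even more cheaply, one could avoid \cref{turn_it_around} entirely and observe that a stable operation $\Sigma^2 H\Z/2\to\Sigma^{5}H\Z$ is a class in $H^3(H\Z/2;\Z)$, which by the universal coefficient theorem and the known torsion homology of $H\Z/2$ is $\Z/2$, detected by $\beta\Sq^2$; and $H^2(H\Z/2;\Z) = 0$ since $H_1(H\Z/2;\Z) = 0$ and $H_2(H\Z/2;\Z)$ is all $2$-torsion forcing $H^2$ to vanish after the Ext term is computed — but I would present the $H_*(H\Z;\Z/2)$ route as the main argument since it is the most transparent.
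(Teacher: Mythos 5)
Your proof is correct and is essentially the paper's argument: both apply \cref{turn_it_around} and then read off the degree-$1$ and degree-$2$ parts of the mod $2$ (co)homology of $H\Z$. The only cosmetic difference is that you work with $H_*(H\Z;\Z/2)\cong\Z/2[\xi_1^2,\xi_2,\dots]$ while the paper uses the dual description $H^*(H\Z;\Z/2)\cong\cA\otimes_{\cA(0)}\Z/2$; these give the same answer, and your degree bookkeeping is right.
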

\begin{proof}
By \cref{turn_it_around}, we need to compute $[H\Z, \Sigma^i H\Z/2] = H^i(H\Z; \Z/2)$ for $i = 1, 2$.
Let $\cA$ denote the mod $2$ Steenrod algebra; then $H^*(H\Z; \Z/2)\cong\cA\otimes_{\cA(0)}\Z/2$~\cite[\S 9]{Wal60}. This vanishes in
degree $1$ and is isomorphic to $\Z/2$ in degree $2$.
\end{proof}
\Cref{string_not_split} implies~\eqref{spin_string_SES} is classified by a nonzero element of $[\Sigma^2 H\Z/2,
\Sigma^5 H\Z]$. And by definition, $\SK(4)$ is an extension of $K(\Z/2, 2)$ by $K(\Z, 4)$ classified by $\beta\circ\Sq^2$, which is a
nonzero element of $[\Sigma^2 H\Z/2, \Sigma^5 H\Z]$. Since this group is isomorphic to $\Z/2$ by \cref{HZHZ2cor},
these two nonzero elements must coincide, so there is an equivalence of abelian $\infty$-groups $B\SO/B\String\simeq \SK(4)$. There is a homotopy
type of such equivalences, and $\pi_0$ of that homotopy type is a torsor over $[\Sigma^2 H\Z/2, \Sigma^4H\Z]$,
which vanishes by \cref{HZHZ2cor}, so the space of identifications is connected.
\end{proof}
\begin{cor}
\label{splitting_string}
The map $K(\Z/2, 1)\to B\O$ defined by the tautological line bundle induces a homotopy equivalence of spaces
\begin{equation}
\label{ostring_equiv}
	B\O/B\String\overset\simeq\longrightarrow K(\Z/2, 1)\times SK(4),
\end{equation}
implying that $\MTString$, $\tmf$, and $\TMF$ can be twisted over a space $X$ by classes $a\in H^1(X;\Z/2)$ and $d\in
\SH^4(X)$.
\end{cor}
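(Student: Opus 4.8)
The plan is to stitch together two facts already established: the split homotopy equivalence \eqref{ostring_SES} and the identification of abelian $\infty$-groups from \cref{SO_String_is_supercoh}. First I would recall that \eqref{ostring_SES} arises exactly as in the proofs of \cref{splitting_spinc} and \cref{splitting_spin}: $w_1\colon B\O\to K(\Z/2,1)$ is a map of abelian $\infty$-groups with fiber $B\SO$, so $B\O/B\SO\simeq K(\Z/2,1)$; the tautological line bundle gives a section $K(\Z/2,1)\simeq B\O_1\to B\O$ of the quotient $B\O\to B\O/B\SO$; and composing this section with $B\O\to B\O/B\String$ splits the cofiber sequence $B\SO/B\String\to B\O/B\String\to B\O/B\SO$. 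This is where the hypothesis that the equivalence is induced by the tautological line bundle is used, and it yields \eqref{ostring_SES}.

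Next I would feed in \cref{SO_String_is_supercoh}, which provides an equivalence of abelian $\infty$-groups $B\SO/B\String\overset\simeq\longrightarrow\SK(4)$; since the space of such equivalences is connected, this is canonical in the homotopy category. Substituting it into \eqref{ostring_SES} produces the displayed equivalence $B\O/B\String\overset\simeq\longrightarrow K(\Z/2,1)\times\SK(4)$. For the twisting statement, a map $f\colon X\to B\O/B\String$ composes with the chain $B\O/B\String\to B\GL_1(\MTString)\overset\sigma\to B\GL_1(\tmf)\to B\GL_1(\Tmf)\to B\GL_1(\TMF)$ to give compatible twists of all four ring spectra; under the equivalence just obtained, the homotopy class of $f$ is the same data as a homotopy class $X\to K(\Z/2,1)$, i.e.\ a class $a\in H^1(X;\Z/2)$, together with a homotopy class $X\to\SK(4)$, i.e.\ a class $d\in\SH^4(X)$ via the natural isomorphism $[X,\SK(4)]\cong\SH^4(X)$ from \cref{SO_String_is_supercoh} (equivalently the discussion around \eqref{sk_defn}).

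I do not expect a real obstacle here: the substantive content lives in \cref{SO_String_is_supercoh}, where the $k$-invariant of the extension \eqref{spin_string_SES} was identified with $\beta\circ\Sq^2$, and in the construction of the splitting \eqref{ostring_SES}. The only points requiring care are bookkeeping ones: recording that the space of identifications $B\SO/B\String\simeq\SK(4)$ is connected, so the composite equivalence is well defined in the homotopy category (unlike the ambiguity flagged in \cref{pinm_pinp}), and checking that the summand being split off is genuinely the tautological-line-bundle one named in the statement — both of which are immediate from what precedes.
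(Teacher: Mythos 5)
Your proposal is correct and matches the paper's (implicit) argument exactly: the corollary is obtained by combining the split equivalence \eqref{ostring_SES}, produced by the same tautological-line-bundle section used for \cref{splitting_spinc} and \cref{splitting_spin}, with the identification $B\SO/B\String\simeq\SK(4)$ of \cref{SO_String_is_supercoh}, whose connected space of equivalences removes any ambiguity. Nothing further is needed.
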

\begin{defn}
 We call the twists associated to $a$ and $d$ in \cref{splitting_string} the
\term{fake vector bundle twists} for $\MTString$, $\tmf$, $\Tmf$, and $\TMF$.
\end{defn}
%
%Therefore a map $X\to B\O/B\String$, which defines twists of $\MTString$, $\tmf$, $\Tmf$, and $\TMF$, is equivalent
%data to two classes $a\in H^1(X;\Z/2)$ and $d\in \SH^4(X)$.
\begin{rem}
\label{super_lambda}
Another consequence of \cref{SO_String_is_supercoh}, applied to the proof strategy of \cref{string_not_split}, is
that, even though $\lambda\in H^4(B\Spin;\Z)$ does not pull back from $B\SO$, its image in $\SH^4(B\Spin)$
\emph{does} pull back from a class $\lambda\in \SH^4(B\SO)$. This is a theorem of Freed~\cite[Proposition
1.9(i)]{Fre08}, with additional proofs given by Jenquin~\cite[Proposition 4.6]{Jen05} and Johnson-Freyd and
Treumann~\cite[\S 1.4]{JFT20}.
%This fact appears in work of Johnson-Freyd and
%Treumann (\cite[\S 2.3]{JF20} and~\cite[\S 1.4]{JFT20}).%, though they do not provide a proof.
\end{rem}
The map $K(\Z, 4) \simeq B\Spin/B\String\to B\SO/B\String$ means degree-$4$ ordinary cohomology classes also
define degree-$4$ twists of string bordism and topological modular forms. Twists of this sort have already been studied, so we compare our twists to the literature.
\begin{defn}
\label{twisted_string_defn}
Given $X$, $a$, and $d$ as in \cref{splitting_string}, let $\Omega_*^\String(X, a, d)$ denote the groups of bordism classes of manifolds $M$ equipped with maps $f\colon M\to X$ and trivializations of $w_1(M) - f^*(a)\in H^1(M;\Z/2)$ and $\lambda(M) - f^*(d)\in\SH^4(M)$.
\end{defn}
A priori we only defined $\lambda$ as a characteristic class of oriented vector bundles; for an unoriented vector bundle $V$, $\lambda(V)$ is be defined to be $\lambda(V\oplus\Det(V))$, as the latter bundle is canonically oriented. \Cref{twisted_string_defn} first appears in work of B.L.\ Wang~\cite[Definition 8.4]{Wan08} in the special case when $a = 0$ and $d$ comes from ordinary cohomology.
\begin{lem}%[{Hebestreit-Joachim~\cite[Corollary 3.3.8]{HJ20}}]
 \label{twisted_string_bordism_comparison}
 There is a natural isomorphism $\pi_*(M^{\MTString}f_{a,d})\overset\cong\to \Omega_*^{\String}(X, a, d)$.
\end{lem}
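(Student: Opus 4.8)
The plan is to adapt the argument Hebestreit-Joachim use for the spin and \spinc analogues (\cref{twisted_spin_bordism_comparison,twisted_spinc_bordism_comparison}): realise $\Omega_*^\String(X,a,d)$ as the bordism group for a tangential structure pulled back from the quotient $B\O\to B\O/B\String$ along the twist, and then identify the associated Thom spectrum with $M^{\MTString}f_{a,d}$ by a relative version of Beardsley's theorem (\cref{universal_twist}).

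First I would use \cref{splitting_string} to factor $f_{a,d}$ as $X\xrightarrow{(a,d)}K(\Z/2,1)\times\SK(4)\overset\simeq\to B\O/B\String\overset T\to B\GL_1(\MTString)$, where $T$ is the twist \cref{cofiber_twists} produces from the identity orientation of $\MTString$; write $u\colon X\to B\O/B\String$ for the resulting map and $q\colon B\O\to B\O/B\String$ for the quotient. Since $B\String\to B\O\xrightarrow{q}B\O/B\String$ is (equivalent to) a fibration with fibre $B\String$ --- the $\pi_0$-surjectivity hypothesis used in the proof of \cref{splitting_spinc} holds --- I form the homotopy pullback $\wG\coloneqq X\times_{B\O/B\String}B\O$ of $u$ along $q$, with projections $p\colon\wG\to X$ and $\xi_X\colon\wG\to B\O$; thus $\xi_X$ is a tangential structure. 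By the universal property of the pullback, and the fact (recalled after \cref{splitting_string}, using the extension $\lambda(V)\coloneqq\lambda(V\oplus\Det V)$ of $\lambda$ to unoriented bundles) that $q$ followed by the equivalence $B\O/B\String\simeq K(\Z/2,1)\times\SK(4)$ classifies $(w_1,\lambda)$, a $\xi_X$-structure on a manifold $M$ is exactly a map $f\colon M\to X$ together with trivialisations of $w_1(M)-f^*a$ and $\lambda(M)-f^*d$. Hence, by the Pontryagin-Thom theorem, $\Omega_n^\String(X,a,d)\cong\pi_n M^{\Sph}(J\circ\xi_X)$ for the appropriate (tangential) Thom spectrum of $\xi_X$; the routine tangent-versus-normal comparison --- which leaves $w_1$ fixed and negates $\lambda$ --- is absorbed by fixing the identification $B\SO/B\String\simeq\SK(4)$, consistently with the paper's use of the $MT$-convention.

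It remains to identify this $\xi_X$-Thom spectrum with $M^{\MTString}f_{a,d}$, and this is the crux. When $X=B\O/B\String$ and $u=\id$ it is exactly \cref{universal_twist} ($\wG=B\O$, $\xi_X=\id$, $M^{\Sph}J=\MTO\simeq M^{\MTString}T$); for general $u$ one needs a relative enhancement. The relevant facts are that the $\Sph$-twist $J\circ\xi_X$ on $\wG$ becomes, after the unit map $1_{\MTString}$, the pullback $p^*(T\circ u)$ of the twist on $X$ (using $T\circ q=1_{\MTString}\circ J$ from the construction of $T$, and $q\circ\xi_X\simeq u\circ p$), and that this pulled-back twist is canonically trivialised on each $B\String$-fibre of $p$ by the $\MTString$-orientation; one then argues that the colimit of \cref{thom_defn_not_eqn} computing the $\xi_X$-Thom spectrum reorganises along $p$ into the colimit computing $M^{\MTString}(T\circ u)$. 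A hands-on version uses \cref{Thom_change_of_rings} together with the equivalence $\MTString\wedge(B\String)_+\simeq\MTString\wedge\MTString$ (itself an instance of \cref{Thom_change_of_rings} for the nullhomotopic twist $B\String\to B\O\to B\GL_1(\MTString)$); an alternative, closer to Hebestreit-Joachim, is to transport everything into May-Sigurdsson's parametrised homotopy theory via the comparison in \cref{HJ_rmk} and quote their argument, whose only string-specific input is the identification $B\SO/B\String\simeq\SK(4)$ from \cref{SO_String_is_supercoh}. I expect the main obstacle to be making this relative identification honest --- in particular upgrading the cheap $\MTString$-homology equivalence coming from the ``$\wedge\MTString$'' manipulations to an equivalence of spectra, and pinning down the tangential/normal conventions so Pontryagin-Thom delivers exactly \cref{twisted_string_defn}. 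Given that, naturality of the isomorphism for maps $X'\to X$ over $B\O/B\String$ is automatic from functoriality of $\wG$, of Pontryagin-Thom, and of the Thom spectrum of a twist.
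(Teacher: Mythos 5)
Your proposal is correct, and its second branch is in fact the entirety of the paper's proof: the paper simply observes that Hebestreit--Joachim's argument (quoted via the framework comparison of \cref{HJ_rmk}) adapts to the string case, the only string-specific input being the identification of $B\SO/B\String$ with $\SK(4)$ from \cref{SO_String_is_supercoh} --- exactly the point you isolate. What you add beyond the paper is the first, hands-on branch: forming the pullback tangential structure $\wG = X\times_{B\O/B\String}B\O$, checking that a $\xi_X$-structure unwinds to the data in \cref{twisted_string_defn}, applying Pontryagin--Thom, and then proposing a relative version of \cref{universal_twist} to identify $\mathit{MT}\xi_X$ with $M^{\MTString}f_{a,d}$. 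You correctly flag that this last identification is the crux and is not supplied by the absolute statement of \cref{universal_twist}; it is precisely the content of Hebestreit--Joachim's theorem (their Corollary 3.3.8 and Remark 2.2.3), so your ``hands-on'' route, if completed, would amount to reproving their result rather than citing it. The intermediate manipulations you propose for that route (e.g.\ $\MTString\wedge(B\String)_+\simeq\MTString\wedge\MTString$ via \cref{Thom_change_of_rings} applied to the nullhomotopic twist) are sound but, as you note, only give equivalences after smashing with $\MTString$; promoting them to the needed equivalence of spectra is where the genuine work lies, and deferring that to Hebestreit--Joachim, as both you and the paper ultimately do, is the intended proof.
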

This follows from work of Hebestreit-Joachim~\cite{HJ20}, much like \cref{twisted_spinc_bordism_comparison,twisted_spin_bordism_comparison}. Though they do not discuss the $\MTString$ case explicitly, their proof can be adapted to our setting. See~\cite[Remark 2.2.3]{HJ20}.

We can also compare with preexisting twists of $\tmf$.
\begin{lem}
\label{tmf_comparison}
    The fake vector bundle twist defined by $K(\Z, 4)\to \SK(4)\to B\GL_1(\tmf)$ is homotopy equivalent to the twist $K(\Z, 4)\to B\GL_1(\tmf)$ constructed by Ando-Blumberg-Gepner~\cite[Proposition 8.2]{ABG10}.
\end{lem}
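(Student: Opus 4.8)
The plan is to recognize both twists as instances of the construction in \cref{cofiber_twists} applied to the Ando--Hopkins--Rezk $\sigma$-orientation $\sigma\colon\MTString\to\tmf$, fed into two comparable towers out of $B\String$, and then to conclude by functoriality of that construction. First I would unwind our fake vector bundle twist. By \cref{SO_String_is_supercoh} and~\eqref{ostring_equiv}, and because the proof of \cref{SO_String_is_supercoh} matches the extension~\eqref{spin_string_SES} with~\eqref{sk_defn} (so the equivalence $B\SO/B\String\simeq\SK(4)$ carries the inclusion $\iota$ of~\eqref{spin_string_SES} to the inclusion $K(\Z,4)\to\SK(4)$ of~\eqref{sk_defn}), the composite $K(\Z,4)\to\SK(4)\to B\GL_1(\tmf)$ is identified with
\[
	K(\Z,4)\xrightarrow{\ \simeq\ }B\Spin/B\String\xrightarrow{\ \iota\ }B\SO/B\String\longrightarrow B\O/B\String\longrightarrow B\GL_1(\MTString)\xrightarrow{\ \sigma\ }B\GL_1(\tmf),
\]
where $B\O/B\String\to B\GL_1(\MTString)$ is the twist $T_{g_1,g_2}$ that \cref{cofiber_twists} assigns to the identity $\MTString$-orientation of $\MTString$, for $g_1\colon B\String\to B\O$ the canonical map and $g_2\colon B\O\to B\GL_1(\Sph)$ the $J$-homomorphism. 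Postcomposing with $\sigma$ is the same datum as feeding \cref{cofiber_twists} the $\sigma$-orientation of $\tmf$ directly, since that orientation is the identity orientation of $\MTString$ followed by $\sigma$ and the construction is natural in the target ring spectrum. So, restricted along $K(\Z,4)\simeq B\Spin/B\String\to B\O/B\String$, our twist is exactly the $\tmf$-twist that \cref{cofiber_twists} produces from $\sigma$ and the tower $B\String\xrightarrow{g_1}B\O\xrightarrow{g_2}B\GL_1(\Sph)$.

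Next I would identify the Ando--Blumberg--Gepner twist of \cite[Proposition 8.2]{ABG10}. Their twist is built from the same $\sigma$-orientation, but along the shorter tower $B\String\xrightarrow{f_1}B\Spin\to B\O\to B\GL_1(\Sph)$: unwinding their construction (via \cite[Theorem 3.19]{ABGHR14b}, as in the proof of \cref{cofiber_twists}) presents the $\sigma$-orientation as a null-homotopy of $B\String\to B\O\to B\GL_1(\Sph)\to B\GL_1(\tmf)$, which, since $B\String\to B\O$ factors through $B\Spin$, descends along $B\String\to B\Spin$ to a map $B\Spin/B\String\to B\GL_1(\tmf)$; under $B\Spin/B\String\simeq K(\Z,4)$ this is their twist. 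In the notation of \cref{cofiber_twists} this is $T_{f_1,f_2}$ for $f_1\colon B\String\to B\Spin$ and $f_2\colon B\Spin\to B\O\to B\GL_1(\Sph)$, fed the $\sigma$-orientation of $\tmf$; note $M(f_2\circ f_1)\simeq\MTString\simeq M(g_2\circ g_1)$, so the two orientations are literally the same datum.

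The comparison is then a functoriality statement for the ``descend the orientation to the cofiber'' recipe of \cref{cofiber_twists}. Both twists descend one and the same null-homotopy of $B\String\to B\GL_1(\tmf)$ (the one coming from $\sigma$); Ando--Blumberg--Gepner descend along $B\String\xrightarrow{f_1}B\Spin$, while we descend along $B\String\xrightarrow{g_1}B\O$ and then restrict. Since $g_1$ factors as $B\String\xrightarrow{f_1}B\Spin\to B\O$, the square
\[
\begin{tikzcd}[ampersand replacement=\&, column sep=large, row sep=small]
	{B\String} \& {B\Spin} \\
	{B\String} \& {B\O}
	\arrow["{f_1}", from=1-1, to=1-2]
	\arrow["\id"', from=1-1, to=2-1]
	\arrow[from=1-2, to=2-2]
	\arrow["{g_1}"', from=2-1, to=2-2]
\end{tikzcd}
\]
is compatible with the common map down to $B\GL_1(\Sph)\to B\GL_1(\tmf)$ and with the null-homotopies, so it induces a commuting triangle with vertices $B\Spin/B\String$, $B\O/B\String$, and $B\GL_1(\tmf)$, whose slanted edge is the induced map $B\Spin/B\String\to B\O/B\String$ (factoring through $B\SO/B\String$) and whose two legs to $B\GL_1(\tmf)$ are Ando--Blumberg--Gepner's descended map and ours. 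Tracing the identification $K(\Z,4)\simeq B\Spin/B\String$ through this triangle identifies our fake vector bundle twist with theirs.

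The main obstacle will be pinning down \cite[Proposition 8.2]{ABG10} precisely enough to make the second step rigorous: confirming that their twist is built from the connective $\sigma$-orientation $\MTString\to\tmf$ we use (rather than a periodic variant landing in $\TMF$), and translating between the formalism of \cite{ABG10} and the Ando--Blumberg--Gepner--Hopkins--Rezk setup used throughout this paper. Secondarily, one has to check that the recipe of \cref{cofiber_twists} is functorial at the level of homotopy-coherent data, so that the triangle above genuinely commutes and not merely after passage to homotopy categories. Note that, in contrast with the $\KU$ and $\KO$ comparisons, there is no quick uniqueness shortcut here, since $[K(\Z,4),B\GL_1(\tmf)]$ is not cyclic.
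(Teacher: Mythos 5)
Your outline is correct and lands on the same underlying mechanism as the paper's (explicitly labeled) proof sketch --- namely, that both twists are manufactured from the single datum of the Ando--Hopkins--Rezk orientation $\sigma\colon\MTString\to\tmf$ --- but you package the comparison differently. The paper works \emph{inside} Ando--Blumberg--Gepner's construction: they build their twist from an $E_\infty$-map $\phi\colon\Sigma_+^\infty K(\Z,3)\to\tmf$ via the $\Sigma_+^\infty\dashv\GL_1$ adjunction of~\cite[(1.4), (1.7)]{ABGHR14b}, and since $\phi$ is itself assembled from $\lambda\colon B\Spin\to K(\Z,4)$, one traces the fake vector bundle twist through their argument. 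You instead recast their construction in the descent language of \cref{cofiber_twists} (descending the $\sigma$-orientation's null-homotopy along $B\String\to B\Spin$ rather than along $B\String\to B\O$) and conclude by functoriality of the cofiber. Your functoriality triangle is fine, and your first step (identifying $K(\Z,4)\to\SK(4)\to B\GL_1(\tmf)$ with the restriction of the universal twist along $B\Spin/B\String\to B\O/B\String$, and the naturality of \cref{cofiber_twists} in the target ring) is correct. The one step that carries the real content is exactly the one you flag: matching the adjunction-based twist of~\cite[Proposition 8.2]{ABG10} with the descended map $B\Spin/B\String\to B\GL_1(\tmf)$, which amounts to the compatibility in~\cite[Theorem 3.19]{ABGHR14b} between orientations, null-homotopies, and adjoint $\GL_1$-maps, together with the identification of the delooping of $K(\Z,3)\to\GL_1(\tmf)$ with the map out of the cofiber $B\Spin/B\String\simeq K(\Z,4)$. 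That is no worse a gap than the paper's own ``pass our construction through their argument,'' so your sketch is at the same level of rigor. One caveat: your closing assertion that $[K(\Z,4),B\GL_1(\tmf)]$ is not cyclic is not substantiated and, to our knowledge, not in the literature; the honest statement is only that no uniqueness theorem à la Antieau--Gepner--Gómez is available here, which is why a direct comparison of constructions is needed.
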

%This recovers the
%Ando-Blumberg-Gepner twist $K(\Z, 4)\to B\GL_1(\tmf)$~\cite[Proposition 8.2]{ABG10} and. The former equivalence is not
\begin{proof}[Proof sketch]
This equivalence is not obvious, because Ando-Blumberg-Gepner construct their twist in a different way: beginning with a map
$\phi\colon \Sigma_+^\infty K(\Z, 3)\to \tmf$ and using the adjunction~\cite[(1.4), (1.7)]{ABGHR14b} between
$\Sigma_+^\infty$ and $\GL_1$. However, their argument builds $\phi$ out of the map $\lambda\colon B\Spin\to
B\Spin/B\String\simeq K(\Z, 4)$, allowing one to pass our construction through their argument and conclude that
our twist, as a class in $[K(\Z, 4), B\GL_1(\tmf)]$, coincides with Ando-Blumberg-Gepner's.
\end{proof}
Though these twists by degree-$4$ cohomology are relatively well-studied, there are not so many examples of
lower-degree twists of string bordism or topological modular forms in the literature. See
Freed-Hopkins-Teleman~\cite[\S 2]{FHT10}, Johnson-Freyd~\cite[\S
2.3]{JF20}, Beardsley-Luecke-Morava~\cite[Example 5.25]{BLM23}, Tachikawa-Yamashita~\cite{TY23ASD, TY21}, Tachikawa-Yonekura~\cite{TY25}, and~\cite[Remark 2.16]{DY24} for some examples.
\begin{exm}
\label{universal_string_twists}
Just as in \cref{universal_spinc_twists,universal_spin_twists}, \cref{universal_twist} calculates some
$\MTString$-module Thom spectra for us: over $B\O/B\String$ we get $\MTO$; over $B\SO/B\String$ we get $\MTSO$, and
over $K(\Z, 4)$ we get $\MTSpin$. The last example is due to Beardsley~\cite[\S 3]{Bea17}.
\end{exm}
%Central extensions of the form~\eqref{spin_string_SES} are classified by $H^5(K(\Z/2, 2); \Z)\cong \Z/4$. \TODO: show that we can identify $B\SO/B\String$ with supercohomology. This will require introducing supercohomology and citing Freed, Gu-Wen, etc. Then relate this to the usual twist by $K(\Z, 4)$ and make sense of ``$\lambda\in \mathit{SH}^4$ mod 2'' as a degree $2$ class ($w_2$) plus a degree $4$ class ($w_4$).
%
%I think we can characterize both supercohomology and $B\SO/B\String$ by showing that $2\times$ their extension classes is trivial. Since the cohomology group is $\Z/4$, this characterizes them. I think this follows because in both cases the ``total space'' of the extension is an abelian $\infty$-group, i.e.\ $\Omega^\infty$ of a spectrum, meaning the extension class is stable, i.e.\ comes from $[\Sigma^2 H\Z/2, \Sigma^5 H\Z]\cong\Z/2$ (generated by $\Sq^2$, then integral Bockstein). So there's not actually any ambiguity.
%
%Anyways, once we have established this equivalence, define the fake vector bundle twist in terms of supercohomology [\TODO; then also edit section 2].
\begin{rem}
Like in \cref{ospinc_not_gp,ospin_not_gp},~\eqref{ostring_SES} is not an equivalence of $\infty$-groups. The same two proofs are available to us: pulling back to $K(\Z/2, 1)$ and showing we do not obtain an $E_1$-ring spectrum, and comparing the group structures on $[\RP^\infty, B\O/B\String]$ and $[\RP^\infty, K(\Z/2, 1)\times B\SO/B\String]$. For the second proof, one observes that $[\RP^\infty, B\O/B\String]\cong\Z/8$ but $[\RP^\infty, K(\Z/2, 1)\times B\SO/B\String]$ has at least four elements of order $4$, then concludes.

For the first proof, we obtain $\MTString\wedge (B\O_1)^{\sigma-1}$ like before; to our knowledge, this notion of bordism has not been studied.\footnote{By analogy with $\SO$ and $\O$ and $\Spin$ and $\Pin^-$, one could call this tring\textsuperscript{$-$} bordism. We hope there is a better name for this spectrum.} However, since this is a vector bundle Thom spectrum, the change-of-rings trick shows that in topological degrees $15$ and below, the $E_2$-page of the Adams spectral sequence computing $\Omega_*^\String((B\O_1)^{\sigma-1})_2^\wedge$ is isomorphic to $\Ext_{\cA(2)}^{s,t}(H^*((B\Z/2)^{\sigma-1};\Z/2), \Z/2)$ (see \S\ref{VB_change_of_rings} for notation and an explanation). Davis-Mahowald~\cite[Table 3.2]{DM78} have computed these Ext groups, and from their computation it directly follows using the Adams spectral sequence that $\pi_0\cong\Z/2$ and $\pi_3\cong\Z/8$, so just like for $\MTPin^c$ and $\MTPin^-$, $\MTString\wedge (B\O_1)^{\sigma-1}$ does not admit an $E_1$-ring spectrum structure.
\end{rem}
In the proof of \cref{thom_module_calc} we will need to understand the mod $2$ cohomology classes naturally associated to a degree-$4$ supercohomology class $d$. The quotient $t\colon \SH\to \Sigma^{-2}H\Z/2$ gives us a degree-$2$ class $t(d)$, sometimes called the \term{Gu-Wen layer} of $d$.

To proceed further, we study the Serre spectral sequence associated to the fibration $K(\Z, 4)\to \SK(4)\to K(\Z/2, 2)$. Let $\overline \delta\in H^4(K(\Z, 4);\Z/2)$ be the mod $2$ reduction of the tautological class; this defines a class in $E_2^{0,4}$ of our Serre spectral sequence, which we also call $\overline\delta$.
\begin{lem}
The class $\overline\delta\in E_2^{0,4}$ survives to the $E_\infty$-page. 
\end{lem}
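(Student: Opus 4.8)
The plan is to show that $\overline\delta \in E_2^{0,4}$ is a permanent cycle in the Serre spectral sequence of the fibration $K(\Z, 4)\to \SK(4)\to K(\Z/2,2)$ by exhibiting an actual cohomology class in $H^4(\SK(4);\Z/2)$ restricting to the tautological class on the fiber $K(\Z,4)$. The only differentials that could kill $\overline\delta$ (an element of total degree $4$ on the fiber edge) are $d_2,d_3,d_4,d_5$ landing in $E_r^{r, 5-r}$ for $r=2,\dots,5$; equivalently, we must rule out the single potentially nonzero transgression-type differential $d_5(\overline\delta)\in E_5^{5,0}\subseteq H^5(K(\Z/2,2);\Z/2)$, and check that the lower $d_r$ vanish because their targets in $H^{r}(K(\Z/2,2);\Z/2)\otimes H^{5-r}(K(\Z,4);\Z/2)$ are zero in the relevant bidegrees (indeed $H^{<4}(K(\Z,4);\Z/2)$ vanishes except in degree $0$, so $d_2,d_3$ have zero target, and $d_4$ would land in $H^4(K(\Z/2,2);\Z/2)\otimes H^1(K(\Z,4);\Z/2) = 0$). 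So the whole question reduces to $d_5$.

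First I would invoke \cref{SO_String_is_supercoh}, which identifies $\SK(4)$ with $B\SO/B\String$ (equivalently, the fibration in question is the stabilization of $K(\Z,4)\simeq B\Spin/B\String \to B\SO/B\String \to B\SO/B\Spin \simeq K(\Z/2,2)$). Then I would observe that the mod $2$ reduction $\lambda \bmod 2 \in H^4(B\Spin;\Z/2)$ — which is the pullback of $\overline\delta$ along $B\Spin \to B\Spin/B\String \simeq K(\Z,4)$ — extends to $H^4(B\SO;\Z/2)$: indeed by Brown's computation \cite{Bro82} cited in the proof of \cref{string_not_split}, $H^4(B\SO;\Z)\cong\Z$ is generated by $p_1$, and $\lambda \bmod 2 = w_4 + w_2^2$ (the mod $2$ reduction of $\lambda$, equivalently of $p_1/2$ up to the standard Wu-formula identity), which is a genuine class on $B\SO$. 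Pulling this class back along $B\SO/B\String \to B\SO$... — wait, the arrow goes the wrong way. The correct statement, which is exactly the content of the Remark following \cref{SO_String_is_supercoh} (Freed's theorem \cite[Prop.~1.9(i)]{Fre08}), is that $\lambda\in\SH^4(B\Spin)$ pulls back from $\SH^4(B\SO)$; equivalently the composite $B\SO \to \SK(4)$ classifying that class restricts on $B\Spin$ to $\lambda$. I would instead argue more directly: since the fibration $K(\Z,4)\to \SK(4)\to K(\Z/2,2)$ is that of abelian $\infty$-groups, it is (a delooping of) a fibration of infinite loop spaces, so the Serre spectral sequence is a spectral sequence of modules over $H^*(K(\Z/2,2);\Z/2)$ and the differentials are determined by their values on generators; moreover $\overline\delta$ transgresses (if at all) to the $k$-invariant of $\SH$, namely $\beta\Sq^2$, read mod $2$. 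But the whole point of \eqref{sk_defn} is that $\SK(4)$ is \emph{built} as the fiber of the map $K(\Z/2,2)\to K(\Z,5)$ representing $\beta\Sq^2$, so in the Serre spectral sequence of $K(\Z,4)\to \SK(4)\to K(\Z/2,2)$, the class $\delta$ (integral tautological class on the fiber) transgresses to $\beta\Sq^2(T)\in H^5(K(\Z/2,2);\Z)$; reducing mod $2$, the transgression of $\overline\delta$ is the mod $2$ reduction of $\beta\Sq^2(T)$. But $\beta$ is the integral Bockstein, so its mod $2$ reduction is $\Sq^1$ composed with... no: the mod $2$ reduction of the integral class $\beta(y)$ is $\Sq^1(\bar y)$ only when... actually the composite $H^*(-;\Z/2)\xrightarrow{\beta}H^{*+1}(-;\Z)\xrightarrow{\bmod 2}H^{*+1}(-;\Z/2)$ equals the mod $2$ Bockstein $\Sq^1$. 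Hence $d_5(\overline\delta) = \Sq^1\Sq^2(T) = \Sq^3(T) = 0$ in $H^5(K(\Z/2,2);\Z/2)$? — no, $\Sq^1\Sq^2 = \Sq^3$ by the Adem relation, and $\Sq^3(T)$ for $T$ a degree-$2$ class is $\Sq^1\Sq^2(T)$; $\Sq^2(T)=T^2$ and $\Sq^1(T^2)=0$, so indeed $\Sq^3(T)=0$. Therefore $d_5(\overline\delta)=0$.

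So the cleanest route is: (1) reduce, by a degree count on the $E_2$-page (using that $H^i(K(\Z,4);\Z/2)=0$ for $0<i<4$), all the potential differentials on $\overline\delta$ to $d_5(\overline\delta)\in H^5(K(\Z/2,2);\Z/2)$; (2) identify this $d_5$ as the transgression, which by construction \eqref{sk_defn} of $\SK(4)$ is the mod $2$ reduction of the $k$-invariant $\beta\Sq^2(T)$; (3) compute that reduction: it is $\Sq^1\Sq^2(T) = \Sq^3(T)$, and since $T$ has degree $2$, $\Sq^3(T)=0$ for degree reasons (the top operation $\Sq^{\deg}$ is squaring and $\Sq^i$ vanishes above the degree). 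Hence $\overline\delta$ is a permanent cycle. The main obstacle is step (2): pinning down that the relevant Serre-spectral-sequence differential really is computed by the $k$-invariant and that one may reduce the integral transgression mod $2$ to get the mod $2$ transgression — this requires being a little careful about the comparison between the integral and mod $2$ Serre spectral sequences and about naturality of transgression, but it is standard (Kudo transgression / the fact that the fibration is induced from the path-loop fibration over $K(\Z,5)$ by pulling back along $K(\Z/2,2)\xrightarrow{\beta\Sq^2}K(\Z,5)$).
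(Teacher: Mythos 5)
Your proposal is correct and follows essentially the same route as the paper: reduce to the transgressive $d_5$ by a degree count on $H^{<4}(K(\Z,4);\Z/2)$, identify $d_5(\overline\delta)$ via naturality from the path--loop fibration over $K(\Z,5)$ as the mod $2$ reduction of the $k$-invariant $\beta\Sq^2(T)$, and compute $\Sq^1\Sq^2(T)=\Sq^3(T)=0$ since $T$ has degree $2$. The meandering middle paragraph can be deleted; the ``cleanest route'' summary at the end is exactly the paper's proof.
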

\begin{proof} The only possible differential that could be nonzero on $\overline\delta$ is the transgressing $d_5$, which pulls back from the transgressing $d_5$ on $\overline\delta$ in the Serre spectral sequence for the universal fibration with fiber $K(\Z, 4)$, namely $K(\Z, 4)\to E(K(\Z, 4))\to B(K(\Z, 4)) \simeq K(\Z, 5)$. In the universal fibration, $d_5(\overline\delta)$ is the mod $2$ tautological class $\epsilon\in H^5(K(\Z, 5);\Z/2)$, so in the fibration with total space $\SK(4)$, $d_5(\overline\delta)$ is the pullback of $\epsilon$ by the classifying map $\beta\circ\Sq^2\colon K(\Z/2, 2)\to K(\Z, 5)$. Thus $\epsilon\mapsto (\beta\Sq^2(B))\bmod 2 = \Sq^1\Sq^2(B)$, where $B\in H^2(K(\Z/2, 2);\Z/2)$ is the tautological class, but $\Sq^1\Sq^2(B) = \Sq^3(B) = 0$, as $B$ has degree $2$. Thus $d_5(\overline\delta)= 0$.
\end{proof}
\begin{rem}
This is an unstable phenomenon: for $n > 4$, a similar argument shows the transgressing differential on the mod $2$ tautological class of $K(\Z, n)$ is nonzero, so no analogue of $\overline\delta$ exists in the cohomology of $\SK(n)$.
\end{rem}
We want to lift $\overline\delta\in E_\infty^{0,4}$ to an element $\delta$ of $H^4(\SK(4);\Z/2)$. If $B$ is the tautological class of $K(\Z/2, 2)$, then there is an ambiguity between $\delta$ and $\delta + B^2$. To resolve this ambiguity, pull back across the map $\lambda\colon B\SO\to \SK(4)$. By comparing the Serre spectral sequences for the fibrations $K(\Z, 4)\to \SK(4)\to K(\Z/2, 2)$ and $B\Spin\to B\SO\to K(\Z/2, 2)$, one learns that $\lambda^*(\delta)$ is either $w_4$ or $w_4 + w_2^2$. Choosing the former allows us to uniquely define $\delta$.
\begin{cor}
\label{delta_SH_defn}
There is a unique class $\delta\in H^4(\SK(4);\Z/2)$ such that $\lambda^*(\delta) = w_4$.
\end{cor}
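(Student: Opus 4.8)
The plan is to first nail down $H^4(\SK(4);\Z/2)$ from the Serre spectral sequence of $K(\Z, 4)\to\SK(4)\to K(\Z/2, 2)$ that we have already been studying, which shows the lifts of $\overline\delta$ form a two-element set $\{\delta, \delta + B^2\}$; then to distinguish these two lifts by pulling back along $\lambda\colon B\SO\to B\SO/B\String\overset\simeq\to\SK(4)$ and comparing with the Serre spectral sequence of $B\Spin\to B\SO\to K(\Z/2, 2)$.

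First I would observe that in the Serre spectral sequence for $\SK(4)$, the only entries contributing to total degree $4$ are $E_\infty^{0,4}$ and $E_\infty^{4,0}$: the entries $E_2^{1,3}$, $E_2^{2,2}$, $E_2^{3,1}$ vanish because $H^i(K(\Z, 4);\Z/2) = 0$ for $1\le i\le 3$. The preceding lemma identifies $E_\infty^{0,4}\cong\Z/2\langle\overline\delta\rangle$. For $E_\infty^{4,0}$: since $H^4(K(\Z/2, 2);\Z/2)\cong\Z/2\langle B^2\rangle$ and no differential can enter bidegree $(4,0)$ (the only candidates come from $E_r^{4-r, r-1}$, which vanish for $2\le r\le 5$ for the same connectivity reasons), we get $E_\infty^{4,0}\cong\Z/2\langle B^2\rangle$ and the pullback $H^4(K(\Z/2, 2);\Z/2)\to H^4(\SK(4);\Z/2)$ is injective. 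As these are $\Z/2$-vector spaces there is no extension problem, so $H^4(\SK(4);\Z/2)\cong(\Z/2)^2$ and the lifts of $\overline\delta$ are precisely $\delta$ and $\delta + B^2$, as stated in the paragraph preceding this corollary.

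Next I would set up the comparison. The composite $B\SO\xrightarrow{\lambda}\SK(4)\to K(\Z/2, 2)$ is the quotient $B\SO\to B\SO/B\Spin\simeq K(\Z/2, 2)$, i.e.\ $w_2$; and the restriction of $\lambda$ to the fiber $B\Spin$ of $w_2$ is the map $\lambda\colon B\Spin\to B\Spin/B\String\overset\simeq\to K(\Z, 4)$, since $B\String$ is the fiber of this $\lambda$ and the equivalences of \cref{SO_String_is_supercoh} are compatible with $B\Spin/B\String\to B\SO/B\String$. So $\lambda$ is a map of fibrations over $K(\Z/2, 2)$, inducing a map of Serre spectral sequences to the one of $B\Spin\to B\SO\to K(\Z/2, 2)$; it carries $B^2\in E_\infty^{4,0}$ to $w_2^2\in H^4(B\SO;\Z/2)$ and $\overline\delta\in E_\infty^{0,4}$ to the pullback along $\lambda$ of the mod-$2$ tautological class of $K(\Z, 4)$, namely $\lambda\bmod 2 = w_4\in H^4(B\Spin;\Z/2)$ --- the nonzero class there, as it is the reduction of a generator of $H^4(B\Spin;\Z)\cong\Z$.

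Finally I would conclude: $\lambda^*(\delta)\in H^4(B\SO;\Z/2)$ is a lift of $w_4\in E_\infty^{0,4}$ along the filtration, hence equals $w_4$ or $w_4 + w_2^2$ (these being the two such lifts, since the remaining relevant $E_\infty$-term of $H^4(B\SO;\Z/2)$ is $E_\infty^{4,0}\cong\Z/2\langle w_2^2\rangle$); and $\lambda^*(\delta + B^2) = \lambda^*(\delta) + w_2^2$ is the other one. Thus exactly one of the two lifts of $\overline\delta$ pulls back to $w_4$, and calling that one $\delta$ proves the corollary. I expect the only subtle point to be the verification in the previous paragraph that $\lambda\colon B\SO\to\SK(4)$ is genuinely fibered over $K(\Z/2, 2)$ with the stated behavior on fibers; this is essentially built into the construction behind \cref{SO_String_is_supercoh}, and once it is granted, the map of spectral sequences together with the standard identity $\lambda\bmod 2 = w_4$ finishes the argument.
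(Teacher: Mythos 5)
Your proposal is correct and follows essentially the same route as the paper: lift $\overline\delta$ from $E_\infty^{0,4}$, note the only ambiguity is by $B^2$, and resolve it by comparing the Serre spectral sequences of $K(\Z,4)\to\SK(4)\to K(\Z/2,2)$ and $B\Spin\to B\SO\to K(\Z/2,2)$ to see that $\lambda^*(\delta)$ is $w_4$ or $w_4+w_2^2$. You have simply filled in the details (the computation of $H^4(\SK(4);\Z/2)$ and the identification $\lambda\bmod 2=w_4$) that the paper leaves implicit.
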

Phrased differently, associated to every $d\in\SH^4(X)$ is a class $\delta\in H^4(X;\Z/2)$, such that if there is an oriented vector bundle $V\to X$ with $d = \lambda(V)$, then $\delta = w_4(V)$. The same line of reasoning also shows that $\lambda^*(t(d)) = w_2$.
%\begin{rem}
%The Wu formula implies $\Sq^1(w_4)\ne 0$ in $H^5(B\SO;\Z/2)$, so $\Sq^1(\delta)\ne 0$, which is not apparent %from the Serre spectral sequence. This is a small but important piece in the proof of %\cref{thom_module_calc}.
%\end{rem}

\section{Computing the input to Baker-Lazarev's Adams spectral sequence}\label{section:BLSS}
\label{s:BL}
%In this section:
%\begin{itemize}
%	\item First, if the Thom spectrum comes from a vector bundle, we understand its cohomology, hence can run the
%	Adams spectral sequence
%	\item Then introduce the Baker-Lazarev $R$-module Adams spectral sequence
%	\item Then prove the main theorem
%\end{itemize}%
%
\subsection{Review: the change-of-rings theorem for vector bundle Thom spectra}
\label{VB_change_of_rings}

We begin by reviewing how the story goes for vector bundle Thom spectra, where we can take advantage of a general
change-of-rings theorem. This is a standard technique dating back to work of Anderson-Brown-Peterson~\cite{ABP69}
and Giambalvo~\cite{Gia73b, Gia73a, Gia76}; see Beaudry-Campbell~\cite[\S 4.5]{BC18} for a nice introduction.
\begin{lem}[Change of rings]
\label{CoR_lemma}
Let $\mathcal{B}$ be a graded Hopf algebra and $\mathcal{C} \subset \mathcal{B}$ be a graded Hopf subalgebra. If
$M$ is a graded $\mathcal C$-module and $N$ is a graded $\mathcal{B}$-module, then there is a natural isomorphism 
\begin{equation}\label{eq:changeofrings}
    \Ext^{s,t}_{\mathcal{B}}(\mathcal{B} \otimes_{\mathcal C} M, N) \overset{\cong}{\longrightarrow}\Ext^{s,t}_{\mathcal{C}}(M, N)
\end{equation}
\end{lem}
For the little siblings we consider, we have the following isomorphisms of $\cA$-modules:
\begin{subequations}
\label{eq:commonCOR}
\begin{align}
	H^*(H\Z;\Z/2) &\cong \cA \otimes_{\cA(0)}\Z/2\\
	H^*(\ku;\Z/2) &\cong \cA \otimes_{\cE(1)} \Z/2\\
	H^*(\ko; \Z/2) &\cong \cA \otimes_{\cA(1)} \Z/2\\
	H^*(\tmf; \Z/2) &\cong \cA \otimes_{\cA(2)} \Z/2.
\end{align}
\end{subequations}
Here $\cA(n)$ is the subalgebra of $\cA$ generated by $\set{\Sq^1,\Sq^2,\Sq^4,\dotsc,\Sq^{2^n}}$ and $\cE(1) =
\ang{Q_0, Q_1}$, where $Q_0 = \Sq^1$ and $Q_1 = \Sq^1\Sq^2 + \Sq^2\Sq^1$. The isomorphisms in~\eqref{eq:commonCOR}
were proven by Wall~\cite[\S 9]{Wal60} ($H\Z$), Adams~\cite{adams_1961} ($\ku$), Stong~\cite{Stong63} ($\ko$), and
Hopkins-Mahowald~\cite{HM14} ($\tmf$).

To use \cref{CoR_lemma}, we need to make $\cA(0)$, $\cA(1)$, $\cA(2)$, and $\cE(1)$
into Hopf subalgebras of $\cA$. This is equivalent to specifying how these algebras interplay with the cup product,
which the Cartan formula answers. For the Steenrod squares, this is standard; we also have $Q_i(ab) = aQ_i(b) +
Q_i(a)b$ for $i = 0,1$.

\Cref{CoR_lemma}, paired with \eqref{eq:commonCOR}, greatly simplifies many computations: for any spectrum which splits as $ X= R \wedge Y$ where $R$ is
one of $H\Z$, $\ku$, $\ko$, or $\tmf$, the $E_2$-page of the Adams spectral sequence computing the $2$-completed homotopy groups of $X$ (or the
$R$-homology of $Y$) is identified with Ext groups over $\cA(0)$, $\cE(1)$, $\cA(1)$, or $\cA(2)$, respectively. These algebras are much smaller than the entire
$2$-primary Steenrod algebra, so the Ext groups are easier to calculate; thus one often hears the slogan that
$\ko$-, $\ku$-, and $\tmf$-homology groups are relatively easy to compute with the Adams spectral
sequence,\footnote{The Adams spectral sequence computing $H\Z$-homology is essentially a repackaging of the
Bockstein spectral sequence; see May-Milgram~\cite{MM81}.} and by \eqref{eq:bigandlittle} and \eqref{eq:commonCOR},
those computations also compute \spinc, spin, and string bordism (the latter in dimensions $15$ and below). See
Douglas-Henriques-Hill~\cite{DHH11} for a nice related computation of vector bundle twists of string bordism.
\begin{rem}
\label{WWc_defn}
Another way to phrase this is that, though~\eqref{eq:commonCOR} is about the little siblings only, combining it
with~\eqref{eq:bigandlittle} allows us to write down change-of-rings results for the Adams spectral sequences of
the big siblings. Specifically, there is an $\cA(0)$-module $W_1$, an $\cE(1)$-module $W_2$, and an $\cA(1)$-module
$W_3$ such that
\begin{subequations}
\begin{align}
	H^*(\MTSO;\Z/2) &\cong \cA\otimes_{\cA(0)} W_1\\
	H^*(\MTSpin^c;\Z/2) &\cong\cA\otimes_{\cE(1)} W_2\\
	H^*(\MTSpin; \Z/2) &\cong \cA\otimes_{\cA(1)} W_3,
\end{align}
\end{subequations}
so that the $E_2$-pages of the Adams spectral sequences computing the $2$-completions of $\Omega_*^\SO$,
$\Omega_*^{\Spin^c}$, and $\Omega_*^\Spin$ are the Ext groups of $W_1$, $W_2$, and $W_3$, respectively, over
$\cE(1)$, $\cA(1)$, and $\cA(2)$ respectively.
Explicitly, these modules begin in low degrees with (compare~\eqref{eq:bigandlittle})
\begin{subequations}
\begin{align}
	W_1 &\cong \Z/2 \oplus \Sigma^4\Z/2 \oplus \Sigma^5 \cA(0) \oplus \Sigma^8\Z/2 \oplus \Sigma^8 \Z/2
	\oplus\dotsb\\
	W_2 &\cong \Z/2 \oplus \Sigma^4 \Z/2 \oplus\Sigma^8\Z/2 \oplus \Sigma^8\Z/2\oplus
	\Sigma^{10}\cE(1)\oplus\dotsb\\
	W_3 &\cong \Z/2\oplus \Sigma^8\Z/2 \oplus \Sigma^{10}\cA(1)/\Sq^3 \oplus\dotsb
\end{align}
\end{subequations}
\end{rem}

Often, though, what one wants is twisted. For vector bundle twists in the sense of \cref{VB_Thom}, this is not a
problem: if $f\colon X\to B\GL_1(R)$ is a vector bundle twist specified by a rank-$r$ virtual vector bundle $V\to
X$, or strictly speaking by the rank-$0$ virtual vector bundle $V - r\coloneqq V - \underline\R^r$, then $f$
factors through $B\GL_1(\Sph)$, so \cref{Thom_change_of_rings} provides a natural homotopy equivalence\footnote{For
a different, less abstract proof of this splitting, see~\cite[\S 10]{FH21a} or~\cite[\S 10.4]{DDHM23}.}
 \begin{equation}
     Mf\overset{\simeq}{\longrightarrow} R\wedge X^{V - r}.
 \end{equation}
Thus, for the ring spectra $R$ we discussed above, one can also use the change-of-rings isomorphism to simplify the computation of twisted $R$-homology for vector bundle twists: for $\ko$, the $E_2$-page is
\begin{equation}
    E^{s,t}_2 = \Ext^{s,t}_{\cA(1)}(H^*(X^{V-r}; \Z/2), \Z/2) \Longrightarrow ko_{t-s}(X)^{\wedge}_2,
\end{equation}
and the other choices of $R$ are analogous. The $\cA$-action (and hence also the $\cA(n)$ and $\cE(1)$-actions) on $H^*(X^{V-r};\Z/2)$ is easy to compute: the Thom isomorphism tells us the cohomology as a vector space, and the Stiefel-Whitney classes of $V$ twist the Steenrod squares as described in~\cite[Remark 3.3.5]{BC18}.

This is a powerful generalization: many bordism spectra of interest arise as twists in this way, including pin\textsuperscript{$\pm$} bordism and all of the bordism spectra studied in~\cite{BG97, Cam17, WW19, WWZ20, Deb21, FH21a}.

\subsection{Baker-Lazarev's $R$-module Adams spectral sequence}\label{sub:BLASS}
For $R$ an $E_\infty$-ring spectrum,\footnote{Baker-Lazarev work with commutative algebras in
Elmendorf-Kriz-Mandell-May's $\Sph$-modules; as we discussed in Footnote~\ref{footnote:spectra}, we may
equivalently work with $E_\infty$-ring spectra.} Baker-Lazarev~\cite{baker2001adams} develop an $R$-module spectrum
generalization of the Adams spectral sequence which reduces to the usual Adams spectral sequence when $R = \Sph$.

%, where $R$ a commutative algebra over the sphere spectrum $\mathbb{S}$, that has multiplication given by $m: R\wedge R \rightarrow R$ and unit $e: \mathbb{S}\rightarrow R$. The setting in \S\ref{section:reviewthom} where $R$ was taken to be an $E_\infty$ ring spectrum works compatibly with this setting where we treat $R$ as a $\mathbb{S}$-algebra, see the footnote in \S\ref{footnote:spectra}.
\begin{defn}
For $R$-modules $H$ and $M$, the \term{$R$-module $H$-homology} of $M$ is
\begin{subequations}
\begin{equation}
	H^R_*(M) \coloneqq \pi_*(H \wedge_R M),
\end{equation}
and the \term{$R$-module $H$-cohomology} of $M$ is
\begin{equation}
	H_R^*(M) \coloneqq \pi_{-*}\mathrm{Map}_R(M, H).
\end{equation}
\end{subequations}
%for $M$ an $R$-module; likewise define ``$R$-module
%$H$-cohomology'' by $H^*_R M \cong \hom_{\mathbb{F}_p}(H^*_R M, \mathbb{F}_p)$. Suppose we have an morphism $H^{\mathbb{S}}_{*} R \rightarrow H^{\mathbb{S}}_{*}H=\cA_{*}$, for some $H$. 
%%Then taking $R$ as one of our little siblings, the resulting $H^{\mathbb{S}}_{*}H$ will be a free module over $H^{\mathbb{S}}_{*} R$. 
\end{defn}
For the purposes of this paper, $R$ will be one of the little siblings. For each such $R$, there is a canonical
isomorphism $\pi_0(R)\overset\cong\to\Z$, which lifts to identify the Postnikov quotient $\tau_{\le
0}R\overset\simeq\to H\Z$; as $\tau_{\le 0}R$ is an $E_\infty$ $R$-algebra spectrum via the quotient map $R\to\tau_{\le 0}R$ (see~\cite{Kri93, Bas99}),
this data provides a canonical $E_\infty$ $R$-algebra structure on $H\Z$. Composing with the mod $n$ reduction map $H\Z\to
H\Z/n$, we also obtain canonical $E_\infty$ $R$-algebra structures on $H\Z/n$ for all $n$. This data makes both $H_R^*H$ and $H^R_*H$ into Hopf algebras, analogously to how the Steenrod algebra $H^*(H\Z/p;\Z/p)$ and its dual are Hopf algebras (see~\cite{Mil58}). For $n = 2$ we have the
following isomorphisms of ``Hopf algebras of $R$-module cohomology operations:''
\begin{thm}%[{Baker~\cite[Theorem 5.1]{baker2020homotopy}}]
\label{HRH_calc}
Let $R$ be one of the little siblings and $H = H\Z/2$ with the $R$-algebra structure defined above. Then there are
Hopf algebra isomorphisms
%$H^\mathbb{S}_* H = \cA_*$. $H^R_{*}H =\cA_{*}\otimes_{H^{\mathbb{S}}_{*} R} \mathbb{F}_p$ and setting $H = H\Z/2$ gives:
\begin{subequations}\label{eq:HRH}
\begin{alignat}{2}
    R &= H\Z\,, \quad &H_R^{*}H &\cong \cA(0)\\
     R &= \ko\,, \quad &H_R^{*}H &\cong \cA(1)\\
      R &= \ku\,, \quad &H_R^{*}H &\cong \cE(1)\\
       R &= \tmf\,, \quad &H_R^{*}H &\cong \cA(2),
\end{alignat}
\end{subequations}
and dualizing gives the corresponding Hopf algebras of homology operations, e.g.\ $H^{H\Z}_*H\cong \cA(0)_*$.
\end{thm}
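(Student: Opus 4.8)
The plan is to deduce all of~\eqref{eq:HRH} from the classical computations of $H^*(R;\Z/2)$ as an $\cA$-module recorded in~\eqref{eq:commonCOR}. Write $\mathcal{B}$ for the sub-Hopf-algebra of $\cA$ appearing there, so that $\mathcal{B} = \cA(0)$ for $R = H\Z$, $\mathcal{B} = \cE(1)$ for $R = \ku$, $\mathcal{B} = \cA(1)$ for $R = \ko$, and $\mathcal{B} = \cA(2)$ for $R = \tmf$, with $H^*(R;\Z/2)\cong\cA\otimes_{\mathcal{B}}\Z/2$ in each case. First I would compute the $R$-module $\Z/2$-homology $H^R_*(H) = \pi_*(H\wedge_R H)$ for $H = H\Z/2$, and then pass to $H_R^*(H)$ by $\Z/2$-linear duality; the final ``dualizing'' clause of the theorem is then automatic, being a restatement of these homology computations.

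For the homology computation I would use the two-sided bar construction $B(H,R,H) = \lvert\, H\wedge_\Sph R^{\wedge\bullet}\wedge_\Sph H\,\rvert$, whose realization is $H\wedge_R H$. Because mod-$2$ homology is field-valued, the Künneth isomorphism identifies the homotopy of the $n$-simplices with $H_*(R;\Z/2)^{\otimes n}\otimes_{\Z/2}\cA_*$, where $\cA_* = \pi_*(H\wedge_\Sph H)$ is the dual Steenrod algebra, and the skeletal filtration produces a spectral sequence
\[ E_2 = \mathrm{Tor}^{H_*(R;\Z/2)}(\Z/2,\ \cA_*)\ \Longrightarrow\ \pi_*(H\wedge_R H), \]
in which $\Z/2$ is a module via the augmentation of $H_*(R;\Z/2)$ and $\cA_*$ a module via the ring map $H_*(R;\Z/2)\to\cA_*$ induced by $R\to H$. (As sanity checks, $R = \Sph$ recovers $\pi_*(H\wedge_\Sph H) = \cA_*$ and $R = H$ recovers $\Z/2$.) Dualizing $H^*(R;\Z/2)\cong\cA\otimes_{\mathcal{B}}\Z/2$ shows that $H_*(R;\Z/2)$ is the sub-Hopf-algebra of $\cA_*$ of coinvariants for the quotient Hopf algebra $\cA_*\twoheadrightarrow\mathcal{B}_*$ dual to $\mathcal{B}\hookrightarrow\cA$; since $\cA$ is free over $\mathcal{B}$ by the Milnor--Moore theorem, dually $\cA_*$ is free of rank $\dim_{\Z/2}\mathcal{B}$ over $H_*(R;\Z/2)$. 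Hence the $\mathrm{Tor}$ groups are concentrated in homological degree $0$, the spectral sequence collapses, and $\pi_*(H\wedge_R H)\cong\Z/2\otimes_{H_*(R;\Z/2)}\cA_*$ --- which, again by Milnor--Moore (or by inspection in each case), is the quotient Hopf algebra $\mathcal{B}_*$. So $H^R_*(H)\cong\mathcal{B}_*$.

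To extract $H_R^*(H) = \pi_{-*}\mathrm{Map}_R(H,H)$ I would use the extension/restriction adjunction $\mathrm{Map}_R(H,H)\simeq\mathrm{Map}_H(H\wedge_R H,\ H)$. Since $\pi_*H\Z/2 = \Z/2$ is a field, every $H\Z/2$-module is a wedge of shifts of $H\Z/2$; applied to $H\wedge_R H$, whose homotopy $\mathcal{B}_*$ is finite-dimensional, this identifies $\pi_{-*}\mathrm{Map}_H(H\wedge_R H, H)$ with the graded $\Z/2$-linear dual of $\mathcal{B}_*$, the algebra structure from composition corresponding to the product on $\mathcal{B} = \mathcal{B}_*^\vee$ dual to the coproduct on $\mathcal{B}_*$. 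Since the (co)product on $\mathcal{B}_*$ is dual to that on $\mathcal{B}$, this yields $H_R^*(H)\cong\mathcal{B}$ as Hopf algebras, which is~\eqref{eq:HRH} at $p = 2$. The $p = 3$ statement for $\tmf$ is proved identically, working $3$-locally and starting from the $\cA_{(3)}$-module structure of $H^*(\tmf;\Z/3)$ recorded in~\cref{intro_Atmf}.

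I expect the main obstacle to be not a single deep step but the care needed with the bar spectral sequence: establishing it with the stated $E_2$-term and module structures, checking that it is multiplicative and compatible with coproducts, and verifying that its collapse onto a single filtration line leaves no hidden algebra or Hopf-algebra extensions, so that the conclusion $\pi_*(H\wedge_R H)\cong\mathcal{B}_*$ holds as Hopf algebras and not merely as graded vector spaces. One could instead sidestep this by citing the four $p = 2$ cases of~\eqref{eq:HRH} individually from the literature, where each is known.
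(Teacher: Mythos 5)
Your proposal takes a genuinely different route from the paper, for the simple reason that the paper does not prove this theorem at all: it attributes the $H\Z$ case to folklore, the $\ko$ and $\ku$ cases to Baker, and the $\tmf$ case to Henriques, and moves on. What you have written is essentially the argument that lives in those references (Baker's proof, for instance, is exactly a K\"unneth spectral sequence $\mathrm{Tor}^{H_*(R;\Z/2)}(\Z/2,\cA_*)\Rightarrow \pi_*(H\wedge_R H)$ collapsed by the freeness of $\cA_*$ over $H_*(R;\Z/2)$ coming from Milnor--Moore, followed by dualization). So your approach buys a uniform, self-contained treatment of all four $p=2$ cases where the paper buys brevity by citation; the identification of the $E_1$-term, the flatness input, the collapse onto filtration zero, and the passage from $H^R_*H$ to $H_R^*H$ via the free--forgetful adjunction are all correct, and you rightly flag the multiplicative/comultiplicative bookkeeping as the place where care is needed. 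One small imprecision: $H_*(R;\Z/2)\cong(\cA/\!\!/\mathcal{B})^*$ is a subalgebra and comodule algebra of $\cA_*$, not a sub-Hopf-algebra (already for $R=H\Z$ the coproduct of $\xi_2$ involves $\xi_1$, which is absent); this does not affect your argument, since you only use freeness of $\cA$ over $\mathcal{B}$ and its dual.

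The one genuine error is the closing sentence about $p=3$. The $\tmf$ computation at $p=3$ is \emph{not} ``proved identically,'' and the paper itself records why in \cref{intro_Atmf}: by a result of Rezk, $H^*(\tmf;\Z/3)$ is not isomorphic to $\cA_3\otimes_{\cA^{\tmf}}\Z/3$, and the map $\cA^{\tmf}\to\cA_3$ is not even injective, so $\cA^{\tmf}$ is not a subalgebra of the mod $3$ Steenrod algebra and there is no analogue of \eqref{eq:commonCOR} to feed into the freeness-and-collapse step. Fortunately the theorem as stated concerns only $H=H\Z/2$, so this misstep lies outside the statement you were asked to prove; but if you intend to cover $H_\tmf^*(H\Z/3)$ you must fall back on the actual computation of Henriques and Hopkins--Mahowald rather than this spectral sequence collapse.
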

This theorem was proven in pieces: the part for $H\Z$ is standard;
for $\ko$ and $\ku$ this is due to Baker~\cite[Theorem 5.1]{baker2020homotopy}; and for $\tmf$ it is due to
Henriques~\cite{DFHH14}.

In the setting of \cref{HRH_calc}, for any $R$-module spectrum $M$, $H_R^*(M)$ is naturally an $H_R^*H$-module and $H^R_*(H)$ is naturally an $H^R_*H$-comodule, analogously to the mod $2$ cohomology and homology of a spectrum with respect to the Steenrod algebra and its dual.

At this point, we detour briefly to compare $H^R_*$, for $R$ one of the little siblings, with the \term{$H_*(R;\Z/2)$-module indecomposables} functor~\cite[\S 5]{Sto92}.
\begin{defn}[{Stolz~\cite[\S 5]{Sto92}}]
\label{indecomp_defn}
Let $\cA_*$ denote the $2$-primary dual Steenrod algebra, $\cB_*$ be a sub-Hopf algebra of $\cA_*$, and $R$ be an $E_\infty$-ring spectrum such that there is an isomorphism of both $\cA_*$-comodules and $\Z/2$-algebras
\begin{equation}
\label{cotensor_R}
    H_*(R;\Z/2) \overset\cong\longrightarrow \cA_* \mathbin{\square}_{\cB_*} \Z/2,
\end{equation}
where $\mathbin{\square}_{\cB_*}$ denotes the cotensor product of $\cB_*$-comodules. If $N$ is a bounded-below, finite-type $R$-module spectrum, the \term{$H_*(R;\Z/2)$-module indecomposables} of $N$ is the $\cB_*$-comodule
\begin{equation}
    \overline{H_*(N)}\coloneqq H_*(N;\Z/2)\otimes_{H_*(R;\Z/2)} \Z/2.
\end{equation}
\end{defn}
Stolz~\cite[Proposition 5.4]{Sto92} showed that if $N$ is as in \cref{indecomp_defn}, there is a natural isomorphism
\begin{equation}
    H_*(N;\Z/2) \overset\cong\longrightarrow \cA_*\mathbin{\square}_{\cB_*} \overline{H_*(N)}.
\end{equation}
See Stolz~\cite[\S 5]{Sto92} for further discussion with $R = \ko$, Führing~\cite[\S 5]{Fuh22} for $R = H\Z$, and Granath~\cite[\S 2.9]{Gra23} for $R = \ku$.

The little siblings $H\Z$, $\ku$, $\ko$, and $\tmf$ all satisfy~\eqref{cotensor_R} with $\cB_*$ equal to $\cA(0)_*$, $\cE(1)_*$, $\cA(1)_*$, and $\cA(2)_*$ respectively; this follows formally by dualizing~\eqref{eq:commonCOR}.
\begin{prop}
\label{stolz_BL}
Let $R$ be one of $H\Z$, $\ku$, $\ko$, or $\tmf$, so that $\cB_*\cong H^R_*H$ by \cref{HRH_calc}. The functors $H^R_*$ and $\overline{H^*(\bl)}$, from $R$-module spectra to $H^R_*H$-comodules, are naturally isomorphic.
\end{prop}
\begin{proof}
In this proof, all cohomology has $\Z/2$ coefficients.
Consider the Künneth spectral sequence (see \cite[Theorem IV.4.1]{EKMM97} and~\cite{Til16})\footnote{This version of the Künneth spectral sequence is associated to the smash product $(H\wedge H)\wedge_{H\wedge R} (H\wedge N)\simeq H\wedge_R N$. See Lawson~\cite[Proposition 2.7.5]{Law18} or Senger~\cite[\S 3]{Sen24}.}
\begin{equation}
\label{first_kn}
    E^2_{*,*} = \Tor^{H_*(R)}_{*,*}(\Z/2, H_*(N)) \Longrightarrow \pi_*(H\wedge_R N) = H^R_*(N).
\end{equation}
To prove the proposition, it suffices to show that these $\Tor$ groups vanish in positive homological degrees: then the spectral sequence collapses for degree reasons to imply an isomorphism
\begin{equation}
    \Z/2\otimes_{H_*(R)} H_*(N) = \Tor_{0,*}^{H_*(R)}(\Z/2, H_*(N)) \overset\cong\longrightarrow H^R_*(N).
\end{equation}
Proving the claimed higher Tor vanishing is not so hard: the natural isomorphism $H_*(N)\cong H_*(R)\otimes\overline{H_*(N)}$~\cite[\S 5]{Sto92} simplifies the $E^2$-page of~\eqref{first_kn}:
\begin{equation}
    E^2_{*,*} \cong \Tor_{*,*}^{H_*(R)}(\Z/2, H_*(R)\otimes \overline{H_*(N)}) \cong\Tor_{*,*}^{\Z/2}(\Z/2, \overline{H_*(N)}),
\end{equation}
and $\Tor$ over a field vanishes in positive homological degrees.%[\TODO: primitives and $H_R^*$?]
\end{proof}
%
%\begin{rem}
%When $R$ is one of the little siblings, the $R$-module homology $H_R^*$ from \cref{} is closely related to the ``$H_*(R)$-module indecomposables'' functor~\cite[\S 5]{Sto92}.
%
%In this remark, all cohomology has $\Z/2$ coefficients.
%Let $\cA_*$ denote the dual Steenrod algebra. Let $R$ be one of $H\Z$, $\ku$, $\ko$, or $\tmf$, $\cB = H_R^*H$, and $\cB_*$ be the dual Hopf algebra to $\cB$. If, then [\TODO] produces an isomorphism
%
%Since $R$ is an $E_\infty$-ring spectrum, $H_*(R)$ is a commutative ring; for any bounded-below, finite type $H_*(R)$-module $M$, the \term{$H_*(R)$-module indecomposables} of $M$ is the $\cB_*$-coalgebra
%There is a natural isomorphism between the functors $H^R_*$ and $\overline{H^*(\bl)}$ from $R$-modules to $\cB_*$-coalgebras. 
\begin{rem}
Despite the equivalence in \cref{stolz_BL}, the two homology theories $H^R_*$ and $\overline{H_*(\bl)}$ have different strengths. The definition of $H^R_*$ makes it easier to use for the applications we have in mind, and $\overline{H_*(\bl)}$ is more generally applicable to ``homology $R$-modules'' (see Stolz~\cite[\S 2]{Sto94}). As we do not need this generality, we stick with $H^R_*$ and $H_R^*$.
%Some sort of remark here re: $H^R_*(M)$ coincides with the indecomposables in the sense of Stolz, and $H_R^*(M)$ presumably coincides with primitives. $H_R^*$ is less general (c.f.\ homology $\ko$-modules) but is general enough, and is easier to use
\end{rem}

%\begin{comment}
%It is a fact that 
%$H^R_{*}H =\cA_{*}\otimes_{H^{\mathbb{S}}_{*} R} \mathbb{F}_p $, and setting $H = H\Z/2$ by  gives (\TODO: sign post):
%\begin{subequations}\label{eq:HRH}
%\begin{align}
%    R &= H\Z\,, \quad H^R_{*}H = \cA_{*}\otimes_{\cA(0)} \mathbb{F}_p\\
%     R &= \ko\,, \quad H^R_{*}H = \cA_{*}\otimes_{\cA(1)} \mathbb{F}_p\\
%      R &= \ku\,, \quad H^R_{*}H = \cA_{*}\otimes_{\cE(1)} \mathbb{F}_p\\
%       R &= \tmf\,, \quad H^R_{*}H = \cA_{*}\otimes_{\cA(2)} \mathbb{F}_p\,
%\end{align}
%\end{subequations}
%\end{comment}

We now present the spectral sequence; let $M$ and $N$ be $R$-modules, and let $H$ be a commutative $R$-ring spectrum.
\begin{thm}[{Baker-Lazarev~\cite{baker2001adams}}]
\label{baker_laz}
Let $M$ and $N$ be $R$-modules and $H$ be an $E_\infty$ $R$-algebra, and suppose that $H^R_*H$ is a flat
$\pi_*(H)$-module. Then there is a spectral sequence of Adams type, natural in $M$, $N$, $H$, and $R$, with
$E_2$-page
\begin{equation}
    E^{s,t}_2 = \Ext^{s,t}_{H_R^* H}(H^*_R M,H^*_R N),% \Rightarrow M^R_*N\,.
\end{equation}
and if $N$ is connective and $M$ is a cellular $R$-module spectrum with finitely many cells in each
degree,\footnote{This condition on $M$ is the analogue in $\cat{Mod}_R$ of the notion of a CW spectrum with
finitely many cells in each degree. If $M$ is the $R$-module Thom spectrum associated to a map $f\colon X\to
B\GL_1(R)$, which is the only case we consider in this paper, then this condition on $M$ is met if $X$ is a CW
complex with finitely many cells in each dimension.} then this spectral sequence converges to the homotopy groups
of the ($R$-module) $H$-nilpotent completion of $N_*^RM$.
\end{thm}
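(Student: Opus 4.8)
This is Baker--Lazarev's theorem, so the substance of a proof is executing the classical Adams construction inside the stable symmetric monoidal category $\cat{Mod}_R$. The plan is: build an $H$-based Adams tower of $M$ in $\cat{Mod}_R$, extract a spectral sequence from its exact couple, identify the $E_2$-page with $\Ext$ over $H_R^*H$ using the flatness hypothesis, and then verify strong convergence using the connectivity and finiteness hypotheses.

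First I would let $\overline H$ denote the fiber of the unit $R\to H$ in $\cat{Mod}_R$, set $M_s\coloneqq\overline H^{\wedge_R s}\wedge_R M$ (so $M_0=M$), and use the cofiber sequences $M_{s+1}\to M_s\to H\wedge_R M_s$ coming from $\overline H\to R\to H$ to assemble the tower
\[
\cdots\longrightarrow\overline H^{\wedge_R 2}\wedge_R M\longrightarrow\overline H\wedge_R M\longrightarrow M,
\]
whose $s$-th layer $K_s\coloneqq H\wedge_R M_s$ is a free $H$-module. (Equivalently, one realizes an injective resolution of the $H^R_*H$-comodule $H^R_*M$; the two constructions produce the same spectral sequence.) Applying the contravariant functor $N_R^*(\bl)=\pi_{-*}\mathrm{Map}_R(\bl,N)$ to this tower produces an exact couple, hence a conditionally convergent spectral sequence with $E_1^{s,t}=N_R^{t}(K_s)$; naturality in $M$, $N$, $H$, and $R$ is immediate since every step is functorial.

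The one genuinely non-formal bookkeeping step is the identification of $E_2$. The hypothesis that $H^R_*H$ is flat over $\pi_*H$ yields a K\"unneth isomorphism $H^R_*(H\wedge_R X)\cong H^R_*H\otimes_{\pi_*H}H^R_*X$ for every $R$-module $X$; iterating it, $H^R_*(K_s)$ is the degree-$s$ term of the cobar complex of the coalgebra $H^R_*H$ on the comodule $H^R_*M$, with $d_1$ the cobar differential. A universal-coefficient argument for these induced $H$-modules — together with, to pass between the comodule picture and the module picture displayed in the statement, finite-type hypotheses on $M$ and $N$ (satisfied by all $M$, $N$ in our applications) — identifies $(E_1^{*,*},d_1)$ with the standard complex computing $\Ext_{H_R^*H}(H_R^*M,H_R^*N)$, so that $E_2^{s,t}=\Ext^{s,t}_{H_R^*H}(H_R^*M,H_R^*N)$.

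Finally I would treat convergence. By construction the tower exhibits the $H$-nilpotent completion of the mapping spectrum $\mathrm{Map}_R(M,N)$ as a homotopy limit, so the abutment is $\pi_{t-s}$ of that completion, and conditional convergence is automatic. Upgrading to strong convergence — verifying Boardman's criterion that the derived $E_\infty$-term vanishes — is the main obstacle, and is exactly where the hypotheses are used: $N$ connective bounds the relevant groups below, and $M$ cellular with finitely many cells in each degree makes $H_R^*M$ of finite type, so each $E_2^{s,t}$ is finite and, in any fixed range of $t-s$, only finitely many filtrations contribute, which forces the requisite $\lim^1$ (Mittag--Leffler) vanishing. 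The genuinely delicate points are thus pinning down the abutment as the $H$-nilpotent completion and this $\lim^1$ bookkeeping; everything else is the formal transport of the classical Adams machine, which goes through because $\cat{Mod}_R$ (modeled by the $R$-modules of~\cite{EKMM97}) is a well-behaved stable symmetric monoidal category. This reproduces the construction of~\cite{baker2001adams}.
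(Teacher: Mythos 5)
The paper does not actually prove this theorem; it quotes it from Baker--Lazarev \cite{baker2001adams}, so your sketch should be measured against the standard proof there. Your overall architecture --- the canonical $H$-based Adams tower $M_s=\overline H^{\wedge_R s}\wedge_R M$ in $\cat{Mod}_R$, the K\"unneth/flatness identification of $E_1$ with a cobar-type complex, and Boardman-style convergence from the connectivity and finite-type hypotheses --- is exactly that of \cite{baker2001adams}, and those are indeed the right load-bearing steps.

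There is, however, a genuine variance error in how you form the exact couple: you resolve $M$ and then apply the \emph{contravariant} functor $\mathrm{Map}_R(\bl,N)$ to the tower. This does not produce the claimed $E_1$-page. For an induced $H$-module one has $\mathrm{Map}_R(H\wedge_R M_s,N)\simeq\mathrm{Map}_R(M_s,\mathrm{Map}_R(H,N))$, whose homotopy is not controlled by $H_R^*K_s$ and $H_R^*N$; already in the classical case $R=N=\Sph$, $H=H\Z/2$, $M=\Sph$, $s=0$, your $E_1^{0,t}$ would be $[H\Z/2,\Sph]_{-t}$, which is not $\Hom^t_{\cA}(\cA,\Z/2)$. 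The abutment comes out wrong for the same reason: taking $N=R$, your $\mathrm{Map}_R(M,R)$ is the $R$-linear dual of $M$, whereas the theorem's abutment specializes to $\pi_*(M)$ (this is the only case used in the paper). The functor must be applied covariantly in the tower direction: either $N\wedge_R(\bl)$, giving $E_1^{s,*}=\pi_*(N\wedge_R H\wedge_R M_s)$, the cobar complex for the $H^R_*H$-comodule $H^R_*M$ with coefficients in $H^R_*N$, whose dual (using the finite-type hypotheses you already invoke) is the module-theoretic $\Ext$ in the statement, with abutment the completion of $N\wedge_R M$; or $\mathrm{Map}_R(N,\bl)$, for which $\pi_*\mathrm{Map}_R(N,H\wedge_R M_s)\cong\Hom_{H_R^*H}(H_R^*K_s,H_R^*N)$ precisely because maps \emph{into} an induced $H$-module are computed from $H_R^*$. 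With that correction, the rest of your outline (flatness, freeness of $H_R^*K_s$ over $H_R^*H$, conditional convergence plus Mittag--Leffler vanishing of the derived $E_\infty$-term) goes through as in Baker--Lazarev.
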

Without the flatness assumption, one in general only has a description of the $E_1$-page, and it is more
complicated,\footnote{In applications, this may be less bad than it seems: for example, McNamara-Reece~\cite[\S
6.2]{MR22} interpret the $E_1$-page of the classical Adams spectral sequence in the context of quantum gravity.} though see also recent work of Burklund-Pstrągowski~\cite{BP25}.
For example, this issue occurs when $R = \Sph$ and $H = \ku$, $\ko$, or $\tmf$; see~\cite{Mah81, Dav87, LM87,
BOSS19, BBBCX20, BBBCX21}. However, if $p$ is a prime number, $\pi_*(H\Z/p)\cong\Z/p$ is a field, so the flatness
assumption is satisfied for all $R$; as this is the only case we consider in this paper, we say no more about the
flatness assumption in \cref{baker_laz}.

The notion of the \term{$H$-nilpotent completion} of a spectrum is due to Bousfield~\cite[\S 5]{Bou79}. When $H =
H\Z/p$, $p$ prime, this is the usual $p$-completion~\cite[Example 1.16]{Rav84}.\footnote{Ravenel assumes $R = \Sph$, but his result is true in the generality we work in.} Thus if the homotopy groups of $N
\wedge_R M$ are finitely generated abelian groups, this as usual detects free and $p^k$-torsion summands, but not
torsion for other primes.

When $R = \Sph$, \cref{baker_laz} reduces to the classical $H$-based Adams spectral sequence, with its standard
convergence results. We will apply \cref{baker_laz} when $R$ is one of the little siblings, $H = H\Z/p$ for $p$
prime, and $N = R$: there is a canonical homotopy equivalence $R\wedge_R M\simeq M$, so in this setting
Baker-Lazarev's spectral sequence takes as input $\Ext_{H_R^*H}(H_R^*(M), \Z/2)$, and converges to the
$p$-completed homotopy groups of $M$.

%The object that this converges to is given by $M^*_RN = \pi_{-*}(M \wedge_R N)$, i.e. the homotopy groups of maps between the $R$-modules $M$ and $N$.  If we choose to forget the $R$-module structure down to $\mathbb{S}$ then $M^*_{\mathbb{S}}N$ gives the homotopy groups of maps between spectra.
%If we take $H^R_*H = \cA_*$ and $N=\mathbb{S}^0_R$, then $H^*_R N = \mathbb{F}_{p}$ and the $E_2$ page gives back the well known $E_2$ page for the Adams spectral sequence.

For Thom spectra $H_R^*$ is easy.
\begin{lem}[$R$-module Thom isomorphisms]
 \label{Rmod_Thom_iso}
   For any $E_\infty$-ring spectrum $R$ such that $H\coloneqq H\Z/2$ is an $R$-algebra and any map $f\colon X\to B\GL_1(R)$, there are isomorphisms
\begin{subequations}
\begin{gather}
\label{homological_Thom}
    H_*(X;\Z/2)\overset\cong\longrightarrow H^R_*(Mf)\\
    H^*(X;\Z/2)\overset\cong\longrightarrow H_R^*(Mf).
\end{gather}
\end{subequations}
\end{lem}
This means that $H_R^*(Mf)$ is a free $H^*(X;\Z/2)$-module on a class $U\in H_R^0(Mf)$, which is the Thom class in
this setting.
%The Poincaré duality algebra condition is met for all of the little siblings $R$ we consider in this
%paper, thanks to the explicit identifications above.
\begin{proof}
Apply \cref{Thom_change_of_rings} with $R_1 = R$ and $R_2 = H\Z/2$ to learn that $Mf\wedge_R H\Z/2$, the object
whose homotopy groups are $H^R_*(Mf)$, is the Thom spectrum of a twist $f'\colon X\to B\GL_1(H\Z/2)$. By
\cref{discrete_example}, $B\GL_1(H\Z/2)$ is contractible, so $f'$ is null-homotopic, so by \cref{trivial_twists},
$Mf\wedge_R H\Z/2\simeq X_+\wedge H\Z/2$. Take homotopy groups to obtain~\eqref{homological_Thom}.

For cohomology,\footnote{We thank an anonymous referee for a suggestion to simplify this part of the proof.} we have a chain of equivalences of spectra
\begin{equation}\label{hom_2_coh}
    \begin{aligned}
        \Map_R(Mf, H\Z/2) &\simeq \Map_{H\Z/2}(Mf\wedge_R H\Z/2, H\Z/2)\\
        &\simeq \Map_{H\Z/2}((X_+)\wedge H\Z/2, H\Z/2)\\
        &\simeq \Map_\Sph(X_+, H\Z/2),
    \end{aligned}
\end{equation}
and the claim follows by taking homotopy groups. The first and third equivalences in~\eqref{hom_2_coh} are instances of the natural isomorphism $\Map_A(B, C)\simeq \Map_E(B\wedge_A E, C)$ for an $E_\infty$-ring spectrum $A$, an $E_\infty$ $A$-algebra spectrum $E$, and $A$-modules $B$ and $C$, and the middle equivalence in~\eqref{hom_2_coh} is the homology Thom isomorphism from the first part of the proof.
%To pass from homology to cohomology, we need some version of the universal coefficient theorem. This would follow
%if $H\Z/2$ were \term{shifted Spanier-Whitehead self-dual} in $\cat{Mod}_R$, i.e.\ if we were given data of a
%homotopy equivalence of $R$-module spectra $\mathrm{Map}_R(H\Z/2, R)\overset\simeq\to \Sigma^k H\Z/2$ for some $k$:
%this correspondence would allow one to identify $R$-module $H\Z/2$-homology and $R$-module $H\Z/2$-cohomology after
%a shift by $k$. Baker~\cite[Proposition 3.2]{baker2020homotopy} shows that whenever $H^*_RH$ is a Poincaré self-duality algebra, then $H\Z/2$ is
%shifted Spanier-Whitehead self-dual, and Poincaré self-duality holds for $\cA(0)$, $\cA(1)$, $\cE(1)$, and
%$\cA(2)$, so we can conclude.
%universal coefficient theorem for $H_*(X;\Z/2)$ and
%$H^*(X;\Z/2)$; for $H_R^*(Mf)$ and $H^R_*(Mf)$, one can use the universal coefficient spectral sequence for
%$R$-module $H\Z/2$-(co)homology constructed by Elmendorf-Kriz-Mandell-May~\cite[Theorem IV.4.5]{EKMM97} and
%Baker-Lazarev~\cite[\S 3]{baker2001adams} and observe that it collapses to obtain the needed isomorphism.
%
%By the universal coefficient theorem, it suffices to prove the analogous isomorphism for
% help! what's hom(Z/2, R)? or anderson self-dual T__T
% Map_R(Mf, Z/2) = Map_R(Mf smash_R Z/2, R) = Map_R(R smash X smash Z/2, R)
\end{proof}

For most of our applications we will take $H = H \Z/2$.%, $N=\mathbb{S}^0_R$, and $R$ to be one of the big siblings.
\begin{exm}[$\tmf$ at the prime $3$]
\label{intro_Atmf}
We will also work with an interesting odd-primary example, where $H = H\Z/3$ and $R = \tmf$. Let $\cA_3 \coloneqq
H^*H$, which is the mod $3$ Steenrod algebra, and let $\cA^{\tmf}\coloneqq H^*_\tmf H$; Henriques and Hill, using
the work of Behrens \cite{behrens2006modular} and unpublished work of Hopkins-Mahowald, showed that
\begin{equation}\label{eq:Atmf_relations}
    \cA^{\tmf}\cong \Z/3\ang{\beta, \cP^1}/(\beta^2, \beta(\cP^1)^2\beta - (\beta\cP^1)^2 - (\cP^1\beta)^2, (\cP^1)^3).
\end{equation}
Curiously, Rezk showed that $H^*(\tmf;\Z/3)$ is not isomorphic to $\cA_3\otimes_{\cA^{\tmf}}\Z/3$: see~\cite[\S 2]{Cul21}.

The map $\phi\colon H_\tmf^*H\to H^*H$ sends $\beta$ to the Bockstein of $0\to\Z/3\to\Z/9\to\Z/3\to 0$ and $\cP^1$ to the first Steenrod power. However, unlike in the previous examples we studied, $\phi$ is not injective! The relation $\beta(\cP^1)^2 + \cP^1\beta\cP^1 + (\cP^1)^2\beta = 0$ is present in $\cA_3$ but not in $\cA^{\tmf}$ (see, e.g.,~\cite[Corollary 13.7]{BR21}).

Baker-Lazarev's \cref{baker_laz} implies that for any $\tmf$-module spectrum $M$, $H_\tmf^*(M)$ carries a natural $\cA^{\tmf}$-module action, and there is an Adams spectral sequence
\begin{equation}
    E_2^{s,t} = \Ext_{\cA^{\tmf}}^{s,t}(H_\tmf^*(M), \Z/3) \Longrightarrow \pi_{t-s}(M)_3^\wedge.
\end{equation}
In general, we will let $H_\tmf^*(M)$ refer to the mod $2$ $\tmf$-module cohomology and denote the mod $3$
$\tmf$-module cohomology by $H_\tmf^*(M; \Z/3)$. Because $(\Z/3)^\times$ is nontrivial, $B\GL_1(H\Z/3)$ is not
contractible, so the proof of \cref{Rmod_Thom_iso} does not directly generalize to this setting; however, as
$B\GL_1(H\Z/3)\cong B(\Z/3)^\times$ (see \cref{discrete_example}), for any twist $f\colon X\to B\GL_1(\tmf)$
factoring through a simply connected space, the induced twist of $H\Z/3$ is trivial and the argument goes through
to show $H_{\tmf}^*(M^\tmf f;\Z/3)\cong H^*(X;\Z/3)$. As $\SK(4)$ is simply connected, this includes the fake
vector bundle twists of $\tmf$ whose components in $H^1(\bl;\Z/2)$ vanish.

Like for the mod $2$ subalgebras of the Steenrod algebra that we discussed, we will want to know how $\cA^{\tmf}$ acts on products. The map $\cA^{\tmf}\to\cA_3$ is a map of Hopf algebras~\cite[\S 13.1]{BR21}, allowing us to use the Cartan formula and multiplicativity of the Bockstein in $\cA_3$ to conclude that in $\cA^{\tmf}$,
\begin{subequations}
    \begin{align}
        \cP^1(ab) &= \cP^1(a)b + a\cP^1(b)\,,\\
        \beta(ab) &= \beta(a)b + (-1)^{\abs a}a\beta(b).
    \end{align}
\end{subequations}
\end{exm}
When $R$ is one of the little siblings, \cref{HRH_calc} implies that for any $R$-module spectrum $M$,
Baker-Lazarev's spectral sequence calculates $\pi_*(M)_2^\wedge$ as the Ext of \emph{something} over an algebra much
smaller than $\cA$ -- one of $\cA(0)$, $\cE(1)$, $\cA(1)$, or $\cA(2)$. Thus the change-of-rings approach to
computing $\pi_*(R\wedge Y)_2^\wedge$ that we described in \S\ref{VB_change_of_rings} generalizes to other
$R$-modules $M$, in particular when $M$ is an $R$-module Thom spectrum -- we just have to figure out $H_R^*(M)$.
This will be the main result of the next section.

%In the reference \cite{baker2020homotopy} the object $H^*_R(H\Z)$ has been computed for $R$ equal to $H\Z, \ko, \ku, \tmf$, and the results are the subalgebras of  $\cA(0),\cA(1),\cE(1)$, and $\cA(2)$ of the mod 2 Steenrod algebra. 
%There is a slightly generalized  change-of-rings theorem given by taking $\mathcal{B} = \cA $ and $\mathcal{C} =
%H^*_R H $ in \cref{CoR_lemma}, which reduces to the familiar ones that were discussed in \eqref{eq:commonCOR} when $R$ is one of the little siblings.  Therefore, by using the map of rings in \eqref{eq:bigandlittle} to go from the big sibling to the little sibling, along with  \eqref{eq:HRH} we again find that we can just compute over the subalgebras of $\cA$.
%Having established that change-of-rings exists for $R$-module spectral sequences, now we just need $H^*_RH$ where $H$ is the Thom spectrum $Mf$; establishing this will be the main result of the next section.

%\begin{rem}
%By taking $R = {S}$, we get that $H_R^*(M) = \Hom(M, -)=  H^*(M)$ since every spectrum is a $\mathbb{S}$-module in a unique way,
%and $H^*_{\mathbb{S}}(H) = \cA$. The generalization in \cite{baker2001adams} recovers the well known Adams spectral seqeunce over the Steenrod algebra.
%\end{rem}

\subsection{Proof of the main theorem}\label{sub:mainthm}
%\TODO: we need to define the mod $2$ cohomology classes associated to a degree-$4$ supercohomology class
%$d\in\SH^4(X)$, which may or may not be $\lambda(V)$ for an oriented vector bundle $V\to X$.
%\begin{itemize}%
%	\item $t(d)\in H^2(X;\Z/2)$, sometimes called the ``Majorana layer.'' Uniquely defined by the quotient $\SH\to
%	\Sigma^{-2}H\Z/2$. If $d = \lambda(V)$, $t(d) = w_2(V)$.
%	\item What we've been calling $\delta$. This is unstable: it came from the fibration $K(\Z, 4)\to
%	B\SO/B\String\to K(\Z/2, 2)$ and the Serre spectral sequence (notes: Serre collapses in range because
%	$\beta\circ\Sq^2$ vanishes mod $2$. There is an ambiguity in $\delta$ versus $\delta + t(d)^2$, and a hidden
%	$\Sq^1$: $\Sq^1(\delta) = \Sq^2\Sq^1t(d) + t(d)\Sq^1 t(d)$, in order for an $\cA(2)$-module to be consistently
%	defined). If $d = \lambda(V)$, $\delta = w_4(V)$ (well, up to ambiguity: we might get $w_4 + w_2^2$).
%\end{itemize}
At this point, we know from the previous section that even for non-vector-bundle Thom spectra $M^Rf$ over $R =
H\Z$, $\ku$, $\ko$ and $\tmf$, we can work over $\cA(0)$, $\cE(1)$, $\cA(1)$, and $\cA(2)$ to compute the $E_2$-page of
Baker-Lazarev's Adams spectral sequence, implying that a change-of-rings formula for these Thom spectra exists.
Our next step is to
determine the $\cA(0)$-, $\cE(1)$-, $\cA(1)$-, and $\cA(2)$-modules $H_R^*(M^Rf)$. We describe the actions of the generators of
$\cA(0)$, $\cE(1)$, $\cA(1)$, and $\cA(2)$ below in \cref{the_twisted_modules}; however, it is not yet clear that they
satisfy the Adem relations, so we describe these modules over freer algebras, then later in the proof of
\cref{thom_module_calc} we show they are compatible with the Adem relations, hence are in fact $H_R^*H$-modules.
%\TODO: also say $\cA^{\tmf}$!
\begin{defn}
\label{the_twisted_modules}
Let $X$ be a space.
\begin{enumerate}
	\item Given $a\in H^1(X;\Z/2)$, let $M_{H\Z}(a, X)$ be the $\Z/2[s_1]$-module which is a free
	$H^*(X;\Z/2)$-module on a single generator $U$, and with $s_1$-action
	\begin{equation}
		s_1(Ux) \coloneqq U(ax + \Sq^1(x)).
	\end{equation}
	\item Given $a\in H^1(X;\Z/2)$ and $c\in H^3(X;\Z)$, let $M_\ku(a, c, X)$ be the $\Z/2\ang{q_0, q_1}$-module
	which is a free $H^*(X;\Z/2)$-module on a single generator $U$, and with $q_0$- and $q_1$-actions given by
	\begin{equation}\label{eq:kuActions}
	\begin{aligned}
		q_0(Ux) &\coloneqq U(ax + Q_0(x))\\
		q_1(Ux) &\coloneqq U((c\bmod 2+a^3)x + Q_1(x)).
	\end{aligned}
	\end{equation}
	\item Given $a\in H^1(X;\Z/2)$ and $b\in H^2(X;\Z/2)$, let $M_\ko(a, b, X)$ be the $\Z/2\ang{s_1, s_2}$-module
	which is a free $H^*(X;\Z/2)$-module on a single generator $U$, and with $s_1$- and $s_2$-actions
	\begin{equation}
	\label{ko_s1s2}
	\begin{aligned}
		s_1(Ux) &\coloneqq U(ax + \Sq^1(x))\\
		s_2(Ux) &\coloneqq U(bx + a\Sq^1(x) + \Sq^2(x)).
	\end{aligned}
	\end{equation}
	\item Given $a\in H^1(X;\Z/2)$, and $d\in SH^4(X)$, let $M_\tmf(a,  d, X)$ be the
	$\Z/2\ang{s_1, s_2, s_4}$-module
	which is a free $H^*(X;\Z/2)$-module on a single generator $U$, with $s_1$- and $s_2$-actions given
	by~\eqref{ko_s1s2} with $b = t(d)$, and $s_4$-action given by
	\begin{equation}
		s_4(Ux) = U(\delta x + t(d) a+\Sq^1(t(d))) \Sq^1(x) + t(d)\Sq^2(x) + a\Sq^3(x) + \Sq^4(x)).
	\end{equation}
	\item Given $d\in \SH^4(X)$, let $M_\tmf'(d, X)$ be the $\Z/3\ang{\beta, p_1}/(\beta^2)$-module which is a
	free $H^*(X;\Z/2)$-module on a single generator $U$ and $\beta$- and $p_1$-actions specified by
	\begin{equation}
	\begin{aligned}
		\beta(Ux) &\coloneqq U\beta(x)\\
		p_1(Ux) &\coloneqq U((d\bmod 3)x + \cP^1(x)).
	\end{aligned}
	\end{equation}
\end{enumerate}
\end{defn}
The mod $3$ reduction of the supercohomology class $d$ is defined as usual as the image of $d$ after passing to the
mod $3$ Moore spectrum $\Sph/3$:
\begin{equation}
	[X, \Sigma^4\SH] \longrightarrow [X, \Sigma^4\SH\wedge \Sph/3]\overset\cong\longrightarrow [X, \Sigma^4 H\Z/3],
\end{equation}
because $H\Z/2\wedge \Sph/3\simeq 0$. Thus $d\bmod 3$ is well-defined as a class in $H^4(X;\Z/3)$.
\begin{lem}
Keep the notation from \cref{the_twisted_modules}.
\begin{enumerate}
	\item The action of $s_1$ on $M_{H\Z}(a, X)$ squares to $0$, so the $\Z/2[s_1]$-module structure on $M_{H\Z}(a,
	X)$ refines to an $\cA(0)$-module structure with $\Sq^1(x)\coloneqq s_1(x)$.
	\item The actions of $q_0$ and $q_1$ on $M_\ku(a, c, X)$ commute and both square to $0$, so the $\Z/2\ang{q_0,
	q_1}$-module structure on $M_\ku(a, c, X)$ refines to an $\cE(1)$-module structure, where for $i = 0,1$,
	$Q_i(x)\coloneqq q_i(x)$.
	\item The actions of $s_1$ and $s_2$ on $M_\ko(a, b, X)$, and of $s_1$, $s_2$, and $s_4$ on $M_\tmf(a, b, c,
	X)$, satisfy the Adem relations with $s_i$ in place of $\Sq^i$, hence refine to an $\cA(1)$-module structure on
	$M_\ko(a, b, X)$ and an $\cA(2)$-module structure on $M_\tmf(a, c, d, X)$.
	\item The actions of $\beta$ and $p^1$ on $M_\tmf'(c, X)$ satisfy the relations in \eqref{eq:Atmf_relations}, hence refine the $\Z/3\ang{\beta, p^1}/(\beta^2)$-module structure on $M_\tmf'(c, X)$ to
	an $\cA^\tmf$-module structure, where the Bockstein acts as $\beta$ and $\cP^1$ acts as $p^1$.
\end{enumerate}
\end{lem}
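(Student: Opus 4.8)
The plan is to verify, case by case, that the operators of \cref{the_twisted_modules} satisfy the defining relations of the relevant algebra when acting on the free $H^*(X;\Z/2)$-module (resp.\ $H^*(X;\Z/3)$-module) on the Thom class $U$. The organizing principle: each relation, once expanded with the Cartan formula, becomes an identity $\sum_I P_I\cdot U\Sq^I(x) = 0$, with $I$ running over admissible monomials and each $P_I$ a polynomial in the characteristic classes ($a$, $b$, $t(d)$, $\delta$, or $d\bmod 3$) and their Steenrod operations. Checking such an identity for all $x$ reduces, on substituting $x = \iota_N$ in a large auxiliary factor $K(\Z/2,N)$ (where the classes $U\Sq^I(\iota_N)$ are linearly independent over the subring generated by the characteristic classes), to $P_I = 0$ for each $I$. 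Each of these polynomial identities should then follow from the Cartan formula, the Adem relations in $\cA$ (resp.\ $\cA_3$), the unstable identities $\Sq^{\abs{y}}y = y^2$, $\Sq^j y = 0$ for $j > \abs{y}$ (and their $p=3$ analogues), and the \emph{extra} identities the characteristic classes genuinely satisfy --- which is exactly where the hypotheses enter: $\Sq^1 c = 0$ because $c$ is integral; and, at $p=3$, $\beta(d\bmod 3) = 0$ together with $\cP^2(d\bmod 3) = (d\bmod 3)^3$.

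Cases (1) and (2) are one-line computations. For instance $s_1^2(Ux) = U(\Sq^1(a) + a^2)x = 0$ since $\Sq^1 a = a^2$; $q_0^2$ vanishes the same way; $q_1^2(Ux)$ collapses to $U(Q_1\theta + \theta^2)x$ with $\theta \coloneqq (c\bmod 2) + a^3$, and $Q_1\theta = \theta^2$ because $Q_1(a^3) = a^6$ and $Q_1(c\bmod 2) = \Sq^1\Sq^2(c\bmod 2) = \Sq^3(c\bmod 2) = (c\bmod 2)^2$ --- here we use $\Sq^1(c\bmod 2) = 0$ and the Adem relation $\Sq^1\Sq^2 = \Sq^3$; and $q_0 q_1 + q_1 q_0 = 0$ because $Q_0\theta = Q_1 a = a^4$ and $Q_0 Q_1 = Q_1 Q_0$.

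For (3) I would dispatch $\cA(1)$ by verifying $\Sq^1\Sq^1 = 0$ and $\Sq^2\Sq^2 = \Sq^1\Sq^2\Sq^1$ directly on $M_\ko(a,b,X)$ --- a few lines, using only Cartan, $\Sq^1 a = a^2$, $\Sq^2 a = 0$, $\Sq^1\Sq^2 = \Sq^3$, and needing no relation between $a$ and $b$. For the additional $\Sq^4$-relations of $\cA(2)$ the cleanest route is to observe that the stated $\Sq^1,\Sq^2,\Sq^4$-formulas are precisely the vector-bundle Thom-isomorphism formulas for a bundle with $w_1 = a$, $w_2 = t(d)$, $w_4 = \delta$ (with the $w_3$-coefficient $a\,t(d) + \Sq^1 t(d)$ being exactly the Wu expression $w_1 w_2 + \Sq^1 w_2$), and to push the required polynomial identities through a universal example carrying a genuine vector bundle (e.g.\ over $B\O$, or over $B\SO$ for the orientation-preserving part, using that $H^*(M\O;\Z/2)$, resp.\ $H^*(M\SO;\Z/2)$, is a bona fide $\cA$-module and that the relevant restriction map is injective in the degrees that arise); compare the discussion around \cref{delta_SH_defn}. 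For (4): $\beta^2 = 0$ is automatic since $\beta$ acts untwisted; $(p^1)^3(Ux)$ collapses to $U(\cP^1)^3(x) = 0$ once one uses $(\cP^1)^2 = -\cP^2$, $\cP^2(d\bmod 3) = (d\bmod 3)^3$ (top power, as $d\bmod 3$ has degree $4$), and $(\cP^1)^3 = 0$ in $\cA_3$; and the middle relation of \eqref{eq:Atmf_relations}, after the twisted terms cancel using $\beta(d\bmod 3) = 0$, reduces to $\beta(\cP^1)^2\beta = (\beta\cP^1)^2 + (\cP^1\beta)^2$, which already holds in $\cA_3$ (a consequence of $Q_1^2 = 0$) and therefore on $H^*(X;\Z/3)$.

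The step I expect to be the main obstacle is the $\cA(2)$ verification: it involves the most Adem relations, $\Sq^4$ interacts with $a$, $t(d)$ and $\delta$ simultaneously, and a direct expansion is lengthy and easy to get wrong, so one really wants the universal/pullback argument above. An appealing shortcut --- and the way the bookkeeping is packaged in the proof of \cref{thom_module_calc} --- is to identify each $M_R(\cdots)$ with $H^*_R(M^R f)$ for the universal fake vector bundle twist $f$ over the appropriate classifying space: $H^*_R(M^R f)$ is by construction a module over $H^*_R H$, and $H^*_R H$ is generated by precisely the operations whose action we have matched, so the relations hold automatically and the lemma becomes a corollary of that identification.
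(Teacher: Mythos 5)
Your closing paragraph is in fact the paper's entire proof: the paper explicitly declines to verify the relations directly and instead derives the lemma as a corollary of \cref{thom_module_calc}, since $H_R^*(M^Rf)$ is by construction a module over $H_R^*H$ and the identification of \cref{the_twisted_modules} with $H_R^*(M^Rf)$ transports that module structure. So you have correctly located the intended argument, but only as an afterthought; the bulk of your proposal is a genuinely different route, namely direct verification. For cases (1), (2), and the $\cA(1)$ part of (3) your computations are right and complete ($\Sq^1 a = a^2$, $Q_1(c\bmod 2) = \Sq^1\Sq^2(c\bmod 2) = (c\bmod 2)^2$ using integrality of $c$, and $Q_0\theta = Q_1 a = a^4$ are exactly the identities needed), and case (4) is plausible though compressed --- the cancellations in $(p^1)^3$ involve $\cP^1(d\bmod 3)$, which is \emph{not} constrained by instability, so you should exhibit the cancellation rather than assert it.

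The genuine gap is where you predicted it: the $\cA(2)$ case. The coefficient polynomials $P_I$ appearing in the expanded Adem relations involve classes such as $\Sq^2 t(d)$, $\Sq^3\delta$, and $\Sq^2\Sq^1 t(d)$, i.e.\ the full $\cA$-action on $H^*(K(\Z/2,1)\times\SK(4);\Z/2)$ through roughly degree $8$, which neither you nor the paper computes. Your proposed fix --- push the identities to $B\O$, where the Thom spectrum is an honest $\cA$-module --- is sound in outline but hinges on injectivity of $H^*(B\O/B\String;\Z/2)\to H^*(B\O;\Z/2)$ on the span of the monomials that occur, through that range of degrees; this is asserted, not proved, and is not a formality (the analogous map already fails to be injective on homotopy-theoretic grounds in other contexts). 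Establishing it would require the same Serre spectral sequence analysis of $H^*(\SK(4);\Z/2)$ that you were trying to avoid. As written, then, the direct route is incomplete precisely at its hardest point, and the clean repair is the one the paper uses: prove \cref{thom_module_calc} first and read off the lemma, rather than the other way around.
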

Rather than prove this directly, we will obtain it as a corollary of \cref{thom_module_calc}. This theorem
says that the modules defined in \cref{the_twisted_modules} are $H_R^*$ of the Thom spectra for the corresponding
twists.
\begin{thm}
Let $X$ be a topological space.
\label{thom_module_calc}
\begin{enumerate}
	\item Given $a\in H^1(X;\Z/2)$, let $f_a\colon X\to B\GL_1(H\Z)$ be the corresponding fake vector bundle twist.
	Then there is an isomorphism of $\cA(0)$-modules
	\begin{equation}
		H_{H\Z}^*(M^{H\Z} f_{a})\overset\cong\longrightarrow M_{H\Z}(a, X).
	\end{equation}
	\item Given $a\in H^1(X;\Z/2)$ and $c\in H^3(X;\Z)$, let $f_{a,c}\colon X\to B\GL_1(\ku)$ be the corresponding
	fake vector bundle twist. Then there is an isomorphism of $\cE(1)$-modules
	\begin{equation}
		H_\ku^*(M^\ku f_{a,c})\overset\cong\longrightarrow M_{\ku}(a, c, X).
	\end{equation}
	\item Given $a\in H^1(X;\Z/2)$ and $b\in H^2(X;\Z/2)$, let $f_{a,b}\colon X\to B\GL_1(\ko)$ be the
	corresponding fake vector bundle twist. Then there is an isomorphism of $\cA(1)$-modules
	\begin{equation}
		H_\ko^*(M^\ko f_{a,b})\overset\cong\longrightarrow M_{\ko}(a, b, X).
	\end{equation}
	\item \label{mainthm:tmf} Given $a\in H^1(X;\Z/2)$, and $d\in SH^4(X)$, let $f_{a,d}\colon X\to
	B\GL_1(\tmf)$ be the corresponding fake vector bundle twist. Then there is an isomorphism of $\cA(2)$-modules
	\begin{equation}
		H_\tmf^*(M^\tmf f_{a,d})\overset\cong\longrightarrow M_{\tmf}(a, d, X),
	\end{equation}
	and an isomorphism of $\cA^\tmf$-modules
	\begin{equation}
		H_\tmf^*(M^\tmf f_{0, d}; \Z/3)\overset\cong\longrightarrow M_{\tmf}'(d, X).
	\end{equation}
\end{enumerate}
\end{thm}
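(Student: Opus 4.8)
Throughout write $Mf=M^R f$, and let $H=H\Z/p$ with the canonical $R$-algebra structure. The plan is to reduce the whole statement to computing how $H_R^{*}H$ acts on the Thom class, and then to compute that action by naturality together with a pullback to a space carrying an honest vector bundle. By \cref{Rmod_Thom_iso} (and its odd-primary variant from \cref{intro_Atmf}), $H_R^{*}(Mf)$ is a free $H^{*}(X;\Z/2)$-module (resp.\ $H^{*}(X;\Z/3)$-module) on a Thom class $U\in H_R^{0}(Mf)$, and by \cref{HRH_calc,baker_laz} it is a module over $H_R^{*}H$, which is $\cA(0)$, $\cE(1)$, $\cA(1)$, $\cA(2)$ or $\cA^{\tmf}$. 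The Thom diagonal of \cref{def:ThomDiag} is a map of $R$-modules, hence $H_R^{*}H$-linear, and it induces the $H^{*}(X)$-module structure on $H_R^{*}(Mf)$; since each of $\cA(n)\hookrightarrow\cA$ and $\cA^{\tmf}\to\cA_3$ is a map of Hopf algebras (recorded in \S\ref{VB_change_of_rings} and \cref{intro_Atmf}), this forces a Cartan-type formula $\theta(Ux)=\sum\theta'(U)\cdot\phi(\theta'')(x)$ for $\theta\in H_R^{*}H$ with reduced coproduct $\sum\theta'\otimes\theta''$ and $\phi$ the map to $\cA$ (resp.\ $\cA_3$). Thus $H_R^{*}(Mf)$ is determined entirely by the classes $\theta(U)$ for $\theta$ a generator, and by degree considerations $\theta(U)=U\cdot v_\theta$ for a unique $v_\theta\in H^{|\theta|}(X;\Z/2)$ (resp.\ $H^{|\theta|}(X;\Z/3)$). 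Expanding the Cartan formula with the $v_\theta$ named in the theorem recovers the formulas of \cref{the_twisted_modules} verbatim; e.g.\ for $\tmf$, $\Sq^3(U)=\Sq^1\Sq^2(U)=\Sq^1(U\,t(d))=U\bigl(a\,t(d)+\Sq^1 t(d)\bigr)$ produces the $\Sq^1(x)$-coefficient in the $s_4$-action, and $\beta,\cP^1$ being primitive in $\cA^{\tmf}$ gives the derivation formulas for $M'_\tmf$. Note also that once the module is identified this way, the lemma preceding \cref{thom_module_calc} asserting the Adem relations is automatic, since $H_R^{*}(Mf)$ is genuinely an $H_R^{*}H$-module.

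Everything above is natural in $X$, and a fake vector bundle twist $f_{a,\dots}$ is the pullback of the universal twist $\bar g$ over $Z\coloneqq B\O/B(\text{structure})$ along the classifying map $X\to Z$ of $(a,\dots)$; the Thom class and the operations pull back. Hence it is enough to identify the universal classes $v_\theta^{\mathrm{univ}}\in H^{|\theta|}(Z;\Z/2)$ (resp.\ $H^{|\theta|}(\SK(4);\Z/3)$ in the $3$-primary case, where $Z=\SK(4)$ because only $a=0$ is permitted, cf.\ \cref{intro_Atmf}).

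To compute these, consider the quotient $q\colon B\O\to B\O/B(\text{structure})\xrightarrow{\ \simeq\ }Z$. By the construction of fake vector bundle twists via \cref{cofiber_twists}, $\bar g\circ q\colon B\O\to B\GL_1(R)$ is the vector bundle twist of the universal rank-zero virtual bundle $\Gamma$, so by \cref{Thom_change_of_rings} its Thom spectrum is $R\wedge B\O^{\Gamma}$, on whose Thom class the classical formula \eqref{vb_steen} computes $\theta(U)=U\cdot(\text{a Stiefel--Whitney polynomial in the }w_i(\Gamma))$. On the other hand $q^{*}\colon H^{j}(Z;\Z/2)\to H^{j}(B\O;\Z/2)$ is injective in the range of degrees that matters (through degree $4$ for $\tmf$, $3$ for $\ku$, $2$ for $\ko$, $1$ for $H\Z$): this is a direct low-degree computation using the cohomology of the Eilenberg--Mac Lane factors and, for $\tmf$, of $\SK(4)$, whose low-degree cohomology is already pinned down by the Serre spectral sequence of $K(\Z,4)\to\SK(4)\to K(\Z/2,2)$ and the class $\delta$ of \cref{delta_SH_defn}. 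Injectivity then identifies $v_\theta^{\mathrm{univ}}$ as the unique class whose $q$-pullback is the relevant Stiefel--Whitney polynomial, and matching this with the expression in $a$, $t(d)$, $\delta$ claimed in the theorem is a matter of the Wu formulas together with $\lambda^{*}\delta=w_4$ and $\lambda^{*}t(d)=w_2$ from \cref{delta_SH_defn} and the text after it. For the $3$-primary statement one instead pulls back along $\HP^{\infty}\to\SK(4)$ classifying the tautological rank-$4$ spin bundle $W$, whose $\SH$-valued $\lambda$-class generates $H^{4}(\HP^{\infty};\Z)$; the odd-primary analogue of \eqref{vb_steen} gives $\cP^{1}(U)=U(d\bmod 3)$, while $\beta(U)=0$ already for degree reasons since $H^{1}(\SK(4);\Z/3)=0$, and injectivity of $\HP^{\infty}\to\SK(4)$ on $H^{4}(\text{--};\Z/3)$ promotes this to the universal twist over $\SK(4)$.

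The main obstacle is Step~3 for $\tmf$. Because $\SK(4)$ is not a product of Eilenberg--Mac Lane spaces (\cref{string_not_split}), restriction to factors does not detect all of $H^{\le 4}$, so one must carefully track the behaviour of $\delta$, of $t(d)$, and of any cross terms of $\Sq^4(U)$ under $q$: this means verifying the injectivity (in fact an isomorphism in degree $4$) of $q^{*}$ and matching the resulting Stiefel--Whitney polynomial with $\delta$ exactly, which rests on comparing the Serre spectral sequences for $K(\Z,4)\to\SK(4)\to K(\Z/2,2)$ and $B\Spin\to B\SO\to K(\Z/2,2)$ as set up before the theorem, and being scrupulous about the group-theoretic splitting $B\O/B\String\simeq K(\Z/2,1)\times\SK(4)$ (a vector bundle $V$ contributes the pair $(w_1(V),\lambda(V-L_{w_1(V)}))$, not $(w_1(V),\lambda(V))$, so the Whitney-sum corrections in $w(V-L)$ must be carried through). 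Everything else — the convergence and shape of the Baker--Lazarev spectral sequence (\cref{baker_laz}), the Thom isomorphism (\cref{Rmod_Thom_iso}), and the Hopf-algebra structures underpinning the Cartan formula — is already available, so the residual work is precisely this characteristic-class bookkeeping.
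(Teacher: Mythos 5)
Your reduction of the theorem to computing $\theta(U)$ for algebra generators $\theta$ --- via the $R$-module Thom isomorphism, the Thom diagonal, and the Hopf-algebra (Cartan) structure on $H_R^*H$ --- is exactly the paper's, as is the passage by naturality to the universal twist. Where you genuinely diverge is in how the universal classes $v_\theta$ get pinned down. The paper argues case by case: for $\ku$ it runs the Baker--Lazarev spectral sequence for each candidate module structure and discards those incompatible with the known homotopy groups of $\MTO$, $\MTSO$, and $\MTPin^c$; for $\tmf$ it determines the coefficient of $\delta$ by an analogous Adams argument using $\pi_3(\MTSpin)=0$, and the remaining coefficients by restricting to hand-picked bundles over $B\O_1$, $\mathbb{CP}^2$, $\RP^2\times\RP^2$, and $\RP^1\times\RP^3$. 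Your single pullback along $q\colon B\O\to B\O/B(\text{structure})$, where the composite is the universal vector-bundle twist and \eqref{vb_steen} applies, is more uniform and in principle stronger: the injectivity you need does hold (in degree $4$ the map $H^4(B\O/B\String;\Z/2)\to H^4(B\O;\Z/2)$ is in fact an isomorphism, as one checks by sending the basis $\set{a^4,a^2t,a\Sq^1t,\delta,t^2}$ to Stiefel--Whitney polynomials), so all coefficients, including the one the paper extracts from the Adams spectral sequence, fall out of one computation.

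The one step you flag but do not carry out is exactly where the content sits. Under the splitting of \cref{splitting_string}, a bundle $V$ lands on the pair $(w_1(V),\,\lambda(V-L_{w_1(V)}))$, so $q^*\delta = w_4(V-L_{w_1(V)}) = w_4 + w_1w_3 + w_1^2w_2$, whereas $q^*(a\,\Sq^1 t(d)) = w_1^2w_2 + w_1w_3$; matching $\Sq^4(U)=Uw_4$ against this basis therefore forces a nonzero coefficient on $a\,\Sq^1 t(d)$ unless $\delta$ is instead normalized so that its pullback to $B\O$ is $w_4$ on the nose rather than $w_4(V-L)$. This is precisely the convention-dependence warned about in \cref{pinm_pinp}, and it is where your output must be reconciled with the formula as stated (the paper's own determination of this coefficient, via $\RP^1\times\RP^3$, turns on the same convention for $\delta(d_V)$, since there $w_4(V)\ne w_4(V-L)$). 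A similar, genuinely sign-sensitive normalization ($q_1 \equiv \pm p_1 \equiv \mp 2\lambda \bmod 3$) is silently deferred in your $3$-primary step via $\HP^\infty\to\SK(4)$. Until that bookkeeping is written out, your argument determines the module structure only up to the self-equivalences of $K(\Z/2,1)\times\SK(4)$ that \cref{pinm_pinp} describes; everything else in the proposal is sound.
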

In the last isomorphism, we turn off degree-$1$ twists so that we have a Thom isomorphism for mod $3$ cohomology.
\begin{cor}
\label{little_sibling_Adams_corollary}
Keep the notation from \cref{thom_module_calc}.
\begin{description}
	\item[Twisted $\Z$-homology] The $E_2$-page of Baker-Lazarev's Adams spectral sequence computing $\pi_*(M^{H\Z}
	f_{a})_2^\wedge$ is isomorphic as $\Ext_{\cA(0)}^{*,*}(\Z/2, \Z/2)$-modules to $\Ext_{\cA(0)}^{s,t}(M_{H\Z}(a, X), \Z/2)$.
	\item[Twisted $\ku$-homology] The $E_2$-page of Baker-Lazarev's Adams spectral sequence computing $\pi_*(M^\ku
	f_{a,c})_2^\wedge$ is isomorphic as $\Ext_{\cE(1)}^{*,*}(\Z/2, \Z/2)$-modules to $\Ext_{\cE(1)}^{s,t}(M_\ku(a, c, X),
	\Z/2)$.
	\item[Twisted $\ko$-homology] The $E_2$-page of Baker-Lazarev's Adams spectral sequence computing $\pi_*(M^\ko
	f_{a,b})_2^\wedge$ is isomorphic as $\Ext_{\cA(1)}^{*,*}(\Z/2, \Z/2)$-modules to $\Ext_{\cA(1)}^{s,t}(M_\ko(a, b, X),
	\Z/2)$.
	\item[Twisted $\tmf$-homology]\hfill
	\begin{enumerate}
		\item The $E_2$-page of Baker-Lazarev's Adams spectral sequence computing $\pi_*(M^\tmf f_{a, d})_2^\wedge$
		is isomorphic as $\Ext_{\cA(2)}^{*,*}(\Z/2, \Z/2)$-modules to $\Ext_{\cA(2)}^{s,t}(M_\tmf(a ,d, X), \Z/2)$.
		\item The $E_2$-page of Baker-Lazarev's Adams spectral sequence computing $\pi_*(M^\tmf f_{0, d})_3^\wedge$
		is isomorphic as $\Ext_{\cA^\tmf}^{*,*}(\Z/3, \Z/3)$-modules to $\Ext_{\cA^\tmf}^{s,t}(M_\tmf'(d, X), \Z/3)$.
	\end{enumerate}
\end{description}
\end{cor}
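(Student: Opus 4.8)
The plan is to assemble \cref{little_sibling_Adams_corollary} formally from \cref{baker_laz}, the change-of-rings computations of $H_R^*H$ recorded in \cref{HRH_calc} (and in \cref{intro_Atmf} for the odd-primary case), and \cref{thom_module_calc}; the substantive work is already done, so what remains is bookkeeping.

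First, for each little sibling $R$ and prime $p$ --- with $p = 2$ for $H\Z$, $\ku$, $\ko$, $\tmf$, and also $p = 3$ for $\tmf$ --- I apply \cref{baker_laz} with $H = H\Z/p$ carrying the canonical $E_\infty$ $R$-algebra structure described before \cref{HRH_calc}, with $M = M^R f$ the Thom spectrum of the relevant fake vector bundle twist, and with $N = R$. The flatness hypothesis of \cref{baker_laz} holds because $\pi_*(H\Z/p) \cong \Z/p$ is a field, and the finiteness hypothesis guaranteeing convergence to $\pi_*(M^R f)_p^\wedge$ holds whenever $X$ has finitely many cells in each dimension; for the mod-$3$ statement we take the twist $f_{0,d}$ with vanishing degree-$1$ part, so that the mod-$3$ Thom isomorphism needed in the proof of \cref{thom_module_calc} is available (see \cref{intro_Atmf}). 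Since $R \wedge_R M^R f \simeq M^R f$ and $H_R^*(R) = \pi_{-*}\mathrm{Map}_R(R, H\Z/p) \cong \pi_{-*}(H\Z/p) \cong \Z/p$ concentrated in degree $0$, \cref{baker_laz} gives
\begin{equation*}
	E_2^{s,t} = \Ext_{H_R^*H}^{s,t}(H_R^*(M^R f), \Z/p).
\end{equation*}
Next I substitute. By \cref{HRH_calc} there are isomorphisms of graded algebras $H_{H\Z}^*H \cong \cA(0)$, $H_\ku^*H \cong \cE(1)$, $H_\ko^*H \cong \cA(1)$, $H_\tmf^*H \cong \cA(2)$, and by \cref{intro_Atmf}, $H_\tmf^*(H\Z/3) \cong \cA^\tmf$; and by \cref{thom_module_calc}, under these identifications $H_R^*(M^R f)$ is isomorphic, as a module over the respective algebra, to $M_{H\Z}(a, X)$, $M_\ku(a, c, X)$, $M_\ko(a, b, X)$, $M_\tmf(a, d, X)$, resp.\ $M_\tmf'(d, X)$. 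Feeding these into the displayed $E_2$-page produces $\Ext_{\cA(0)}^{s,t}(M_{H\Z}(a, X), \Z/2)$, $\Ext_{\cE(1)}^{s,t}(M_\ku(a, c, X), \Z/2)$, $\Ext_{\cA(1)}^{s,t}(M_\ko(a, b, X), \Z/2)$, $\Ext_{\cA(2)}^{s,t}(M_\tmf(a, d, X), \Z/2)$, and $\Ext_{\cA^\tmf}^{s,t}(M_\tmf'(d, X), \Z/3)$ respectively, the internal grading matching because all the cited isomorphisms are graded.

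Finally, for the asserted module structures: the $E_2$-page $\Ext_{H_R^*H}^{*,*}(H_R^*M, \Z/p)$ is a module over $\Ext_{H_R^*H}^{*,*}(\Z/p, \Z/p)$ --- the $E_2$-page of Baker-Lazarev's spectral sequence for $\pi_*(R)_p^\wedge$ itself (the case $M = N = R$) --- via Yoneda composition, and this is compatible with the spectral-sequence structure by the usual multiplicativity of Adams-type spectral sequences; in any case the corollary concerns only the $E_2$-page. Since Yoneda products are natural in the algebra and in both module variables, the algebra isomorphisms of \cref{HRH_calc}/\cref{intro_Atmf} together with the module isomorphisms of \cref{thom_module_calc} carry this structure onto the Yoneda-module structure of $\Ext$ over the relevant algebra ($\cA(0)$, $\cE(1)$, $\cA(1)$, $\cA(2)$, or $\cA^\tmf$). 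There is essentially no obstacle beyond this bookkeeping; the genuine mathematical content lies entirely in \cref{thom_module_calc} and the cited identifications of $H_R^*H$, the only points requiring any care being the simplification $N = R \Rightarrow H_R^*N = \Z/p$ and the grading match, both routine.
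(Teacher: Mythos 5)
Your proposal is correct and is exactly the assembly the paper intends: apply \cref{baker_laz} with $N = R$ and $H = H\Z/p$, use $R\wedge_R M^Rf \simeq M^Rf$ and $H_R^*(R)\cong\Z/p$ to reduce the $E_2$-page to $\Ext_{H_R^*H}(H_R^*(M^Rf),\Z/p)$, and then substitute the identifications of \cref{HRH_calc} (resp.\ \cref{intro_Atmf}) and \cref{thom_module_calc}, with the $\Ext(\Z/p)$-module structure given by Yoneda products. The paper leaves this as immediate bookkeeping, and your handling of the convergence hypothesis and the mod-$3$ restriction to $f_{0,d}$ matches the paper's remarks.
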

\begin{rem}
In \S\ref{so_twists} we saw that the twists of $H\Z$ discussed above are all vector bundle twists, so that the
$H\Z$ part of \cref{little_sibling_Adams_corollary} follows from the standard change-of-rings argument; the same is
true for the twists of $\MTSO$ appearing below in \cref{big_sibling_Adams_corollary}. In both cases, the other
calculations are new.
\end{rem}
\begin{rem}
The analogue of \cref{little_sibling_Adams_corollary} is true for a few standard variants of the Adams spectral
sequence. For example, one could switch the order of $H_R^*(Mf)$ and $\Z/2$ in $\Ext_{H_R^*H}$ and obtain the
$E_2$-page of Baker-Lazarev's Adams spectral sequence computing twisted $R$-cohomology for twists over a finite type space. One could also work out a version of \cref{little_sibling_Adams_corollary} in terms of $R$-module $H$-homology with its $H^R_*H$-comodule
structure.
\end{rem}
Now, given a big sibling and little sibling pair $M\to R$, we lift to $M$. While it would be nice to completely describe the $M$-module Baker-Lazarev Adams spectral sequences for $M = \MTSO$, $\MTSpin^c$, $\MTSpin$, and $\MTString$, this ranges between very complicated and intractable. This is because these Adams spectral sequences would in principle determine the ring structures on $M_*$ for these spectra $M$, which are not presently known for $\MTSpin^c$, $\MTSpin$, and $\MTString$ and which is intricate for $\MTSO$.\footnote{See Abdallah-Salch~\cite{AS24} for recent progress in the \spinc case.} Thus we provide two different lifts of \cref{little_sibling_Adams_corollary}:
\begin{enumerate}
    \item In \cref{big_sibling_Adams_estimate}, we use the connectivity of the orientations from the big to the little siblings to \emph{partially} calculate the Baker-Lazarev Adams spectral sequence for each of the big siblings.
    \item In \cref{big_sibling_Adams_corollary}, we use the splittings of $M$ that we reviewed in~\eqref{eq:bigandlittle} to noncanonically describe $M$-module Thom spectra as sums of $R$-module Thom spectra, and therefore obtain an $R$-module Baker-Lazarev Adams spectral sequence that computes spectrum-level information about $M$-module Thom spectra. This does not work for $\MTString$, which has not been split at $2$.\footnote{While there is substantial evidence suggesting that, $2$-locally, $\MTString$ splits as a wedge sum of $\tmf$, $\Sigma^{16}\tmf_0(3)$, and other pieces, for example in~\cite{Pen83, MG95, MH02, MR09, Lau04, Lau16, LO16, LO18, LS19, Dev19, Abs21, Dev20, Tok24}, it is not a foregone conclusion that a splitting exists. For example, Kochman~\cite[Part 1, Theorem 5.4]{Koc93} proved that the symplectic bordism spectrum $\mathit{MTSp}$ is indecomposable at the prime $2$.}
\end{enumerate}
Before we compare the Baker-Lazarev Adams spectral sequences for the big and little siblings, we need a few facts in homological algebra.
\begin{lem}
\label{first_step_resolution}
Let $k$ be a field and $A_1$ and $A_2$ be $\Z$-graded $k$-algebras concentrated in nonnegative degrees. Suppose that we have the following data for some positive integer $n$:
\begin{enumerate}
    \item A $k$-algebra homomorphism $\phi\colon A_1\to A_2$ which is a $k$-vector space isomorphism in degrees $\le n$.
    \item $A_i$-modules $M_i$ concentrated in nonnegative degrees, and an $A_1$-module homomorphism $\psi\colon M_1\to M_2$ which is an isomorphism in degrees $\le n$.%\footnote{Here $M_2$ carries the induced $A_1$-module structure: $A_1$ acts by $A_1\overset\phi\to A_2\to\End(M_2)$.}
\end{enumerate}
Then for $i = 1,2$, there are free $A_i$-modules $F_i$ and surjective $A_i$-module homomorphisms $\chi_i\colon F_i\to M_i$, together with an $A_1$-module map $\theta\colon F_1\to F_2$ which is an isomorphism in degrees $\le n$, and such that the following diagram commutes:
% https://q.uiver.app/#q=WzAsNCxbMCwwLCJGXzEiXSxbMCwxLCJGXzIiXSxbMSwwLCJNXzEiXSxbMSwxLCJNXzIiXSxbMCwyLCJcXGNoaV8xIl0sWzEsMywiXFxjaGlfMiJdLFswLDEsIlxcdGhldGEiLDJdLFsyLDMsIlxccHNpIiwyXV0=
\begin{equation}
\label{step_one}
\begin{tikzcd}
	{F_1} & {M_1} \\
	{F_2} & {M_2}
	\arrow["{\chi_1}", from=1-1, to=1-2]
	\arrow["\theta"', from=1-1, to=2-1]
	\arrow["\psi"', from=1-2, to=2-2]
	\arrow["{\chi_2}", from=2-1, to=2-2]
\end{tikzcd}\end{equation}
\end{lem}
Here the $A_1$-module structures on $M_2$ and $F_2$ are the ones induced across $\phi$.
\begin{proof}
Present $M_2$ as an $A_2$-module, and let $\chi_2\colon F_2\to M_2$ be the quotient map sending the free $A_2$-module on the generating set $S$ of $M_2$ to their images in $M_2$. Let $F_1$ be the free $A_1$-module on $S$, so that $\phi$ induces the $A_1$-module map $\theta\colon F_1\to F_2$. Since $F_1$ and $F_2$ are concentrated in nonnegative degrees and $\phi$ is an isomorphism in degrees $n$ and below, the same is true of $\theta$.

Now to build $\chi_1$. Let $s\in S$, which we regard as a generator of $F_1$. If $\deg(s) > n$, let $\chi_1(s) = 0$. If $\deg(s)\le n$, $\chi_2(\theta(s))\in M_2$ has a unique preimage under $\psi$, because $\psi$ is an isomorphism in degrees $n$ and below; define $\chi_1(s) \coloneqq\psi^{-1}(\chi_2(\theta(s))$. At this point, we have now verified the entire lemma statement except for surjectivity of $\chi_1$; if $\chi_1$ as constructed has cokernel, add free $A_1$-module summands to $F_1$ so that $\chi_1$ is surjective. Then define $\theta$ on these summands by the requirement that~\eqref{step_one} commutes: since the $A_1$-module structure on $M_2$ is induced from its $A_2$-module via $\phi$, the $A_1$-module map $\psi\circ\chi_1$ factors through $F_1\to F_1\otimes_{A_1}A_2$, which is a free $A_2$-module, so we may extend $\theta$.
\end{proof}
\begin{cor}
\label{compare_resolution}
Suppose for $i = 1,2$, $A_i$, $M_i$, $\phi$, $\psi$, and $n$ are as in \cref{first_step_resolution}. Then there are free resolutions $P_\bullet^{(i)}\to M_i$ of $A_i$-modules and a map $\theta_\bullet\colon P_\bullet^{(1)}\to P_\bullet^{(2)}$ of chain complexes of $A_1$-modules such that for all homological degrees $s$, $\theta_s$ is an isomorphism in grading-degree $n$ and below.
\end{cor}
\begin{proof}
Use \cref{first_step_resolution} to build $d_0^{(i)}\colon P_1^{(i)}\to M_i$ for each $i$ and the map $\theta_1\colon P_1^{(1)}\to P_1^{(2)}$. Then there is an induced $A_1$-module map $\psi'\colon \ker(d_0^{(1)})\to\ker(d_0^{(2)})$ which is an isomorphism in degrees $\le n$, so we can build the next step in the free resolution by applying \cref{first_step_resolution} to $\ker(d_0^{(i)})$ and $\psi'$, and so on.
\end{proof}
Recall the notion of a \term{minimal resolution} of a module over an augmented algebra from, e.g.,~\cite[\S 4.4]{BC18}.
\begin{lem}
\label{augmented}
Keeping the notation and assumptions from \cref{compare_resolution}, now assume in addition that $A_1$ and $A_2$ are augmented algebras, such that each augmentation $A_i\to k$ is an isomorphism when restricted to degree-$0$ elements. Assume also that $\phi$ is a homomorphism of augmented algebras. Then the resolutions $P_{\bullet}^{(i)}\to M_i$ may be chosen to be minimal resolutions such that $\theta_s$ is an isomorphism in grading-degrees $n+s$ and below.
\end{lem}
\begin{proof}
Building the minimal resolutions is exactly as in the proof of \cref{compare_resolution}, thanks to the observation that we can choose the surjections in \cref{first_step_resolution} to satisfy the minimality property.

The assumption that the augmentations $A_i\to k$ are isomorphisms in degree $0$ implies that $P_s^{(i)}$ is concentrated in degrees $s$ and above. Therefore we may shift the grading on $P_s^{(i)}$ down by $s$ before applying \cref{first_step_resolution} in the inductive step of \cref{compare_resolution}, then shift it back up, to obtain an isomorphism in degrees $\le n+s$, as required.
\end{proof}
\begin{lem}
\label{ext_tor_appx}
Suppose $H = H\Z/p$ for a prime $p$ and we have connective, $E_\infty$-ring spectra $R_1$ and $R_2$ with $E_\infty$-ring maps $f\colon R_1\to R_2$ and $g\colon R_2\to H$ such that $f$ is $n$-connected for some $n\ge 1$. Suppose we have connective $R_i$-module spectra $N_i$ and an $n$-connected $R_1$-module map $\varphi\colon N_1\to N_2$. Then the induced maps
\begin{subequations}
\begin{gather}
    \label{tor_appx}
    H^{R_1}_*(N_1) \longrightarrow H^{R_2}_*(N_2)\\
    H_{R_1}^*(N_1) \longrightarrow H_{R_2}^*(N_2)
    \label{ext_appx}
\end{gather}
\end{subequations}
are isomorphisms in degrees $*\le n$.
\end{lem}
\begin{proof}
First~\eqref{tor_appx}. For $i = 1,2$, consider the Künneth spectral sequences
\begin{equation}
    E^2_{*,*} = \Tor_{*,*}^{\pi_*(R_i)}(\Z/p, \pi_*(N_i)) \Longrightarrow H^{R_i}_*(N_i).
\end{equation}
The Künneth spectral sequence is natural in the data of the map $R_i\to H$ and $N_i$, so $f$ induces a map of spectral sequences, i.e.\ a map on each page which commutes with differentials, and which converges to the map $H^{R_1}_*(N_1)\to H^{R_2}_*(N_2)$. \Cref{compare_resolution} implies that the induced map on $E^2$-pages is an isomorphism in grading degree $n$ and below, and therefore also in total degree $n$ and below. This immediately implies the result for~\eqref{tor_appx} (there may be differentials from total degree $n+1$ to total degree $n$, but $n$-connectivity implies surjectivity in degree $n+1$, so those differentials are carried from the first spectral sequence to the second, therefore also implying the isomorphism in degree $n$).

For~\eqref{ext_appx}, use the equivalence $\Map_{R_i}(N_i, H)\overset\simeq\to \Map_H(H\wedge_{R_i} N_i, H)$ to reduce to~\eqref{tor_appx}, similarly to the proof of \cref{Rmod_Thom_iso}.
\end{proof}
\begin{cor}
\label{ext_approx_2}
With notation as in \cref{ext_tor_appx}, assume also that $H_{R_i}^0H\cong\Z/p$. Then the induced map
\begin{equation}
\label{RiMi}
    \Ext_{H_{R_1}^*H}^{s,t}(H_{R_1}^*(N_1), \Z/p) \longrightarrow \Ext_{H_{R_2}^*H}^{s,t}(H_{R_2}^*(N_2), \Z/p)
\end{equation}
is an isomorphism in topological degree $t-s\le n$.
\end{cor}
\begin{proof}
The condition on $H_{R_i}^0H$ implies that $H_{R_i}^*H$ is canonically augmented by the algebra map quotienting by all positive-degree elements. Therefore we may use \cref{augmented} with $A_i = H_{R_i}^*H$, $M_i =\pi_*(N_i)$, $\phi$ the map on $H^*_{(\bl)}H$, and $\psi = \pi_*(\varphi)$.% The conclusion of \cref{compare_resolution} then finishes the proof.
\end{proof}
Now we provide our first lift of \cref{little_sibling_Adams_corollary} to the big siblings: a computation of the Baker-Lazarev Adams spectral sequence, but only in a range.
\begin{thm}
\label{big_sibling_Adams_estimate}
Keep the notation from \cref{thom_module_calc}.
\begin{description}
	\item[Twisted oriented bordism] In topological degrees $t-s\le 3$, the $E_2$-page of Baker-Lazarev's Adams spectral sequence computing $(\Omega_*^{\SO}(X, a))_2^\wedge$ is isomorphic as $\Ext_{\cA(0)}^{*,*}(\Z/2, \Z/2)$-modules to
	$\Ext_{\cA(0)}^{s,t}(M_{H\Z}(a, X), \Z/2)$.
	\item[Twisted \spinc bordism] In topological degrees $t-s\le 3$, the $E_2$-page of Baker-Lazarev's Adams spectral sequence computing $(\Omega_*^{\Spin^c}(X, a, c))_2^\wedge$ is isomorphic as $\Ext_{\cE(1)}^{*,*}(\Z/2, \Z/2)$-modules to
	$\Ext_{\cE(1)}^{s,t}(M_\ku(a, c, X), \Z/2)$.
	\item[Twisted spin bordism] In topological degrees $t-s\le 7$, the $E_2$-page of Baker-Lazarev's Adams spectral sequence computing $(\Omega_*^{\Spin}(X, a, b))_2^\wedge$ is isomorphic as $\Ext_{\cA(1)}^{*,*}(\Z/2, \Z/2)$-modules to
	$\Ext_{\cA(1)}^{s,t}(M_\ko(a, b, X), \Z/2)$. 
    %For $t-s < 8$, this is isomorphic to
	$\Ext_{\cA(1)}^{s,t}(M_\ko(a, b, X), \Z/2)$.
	\item[Twisted string bordism] In topological degrees $t-s \le 15$, the $E_2$-page of Baker-Lazarev's Adams spectral sequence computing $(\Omega_*^\String(X, a, d))_2^\wedge$, resp.\ $(\Omega_*^\String(X, 0, d))_3^\wedge$, are isomorphic
	to $\Ext_{\cA(2)}^{s,t}(M_\tmf(a, d, X), \Z/2)$, resp.\ $\Ext_{\cA^{\tmf}}^{s,t}(M_\tmf'(d, X), \Z/3)$, as modules
	over $\Ext_{\cA(2)}^{*,*}(\Z/2, \Z/2)$, resp.\ $\Ext_{\cA^{\tmf}}^{*,*}(\Z/3, \Z/3)$.
\end{description}
\end{thm}
\begin{proof}
Each of these is a consequence of \cref{ext_approx_2}, where $R_1$ is the big sibling, $R_2$ is the little sibling, $N_i$ is the $R_i$-module Thom spectrum for the fake vector bundle twist in question, $f$ is the orientation $R_1\to R_2$ introduced in \S\ref{sub:faketwist}, and $\varphi$ is the induced map of Thom spectra. Here $n$ is $3$ for oriented and \spinc bordism, $n = 7$ for spin bordism, and $n = 15$ for string bordism. The only hypothesis we have yet to confirm is that $\varphi$ is $n$-connected, which we do now. By \cref{Thom_change_of_rings}, $N_2\simeq R_2\wedge_{R_1} N_1$ and $\varphi\simeq f\wedge \id\colon N_1\simeq \textcolor{BrickRed}{R_1}\wedge_{R_1}N_1\to \textcolor{MidnightBlue}{R_2}\wedge_{R_1}N_1\simeq N_2$. 
Thus we get an induced map between the following two Künneth spectral sequences:
\begin{subequations}
    \begin{align}
        E^2_{*,*} &= \Tor_{*,*}^{\pi_*(R_1)}(\pi_*(\textcolor{BrickRed}{R_1}), \pi_*(N_1)) \Longrightarrow \pi_*(\textcolor{BrickRed}{R_1}\wedge_{R_1}N_1) = \pi_*(\textcolor{BrickRed}{N_1})\\
        E^2_{*,*} &= \Tor_{*,*}^{\pi_*(R_1)}(\pi_*(\textcolor{MidnightBlue}{R_2}), \pi_*(N_1)) \Longrightarrow \pi_*(\textcolor{MidnightBlue}{R_2}\wedge_{R_1}N_1) = \pi_*(\textcolor{MidnightBlue}{N_2}),
    \end{align}
\end{subequations}
which we apply \cref{compare_resolution} to, similarly to the proof of \cref{ext_tor_appx}. Thus for each of the four cases in the theorem statement, we have verified the hypotheses of \cref{ext_approx_2}; the conclusion of that corollary finishes the proof of this theorem.
%hus all of the hypotheses of \cref{ext_approx_2} are met, and the theorem statement is its conclusion in each case.
\end{proof}
Because \cref{big_sibling_Adams_estimate} only calculates the Baker-Lazarev Adams spectral sequence in a range of degrees, we also provide a version in all degrees for $\MTSO$, $\MTSpin^c$, and $\MTSpin$, which heuristically records the fact that the Wall, resp.\ Anderson-Brown-Peterson splittings of these Thom spectra fiber over $B\O/BH$. Thus these splittings are compatible with fake vector bundle twists.

Recall the modules $W_1$, $W_2$, and $W_3$ from \cref{WWc_defn}.
\begin{cor}
\label{big_sibling_Adams_corollary}
Keep the notation from \cref{thom_module_calc}.
\begin{description}
	\item[Twisted oriented bordism] There is a strongly convergent spectral sequence of Adams type with signature
    \begin{equation}
        E_2^{s,t} = \Ext_{\cA(0)}^{s,t}(M_{H\Z}(a, X)\otimes W_1, \Z/2) \Longrightarrow \Omega_*^\SO(X, a)_2^\wedge.
    \end{equation}
   % The $E_2$-page of Baker-Lazarev's Adams spectral sequence
	%computing $(\Omega_*^{\SO}(X, a))_2^\wedge$ is isomorphic as $\Ext_{\cA(0)}(\Z/2)$-modules to
	%$\Ext_{\cA(0)}^{s,t}(M_{H\Z}(a, X)\otimes W_1, \Z/2)$.
	\item[Twisted \spinc bordism]
    There is a strongly convergent spectral sequence of Adams type with signature
    \begin{equation}
        E_2^{s,t} = \Ext_{\cE(1)}^{s,t}(M_{\ku}(a, c, X)\otimes W_2, \Z/2) \Longrightarrow \Omega_*^{\Spin^c}(X, a, c)_2^\wedge.
    \end{equation}
    %The $E_2$-page of Baker-Lazarev's Adams spectral sequence
	%computing $(\Omega_*^{\Spin^c}(X, a, c))_2^\wedge$ is %isomorphic as $\Ext_{\cE(1)}(\Z/2)$-modules to
	%$\Ext_{\cE(1)}^{s,t}(M_\ku(a, c, X)\otimes W_2, \Z/2)$.
	\item[Twisted spin bordism]
    There is a strongly convergent spectral sequence of Adams type with signature
    \begin{equation}
        E_2^{s,t} = \Ext_{\cA(1)}^{s,t}(M_{\ko}(a, b, X)\otimes W_3, \Z/2) \Longrightarrow \Omega_*^{\Spin}(X, a, b)_2^\wedge.
    \end{equation}
    %The $E_2$-page of Baker-Lazarev's Adams spectral sequence
	%computing $(\Omega_*^{\Spin}(X, a, b))_2^\wedge$ is isomorphic as $\Ext_{\cA(1)}(\Z/2)$-modules to
	%$\Ext_{\cA(1)}^{s,t}(M_\ko(a, b, X)\otimes W_3, \Z/2)$. For $t-s < 8$, this is isomorphic to
	%$\Ext_{\cA(1)}^{s,t}(M_\ko(a, b, X), \Z/2)$.
%	\item[Twisted string bordism] For $t-s < 16$, the $E_2$-pages of Baker-Lazarev's Adams spectral sequences
%	computing $(\Omega_*^\String(X, a, d))_2^\wedge$, resp.\ $(\Omega_*^\String(X, 0, d))_3^\wedge$, are isomorphic
%	to $\Ext_{\cA(2)}^{s,t}(M_\tmf(a, d, X), \Z/2)$, resp.\ $\Ext_{\cA^{\tmf}}(M_\tmf'(d, X), \Z/3)$, as modules
%	over $\Ext_{\cA(2)}(\Z/2)$, resp.\ $\Ext_{\cA^{\tmf}}(\Z/3)$.
\end{description}
\end{cor}
All tensor products are taken over $\Z/2$ and given an $\cA(0)$-, $\cE(1)$-, or $\cA(1)$-module structure using the Hopf algebra structure on $\cA(0)$, $\cE(1)$, and $\cA(1)$, respectively.
\begin{proof}
Throughout this proof, implicitly $2$-localize. We give the proof for twisted spin bordism; the remaining cases are analogous. The input is a theorem of Hebestreit-Joachim~\cite{HJ20} that the Anderson-Brown-Peterson decomposition of $\MTSpin$ as a sum of $\ko$-modules upgrades to a splitting of local systems of spectra over $B\O/B\Spin$. Therefore, given a fake vector bundle twist $f_{a,b}\colon X\to B\O/B\Spin$, there is an equivalence of spectra
\begin{equation}
\label{twisted_ABP}
    M^{\MTSpin}f_{a,b} \simeq \textcolor{BrickRed}{\bigvee_i \Sigma^{\ell_i} M^\ko f_{a,b}} \vee \textcolor{Green}{\bigvee_j \Sigma^{m_j} M^\ko f_{a,b}\wedge_\ko \ko\ang 2}\vee \textcolor{MidnightBlue}{\bigvee_k \Sigma^{n_k} M^\ko f_{a,b}\wedge_\ko H\Z/2},
\end{equation}
where the indices $i$, $j$, $k$, $n_i$, $n_j$, and $n_k$ represent the indices and shifts in the original Anderson-Brown-Peterson decomposition~\cite{ABP67} and $\ko\ang 2$ is the $1$-connected cover of $\ko$.

The right-hand side of~\eqref{twisted_ABP} is manifestly a $\ko$-module; use this equivalence to define a $\ko$-module structure on $M^\MTSpin f_{a,b}$. Then the spectral sequence in the corollary statement is the Baker-Lazarev $\ko$-module Adams spectral sequence for $M^{\MTSpin}f_{a,b}$; $W_3$ appears because it is the direct sum of $H_\ko^*$ of the summands in the Anderson-Brown-Peterson decomposition.

Hebestreit-Joachim's proof goes through in exactly the same way for $\MTSpin^c$ and $\ku$~\cite{HJ20}. For $\MTSO$ and $H\Z$, we use the fact that the map $B\O/B\SO\to B\GL_1(\MTSO)$ factors through $B\GL_1(\mathbb S)$, so every equivalence of spectra fibers over it.
\end{proof}

\begin{proof}[Proof of \cref{thom_module_calc}]\label{proof:thom_module_calc}
%\begin{comment}
%Sketch:
%\begin{itemize}
%	\item Use the Thom diagonal/Cartan formula to show that it suffices to compute $\Sq^i(U)$, or $Q^i(U)$, or
%	$\cP^1(U)$, where $U$ is the Thom class
%	\item Use naturality of cohomology operations to reduce to the universal case over $B\O/BG$
%	\item Use \cref{universal_twist} to infer what the cohomology operations on the Thom class are.
%\end{itemize}
%\end{comment}
%The cases of $H\Z$, $\ku$, and $\ko$ have essentially identical proofs; we will give the proof for $R = \ku$. The case of $\tmf$ at $p = 2$ has a similar proof, complicated only by the presence of supercohomology, so we will indicate the differences from $\ku$. The final case, $\tmf$ at $p = 3$, has a \TODO.
All five parts of the theorem have similar proofs, so we walk through
the full proof in two cases --- $R = \ku$, whose proof carries through for $H\Z$, $\ko$, and $\tmf$ at $p = 3$ with
minor changes; and $R = \tmf$ at $p = 2$, where the presence of supercohomology means the proof is slightly
different.

Now we specialize to $R= \ku$ and a fake vector bundle twist $f_{a,c}\colon X\to B\GL_1(\ku)$. To begin, use
\cref{Rmod_Thom_iso} to learn that $H_\ku^*(M^\ku f_{a,c})\cong H^*(X;\Z/2)$ as $\Z/2$-vector spaces.
%then apply the strategy  explicitly for the first case involving $H^*_\ku(Mf_{a,c})$. The other cases will most follow suit, except for $\tmf$ which we spell out in more detail. We begin by noticing that $H^*_R(Mf)$, for $R$ one of the little siblings and $f$ a twist, is the same
%as a $\Z/2$-module to $H^*(X;\Z/2)$. This is readily seen by applying \cref{Thom_change_of_rings} with $R_1 = \ku$ and $R_2 = H\Z/2$.
(In the more familiar case where the twist is given by a vector bundle, this is the Thom isomorphism.) Next, the
Thom diagonal (\cref{def:ThomDiag}) and the Cartan formula provide a formula for $Q_i(Ux)$, $i = 0,1$, in terms of
$Q_i(U)$ and $Q_i(x)$. In particular, this formula implies that if we can show $Q_0(U) = Ua$ and $Q_1(U) =
U(a^3+c)$, then the $\cE(1)$-module action defined on $M_\ku(X, a, c)$ in \cref{the_twisted_modules} is identified
with $H_\ku^*(Mf_{a,c})$.
%
%use
%the Thom diagonal on $H^*_R(Mf)$, which we defined in \cref{def:ThomDiag}, along with the Cartan formula, which tells us how to compute Steenrod actions on products of classes in $H^*(X;\Z/2)$, it suffices to compute $\Sq^i(U)$, or $Q^i(U)$, or
%	$\cP^1(U)$, where $U$ is the Thom class for $Mf$.
By the naturality of cohomology operations, it suffices to compute $Q_0(U)$ and $Q_1(U)$ for the
%for any twist $f$
the universal twist over $B\O/B\Spin^c$. \Cref{universal_twist} then allows us to infer what the cohomology
operations on the Thom class have to be in order to recover the correct $\cA$-module structure on the Thom spectrum
after applying the universal twist.

Let $f\colon B\O/B\Spin^c\to B\GL_1(\MTSpin^c)$ be the universal fake vector bundle twist,
$M^{\MTSpin^c}f$ be its associated Thom spectrum, and $M^{\ku}f$ be the $\ku$-module Thom spectrum obtained by
composing $f$ with the map $B\GL_1(\MTSpin^c)\to B\GL_1(\ku)$ induced by the Atiyah-Bott-Shapiro map. The
Atiyah-Bott-Shapiro map is $3$-connected, so the map $M^{\MTSpin^c}f\to M^{\ku}f$ is also $3$-connected. Thus, for
example, $\pi_0(M^{\ku}f)\cong\pi_0(M^{\MTSpin^c} f)$; by \cref{universal_spinc_twists} $M^{\MTSpin^c}f\simeq\MTO$,
so $\pi_0(M^{\ku} f)\cong\Omega_0^\O\cong\Z/2$. This and similar ideas will determine $Q_0(U)$ and $Q_1(U)$ for us:
%	
%	We move to proving the first case where  $R=\ku$. The map $f_{a,c}: X \rightarrow B\GL_1(\ku)$ factors through 
%$B\O/B\Spin^c \rightarrow B\GL_1(\MTSpin^c)$, and then $B\GL_1(\MTSpin^c)$ maps to $B\GL_1(\ku)$ by the Atiyah-Bott-Shapiro map. 
%Due to the connectivity property of the map from $\Spin^c$ to $\ku$, we look at the twist over $B\GL_1(\MTSpin^c)$ and transfer these results to $B\GL_1(\ku)$.
%From \eqref{eq:HRH}, we know that we need the $\cE(1)$ action on cohomology $H^*_{\ku}(Mf_{a,c})$.
%In mapping to $BO/B\Spin^c$, which gives the universal twist, we are able to obtain the $\cE(1)$-modules for any other twist by pulling back.
%Using the identification we chose from \S\ref{spinc_twists} of $B\O/B\Spin^c \simeq K(\Z/2,1)\times K(\Z,3)$, and the twist is given by two elements $a \in H^1(X; \Z/2)$ and $c \in H^3(X; \Z)$. 
%Then the Thom spectrum $Mf_{a,b}$ can be decomposed as its little sibling $\ku$ using \eqref{eq:MTspinc}, and due to the change-of-rings, working over $\ku$ makes  $H^*(X; \Z/2)$ into an $\cE(1)$-module. We now only have to understand the actions of 
% By the reasoning in the previous paragraph, it suffices to know the action of twisted Steenrod operations on the
% Thom class $U$.
in particular we will find $\Sq^1(U) = Ua$ and $Q_1(U) = U(c \bmod 2+a^3)$ because this is the unique
 choice that is compatible with the known homotopy groups of the Thom spectra of the universal twists from \S\ref{spinc_twists}: $\MTO$ over $K(\Z/2, 1)\times K(\Z, 3)$, $\MTSO$ over $K(\Z, 3)$, and $\MTPin^c$ over $K(\Z/2, 1)$.
 
We first consider $Q_0$: $Q_0(U)$ is either $0$ or $Ua$. %Let $M^{\MTSpin^c}f$ be the Thom spectrum of the twist
%$K(\Z/2, 1)\times K(\Z, 3)\to B\GL_1(\MTSpin^c)$ and $M^\ku f$ be the Thom spectrum of the induced map to $B\GL_1(\ku)$;
For either of the two options for $Q_0(U)$, one can explicitly
write the $\cE(1)$-module structure on $H_\ku^*(M^\ku f)$ in low degrees. Then, using Baker-Lazarev's Adams
spectral sequence, one finds that if $Q_0(U) = 0$, $\pi_0(M^\ku f)_2^\wedge\cong \pi_0(M^{\MTSpin^c}f)$ has at
least $4$ elements, but since $Mf \simeq\MTO$, we know this group is $\Omega_0^\O\cong\Z/2$. Thus $Q_0(U) = Ua$.
%One can explicity write the $\cE(1)$-module structure on 
%in degree less than two, and check that the action by $a$ results in an $E_2$ page which agrees with the bordism groups of $MT\O$ in low degree. This establishes that $\Sq^1(U) = Ua$.

There are three options for $Q_1(U)$: $0$, $U c\bmod 2$, and $U(c\bmod 2+a^3)$. In order to verify the $Q_1$
action, we pull back to $K(\Z, 3)$ and $K(\Z/2, 1)$ separately, and then argue in a similar way.
\begin{itemize}
	\item For $f\colon K(\Z, 3)\to B\GL_1(\MTSpin^c)$, $M^{\MTSpin^c} f\simeq\MTSO$, which is incompatible with
	$Q_1(U) = 0$; the argument is similar to that for $Q_0$.
	\item For $f\colon K(\Z/2, 1)\to B\GL_1(\MTSpin^c)$, $M^{\MTSpin^c} f\simeq \MTPin^c$.
	In $H_\ku^*(M^\ku f)$, $Q_1(U)\ne 0$, which one can show by pulling back further along
	\begin{equation}
		M^\ku f\wedge H\Z/2\longrightarrow M^\ku\wedge_{\ku} H\Z/2.
	\end{equation}
\end{itemize}
Thus $Q_1(U) = U(c\bmod 2+a^3)$.
%
% which has either the trivial action on $U$, or acts by $a \in H^1(X; \Z/2)$. 
%  which then means we must recover the modules for $MT\SO$. The $\cA$-module structre of $MT\SO$ has a nontrivial action of $\Sq^2$ on the Thom class there, and that establishes that $Q_1(U)$ is either  $U(b \mod 2 )$, or $U((b \mod 2) + a^3)$ in order to match with the $E_2$ page of $MT\SO$. Figuring out exactly which one requires pulling back to $MT\Pin^c\simeq MT\Spin^c \wedge \Sigma^{-1}MT\O_1$ and checking that the choice $U((b \mod 2) + a^3)$ does not yield the correct $E_2$ page for $MT\Pin^c$ in low degree. This establishes that $Q_1(U) = U(b \mod 2)$.
 Using the fact that $\cE(1) = \langle Q_0, Q_1\rangle$ and applying the Cartan formula recovers the actions in \eqref{eq:kuActions}. 

Because the fake vector bundle twist for $\tmf$ uses supercohomology, its part of the proof is different enough
that we go into the details. The reduction to the computation of $\Sq^1(U)$, $\Sq^2(U)$, and $\Sq^4(U)$ in the case
of the universal twist proceeds in the same way as for $\ku$. In \S\ref{string_twists} we computed
$H^*(B\SO/B\String;\Z/2)$ in low degrees; this and the Künneth formula imply that in the mod $2$ cohomology of
$K(\Z/2, 1)\times B\SO/B\String$, $H^1$ is spanned by $a$, $H^2$ is spanned by $\set{a^2, t(d)}$, and $H^4$ is
spanned by $\set{a^4, a^2 t(d), a\Sq^1t(d), \delta, t(d)^2}$. Therefore there are
$\lambda_1,\dotsc,\lambda_8\in\Z/2$ such that
\begin{subequations}
\begin{align}
	\Sq^1(U) &= U\lambda_1 a\\
    \label{eq:Sq2tmf}
	\Sq^2(U) &= U(\lambda_2 a^2 + \lambda_3 t(d))\\
	\Sq^4(U) &= U(\lambda_4 a^4 + \lambda_5 a^2t(d) + \lambda_6 a\Sq^1t(d) + \lambda_7\delta + \lambda_8 t(d)^2).\label{eq:Sq4tmf}
\end{align}
\end{subequations}
We finish the proof by indicating how to find $\lambda_1$ through $\lambda_8$. 
To find $\lambda_7$, consider the twist pulled back to $f\colon K(\Z, 4)\simeq B\Spin/B\String\to B\O/B\String$.
Like in the proof for twists of $\ku$, the action of $\Sq^4$ on the Thom class can be detected on either
$M^{\MTString}f$ or $M^{\tmf}f$; as we discussed in \cref{universal_string_twists}, $M^{\MTString}f\simeq\MTSpin$,
so $\pi_3(M^{\MTString}f)\cong\Omega_3^\Spin = 0$, and since the map $M^{\MTString}f\to M^{\tmf}f$ is
sufficiently connected, $\pi_3(M^{\tmf}f) = 0$ as well. In $H_{\tmf}^*(M^{\tmf}f)$, the only options for $\Sq^4(U)$
are $0$ or $U$ times the tautological class. One can run the Baker-Lazarev Adams spectral sequence for these two
options and see that only the latter choice is compatible with $\pi_3(M^{\tmf}f) = 0$.\footnote{To do so, it will
be helpful to know $\Ext_{\cA(2)}(C\nu, \Z/2)$, where $C\nu$ is the $\cA(2)$-module with two $\Z/2$ summands in
degrees $0$ and $4$, joined by a $\Sq^4$. These Ext groups have been computed by Bruner-Rognes~\cite[Corollary
4.16, Figure 4.3]{BR21}.} Thus $\lambda_7 = 1$.

For the other coefficients, we pull back to vector bundle twists for various vector bundles $V\to X$, where we know
$\Sq^k(U) = Uw_k(V)$, $a\mapsto w_1(V)$, $t(d)\mapsto w_2(V)$, and $\delta\mapsto w_4(V)$. Choosing vector bundles with auspicious values of $w_1$, $w_2$, and $w_4$ quickly determines the remaining coefficients.

\begin{itemize}
%	\item To determine the coefficient $\lambda_7$ consider the twist pulled back to $K(\Z, 4)\simeq B\Spin/B\String\to B\O/B\String$, where the Thom spectrum $M^{MT\tmf}f\simeq \MTSpin$. This isolates the term in $\Sq^4(U)$ that contains $\delta$, and the only way to obtain the homotopy groups equal to $\MTSpin$ is if $\lambda_7 = 1$.
	\item Pulling back the twist to $K(\Z/2, 1)\simeq B\O_1$ gives the Thom spectrum  $\tmf\wedge
	(B\O_1)^{\sigma-1}$, where $\sigma\to B\O_1$ is the tautological line bundle. As $w_1(\sigma)\ne 0$ but $w_2(\sigma) = 0$ and $w_4(\sigma) = 0$, we can plug these
	Stiefel-Whitney classes into ~\eqref{eq:Sq2tmf} (with $w_1(V)$ in place of $a$, $w_2(V)$ in place of $t(d)$,
	and $w_4(V)$ in place of $\delta$ as usual) to conclude
	$\lambda_1 = 1$, $\lambda_2 = 0$, and $\lambda_4 = 0$.
%	\item For $f:B\SO/B\String \rightarrow B\GL_1(MT\tmf)$, $M^{MT\tmf}f \simeq \MTSO$ we use the fact that $\Sq^4(U)$ contains $\delta$ and that $\Sq^1(\delta)\neq 0$ by corollary \ref{delta_SH_defn}.
	%From the same corollary we can choose to set $\lambda_8=0$ by the way we have defined $\delta$.
%
%	One can consider an $\cA$-module generated by $U$ which produces the same homotopy groups as $\MTSO$ in low degrees which are $\Omega^{\SO}_0 = \Z$, $\Omega^{\SO}_4 = \Z$, and $\Omega^{\SO}_5 = \Z/2$. The only possible choice for $\Sq^2(U)$
%	such that this is the case is $\lambda_3 = 1$.
	\item Let $V \coloneqq \mathcal{O}(1)\oplus\mathcal{O}(2)\to\mathbb{CP}^2$. If $\alpha\in H^2(\mathbb{CP}^2;\Z/2)\cong\Z/2$ is the unique nonzero element, then $w_1(V) = 0$, $w_2(V) = \alpha$, and $w_4(V) = 0$. Plugging this into~\eqref{eq:Sq2tmf}, we find $\Sq^2(U) = U\alpha = U\lambda_3\alpha$, so $\lambda_3 = 1$. And plugging $w_1(V)$, $w_2(V)$, and $w_4(V)$ into~\eqref{eq:Sq4tmf}, we obtain $\Sq^4(U) = 0 = U\lambda_8\alpha^2$, so $\lambda_8 = 0$.
        \item Let $x$, resp.\ $y$ be the nonzero classes in $H^1(\RP^2\times\RP^2;\Z/2)$ pulled back from the first, resp.\ second copy of $\RP^2$, and let $\sigma_x,\sigma_y\to\RP^2\times\RP^2$ be the real line bundles satisfying $w_1(\sigma_x) = x$ and $w_1(\sigma_y) = y$. Now let $V \coloneqq \sigma_x\oplus \sigma_y^{\oplus 3}$; then $w_1(V) = x+y$, $w_2(V) = xy+y^2$, and $w_4(V) = 0$. Plugging into~\eqref{eq:Sq4tmf}, we have $\Sq^4(U) = 0 = U\lambda_5x^2y^2$, so $\lambda_5 = 0$.
        \item Repeat the preceding example, but with $\RP^1\times\RP^3$ in place of $\RP^2\times\RP^2$; this time, $w_1(V) = x+y$, $w_2(V) = xy+y^2$, and $w_4(V) = xy^3$. Plugging into~\eqref{eq:Sq4tmf}, we have $\Sq^4(U) = Uxy^3 = U(1+\lambda_6)xy^3$, so $\lambda_6 = 0$.
        \qedhere
%        \item To determine the coefficients for $\lambda_5$ and $\lambda_6$, we would in principle need to consider the entire twist $f:K(\Z/2,1) \times B\SO/B\String \rightarrow MT\tmf$ such that we obtain a spectrum with the same homotopy groups as $\MTO$. But a simpler method is to pull back the universal twist to vector bundle twists and compute there, then match with the form of 
%	\eqref{eq:Sq4tmf} pulled back to the vector bundle.
\end{itemize}
\end{proof}
%Next steps:
%\begin{itemize}
%	\item Lemma: these satisfy the various Adem relations, hence are $H_R^*H$-modules. [State but don't prove,
%	since it falls out of our actual proofs]
%	\item Theorem 1: these are the $H_R^*H$-module structures on $H_R^*(Mf)$. [State but don't prove yet]
%	\item Corollary 2: this is what the $E_2$-page of the Baker-Lazarev Adams SS looks like for the little siblings
%	\item Corollary 3: this is what the $E_2$-page of the Baker-Lazarev Adams SS looks like for the big siblings
%	in a range of degrees
%	\item Prove theorem 1 (which implies the lemma)
%\end{itemize}

\section{Applications}\label{section:applications}

In this section, we give examples in which we use \cref{little_sibling_Adams_corollary,big_sibling_Adams_corollary}
to make computations of twisted (co)homology groups.
\subsection{U-duality and related twists of spin bordism}\label{sub:uduality}
\label{s:u_duality}
Let $G$ be a topological group and
\begin{subequations}
\begin{equation}
	\shortexact*{\set{\pm 1}}{\wG}{G}{}
\end{equation}
be a central extension classified by $\beta\in H^2(BG;\set{\pm 1})$. Then the central extension
\begin{equation}
\label{spin_G_xtn}
	\shortexact*[][p]{\set{\pm 1}}{\Spin\times_{\set{\pm 1}}\wG}{\SO\times G}{}
\end{equation}
\end{subequations}
is classified by $w_2 + \beta\in H^2(B(\SO\times G);\Z/2)$. One can
prove this is the extension by pulling back along $\SO \rightarrow \SO \times G$ and $G \rightarrow \SO \times G$ and observing that both
pulled-back extensions are non-split. 
 Therefore given an oriented vector bundle
$E\to X$ and a principal $G$-bundle $P\to X$, i.e.\ the data of an $\SO\times G$ structure on $E$, a lift of this
data to a $\Spin\times_{\set{\pm 1}}\wG$-structure is a trivialization of $w_2(E) + f_P^*(\beta)$, where
$f_P\colon X\to BG$ is the classifying map of $P\to X$. That is, if $\xi$ denotes the composition
\begin{equation}
	\xi\colon B(\Spin\times_{\set{\pm 1}}\wG)\overset{Bp}{\longrightarrow} B\SO\times BG\to B\SO\to B\O,
\end{equation}
then a $\xi$-structure on $E$ is equivalent to a $(BG, \beta)$-twisted spin structure, meaning that by \cref{twisted_spin_bordism_comparison} the Thom spectrum $\mathit{MT\xi}$ is canonically equivalent to the $\MTSpin$-module Thom spectrum
$Mf_{0,\beta}$ associated to the fake vector bundle twist $f_{0,\beta} \colon BG\to B\GL_1(\MTSpin)$ (see \cref{HJ_lift} for the spectrum-level statement). $\mathit{MT\xi}$ may or may not split as $\MTSpin\wedge X$ for a spectrum $X$: a sufficient condition is the existence of a
vector bundle $V\to BG$ such that $w_2(V) = \beta$, as we discussed in \S\ref{VB_change_of_rings}. But as we will see soon,
there are choices of $(G, \beta)$, even when $G$ is a compact, connected Lie group, for which no such $V$ exists.
For these $G$ and $\beta$, \cref{thom_module_calc} significantly simplifies the calculation of $\xi$-bordism.

As an example, consider $G = \SU_8/\set{\pm 1}$ and $\beta$ the nonzero element of $H^2(BG;\Z/2)
\cong\Hom(\pi_1(G), \Z/2) \cong\Z/2$, corresponding to the central extension
\begin{equation}
	\shortexact*{\set{\pm 1}}{\SU_8}{\SU_8/\set{\pm 1}}.
\end{equation}
In~\cite{DY22}, we studied $\Omega_*^{\Spin\times_{\set{\pm 1}}\SU_8}$ as part of an argument that the
$E_{7(7)}(\R)$ U-duality symmetry of four-dimensional $\cN = 8$ supergravity is anomaly-free. Speyer~\cite{Speyer}
shows that all representations of $G$ are spin, so $\beta\ne w_2(V)$ for any vector bundle $V\to BG$ induced from a
representation of $G$, and this can be upgraded to show $Mf_{0,\beta}\not\simeq\MTSpin\wedge X$ for
any spectrum $X$ (see~\cite[Footnote 6]{DY22}). This precludes the standard shearing/change-of-rings argument for
computing $\Spin\times_{\set{\pm 1}}\SU_8$ bordism, and indeed in~\cite[\S 4.3]{DY22} we had to give a more
complicated workaround. However, thanks to \cref{thom_module_calc}, we can now argue over $\cA(1)$. We need as input
the low-degree cohomology of $B(\SU_8/\set{\pm 1})$.
\begin{prop}[{\cite[Theorem 4.4]{DY22}}]
\label{SU8_Z2_coh}
$H^*(B(\SU_8/\set{\pm 1});\Z/2)\cong\Z/2[\beta, b, c, d, e, \dots]/(\dots)$ with $\abs \beta = 2$, $\abs b = 3$,
$\abs c = 4$, $\abs d = 5$, and $\abs e = 6$; there are no other generators below degree $7$ and no relations below
degree $7$. The Steenrod squares are
\begin{equation}\label{eq:SteenrodSU8/Z2}
\begin{aligned}
	\Sq(\beta) &= \beta + b + \beta^2\\
	\Sq(b) &= b+d + b^2\\
	\Sq(c) &= c+e + \Sq^3(c) + c^2\\
	\Sq(d) &= d + b^2 + \Sq^3(d) + \Sq^4(d) + d^2.
\end{aligned}
\end{equation}
\end{prop}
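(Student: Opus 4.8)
The plan is to compute $H^*(B(\SU_8/\set{\pm 1});\Z/2)$ by running the Serre spectral sequence of the fibration
\[
  B\SU_8 \longrightarrow B(\SU_8/\set{\pm 1}) \overset{\beta}{\longrightarrow} K(\Z/2, 2)
\]
associated to the central extension $\set{\pm 1}\to\SU_8\to\SU_8/\set{\pm 1}$; here $\beta$ classifies the extension, pulling back the fundamental class of $K(\Z/2,2)$ to the class $\beta\in H^2$ named in the proposition, and the homotopy fiber is $B\SU_8$. First I would recall that $H^*(B\SU_8;\Z/2)\cong\Z/2[c_2,\dotsc,c_8]$ with $\abs{c_i}=2i$ --- equivalently $\Z/2[w_4,w_6,\dotsc,w_{16}]$, since a complex vector bundle has $w_{2i}=c_i\bmod 2$ and vanishing odd Stiefel--Whitney classes --- and that $H^*(K(\Z/2,2);\Z/2)$ is the polynomial algebra on $\iota_2, \Sq^1\iota_2, \Sq^2\Sq^1\iota_2,\dotsc$. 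In the spectral sequence the tautological class $\iota_2$ maps to $\beta$, so $\Sq^1\iota_2\mapsto b\coloneqq\Sq^1\beta$ and $\Sq^2\Sq^1\iota_2\mapsto d\coloneqq\Sq^2\Sq^1\beta$, two of the sought generators.

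The crux is that every $c_i$ survives to $E_\infty$. I would see this from the $\Z$-coefficient Serre spectral sequence: there $H^*(B\SU_8;\Z)$ is a polynomial ring over $\Z$, hence torsion-free, while every term $E_2^{p,q}$ with $p>0$ is torsion because $H^p(K(\Z/2,2);\Z)$ is a finite $2$-group for $p>0$. So in each total degree the positive-filtration part of the abutment is torsion and therefore zero, which forces $E_\infty^{0,q}=E_2^{0,q}=H^q(B\SU_8;\Z)$ for all $q$; thus no $c_i$ supports a nonzero differential, and neither does its mod $2$ reduction in the mod $2$ spectral sequence. Two consequences: first, the fiber-restriction maps $H^4(\bl;\Z/2)\to H^4(B\SU_8;\Z/2)$ and $H^6(\bl;\Z/2)\to H^6(B\SU_8;\Z/2)$ hit $c_2$ and $c_3$, yielding classes $c$ and $e$ on $B(\SU_8/\set{\pm 1})$ in degrees $4$ and $6$; second, by the Leibniz rule no differential meets the bottom row $E^{*,0}$, so the edge map $\beta^*\colon H^*(K(\Z/2,2);\Z/2)\to H^*(B(\SU_8/\set{\pm 1});\Z/2)$ is an injective ring map and $\beta$, $b$, $d$ are honest polynomial generators. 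Assembling the $E_\infty$-page over the only nonzero fiber degrees $q\in\set{0,4,6}$ then computes $H^n$ through degree $7$; a Poincaré-series comparison shows the evident surjection $\Z/2[\beta,b,c,d,e]\to H^*(B(\SU_8/\set{\pm 1});\Z/2)$ is an isomorphism in degrees $\le 6$, which is the ring-theoretic assertion.

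For the Steenrod squares: $\Sq^1\beta=b$ by construction and $\Sq^2\beta=\beta^2$ by instability, giving $\Sq(\beta)$; then $\Sq^1 b=\Sq^1\Sq^1\beta=0$, $\Sq^2 b=d$ by definition, and $\Sq^3 b=b^2$ by instability, giving $\Sq(b)$. Choosing $c$ to be the mod $2$ reduction of an integral lift of $c_2$ --- one exists by the survival argument run over $\Z$ --- yields $\Sq^1 c=0$, and taking $e\coloneqq\Sq^2 c$ is consistent because $\Sq^2 c$ restricts to $\Sq^2(c_2\bmod 2)=w_6=c_3\bmod 2$ by the Wu formula; $\Sq^3 c$ is the unnamed element of $H^7$ (all of which is the kernel of restriction to $B\SU_8$, since $H^7(B\SU_8;\Z/2)=0$) and $\Sq^4 c=c^2$, giving $\Sq(c)$. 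Finally, for $d=\Sq^2\Sq^1\beta$ the Adem relations $\Sq^1\Sq^2=\Sq^3$ and $\Sq^2\Sq^2=\Sq^3\Sq^1$ give $\Sq^1 d=\Sq^3\Sq^1\beta=\Sq^3 b=b^2$ and $\Sq^2 d=\Sq^3\Sq^1\Sq^1\beta=0$, while $\Sq^5 d=d^2$ by instability and $\Sq^3 d,\Sq^4 d$ stay unnamed, giving $\Sq(d)$.

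The main obstacle I anticipate is establishing the survival of $c_2$ and $c_3$ in the Serre spectral sequence --- i.e.\ ruling out any differential into the bottom row, which is exactly where the torsion-freeness of $H^*(B\SU_8;\Z)$ must be used to avoid circular reasoning --- together with pushing the low-degree bookkeeping far enough to identify all the generators and confirm the absence of relations through degree $6$.
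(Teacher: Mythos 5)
The paper does not prove this proposition; it cites \cite[Theorem 4.4]{DY22}, so your argument can only be judged on its own terms. Your overall strategy (the Serre spectral sequence of $B\SU_8\to B(\SU_8/\set{\pm 1})\to K(\Z/2,2)$) is reasonable, and the Steenrod-square bookkeeping at the end (instability, the Adem relations $\Sq^1\Sq^2=\Sq^3$ and $\Sq^2\Sq^2=\Sq^3\Sq^1$, the Wu formula for $\Sq^2w_4$) is correct \emph{granting} the identification of the generators. But the linchpin of that identification --- the survival of $c_2$ and $c_3$ to $E_\infty$ --- is exactly where the proof breaks. The argument ``the positive-filtration part of the abutment is torsion and therefore zero'' is a non sequitur: torsion does not imply zero, and indeed $H^*(B(\SU_8/\set{\pm 1});\Z)$ has plenty of nonzero torsion in positive filtration (e.g.\ the integral Bockstein of $\beta$ in degree $3$). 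Nothing in your setup prevents a differential from a free group $E_r^{0,q}$ into a torsion group $E_r^{r,q-r+1}$.

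Worse, the general principle you are trying to extract is false, as the case $n=2$ shows: for the identical fibration $B\SU_2\to B\SO_3\to K(\Z/2,2)$ one has $H^*(B\SU_2;\Z)$ torsion-free and $H^{>0}(K(\Z/2,2);\Z)$ all torsion, yet $H^4(B\SO_3;\Z/2)=\Z/2\set{w_2^2}$ is one-dimensional, so $c_2\bmod 2$ must support a nonzero transgression $d_5\colon E_5^{0,4}\to E_5^{5,0}$ (and integrally the image of $H^4(B\SO_3;\Z)\to H^4(B\SU_2;\Z)$ is $4\Z$, not $\Z$). So whether $c_2\bmod 2$ survives genuinely depends on $n$, and for $n=8$ it requires an actual computation of the transgression $d_5$ on $E_5^{0,4}$ (equivalently, exhibiting a class on $B(\SU_8/\set{\pm 1})$ restricting to $c_2\bmod 2$), which you have not supplied; the same issue recurs for $c_3$ and propagates into your claim that no differential hits the bottom row, hence into the injectivity of the edge map and the dimension count in degrees $4$ through $6$. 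You correctly flagged this survival question as the main obstacle, but the resolution you propose does not work; this is the genuinely $n$-dependent content of the proposition and is where the cited proof in \cite{DY22} has to do real work.
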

By \cref{big_sibling_Adams_estimate}, to understand the $\MTSpin$-module Baker-Lazarev Adams spectral sequence for $M^{\MTSpin}f_{0,\beta}$ in the degrees we care about (i.e.\ $5$ and below), it is equivalent to consider the $\ko$-module analogue for $M^\ko f_{0,\beta}$,
% \cref{big_sibling_Adams_corollary}, the ma
%$M^{\MTSpin} f_{0,\beta}\to M^\ko f_{0,\beta}$ is an %isomorphism on homotopy groups in degrees we care about, so
%$M^{\ko}f_{0,\beta}$ suffices.
\Cref{thom_module_calc} tells us how $\cA(1)$ acts on $H_\ko^*(M^\ko f_{0,\beta})$: $\Sq^1(U) = 0$ and $\Sq^2(U) =
U\beta$; to make more computations, use the Cartan formula and the Steenrod squares in \cref{SU8_Z2_coh}.
Then using the information from~\eqref{eq:SteenrodSU8/Z2} yields
\begin{subequations}
\label{seagull_maker}
\begin{gather}
\begin{aligned}
    \Sq^1(U\beta) &= U\Sq^1(\beta) + \Sq^1(U)\beta = Ub\\
	\Sq^2(U\beta) &= U\Sq^2(\beta) + \Sq^1(U)\Sq^1(\beta) + \Sq^2(U)\beta = U(2\beta^2) = 0\\
%    \Sq^2(Ub) &= \Sq^2(b)U + Ub\beta \Sq^1(b)Sq^1(U) =  (d+b\beta)U\,, \notag 
\end{aligned}\\
\begin{aligned}
	\Sq^1(Ub) &= U\Sq^1(b) + \Sq^1(U)b = 0\\
	\Sq^2(Ub) &= U\Sq^2(b) + \Sq^1(U)\Sq^1(b) + \Sq^2(U)b = U(d+b\beta)
\end{aligned}\\
\begin{aligned}
	\Sq^1(U(d+b\beta)) &= U\Sq^1(d+b\beta) + \Sq^1(U)(d+b\beta) = U(2b^2) = 0\\
	\Sq^2(U(d+b\beta)) &= U\Sq^2(d+b\beta) + \Sq^1(U)\Sq^1(d+b\beta) + \Sq^2(U)(d+b\beta)= 0.
\end{aligned}
\end{gather}
\end{subequations}
See the lower left (red) piece of \cref{Uduality_sseq}, left, for a picture of this data.
This calculation implies the vector space generated by $\set{U, U\beta, Ub, U(d+b\beta)}$ is an $\cA(1)$-submodule
of $H_\ko^*(M^\ko f_{0,\beta})$; specifically, it is isomorphic to the ``seagull'' $\cA(1)$-module $M_0\coloneqq
\cA(1)\otimes_{\cA(0)}\Z/2$.\footnote{Adamyk~\cite{adamyk2021classifying} introduced the name ``seagull'' for $M_0$.} This is an $\cA(1)$-module whose $\cA(1)$-action does not compatibly extend to an
$\cA$-action. Continuing to compute $\Sq^1$- and $\Sq^2$-actions as in~\eqref{seagull_maker}, we learn that there
is an isomorphism of $\cA(1)$-modules 
\begin{equation}
\label{spin-SU8_Hko}
	H_{\ko}^*(M^\ko f_{0,\beta})\cong \textcolor{BrickRed}{M_0} \oplus
		\textcolor{Green}{\Sigma^4 M_0} \oplus
		\textcolor{MidnightBlue}{\Sigma^4 M_1} \oplus
		\textcolor{Fuchsia}{\cA(1)} \oplus P,
\end{equation}
where $P$ is concentrated in degrees $6$ and above (so we can and will ignore it), and
$\textcolor{MidnightBlue}{M_1}$ is an $\cA(1)$-module which is isomorphic to either $M_0$ or $C\eta\coloneqq
\cA(1)\otimes_{\cE(1)}\Z/2$. We draw the decomposition~\eqref{spin-SU8_Hko} in \cref{Uduality_sseq}, left.
%The
%module $\textcolor{BrickRed}{M_0}$ is generated by $U$ and can be obtained by using the fact that the first
%nontrivial Steenrod action is $\Sq^2(U) = U\beta$ via  part 2 of \cref{thom_module_calc}. 
%Then proceeding with the relation in \eqref{eq:SteenrodSU8/Z2} yields
%\begin{align}
%    \Sq^1(U\beta) &= \Sq^1(\beta) U + \beta \Sq^1(U) = Ub \\\notag 
%    \Sq^2(Ub) &= \Sq^2(b)U + Ub\beta \Sq^1(b)Sq^1(U) =  (d+b\beta)U\,, \notag 
%\end{align}
%which completes the red seagull module on the left of \cref{Uduality_sseq}.
\begin{figure}[h!]
\begin{subfigure}[c]{0.35\textwidth}
\begin{tikzpicture}[scale=0.6, every node/.style = {font = \tiny}]
        \foreach \y in {0, ..., 11} {
                \node at (-2, \y) {$\y$};
        }
        \begin{scope}[BrickRed]
                \Mzero{0}{0}{$U$};
                %\tikzpt{0}{4}{}{};
                %\tikzpt{0}{6}{}{};
                %\tikzpt{0}{7}{}{};
                \draw (0.4,1.7) node {$U\beta$};
                \draw (0.4,3) node {$Ub$};
                \draw (0,5.3) node {$U(d+b\beta)$};
                %\draw (-.6,5.3) node {$U(ab+d)$};
                %\sqone(0, 4);
              %\sqtwoR(0, 4);
                %\sqone(0, 6);
        \end{scope}
        \begin{scope}[Green]
                \Mzero{2}{4}{$U\beta^2$};
        \end{scope}
        \begin{scope}[MidnightBlue]
                \Mzero{3.5}{4}{$Uc$};
        \end{scope}
        \begin{scope}[Fuchsia]
                \Aone{5}{5}{$Ud$};
        \end{scope}
\end{tikzpicture}
\end{subfigure}
\quad
\begin{subfigure}[c]{0.35\textwidth}
%\begin{sseqdata}[name=UAdams, classes=fill, xrange={0}{5}, yrange={0}{3}, scale=0.6,
%        x label = {$\displaystyle{s\uparrow \atop t-s\rightarrow}$},
%        x label style = {font = \small, xshift = -17ex, yshift=6ex}, >=stealth]
%\begin{scope}[BrickRed]
%        \class(0, 0)\AdamsTower{}
%\end{scope}
%\begin{scope}[Green]
%        \class(4, 0)\AdamsTower{}
%\end{scope}
%\begin{scope}[MidnightBlue]
%        \class(4, 0)\AdamsTower{}
%\end{scope}
%\class[Fuchsia](5, 0)
%\end{sseqdata}
%\printpage[name=UAdams, page=2]
\includegraphics{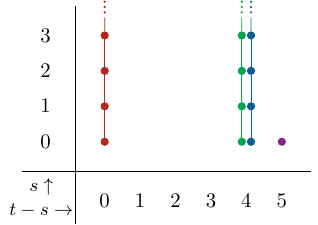}
\end{subfigure}
\caption{Left: the $\cA(1)$-module structure on $H_{\ko}^*(M^\ko f_{0,\beta})$ in low degrees, where
$\beta\in H^2(B(\SU_8/\set{\pm 1});\Z/2)$ is the generator. The pictured submodule contains all elements in degrees
$5$ and below. We have not determined $\Sq^3(Uc)$ -- it may be $0$, in which case the blue summand would vanish in
degrees $7$ and above. In either case, the pictured $\cA(1)$-module cannot arise as the restriction of an
$\cA$-action to $\cA(1)$, indicating that the fake vector bundle twist $f_{0,\beta}$ of $\ko$ cannot arise from a
vector bundle.  Right: the $E_2$-page of the corresponding $\ko$-module Adams spectral sequence, which as discussed
in \S\ref{s:u_duality} also computes the $2$-completion of $\Omega_*^{\Spin\times_{\set{\pm 1}}\SU_8}$ in degrees $7$ and below.}
\label{Uduality_sseq}
\end{figure}

The change-of-rings isomorphism (\cref{CoR_lemma}) and Koszul duality~\cite[Remark 4.5.4]{BC18} allow us to compute
$\Ext_{\cA(1)}(M_0)\cong\Z/2[h_0]$ and $\Ext_{\cA(1)}(C\eta)\cong\Z/2[h_0, v_1]$ with $h_0$ in bidegree $(t-s, s)
= (0, 1)$ and $v_1$ in bidegree $(t-s, s) = (2, 1)$~\cite[Examples 4.5.5 and 4.5.6]{BC18}. Therefore we can draw
the $E_2$-page of the Adams spectral sequence computing the twisted $\ko$-homology associated to the fake vector
bundle twist $f_{0,\beta}\colon B(\SU_8/\set{\pm 1})\to B\GL_1(\ko)$ in \cref{Uduality_sseq}, right. By \cref{big_sibling_Adams_estimate}, this also computes $\pi_*(M^{\MTSpin}f_{0,\beta})_2^\wedge$ in low degrees, and by
\cref{twisted_spin_bordism_comparison}, this is isomorphic to the $2$-completion of the corresponding twisted spin bordism groups, which we saw above are $\Omega_*^{\Spin\times_{\set{\pm 1}}\SU_8}$. This spectral sequence collapses on the $E_2$-page in
degrees $5$ and below, using $h_0$-linearity of differentials, so we have made the following computation.
\begin{thm}[{\cite[Theorem 4.26]{DY22}}]
\label{u_duality_bordism}
\begin{equation}
\begin{aligned}
	\Omega_0^{\Spin\times_{\set{\pm 1}}\SU_8} &\cong \Z\\
	\Omega_1^{\Spin\times_{\set{\pm 1}}\SU_8} &\cong 0\\
	\Omega_2^{\Spin\times_{\set{\pm 1}}\SU_8} &\cong 0\\
	\Omega_3^{\Spin\times_{\set{\pm 1}}\SU_8} &\cong 0\\
	\Omega_4^{\Spin\times_{\set{\pm 1}}\SU_8} &\cong \Z^2\\
	\Omega_5^{\Spin\times_{\set{\pm 1}}\SU_8} &\cong \Z/2.
\end{aligned}
\end{equation}
\end{thm}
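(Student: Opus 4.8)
The plan is to run the $2$-primary $\ko$-module Baker--Lazarev Adams spectral sequence for the Thom spectrum $M^{\ko}f_{0,\beta}$; all of the ingredients are assembled in the discussion preceding the statement, so the proof is mostly a matter of reading off the spectral sequence and resolving extensions.

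First I would make the reduction explicit. By the identification of $\mathit{MT\xi}$ with $M^{\MTSpin}f_{0,\beta}$ above and \cref{twisted_spin_bordism_comparison}, $\Omega_k^{\Spin\times_{\set{\pm 1}}\SU_8}\cong \pi_k(M^{\MTSpin}f_{0,\beta})$; since the fiber of $\MTSpin\to\ko$ is $7$-connected and $M^{\MTSpin}f_{0,\beta}$ is connective, the map $M^{\MTSpin}f_{0,\beta}\to M^{\ko}f_{0,\beta}$ is an isomorphism on homotopy groups in degrees $\le 7$, so for $k\le 5$ it suffices to compute $\pi_k(M^{\ko}f_{0,\beta})^{\wedge}_2$ via the spectral sequence with $E_2^{s,t}=\Ext_{\cA(1)}^{s,t}(H_{\ko}^*(M^{\ko}f_{0,\beta}),\Z/2)$ (\cref{big_sibling_Adams_corollary}). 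No odd torsion is lost in this range: the twist $f_{0,\beta}$ factors through the simply connected space $K(\Z/2,2)$, hence becomes nullhomotopic after $p$-localization for any odd prime $p$, so $(M^{\ko}f_{0,\beta})_{(p)}\simeq(\ko\wedge B(\SU_8/\set{\pm 1})_+)_{(p)}$, and $\ko_*(B(\SU_8/\set{\pm 1}))$ has no odd torsion through degree $5$ by the Atiyah--Hirzebruch spectral sequence.

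Second, I would assemble the $E_2$-page. \Cref{thom_module_calc} identifies $H_{\ko}^*(M^{\ko}f_{0,\beta})$ with $M_{\ko}(0,\beta,B(\SU_8/\set{\pm 1}))$, so $\Sq^1(U)=0$ and $\Sq^2(U)=U\beta$; feeding the cohomology ring and Steenrod squares of \cref{SU8_Z2_coh} into the Cartan formula --- the bootstrap begun in \eqref{seagull_maker} --- identifies this $\cA(1)$-module in low degrees with the decomposition \eqref{spin-SU8_Hko}, namely a seagull $M_0$, a $\Sigma^4 M_0$, a $\Sigma^4 M_1$ with $M_1$ either $M_0$ or $C\eta$, a free summand generated by $Ud$ in degree $5$, and a remainder $P$ concentrated in degrees $\ge 6$. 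Applying the change-of-rings isomorphism \cref{CoR_lemma} together with Koszul duality gives $\Ext_{\cA(1)}(M_0)\cong\Z/2[h_0]$, $\Ext_{\cA(1)}(C\eta)\cong\Z/2[h_0,v_1]$, $\Ext_{\cA(1)}(\cA(1))\cong\Z/2$ in bidegree $(0,0)$, and $\Ext_{\cA(1)}(P)=0$ for $t-s<6$; so in stems $t-s\le 5$ the $E_2$-page (\cref{Uduality_sseq}, right) consists of an $h_0$-tower in stem $0$, nothing in stems $1,2,3$, two $h_0$-towers in stem $4$, and a single class at $(t-s,s)=(5,0)$.

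Finally I would argue collapse and resolve extensions. Differentials out of the stem-$4$ towers land in stem $3$, which is zero; the only differential that could touch the pictured region is one out of the stem-$5$ class into a stem-$4$ tower, and this vanishes by $h_0$-linearity, since $h_0$ annihilates the stem-$5$ class but acts injectively on the $\Z/2[h_0]$-towers. Hence $E_\infty=E_2$ in stems $\le 5$. Each infinite $h_0$-tower contributes a copy of $\Z$ to the $2$-completed homotopy, and there are no hidden $2$-extensions in stem $4$ (a hidden extension would raise Adams filtration, but the two bottom classes already generate their own towers), while the isolated class at $(5,0)$ contributes $\Z/2$. Combining this with the odd-primary vanishing above and finite generation of these bordism groups yields $\Omega_0^{\Spin\times_{\set{\pm 1}}\SU_8}\cong\Z$, $\Omega_1\cong\Omega_2\cong\Omega_3\cong 0$, $\Omega_4\cong\Z^2$, and $\Omega_5\cong\Z/2$. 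The genuine work here --- already carried out in the text --- is pinning down the $\cA(1)$-module \eqref{spin-SU8_Hko}, and in particular checking that $P$ begins in degree $\ge 6$ so that nothing leaks into stems $\le 5$; happily the undetermined class $\Sq^3(Uc)$, which decides whether $M_1\cong M_0$ or $M_1\cong C\eta$, is irrelevant below stem $6$.
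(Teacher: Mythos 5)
Your proof is correct and follows essentially the same route as the paper: reduce to $M^{\ko}f_{0,\beta}$ via connectivity of the Atiyah–Bott–Shapiro map, compute the $\cA(1)$-module structure on $H_{\ko}^*$ by feeding \cref{SU8_Z2_coh} and \cref{thom_module_calc} into the Cartan formula, read off the $E_2$-page from the decomposition~\eqref{spin-SU8_Hko}, and conclude via $h_0$-linearity. You usefully spell out two points the paper leaves implicit (deferring to~\cite{DY22}): the odd-primary vanishing, via the observation that the twist factors through the $2$-primary space $K(\Z/2,2)$ and hence dies after $p$-localization for $p$ odd, and the absence of hidden $2$-extensions in stem $4$.
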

There are a few other choices of compact Lie groups $G$ and classes $\beta \in H^2(BG;\Z/2)$ such that $\beta$ is
not equal to $w_2$ of any representation, including
\begin{itemize}
	\item $\SU_{4n}/\set{\pm 1}$ for $n > 1$, where $\beta$ corresponds to the double cover
	$\SU_{4n}\to\SU_{4n}/\set{\pm 1}$~\cite{Speyer},
	\item $\PSO_{8n}$, where $\beta$ corresponds to the double cover $\SO_{8n}\to\PSO_{8n}$~\cite{TJF},
	\item $\mathrm{PSp}_n$ and the double cover $\mathrm{Sp}_n\to \mathrm{PSp}_n$ for $n > 1$, and
	\item $E_7/\set{\pm 1}$ and the double cover $E_7\to E_7/\set{\pm 1}$.
\end{itemize}
For the last two items, the proof is analogous to~\cite[Footnote 6]{DY22} for $\SU_8/\set{\pm 1}$: compute the low-degree mod $2$ cohomology of $BG$ and use this to show that if $\beta$ is $w_2$ of a representation $V$, the $\cA$-action on the cohomology of the corresponding Thom spectrum violates the Adem relations.

For all of these choices of $G$ and $\beta$, one can define (at a physics level of rigor) unitary quantum field theories with fermions
and a background $\wG$ symmetry, such that $-1\in\wG$ acts by $-1$ on fermions and by $1$ on bosons. Then, as
described in \cite{WWW19,Seiberg:2016rsg}, these theories can be defined on manifolds with differential
$\Spin_n\times_{\set{\pm 1}}\wG$ structures, so by work of Freed-Hopkins~\cite{FH21a}, the anomaly field theories of
these QFTs are classified using the bordism groups $\Omega_*^{\Spin\times_{\set{\pm 1}}\wG}$, and computations such
as \cref{u_duality_bordism} are greatly simplified using \cref{thom_module_calc}.

Kuroda~\cite{Kur25} makes some of these computations, using similar methods to the ones we used here to determine the $\Spin\times_{\set{\pm 1}}\Sp_4$, $\Spin\times_{\set{\pm 1}}\SU_8$, and $\Spin\times_{\set{\pm 1}}\Spin_{16}$ bordism groups in degrees $7$ and below.
\begin{rem}
\label{stabdiff}
Though we focused on invertible field theories in this section, there are other applications of twisted spin
bordism groups. For example, Kreck's modified surgery~\cite{Kre99} uses twisted spin bordism to classify closed,
smooth $4$-manifolds whose universal covers are spin up to stable diffeomorphism: given such a manifold $M$, one
shows that $w_1(M)$ and $w_2(M)$ pull back from $B\pi_1(M)$, then considers twisted spin bordism for the fake
vector bundle twist over $B\pi_1(M)$ given by $w_1(M)$ and $w_2(M)$.
%\begin{equation}
%\label{stabdiff_twist}
%	B\pi_1(M)\overset{(w_1(M), w_2(M))}{\longrightarrow} K(\Z/2, 1)\times K(\Z/2, 2)\simeq
%	B\O/B\Spin\longrightarrow B\GL_1(\MTSpin).
%\end{equation}
Often one computes these bordism groups with Teichner's \term{James spectral sequence}~\cite[\S II]{Tei93}, a
version of the Atiyah-Hirzebruch spectral sequence for spin bordism that can handle non-vector-bundle twists.
However, extension questions in this spectral sequence can be difficult, and it is helpful to have the Adams
spectral sequence to resolve them (see~\cite{Ped17} for an example for a vector bundle twist). Therefore
\cref{big_sibling_Adams_corollary} could be a useful tool for studying stable diffeomorphism classes of
$4$-manifolds, since not all of the relevant twists come from vector bundles.
%A priori, this twist need not come from a vector bundle on $BG$, though it frequently does. Often, researchers
%studying twisted spin bordism with stable diffeomorphism in mind will assume that~\eqref{stabdiff_twist} comes
%from a vector bundle (e.g.\ \cite[\S 3.3]{KLPT17}
\end{rem}
\subsection{Twists of string bordism}\label{sub:stringbord}
A story very similar to that of~\S\ref{s:u_duality} takes place one level up in the Whitehead tower for $B\O$. Many
supergravity theories require spacetime manifolds $M$ to satisfy a \term{Green-Schwarz condition} specified by a
Lie group $G$ and a class $c\in H^4(BG;\Z)$, which Sati-Schreiber-Stasheff~\cite{SSS12} characterize as data of a
spin structure on $M$, a principal $G$-bundle $P\to M$ and a trivialization of $\lambda(M) - c(M)$, i.e.\ the data
of a $(BG, c)$-twisted string structure on $M$ (see also~\cite{Sat10, Sat11, SS19}). In many example theories of
interest, this twist does not come from a vector bundle, including
%\begin{itemize}
the $E_8\times E_8$ heterotic string and the CHL string~\cite[Lemma 2.2]{Deb23}.
%	\item the $\Spin_{32}/(\Z/2)$ heterotic string and the%
%	$(\Spin_{16}\times\mathrm\Spin_{16})/(\Z/2)$ non-supersymmetric heterotic string~\cite{BDDM}. 
	%[TODO: really?]
%\end{itemize}
The corresponding twisted string bordism groups are used to study anomalies and defects for these theories;
anomalies were touched on in \S\ref{s:u_duality}, and the use of bordism groups to learn about defects is through
the McNamara-Vafa cobordism conjecture~\cite{MV19}.

\Cref{big_sibling_Adams_estimate} allows us to use the Baker-Lazarev Adams spectral sequence at $p = 2$ and $p = 3$ to calculate these
twisted string bordism groups in dimensions $15$ and below, which suffices for applications to superstring theory.
(Calculations at primes greater than $3$ are easier and can be taken care of with other methods.) We will show an
example computation, relevant for the $E_8\times E_8$ heterotic string at $p = 3$; for applications of \cref{big_sibling_Adams_estimate} to twisted string bordism at $p = 2$, see~\cite[\S 2.2, \S 2.4.1]{Deb23} and~\cite{BDDM}, and for more $p = 3$ calculations, see~\cite{BDDM}.

Because $E_8$ is a connected, simply connected, simple Lie group, there is an isomorphism $c\colon
H^4(BE_8;\Z)\overset\cong\to\Z$ uniquely specified by making the Chern-Weil class of the Killing form positive; let
$c$ be the preimage of $1$ under this isomorphism. Bott-Samelson~\cite[Theorems IV, V(e)]{BS58} showed that, interpreted as a map
$BE_8\to K(\Z, 4)$, $c$ is $15$-connected.

For $i = 1,2$, let $c_i\in H^4(BE_8\times BE_8;\Z)$ be the copy of $c$ coming from the $i^{\mathrm{th}}$ copy of
$E_8$. Let $\Z/2$ act on $E_8\times E_8$ by switching the two factors; then in the Serre spectral sequence for the
fibration of classifying spaces induced by the short exact sequence
\begin{equation}
\label{E82_Serre}
	\shortexact*{E_8\times E_8}{(E_8\times E_8)\rtimes\Z/2}{\Z/2},
\end{equation}
the class $c_1 + c_2\in E_2^{0,4} = H^4(BE_8\times BE_8;\Z)$ survives to the $E_\infty$-page and lifts uniquely to
define a class $c_1 + c_2\in H^4(B((E_8\times E_8)\rtimes\Z/2);\Z)$. The Green-Schwarz condition for the $E_8 \times
E_8$ heterotic string asks for an $(E_8\times E_8)\rtimes\Z/2$-bundle $P\to M$ and a trivialization of $\lambda(M) -
(c_1+c_2)(P)$, so we want to compute $\Omega_*^\String(B((E_8\times E_8)\rtimes\Z/2), c_1+c_2)$.
\Cref{big_sibling_Adams_estimate} allows us to use the change-of-rings theorem to simplify the Adams spectral sequence at $p = 2,3$ for this
computation in degrees $15$ and below; we will give the $3$-primary computation here and point the interested reader to~\cite[\S 2.2]{Deb23}
for the longer $2$-primary computation.
\begin{thm}[{\cite[Theorem 2.65]{Deb23}}]
\label{heterotic_at_3}
The $(B((E_8\times E_8)\rtimes\Z/2), c_1+c_2)$-twisted string bordism groups lack $3$-primary torsion in degrees
$11$ and below.
\end{thm}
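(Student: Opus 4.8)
The plan is to run Baker--Lazarev's Adams spectral sequence at $p = 3$. Write $X = B((E_8\times E_8)\rtimes\Z/2)$ and $d = c_1 + c_2$, regarded as a class in $\SH^4(X)$ through ordinary cohomology. By \cref{big_sibling_Adams_corollary}, for $t - s < 16$ the $E_2$-page of the spectral sequence computing $(\Omega_*^{\String}(X, 0, d))_3^\wedge$ is $\Ext_{\cA^\tmf}^{s,t}(M_\tmf'(d, X), \Z/3)$. Since $11 < 16$, it suffices to show that in stems $t-s\le 11$ this $\Ext$-group contains no \emph{finite $h_0$-tower}: every class in that range should be either in Adams filtration $0$ or part of an $h_0$-tower of unbounded filtration (detecting a free summand of $\pi_*$), so that $\pi_*$ is $3$-torsion-free there and there is no room for a differential to change this.

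First I would pin down $H^*(X;\Z/3)$ in low degrees. Since $3\nmid\abs{\Z/2}$, the Serre spectral sequence of $B(E_8\times E_8)\to BX\to B\Z/2$ collapses, so $H^*(X;\Z/3)\cong H^*(BE_8\times BE_8;\Z/3)^{\Z/2}$, the invariants under the swap of factors. As $c\colon BE_8\to K(\Z,4)$ is $15$-connected~\cite{BS58}, $H^*(BE_8;\Z/3)$ agrees through degree $14$ with $H^*(K(\Z,4);\Z/3)$, which is the free graded-commutative algebra on $\iota_4$, $\cP^1\iota_4$, $\beta\cP^1\iota_4$, $\cP^2\iota_4, \dots$; a Künneth-and-invariants computation then shows $H^*(X;\Z/3)$ is nonzero only in degrees $0, 4, 8, 9$ below degree $12$, with $H^4 = \langle c_1+c_2\rangle$, $H^8 = \langle c_1^2 + c_2^2,\ c_1 c_2,\ \cP^1(c_1+c_2)\rangle$, and $H^9 = \langle\beta\cP^1(c_1+c_2)\rangle$. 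Substituting this into \cref{the_twisted_modules}(5) --- where $\beta(Ux) = U\beta x$ and $\cP^1(Ux) = U((c_1+c_2)x + \cP^1 x)$ --- presents the $\cA^\tmf$-module $M \coloneqq M_\tmf'(d, X)$ explicitly in this range. The key structural point is that $\cP^1$ acts as freely as possible on the bottom class $U$: one has $\cP^1 U = U(c_1+c_2)\ne 0$, $(\cP^1)^2 U = U\bigl((c_1+c_2)^2 + \cP^1(c_1+c_2)\bigr)\ne 0$ (because $(c_1+c_2)^2$ and $\cP^1(c_1+c_2)$ are linearly independent in $H^8$), and $\beta(\cP^1)^2 U = U\beta\cP^1(c_1+c_2)\ne 0$, whereas $\beta U = 0$ and $\beta(\cP^1 U) = 0$ since $c_1+c_2$ reduces from an integral class; here the unstable relations $\cP^1\cP^1 = 2\cP^2$ and $\cP^2 x = x^3$ for $\abs x = 4$ force $(\cP^1)^3 U = 0$, consistently with the relation $(\cP^1)^3 = 0$ in $\cA^\tmf$ (\cref{intro_Atmf}).

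Next I would decompose $M$ in degrees $\le 13$ as a direct sum of $\cA^\tmf$-modules. The cyclic submodule $\langle U\rangle$ is, in this range, the four-dimensional ``staircase'' with classes $U$, $\cP^1 U$, $(\cP^1)^2 U$, $\beta(\cP^1)^2 U$ in degrees $0, 4, 8, 9$; a direct minimal-resolution computation shows that $\Ext_{\cA^\tmf}(\langle U\rangle,\Z/3)$ in stems $\le 11$ is the $h_0$-tower in stem $0$ together with a single $h_0$-tower based at bidegree $(1,5)$ (which detects the free $\Z_{(3)}$ in $\Omega_4^{\String}(X,d)$ that is already visible rationally from $H^4(X;\Q) = \Q$), and contains no finite $h_0$-tower. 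The complementary summands of $M$ are generated in degrees $\ge 8$ by classes annihilated by $\beta$ --- the extra degree-$8$ generators $c_1^2 + c_2^2$ and $c_1 c_2$ reduce from integral classes, so $\beta U(c_1^2+c_2^2) = \beta U c_1 c_2 = 0$ --- so their $\Ext$-groups are concentrated in $t\ge 8$; comparing with the $\cA^\tmf$-module $\Ext$ computations of Bruner--Rognes~\cite{BR21} one checks that these also contribute no finite $h_0$-tower in stems $\le 11$. Assembling the summands, and observing that the only possible differentials landing in or emanating from stems $\le 11$ connect classes of the wrong type, yields the theorem.

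The main obstacle will be making the $\cA^\tmf$-module identification of $M$ --- and the direct-sum splitting --- precise in the required range: this means tracking the $\cP^1$- and $\beta$-actions in $H^*(K(\Z,4);\Z/3)$ through degree roughly $13$, passing to factor-swap invariants, and verifying the independence statements that make $\langle U\rangle$ behave like a quotient of $\cA^\tmf$ on which $\cP^1$ is nearly free. Conceptually this is the heart of the matter: it is precisely the nontriviality of the $\cP^1$-action induced by the twist $d = c_1 + c_2$ --- ultimately a consequence of $c$ being $15$-connected on each $E_8$ factor --- that kills the $\alpha$- and $\beta$-towers (equivalently, the $3$-primary parts of $\pi_3\tmf$ and $\pi_{10}\tmf$) in the twisted theory, which is what makes the twisted string bordism groups torsion-free in this range. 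If one prefers to avoid an explicit splitting, one can instead filter $M$ by its degree-$8$ generators and run the evident comparison of Adams spectral sequences, which in stems $\le 11$ is lacunary enough to give the same conclusion.
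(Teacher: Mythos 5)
Your strategy coincides with the paper's: Baker--Lazarev at $p=3$ via \cref{big_sibling_Adams_corollary}, the same computation of $H^*(X;\Z/3)$ (collapse of the Serre spectral sequence, Bott--Samelson, Cartan--Serre), and the same decomposition of $M_\tmf'(d,X)$ in degrees $\le 11$: your cyclic ``staircase'' $\langle U\rangle$ with classes in degrees $0,4,8,9$ is exactly the paper's summand $N_2=\cA^{\tmf}/(\beta,\beta\cP^1,\cP^1\beta(\cP^1)^2)$, and your two complementary degree-$8$ summands are the paper's $\Sigma^8N_1\oplus\Sigma^8N_1$. The only methodological difference is how the Ext groups are obtained: the paper runs the three long exact sequences induced by \eqref{Cnu_SES}, \eqref{qn_SES}, and \eqref{2qn_SES}, including a genuinely delicate boundary map ($\partial(\alpha y)=\pm\beta w$, proved via an explicit extension of $\cA^{\tmf}$-modules), whereas you assert the answer from a ``direct minimal-resolution computation'' that is not carried out.

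Two concrete issues. First, your reduction is misstated: a class in Adams filtration $0$ that does not support an unbounded $h_0$-tower detects a $\Z/3$-summand, so ``in filtration $0$ or part of an unbounded tower'' is not a sufficient condition for torsion-freeness. The correct criterion (which you also state) is that every class in stems $\le 11$ must lie in an unbounded $h_0$-tower. Second, your claimed answer for $\Ext_{\cA^{\tmf}}(\langle U\rangle,\Z/3)$ is incomplete: besides the towers in stems $0$ and $4$, there is an unbounded $h_0$-tower on $c_4$ based in bidegree $(s,t)=(2,10)$, i.e.\ stem $8$, coming from $\pi_8\tmf\otimes\Q$; without it your chart has only two towers in stem $8$, whereas the rational rank of the degree-$8$ twisted bordism group is $3$. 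Neither point overturns the conclusion---the missing tower is infinite---but together they indicate that the Ext computation, which is essentially the entire content of the paper's proof (\cref{nu_bdry,qn_bdry,2qn_bdry}), has been asserted rather than performed. To make this a proof you must actually produce the Ext charts of $N_2$ and $N_1$ in the stated range and verify the absence of finite $h_0$-towers; the rest of your outline matches the paper.
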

Just like for $\Spin\times_{\set{\pm 1}}\SU_8$ bordism and~\cite{DY22} in \S\ref{s:u_duality}, the computation
in~\cite{Deb23} does not take advantage of the change-of-rings theorem, works over the entire Steenrod algebra, and
is significantly harder than our proof here.
\begin{proof}
Recall the notation $\cA^{\tmf}$, $\beta$, and $\cP^1$ from \cref{intro_Atmf}.
By \cref{twisted_string_bordism_comparison} (see also \cref{HJ_lift}), the Thom spectrum for $(B((E_8\times E_8)\rtimes\Z/2), c_1+c_2)$-twisted string bordism
is identified with the $\MTString$-module Thom spectrum $M^{\MTString}f_{0,c_1+c_2}$, where $f_{0,c_1+c_2}$ is the
fake vector bundle twist defined by the image of the class $c_1+c_2\in H^4(B((E_8\times E_8)\rtimes\Z/2);\Z)$ in
supercohomology.
Let
$M^{\tmf}f_{0,c_1+c_2}$ be the $\tmf$-module Thom spectrum induced by the
Ando-Hopkins-Rezk map $\sigma\colon \MTString\to\tmf$. As a consequence of \cref{big_sibling_Adams_estimate}, in topological degrees $15$ and below, the $\MTString$-module Baker-Lazarev Adams spectral sequence for $M^{\MTString}f_{0,c_1+c_2}$ coincides with the $\tmf$-module Baker-Lazarev Adams spectral sequence for $M^\tmf f_{0,c_1+c_2}$.
%the $(B((E_8\times
%E_8)\rtimes\Z/2), c_1+c_2)$-twisted string bordism groups are isomorphic to $\pi_*(M^\tmf f_{0,c_1+c_2})$ in degrees $15$ and below, and
\Cref{thom_module_calc} describes the $\cA^{\tmf}$-module structure on $H_\tmf^*(M^\tmf f_{0,c_1+c_2};\Z/3)$, and
hence the input to the $\tmf$-module Baker-Lazarev Adams spectral sequence, in terms of the $\cA_3$-module structure on $H^*(B(E_8\times
E_8)\rtimes\Z/2;\Z/3)$.
\begin{lem}
Let $x\coloneqq (c_1 + c_2)\bmod 3$ and $y\coloneqq c_1c_2\bmod 3$. Then $H^*(B(E_8\times
E_8)\rtimes\Z/2;\Z/3)\cong \Z/3[x, \cP^1(x), \beta\cP^1(x), y, \dots]/(\dots)$; there are no other generators below
degree $12$, nor any relations below degree $12$.
\end{lem}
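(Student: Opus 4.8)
The plan is to identify $H^*(B((E_8\times E_8)\rtimes\Z/2);\Z/3)$ with the $\Z/2$-invariants of $H^*(B(E_8\times E_8);\Z/3)$ under the factor-swap, and then to read off those invariants in low degrees from the structure of $H^*(K(\Z,4);\Z/3)$. First I would apply the Serre spectral sequence to the fibration of classifying spaces induced by~\eqref{E82_Serre}, namely $B(E_8\times E_8)\to B((E_8\times E_8)\rtimes\Z/2)\to B\Z/2$. Its $E_2$-page is $E_2^{p,q}=H^p(\Z/2;H^q(B(E_8\times E_8);\Z/3))$, where $\Z/2$ acts on the fiber cohomology by exchanging the two tensor factors; since $2$ is invertible in $\Z/3$, this group vanishes for $p>0$, so the spectral sequence collapses and $H^n(B((E_8\times E_8)\rtimes\Z/2);\Z/3)\cong H^n(B(E_8\times E_8);\Z/3)^{\Z/2}$.

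Next I would compute $H^*(B(E_8\times E_8);\Z/3)$ through degree $15$. Since $c\colon BE_8\to K(\Z,4)$ is $15$-connected~\cite{BS58}, it induces an isomorphism on cohomology in degrees $\le 15$; and by the Cartan--Serre description of $H^*(K(\Z,4);\Z/3)$, in this range the latter is the free graded-commutative $\Z/3$-algebra on $c$ in degree $4$, $\cP^1 c$ in degree $8$, and $\beta\cP^1 c$ in degree $9$ --- the next generator, corresponding to the admissible monomial $\cP^3\cP^1\iota_4$, lives in degree $20$, and there is no admissible monomial of excess $<4$ in degrees $10$ through $19$. By the Künneth theorem over the field $\Z/3$, $H^*(B(E_8\times E_8);\Z/3)$ is then, in degrees $\le 15$, the free graded-commutative algebra on $c_1,c_2$ (degree $4$), $\cP^1 c_1,\cP^1 c_2$ (degree $8$), and $\beta\cP^1 c_1,\beta\cP^1 c_2$ (degree $9$), with the $\Z/2$-action exchanging the subscripts (no Koszul signs intervene below degree $18$, where the first product of two odd classes lives).

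Finally I would extract the invariant subring in degrees $<12$. The classes $x=c_1+c_2$, $\cP^1 x=\cP^1 c_1+\cP^1 c_2$, $\beta\cP^1 x=\beta\cP^1 c_1+\beta\cP^1 c_2$, and $y=c_1 c_2$ are manifestly invariant; classical invariant theory identifies $\Z/3[c_1,c_2]^{\Z/2}$ with $\Z/3[x,y]$, and a direct degree-by-degree count through degree $11$ shows that $x$, $\cP^1 x$, $\beta\cP^1 x$, $y$ span the whole invariant subspace in each degree below $12$ and satisfy no relation there (the only relation internal to $\Z/3[x,\cP^1 x,y]\otimes\Lambda[\beta\cP^1 x]$ is $(\beta\cP^1 x)^2=0$, in degree $18$). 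The step I expect to require the most care --- and the only real obstacle --- is this degree bookkeeping: the lowest-degree invariants \emph{not} lying in the subalgebra generated by $x,\cP^1 x,\beta\cP^1 x,y$ are $c_1\cP^1 c_1+c_2\cP^1 c_2$ and $c_1\cP^1 c_2+c_2\cP^1 c_1$ in degree $12$ (and $(\cP^1 c_1)(\cP^1 c_2)$ in degree $16$), so one must check carefully that no new generator or relation appears below degree $12$. Combining the three steps yields the stated description of $H^*(B((E_8\times E_8)\rtimes\Z/2);\Z/3)$.
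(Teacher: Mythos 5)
Your proposal is correct and follows essentially the same route as the paper: collapse the Serre spectral sequence for the fibration over $B\Z/2$ (using that $2$ is invertible mod $3$) to reduce to the $\Z/2$-invariants, compute $H^*(BE_8;\Z/3)$ in the relevant range via the $15$-connected map to $K(\Z,4)$ and the Cartan--Serre description, and then do the invariant-theory bookkeeping. The only difference is that you spell out the degree-by-degree check (including locating the first new invariant $c_1\cP^1c_1+c_2\cP^1c_2$ in degree $12$), which the paper leaves implicit.
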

The actions of $\cP^1$ and $\beta$ are as specified via the names of the generators.
\begin{proof}
Because $H^*(B\Z/2;\Z/3)$ vanishes in positive degrees, the Serre spectral sequence for~\eqref{E82_Serre} collapses
at $E_2$ to yield an isomorphism to the ring of invariants
\begin{equation}
	H^*(B(E_8\times E_8)\rtimes\Z/2;\Z/3) \overset\cong\longrightarrow (H^*(BE_8\times BE_8;\Z/3))^{\Z/2}.
\end{equation}
The lemma thus follows once we know $H^*(BE_8;\Z/3)\cong\Z/3[c\bmod 3, \cP^1(c\bmod 3), \beta\cP^1(c\bmod 3),
\dotsc]/(\dotsc)$, where we have given all generators and relations in degrees $11$ and below. Because $c\colon BE_8\to K(\Z, 4)$ is $15$-connected~\cite[Theorems IV, V(e)]{BS58}, we may replace $BE_8$ with $K(\Z,
4)$, and the mod $3$ cohomology of $K(\Z, 4)$ was computed by Cartan~\cite{Car54} and Serre~\cite{Ser52}; see
Hill~\cite[Corollary 2.9]{Hil09} for an explicit description.
\end{proof}
To compute $H_\tmf^*(M^\tmf f_{0,c_1+c_2})$, we also need to know $\cP^1(U)$, and \cref{thom_module_calc} tells us
 $\cP^1(U)=  Ux$.
 %by \cref{thom_module_calc}.
 Then as usual we compute on all
classes in degrees $11$ and below using the Cartan formula.
\begin{cor}
Let $N_1\coloneqq \cA^{\tmf}/(\beta, (\cP^1)^2, \beta\cP^1\beta)$ and $N_2\coloneqq \cA^\tmf/(\beta, \beta\cP^1,
\cP^1\beta(\cP^1)^2)$. Then there is a map of $\cA^{\tmf}$-modules
\begin{equation}
\label{tmf_decomp}
	H_\tmf^*(M^\tmf f_{0,c_1+c_2})\longrightarrow \textcolor{BrickRed}{N_2} \oplus
		\textcolor{Green}{\Sigma^8 N_1} \oplus \textcolor{MidnightBlue}{\Sigma^8 N_1}
\end{equation}
which is an isomorphism in degrees $11$ and below.
\end{cor}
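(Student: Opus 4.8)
The plan is to compute $H_\tmf^*(M^\tmf f_{0,c_1+c_2};\Z/3)$ as an $\cA^{\tmf}$-module through degree $11$ and then peel off cyclic submodules. By \cref{thom_module_calc} (the mod $3$ part), $H_\tmf^*(M^\tmf f_{0,c_1+c_2};\Z/3)$ is free over $H^*(B((E_8\times E_8)\rtimes\Z/2);\Z/3)$ on a Thom class $U$ in degree $0$, with $\beta(U)=0$ and $\cP^1(U)=Ux$ (the class $x=(c_1+c_2)\bmod 3$ plays the role of $d\bmod 3$). Feeding in the ring structure from the preceding lemma together with the Cartan formula from \cref{intro_Atmf} and the fact that $x$ and $y:=c_1c_2\bmod 3$ are reductions of integral classes, hence have vanishing Bockstein, one finds that this module is concentrated in degrees $0,4,8,9$ below degree $12$, with $\Z/3$-bases $\{U\}$, $\{Ux\}$, $\{U\cP^1(x),Uy,Ux^2\}$, $\{U\beta\cP^1(x)\}$ respectively. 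In particular $\cP^1(U)=Ux$, $(\cP^1)^2(U)=Ux^2+U\cP^1(x)$, and $\beta(\cP^1)^2(U)=U\beta\cP^1(x)$, while $\beta$ kills $Ux^2$ and $Uy$ but not $Ux^2+U\cP^1(x)$.

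Next I would peel off the three summands. The cyclic submodule $\cA^{\tmf}\cdot U$ has basis $U,\cP^1U,(\cP^1)^2U,\beta(\cP^1)^2U$ through degree $11$ — every other monomial of $\cA^{\tmf}$ acts by $0$ on $U$ because $\beta U=0$ and $\beta\cP^1 U=U\beta(x)=0$ — and this matches the low-degree part of $N_2=\cA^{\tmf}/(\beta,\beta\cP^1,\cP^1\beta(\cP^1)^2)$, giving $\cA^{\tmf}\cdot U\cong N_2$ in degrees $\le 11$. Since $\beta$ annihilates $Uy$ and $Ux^2$, the vectors $Ux^2+U\cP^1(x)$, $Uy$, $Ux^2$ form a basis of the degree-$8$ part, the first spanning $(\cA^{\tmf}\cdot U)_8$; and each of $\cA^{\tmf}\cdot Uy$ and $\cA^{\tmf}\cdot Ux^2$ is concentrated in degree $8$ through degree $11$ (the next nonzero class, obtained by applying $\cP^1$ to the generator, lies in degree $12$), so each is isomorphic to $\Sigma^8 N_1$ in that range, where $N_1=\cA^{\tmf}/(\beta,(\cP^1)^2,\beta\cP^1\beta)$. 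These three submodules meet trivially and together span the module through degree $11$. The asserted map $H_\tmf^*(M^\tmf f_{0,c_1+c_2})\to N_2\oplus\Sigma^8 N_1\oplus\Sigma^8 N_1$ is then assembled from the resulting identifications (equivalently, it is the projection killing the complementary, degree $\ge 12$, summand), and it is an isomorphism in degrees $11$ and below by the dimension count just given.

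I expect the main obstacle to be the degree-$8$ bookkeeping: $U\cP^1(x)$, $Ux^2$, and $(\cP^1)^2U=Ux^2+U\cP^1(x)$ are three classes subject to a single linear relation, and one must take $Uy$ and $Ux^2$ — not $U\cP^1(x)$ — as the generators of the two $\Sigma^8 N_1$ summands, since $\beta(U\cP^1(x))=U\beta\cP^1(x)\ne 0$ whereas the generator of $N_1$ is killed by $\beta$. A secondary point is that two defining relations of $N_1$ and one of $N_2$ — namely $(\cP^1)^2$ acting on the degree-$8$ generators, and $\cP^1\beta(\cP^1)^2$ acting on $U$ — sit in degrees $13$ and $16$, beyond the range in which the preceding lemma records the cohomology ring; to pin down the map globally one either extends that computation via Hill's description of $H^*(K(\Z,4);\Z/3)$~\cite{Hil09} or works with truncations to degrees $\le 15$, which is all the application requires. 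Neither step is difficult, but both need care.
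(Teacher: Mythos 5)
Your proposal is correct and follows essentially the same route as the paper: apply \cref{thom_module_calc} to get $\beta(U)=0$ and $\cP^1(U)=Ux$, use the preceding lemma for the mod $3$ cohomology ring, compute the $\cA^{\tmf}$-action via the Cartan formula, and read off the three cyclic pieces with generators $U$, $Ux^2$, and $Uy$ — exactly the decomposition drawn in \cref{twisted_tmf_Adams}, left. Your degree-$8$ bookkeeping (taking $Ux^2$ and $Uy$, not $U\cP^1(x)$, as the $\Sigma^8N_1$ generators, since $\beta(U\cP^1(x))\neq 0$) matches the paper, which leaves these details implicit.
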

We draw the decomposition~\eqref{tmf_decomp} in \cref{twisted_tmf_Adams}, left. The next step is to compute the Ext
groups of $N_1$ and $N_2$ over $\cA^{\tmf}$. To do so, we will repeatedly use the fact that a short exact sequence
of $\cA^{\tmf}$-modules induces a long exact sequence in Ext; see~\cite[\S 4.6]{BC18} for more information on this
technique, including how to depict the long exact sequence in an Adams chart along with some examples. Let $C\nu$
denote the $\cA^{\tmf}$-module consisting of two $\Z/3$ summands in degrees $0$ and $4$ linked by a nontrivial
$\cP^1$-action. Then there are short exact sequences
\begin{subequations}
\begin{gather}
	\label{Cnu_SES}
	\shortexact{\textcolor{RubineRed}{\Sigma^4\Z/3}}{C\nu}{\textcolor{Periwinkle}{\Z/3}},\\
	\label{qn_SES}
	\shortexact{\textcolor{Cerulean}{\Sigma^5\Z/3}}{N_1}{\textcolor{PineGreen}{C\nu}},\\
	\label{2qn_SES}
	\shortexact{\textcolor{RawSienna}{\Sigma^4 N_1}}{N_2}{\textcolor{RedOrange}{\Z/3}}.
\end{gather}
\end{subequations}
We will address~\eqref{Cnu_SES} in \cref{cnu_fig}, \eqref{qn_SES} in \cref{qn_fig}, and~\eqref{2qn_SES} in
\cref{2qn_fig}. % TODO: recall Ext(Z/3), which acts on everything
As input to our computations, we need $\Ext_{\cA^{\tmf}}(\Z/3)\coloneqq \Ext_{\cA^{\tmf}}^{*,*}(\Z/3, \Z/3)$; this acts on $\Ext_{\cA^\tmf}(V)\coloneqq \Ext_{\cA^{\tmf}}^{*,*}(V, \Z/3)$ for any
$\cA^{\tmf}$-module $V$ by the Yoneda product (see~\cite[\S 4.2]{BC18}). The boundary maps in the long exact
sequences of Ext groups induced by short exact sequences of $\cA^{\tmf}$-modules are linear for this
$\Ext_{\cA^{\tmf}}(\Z/3)$-action, which we will use in \cref{nu_bdry,qn_bdry,2qn_bdry}. Throughout this subsection, if we do not specify the base, $\Ext$ means $\Ext_{\cA^\tmf}$.
\begin{thm}[{Henriques-Hill~\cite{Hil07, DFHH14}}]
\label{Ext_Z2_tmf}
$\Ext_{\cA^{\tmf}}(\Z/3)$ is generated by the classes
	$h_0\in\Ext^{1,1}$,
	$\alpha\in\Ext^{1,4}$,
	$c_4\in\Ext^{2,10}$,
	$\beta\in\Ext^{2,12}$,
	$c_6\in\Ext^{3,15}$, and
	$\Delta\in \Ext^{3,27}$, modulo the relations $\alpha^2 = 0$, $h_0\alpha = 0$, $h_0\beta = 0$, $\alpha c_4 =
	0$, $\beta c_4 = 0$, $\alpha c_6 = 0$, $\beta c_6 = 0$, and $c_4^3 - c_6^2 = h_0^3\Delta$.
\end{thm}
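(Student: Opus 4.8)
The plan is to compute $\Ext_{\cA^{\tmf}}^{*,*}(\Z/3,\Z/3)$ directly from the explicit presentation~\eqref{eq:Atmf_relations} and then match the answer with the stated bigraded ring. First I would record the structural context that pins down the names and degrees of the generators: applying \cref{baker_laz} to the $\tmf$-module $M = N = \tmf$, whose mod-$3$ $\tmf$-module cohomology $H_\tmf^*(\tmf;\Z/3)$ is $\Z/3$ concentrated on the unit class, shows that $\Ext_{\cA^{\tmf}}(\Z/3,\Z/3)$ is the $E_2$-page of an Adams-type spectral sequence converging to $\pi_*(\tmf)_3^\wedge$. Since $\pi_*(\tmf)_3^\wedge$ is classically known, this both motivates the names of the six generators --- $h_0$ detects multiplication by $3$, $\alpha$ detects $\alpha_1\in\pi_3$, $c_4\in\pi_8$ and $c_6\in\pi_{12}$ detect the modular forms of those names, $\beta$ detects $\beta_1\in\pi_{10}$, and $\Delta$ is the $3$-local analogue of the discriminant modular form --- and provides a consistency check for the algebraic computation.

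The computation itself I would carry out either by a minimal free resolution of $\Z/3$ over the finite-dimensional graded algebra $\cA^{\tmf}$, built cell by cell --- the number of $\cA^{\tmf}$-generators of the $s$-th term in internal degree $t$ being $\dim_{\Z/3}\Ext_{\cA^{\tmf}}^{s,t}(\Z/3,\Z/3)$, with Yoneda composition giving the ring structure --- or, more systematically, by a May-type filtration on $\cA^{\tmf}$ whose associated graded is a primitively generated, bicommutative Hopf algebra over $\Z/3$; the cohomology of the latter is a tensor product of a polynomial algebra on classes dual to $\beta$ and $\cP^1$ with exterior and divided-power factors, and the resulting May spectral sequence has its differentials and multiplicative extensions forced in low degrees by the relations in~\eqref{eq:Atmf_relations} --- in particular the degree-$10$ relation $\beta(\cP^1)^2\beta = (\beta\cP^1)^2 + (\cP^1\beta)^2$ --- and cross-checked against $\pi_*(\tmf)_3^\wedge$. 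Either route produces the six generators in bidegrees $(s,t) = (1,1),(1,4),(2,10),(2,12),(3,15),(3,27)$, and the vanishing relations $\alpha^2 = h_0\alpha = h_0\beta = \alpha c_4 = \beta c_4 = \alpha c_6 = \beta c_6 = 0$ then fall out of sparsity together with a bounded amount of product bookkeeping.

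The main obstacle is the one genuinely nontrivial multiplicative relation $c_4^3 - c_6^2 = h_0^3\Delta$, along with the claim that the listed generators and relations are complete. For the relation, the cleanest argument is not purely homological: I would compare with the descent (elliptic) spectral sequence for $\tmf$ coming from the moduli stack of elliptic curves, where $c_4$ and $c_6$ are honest modular forms and $\Delta = (c_4^3 - c_6^2)/1728$; since $1728 = 2^6\cdot 3^3$ is $3^3$ times a $3$-local unit and multiplication by $3$ raises Adams filtration by exactly one (that is the class $h_0$), the classical identity $c_4^3 - c_6^2 = 1728\,\Delta$ descends to $c_4^3 - c_6^2 = (\text{unit})\cdot h_0^3\Delta$ in $\Ext_{\cA^{\tmf}}(\Z/3,\Z/3)$; both sides live in $\Ext^{6,30}$, so the bidegrees are consistent. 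For completeness, the general fact that the cohomology of a finite-dimensional graded Hopf algebra is Noetherian shows that $\Ext_{\cA^{\tmf}}(\Z/3,\Z/3)$ is finitely generated over the polynomial subalgebra $\Z/3[c_4,c_6,\Delta]$ and is $\Delta$-periodic, so it suffices to run the resolution through a bounded range --- far enough to see $\alpha\Delta$, $\beta\Delta$, and $c_4\Delta$, roughly internal degree $40$ --- and check that no new generators or relations appear; this is exactly the range treated by Hill~\cite{Hil07} and Bruner--Rognes~\cite{BR21}, whose explicit charts one may cite for the remaining bookkeeping.
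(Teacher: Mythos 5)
The paper does not prove this statement at all: it is quoted verbatim from Henriques--Hill \cite{Hil07, DFHH14} (see also \cite{BR21}), so any proof you supply is automatically ``a different route'' from the text. Your overall computational outline is sound and is essentially what the cited sources do: $\cA^{\tmf}$ is a finite-dimensional graded algebra given explicitly by \eqref{eq:Atmf_relations}, so a minimal resolution (or a May-type filtration) computes $\Ext_{\cA^{\tmf}}(\Z/3,\Z/3)$ in any finite range, finite generation of the cohomology of a finite connected graded Hopf algebra plus periodicity bounds the range one must examine, and the identification of the six generators and their bidegrees against $\pi_*(\tmf)_3^\wedge$ via \cref{baker_laz} is a legitimate consistency check.

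The one step I would not accept as written is your ``cleanest argument'' for $c_4^3 - c_6^2 = h_0^3\Delta$. This is a relation on the $E_2$-page, and it cannot be \emph{derived} from the modular-forms identity $c_4^3 - c_6^2 = 1728\,\Delta$ without already knowing the Adams differentials and which homotopy classes the Ext classes detect --- which is the content of the chart you are trying to establish. Concretely, at $p=3$ the cokernel of $\pi_{24}\tmf\to \mathit{MF}_{12}$ contains a $\Z/3$ on $\Delta$ (only $3\Delta$ lifts), and correspondingly $\Delta\in\Ext^{3,27}$ is \emph{not} a permanent cycle: it supports a $d_2$ hitting $\alpha\beta^2$ in stem $23$, so the surviving $h_0$-tower begins at $h_0\Delta$ in filtration $4$. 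Your count ``$v_3(1728)=3$, each factor of $3$ is one $h_0$ applied to $\Delta$'' tacitly identifies the Ext class $\Delta$ with the modular form $\Delta$ in filtration $3$; a careful version of the same bookkeeping gives $c_4^3-c_6^2 = (\text{unit})\cdot 9\cdot y$ with $y$ the generator detected by $h_0\Delta$, which lands in filtration $6$ only because the two elisions (image is $3\Delta$, not $\Delta$; tower starts at $h_0\Delta$, not $\Delta$) cancel. So the filtration argument is a consistency check, not a proof; the relation has to come out of the algebraic computation through internal degree $30$, which is exactly where your proposal (correctly) falls back on the explicit resolutions of \cite{Hil07} and \cite{BR21} --- i.e., on the same citation the paper makes directly.
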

\begin{rem}
Our notation differs from that of some authors who study $\Ext_{\cA^{\tmf}}(\Z/3)$. Compared with Hill~\cite[\S
2]{Hil07}, our names for generators agree except that what we call $h_0$ Hill calls $v_0$. Comparing with
Bruner-Rognes~\cite[Chapter 13]{BR21}: our $h_0$ is their $a_0$, our $\alpha$ is
their $h_0$, and our $\beta$ is their $b_0$, and other names of generators agree.
\end{rem}
The action of $h_0$ on the $E_\infty$-page of this Adams spectral sequence lifts to multiplying by $3$ on the
twisted $\tmf$-homology groups that the spectral sequence converges to.

% then, run fig 2

In the long exact sequence in Ext corresponding to~\eqref{Cnu_SES}, let $x\in\Ext^{0,0}$ be either generator of
$\Ext_{\cA^{\tmf}}(\textcolor{Periwinkle}{\Z/3})$ and $y\in\Ext^{0,4}$ be either generator of
$\Ext_{\cA^{\tmf}}(\textcolor{RubineRed}{\Sigma^4\Z/3})$, both as modules over $\Ext_{\cA^{\tmf}}(\Z/3)$. In both
cases, there are exactly two generators and they differ by a sign.
\begin{lem}
\label{nu_bdry}
In the long exact sequence in $\Ext$ associated to~\eqref{Cnu_SES}, $\partial(y) = \pm \alpha x$, $\partial(\beta y) =
\pm \alpha\beta x$, and the boundary map vanishes on all other elements in degrees $14$ and below (except for
$-c$ where $c$ was a class already listed).
\end{lem}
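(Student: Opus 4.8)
The plan is to exploit the $\Ext_{\cA^{\tmf}}(\Z/3)$-linearity of the connecting homomorphism $\partial$, which reduces the lemma to a single extension-class computation together with bookkeeping in the ring $\Ext_{\cA^{\tmf}}(\Z/3)$ recorded in \cref{Ext_Z2_tmf}. First I would observe that, since $\Sigma^4\Z/3$ is a suspension of $\Z/3$, the group $\Ext_{\cA^{\tmf}}(\Sigma^4\Z/3)$ is free of rank one over $\Ext_{\cA^{\tmf}}(\Z/3)$ on the class $y$, which sits in bidegree $(t-s,s)=(4,0)$; hence $\partial$ is completely determined by the single element $\partial(y)$, via $\partial(\rho y)=\pm\rho\cdot\partial(y)$ for $\rho\in\Ext_{\cA^{\tmf}}(\Z/3)$.

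Next I would pin down $\partial(y)$. By the standard identification of the connecting map in the long exact $\Ext$-sequence with Yoneda product against the extension class, $\partial(y)$ is, up to our choices of generators, the class in $\Ext^{1,4}_{\cA^{\tmf}}(\Z/3,\Z/3)$ classifying \eqref{Cnu_SES}. Lifting the augmentation $\cA^{\tmf}\to\Z/3$ through $C\nu$ shows this class is the functional dual to the $\cP^1$-attachment joining the two cells of $C\nu$, so it is nonzero; since \cref{Ext_Z2_tmf} gives $\Ext^{1,4}_{\cA^{\tmf}}(\Z/3)\cong\Z/3\ang{\alpha}$, we conclude $\partial(y)=\pm\alpha x$.

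It then remains to evaluate $\rho\cdot\alpha$ for the additive generators $\rho$ of $\Ext_{\cA^{\tmf}}(\Z/3)$ with $t-s\le 10$ — precisely the range for which $\rho y$ has $t-s\le 14$. Using the presentation in \cref{Ext_Z2_tmf}, these $\rho$ are, up to scalars, $1$, the powers $h_0^k$ ($t-s=0$), the class $\alpha$ ($t-s=3$), the classes $h_0^kc_4$ ($t-s=8$), and $\beta$ ($t-s=10$), since $c_6$ and $\Delta$ lie in larger degrees and $\alpha^2=0$. The relations $h_0\alpha=0$, $\alpha^2=0$, $\alpha c_4=0$ force $\rho\cdot\alpha=0$ for $\rho\in\{h_0^k,\alpha,h_0^kc_4\}$, while $1\cdot\alpha=\alpha$ and $\beta\cdot\alpha=\alpha\beta\neq 0$ (no relation kills $\alpha\beta$). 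Translating back through $\partial(\rho y)=\pm\rho\alpha x$ yields $\partial(y)=\pm\alpha x$, $\partial(\beta y)=\pm\alpha\beta x$, and $\partial=0$ on every other element $h_0^ky$, $\alpha y$, $h_0^kc_4 y$ of $\Ext_{\cA^{\tmf}}(\Sigma^4\Z/3)$ in that range; the ``$-c$'' caveat merely records $\partial(-y)=\mp\alpha x$ and similarly for $\pm\beta y$.

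The only step that is not purely formal is the identification $\partial(y)=\pm\alpha x$, i.e.\ that the extension class of $C\nu$ is detected by $\alpha$; I expect this to be the main (still quite mild) obstacle, since everything after it is manipulation inside \cref{Ext_Z2_tmf}, where the one point to double-check is that the stated presentation really does leave no further additive generators of $\Ext_{\cA^{\tmf}}(\Z/3)$ in $t-s\le 10$.
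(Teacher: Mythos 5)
Your proof is correct and follows essentially the same strategy as the paper: reduce everything to $\partial(y)$ via $\Ext_{\cA^{\tmf}}(\Z/3)$-linearity, then use $\Ext^{1,4}_{\cA^{\tmf}}(\Z/3)\cong\Z/3\ang{\alpha}$ to force $\partial(y)=\pm\alpha x$. The only differences are local and harmless: the paper shows $\partial(y)\ne 0$ by checking $\Hom_{\cA^{\tmf}}(C\nu,\Sigma^4\Z/3)=0$, whereas you identify $\partial(y)$ with the (visibly nonzero, because of the $\cP^1$-attachment) extension class of~\eqref{Cnu_SES}; and the paper dismisses the remaining boundary maps ``for degree reasons,'' whereas you kill them with the relations $h_0\alpha=\alpha^2=\alpha c_4=0$ from \cref{Ext_Z2_tmf}.
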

We draw this in \cref{cnu_fig}, bottom left.
\begin{proof}
Apart from on $\pm y$ and $\pm \beta y$, the boundary map vanishes for degree reasons; since $\partial$ commutes
with the action of $\Ext_{\cA^{\tmf}}(\Z/3)$, once we show $\partial(y) = \pm \alpha x$, $\partial(\beta y) = \pm
\alpha\beta x$ follows. Since $\Ext^{1,4}(\textcolor{Periwinkle}{\Z/3})\cong\Z/3$, if we show $\partial(y)\ne 0$
the only options for $\partial y$ are $\pm \alpha x$.

Since $y$ and $-y$ are the only nonzero elements in $\Ext^{4,0}$ of both $\textcolor{Periwinkle}{\Z/3}$ and
$\textcolor{RubineRed}{\Sigma^4\Z/3}$, $\partial(y) = 0$ if and only if $\Ext_{\cA^{\tmf}}^{0,4}(C\nu) = 0$. And
this Ext group is $\Hom_{\cA^{\tmf}}(C\nu, \Sigma^4 \Z/3) = 0$.
\end{proof}
\begin{rem}
\label{hidden_alpha}
In $\Ext_{\cA^{\tmf}}(C\nu)$, $\alpha(\alpha y) = \beta x$,\footnote{This does not contradict the relation
$\alpha^2 = 0$ from \cref{Ext_Z2_tmf}: since $y$ was killed in the long exact sequence computing $\Ext(C\nu)$,
the class $\alpha y\in \Ext(C\nu)$ is not $\alpha$ times anything, so $\alpha(\alpha y)$ need not vanish.} but this
is not detected by the long exact sequence in Ext. This action is denoted with a dashed gray line in
\cref{cnu_fig}, bottom right. We do not need this hidden $\alpha$-action, so we will not prove it;\footnote{We do
use this $\alpha$-action in the proof of \cref{2qn_bdry}, but only to determine Ext groups that will be in too high
of a degree to matter in the final computation, so that part of the proof can be left out.} one way to check it is
to compute $\Ext_{\cA_3}(C\nu)$ using the software developed by Bruner~\cite{Bru18} or by Beauvais-Feisthauer, Chatham, and Chua~\cite{CC21},
obtain the hidden $\alpha$-action in $\Ext_{\cA_3}(C\nu)$, and chase it across the map of Ext groups induced by
$\cA^{\tmf}\to\cA_3$.
\end{rem}
Thus we obtain $\Ext_{\cA^\tmf}(C\nu)$ in \cref{cnu_fig}, bottom right.

% then run fig 3

\begin{figure}[h!]
\begin{subfigure}[c]{0.4\textwidth}
\begin{tikzpicture}[scale=0.5]
        \PoneR(0, 0);
        \begin{scope}[RubineRed]
                \foreach \x in {-5, 0} {
                        \tikzpt{\x}{2}{}{};
                }
                \draw[->, thick] (-4.5, 2) -- (-0.5, 2);
        \end{scope}
        \begin{scope}[Periwinkle]
                \foreach \x in {0, 5} {
                        \tikzpt{\x}{0}{}{};
                }
                \draw[->, thick] (0.5, 0) -- (4.5, 0);
        \end{scope}
        \node[below=2pt] at (-5, 0) {$\Sigma^4\Z/3$};
        \node[below=2pt] at (0, 0) {$C\nu$};
        \node[below=2pt] at (5, 0) {$\Z/3$};
\end{tikzpicture}
\end{subfigure}

\begin{subfigure}[c]{0.48\textwidth}
\includegraphics{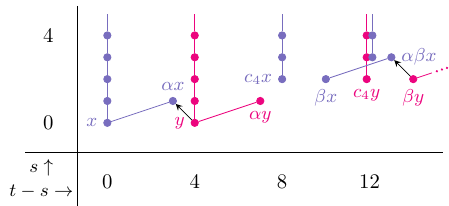}
\end{subfigure}
\begin{subfigure}[c]{0.48\textwidth}
%        \printpage[name=Calf, page=2]
\includegraphics{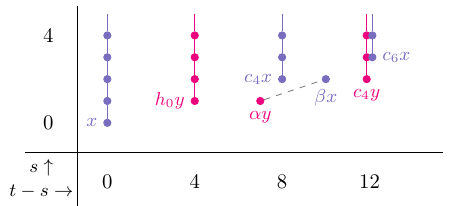}
\end{subfigure}
\caption{Top: the short exact sequence~\eqref{Cnu_SES} of $\cA^{\tmf}$-modules. Lower left: the induced long exact
sequence in Ext; we compute the pictured boundary maps in \cref{nu_bdry}. Lower right: $\Ext_{\cA^{\tmf}}(C\nu)$ as
computed by the long exact sequence. The dashed line is a nonzero $\alpha$-action not visible to this computation;
see \cref{hidden_alpha}.}
\label{cnu_fig}
\end{figure}

Now we turn to~\eqref{qn_SES} and its long exact sequence in Ext, depicted in \cref{qn_fig}. We keep the notation
for elements of $\Ext(\textcolor{PineGreen}{C\nu})$ from above, so elements are specified by products of classes in
$\Ext(\Z/3)$ with $x$ or $y$. In the long exact sequence induced by~\eqref{qn_SES}, let $z\in\Ext^{0,5}$ be a
generator of $\Ext(\textcolor{Cerulean}{\Sigma^5\Z/3})$ as a module over $\Ext(\Z/3)$ (again, there is exactly one
other generator, which is $-z$).
\begin{lem}
\label{qn_bdry}
In the long exact sequence in Ext associated to~\eqref{qn_SES}, $\partial(h_0^iz) = \pm h_0^iy$, $\partial(h_0c_4z) =
\pm h_0^i c_4y$, and the boundary map vanishes on all other elements in degrees $14$ and below (except for $-c$
where $c$ was a class already listed).
\end{lem}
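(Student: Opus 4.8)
The plan is to reduce everything to the single class $\partial(z)$ by exploiting that the boundary map $\partial$ in the long exact sequence associated to~\eqref{qn_SES} is a map of $\Ext_{\cA^{\tmf}}(\Z/3)$-modules for the Yoneda action (linearity of these boundary maps was recalled just before \cref{Ext_Z2_tmf}). Since $\Ext_{\cA^{\tmf}}(\Sigma^5\Z/3)\cong\Sigma^5\Ext_{\cA^{\tmf}}(\Z/3)$ is a free $\Ext_{\cA^{\tmf}}(\Z/3)$-module of rank one on the generator $z\in\Ext^{0,5}$, the entire boundary map is determined by $\partial(z)\in\Ext^{1,5}_{\cA^{\tmf}}(C\nu)$, via $\partial(cz)=c\cdot\partial(z)$ for all $c\in\Ext_{\cA^{\tmf}}(\Z/3)$. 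So the lemma amounts to (i) computing $\partial(z)$ and (ii) identifying which products $c\cdot\partial(z)$ are nonzero in the range of degrees considered.

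For (i) I would argue exactly as in the proof of \cref{nu_bdry}. By exactness and the vanishing $\Ext^{0,5}_{\cA^{\tmf}}(C\nu)=\Hom_{\cA^{\tmf}}(C\nu,\Sigma^5\Z/3)=0$ (because $C\nu$ is concentrated in degrees $0$ and $4$), the boundary map restricts to an injection $\Ext^{0,5}_{\cA^{\tmf}}(N_1)\hookrightarrow\Ext^{0,5}_{\cA^{\tmf}}(\Sigma^5\Z/3)=\Z/3\{z\}$, so $\partial(z)\neq 0$ provided $\Ext^{0,5}_{\cA^{\tmf}}(N_1)=0$. This last vanishing is a one-line check on bottom cells: the degree-$5$ part of $N_1=\cA^{\tmf}/(\beta,(\cP^1)^2,\beta\cP^1\beta)$ is one-dimensional, spanned by the class of $\beta\cP^1=\beta\cdot\cP^1$, which lies in $\cA^{\tmf}_{>0}\cdot N_1$ and hence is annihilated by every $\cA^{\tmf}$-module map to the degree-$5$-concentrated module $\Sigma^5\Z/3$. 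Since $\Ext^{1,5}_{\cA^{\tmf}}(C\nu)$ is one-dimensional by the computation recorded in \cref{cnu_fig}, $\partial(z)$ must be $\pm$ its nonzero generator, which we call $y$; then $\partial(h_0^iz)=\pm h_0^iy$ and $\partial(h_0^ic_4z)=\pm h_0^ic_4y$ are immediate from $\Ext_{\cA^{\tmf}}(\Z/3)$-linearity.

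For (ii) I would enumerate the nonzero classes $cz$ of $\Ext_{\cA^{\tmf}}(\Sigma^5\Z/3)$ in stems $\le 14$ — namely $h_0^iz$, $\alpha z$, and $h_0^ic_4z$ — and compute $cy\in\Ext_{\cA^{\tmf}}(C\nu)$ for each. Using that $y$ maps to $h_0$ times the $\Sigma^4\Z/3$-generator under $\Ext_{\cA^{\tmf}}(C\nu)\to\Ext_{\cA^{\tmf}}(\Sigma^4\Z/3)$, together with the relations $h_0\alpha=\alpha^2=0$ from \cref{Ext_Z2_tmf} and the vanishing of $\Ext^{2,9}_{\cA^{\tmf}}(\Z/3)$, one gets $h_0^iy\neq 0$, $h_0^ic_4y\neq 0$, and $\alpha y=0$, which is exactly the claimed behavior. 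I do not expect a real obstacle here: the only genuinely non-formal input is $\Ext^{0,5}_{\cA^{\tmf}}(N_1)=0$, and the rest is the Yoneda-module formalism plus bidegree bookkeeping; the one thing to be careful about is keeping the $\Ext_{\cA^{\tmf}}(\Z/3)$-linearity of $\partial$ straight so that ``$\partial$ is determined by $\partial(z)$'' is applied correctly.
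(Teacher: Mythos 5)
Your argument is correct and is essentially the paper's own proof: reduce everything to $\partial(z)$ via $\Ext_{\cA^{\tmf}}(\Z/3)$-linearity of the boundary map, and show $\partial(z)\neq 0$ from the vanishing of $\Ext^{0,5}_{\cA^{\tmf}}(N_1)=\Hom_{\cA^{\tmf}}(N_1,\Sigma^5\Z/3)$. The only differences are cosmetic: the paper names the generator of $\Ext^{1,5}_{\cA^{\tmf}}(C\nu)$ as $h_0y$ rather than rebaptizing it $y$, and your step (ii) spells out the vanishing of the remaining boundary maps a bit more explicitly than the paper's appeal to degree reasons.
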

We draw this in \cref{qn_fig}, bottom left.
\begin{proof}
The proof is essentially the same as for \cref{nu_bdry}: all boundary maps other than the ones in the theorem
statement vanish for degree reasons; then, $\Ext(\Z/3)$-linearity of boundary maps reduces the theorem statement to
the computation of $\partial(z)$, which must be $\pm h_0y$ because $\Ext_{\cA^{\tmf}}^{0,5}(N_1) =
\Hom_{\cA^{\tmf}}(N_1, \Sigma^5\Z/3) = 0$.
\end{proof}
\begin{rem}
\label{hidden_h0}
Like in \cref{hidden_alpha}, the long exact sequence does not fully specify the $\Ext(\Z/3)$-action on $\Ext(N_1)$.
One can show that $h_0\cdot\alpha z = \pm c_4x$, but this is missed by our long exact sequence calculation. We do not need this relation in our proof of \cref{heterotic_at_3}, so we do not
prove it; one way to see $h_0\cdot\alpha z = \pm c_4x$ would be to deduce it from the analogous $h_0$-action in
$\Ext(N_2)$ via the long exact sequence in Ext induced from~\eqref{2qn_SES}. To see the corresponding $h_0$-action
in $\Ext(N_2)$, let $N_3$ be a nonsplit $\cA^{\tmf}$-module extension of $C\nu$ by $\Sigma^8\Z/3$; this
characterizes $N_3$ up to isomorphism. Then there is a short exact sequence $\Sigma^9\Z/3\to N_2\to N_3$, and the $h_0$-action we want to detect is visible to the corresponding long exact sequence in Ext.%However, this does not determine $h_0\cdot c_4y$.
%In degrees $14$ and below, the long exact sequence misses the relation $h_0\cdot \alpha z = c_4x$ and $h_0\cdot
%c_4y = c_6x$ in $\Ext(N_1)$. 
%\TODO: this works for $h_0c_4y$, right?
\end{rem}
Thus we have $\Ext(N_1)$ in \cref{qn_fig}, bottom right.

\begin{figure}[h!]
\begin{subfigure}[c]{0.4\textwidth}
\begin{tikzpicture}[scale=0.5]
        \threebock(0, 2);
        \begin{scope}[Cerulean]
                \foreach \x in {-5, 0} {
                        \tikzpt{\x}{2.5}{}{};
                }
                \draw[->, thick] (-4.5, 2.5) -- (-0.5, 2.5);
        \end{scope}
        \begin{scope}[PineGreen]
                \foreach \x in {0, 5} {
                        \tikzpt{\x}{0}{}{};
                        \tikzpt{\x}{2}{}{};
                        \PoneL(\x, 0);
                }
                \draw[->, thick] (0.5, 0) -- (4.5, 0);
        \end{scope}
        \node[below=2pt] at (-5, 0) {$\Sigma^5\Z/3$};
        \node[below=2pt] at (0, 0) {$N_1$};
        \node[below=2pt] at (5, 0) {$C\nu$};
\end{tikzpicture}
\end{subfigure}

\begin{subfigure}[c]{0.48\textwidth}
\includegraphics{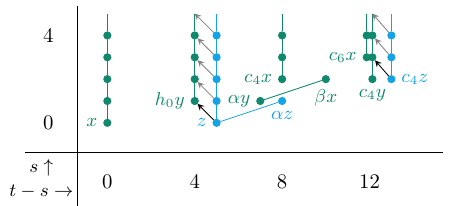}
\end{subfigure}
\begin{subfigure}[c]{0.48\textwidth}
%        \printpage[name=qnthree, page=2]
\includegraphics{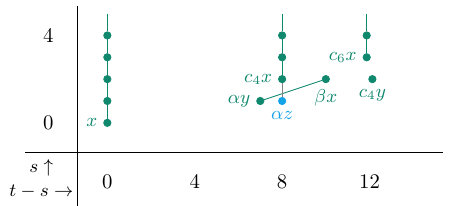}
\end{subfigure}

\caption{Top: the short exact sequence~\eqref{qn_SES} of $\cA^{\tmf}$-modules. Lower left: the induced long exact
sequence in Ext. We compute the pictured boundary maps in \cref{qn_bdry}. Lower right: $\Ext_{\cA^{\tmf}}(N_1)$ as
computed by the long exact sequence. The gray line joining $\alpha z$ and $c_4x$ indicates a
nonzero $h_0$-action not visible to this computation; see \cref{hidden_h0}.}
\label{qn_fig}
\end{figure}

The last long exact sequence we have to run is the one induced by~\eqref{2qn_SES}. We keep the notation for
elements of $\Ext(\textcolor{RawSienna}{N_1})$ from above --- classes in $\Ext(\Z/3)$ times $x$, $y$, or $z$. We let
$w$ denote a generator of $\Ext(\textcolor{RedOrange}{\Z/3})$ as an $\Ext(\Z/3)$-module; like before, the two
generators are $w$ and $-w$.
\begin{lem}
\label{2qn_bdry}
In the long exact sequence in Ext associated to~\eqref{2qn_SES}, the boundary map takes the values $\partial(x) =
\pm \alpha w$, $\partial(\alpha y) = \pm \beta w$, and $\partial(\beta x) = \pm \alpha\beta w$, and vanishes on all
other classes in degrees $14$ and below (except for $-c$ where $c$ was a class already listed).
\end{lem}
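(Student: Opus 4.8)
The plan is to run the same template used for \cref{nu_bdry,qn_bdry}. The boundary map $\partial\colon \Ext_{\cA^{\tmf}}^{s,t}(\Sigma^4 N_1)\to\Ext_{\cA^{\tmf}}^{s+1,t}(\Z/3)$ attached to~\eqref{2qn_SES} is a homomorphism of $\Ext_{\cA^{\tmf}}(\Z/3)$-modules, so the first step is to reduce the statement to the values of $\partial$ on a short list of module generators of $\Ext_{\cA^{\tmf}}(\Sigma^4 N_1)$ in the range $t-s\le 14$. I would read these generators off the chart for $\Ext_{\cA^{\tmf}}(N_1)$ computed in \cref{qn_fig} (shifted up by $4$) and compare their bidegrees against the bidegrees of the generators of $\Ext_{\cA^{\tmf}}(\Z/3)$ from \cref{Ext_Z2_tmf}. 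This comparison should show that every module generator other than $x$, $\alpha y$, and $\beta x$ has boundary landing in a group which vanishes in that bidegree; together with $\Ext_{\cA^{\tmf}}(\Z/3)$-linearity, this disposes of the ``vanishes on all other classes'' part of the claim. This is the only place any real bookkeeping is needed, and it is routine.

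Next I would pin down $\partial$ on the three surviving generators. For $x\in\Ext^{0,4}_{\cA^{\tmf}}(\Sigma^4 N_1)\cong\Z/3$, the relevant segment of the long exact sequence is $0=\Ext^{0,4}_{\cA^{\tmf}}(\Z/3)\to\Ext^{0,4}_{\cA^{\tmf}}(N_2)\to\Z/3\ang{x}\xrightarrow{\partial}\Ext^{1,4}_{\cA^{\tmf}}(\Z/3)=\Z/3\ang{\alpha w}$, so it suffices to note that $N_2$ is cyclic on a degree-$0$ generator, whence $\Ext^{0,4}_{\cA^{\tmf}}(N_2)=\Hom^{4}_{\cA^{\tmf}}(N_2,\Z/3)=0$ and $\partial$ is injective on $\Z/3\ang{x}$; thus $\partial(x)=\pm\alpha w$. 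Then $\Ext_{\cA^{\tmf}}(\Z/3)$-linearity gives $\partial(\beta x)=\beta\,\partial(x)=\pm\alpha\beta w$ immediately.

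The class $\alpha y$ is the delicate one: as in \cref{hidden_alpha} it is not literally $\alpha$ times an honest class, so linearity does not directly evaluate $\partial(\alpha y)$. My plan is to use the hidden $\alpha$-action $\alpha\cdot(\alpha y)=\pm\beta x$, which already holds in $\Ext_{\cA^{\tmf}}(C\nu)$ and pulls back along $N_1\twoheadrightarrow C\nu$ (the relevant restriction $\Ext^{2,12}_{\cA^{\tmf}}(C\nu)\to\Ext^{2,12}_{\cA^{\tmf}}(N_1)$ is injective because $\Ext^{1,7}_{\cA^{\tmf}}(\Z/3)=0$), hence also holds in $\Ext_{\cA^{\tmf}}(\Sigma^4 N_1)$. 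Applying $\partial$ and using linearity yields $\alpha\cdot\partial(\alpha y)=\partial(\beta x)=\pm\alpha\beta w\ne 0$, so $\partial(\alpha y)\ne 0$; since $\Ext^{2,12}_{\cA^{\tmf}}(\Z/3)=\Z/3\ang{\beta w}$, this forces $\partial(\alpha y)=\pm\beta w$. (Alternatively one could verify $\partial(\alpha y)\ne 0$ by computing $\Ext^{1,12}_{\cA^{\tmf}}(N_2)$ outright — this is the use of the hidden $\alpha$-action mentioned in the footnote to \cref{hidden_alpha}.)

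The main obstacle I anticipate is exactly this last step: justifying the hidden $\alpha$-action in $\Ext_{\cA^{\tmf}}(C\nu)$, which in the end rests on the map $\cA^{\tmf}\to\cA_3$ and known machine computations of $\Ext_{\cA_3}(C\nu)$ rather than on anything elementary. Everything else — the degree-reason vanishing and the two base-case boundary evaluations — is of the same flavor as, and no harder than, the corresponding steps in \cref{nu_bdry,qn_bdry}.
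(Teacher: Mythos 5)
Your skeleton matches the paper's: degree reasons kill the boundary map on everything except $\pm x$, $\pm\alpha y$, $\pm\beta x$, and $\partial(x)=\pm\alpha w$ is forced by $\Ext^{0,4}_{\cA^{\tmf}}(N_2)=\Hom_{\cA^{\tmf}}(N_2,\Sigma^4\Z/3)=0$. Your derivation of $\partial(\beta x)=\beta\,\partial(x)=\pm\alpha\beta w$ straight from $\beta$-linearity is valid and in fact a little cleaner than the paper, which instead obtains $\partial(\beta x)$ from $\partial(\alpha y)$ using the relation $\alpha(\alpha y)=\beta x$. Where you genuinely diverge is at the crux, $\partial(\alpha y)\ne 0$: you make this rest entirely on the hidden $\alpha$-action of \cref{hidden_alpha}, applying $\partial$ to $\alpha\cdot(\alpha y)=\pm\beta x$ and using $\alpha\beta\ne 0$. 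That is logically sound \emph{if} one grants the hidden action, but that action is exactly the fact the paper declines to prove (it is only ``checkable'' by a machine computation of $\Ext_{\cA_3}(C\nu)$ followed by a comparison along $\cA^{\tmf}\to\cA_3$), and the paper deliberately arranges its proof so that the hidden action is needed only for classes in degrees too high to matter. The paper's self-contained replacement is an extension-class argument: it represents $\alpha y\in\Ext^{1,12}_{\cA^{\tmf}}(\Sigma^4 N_1)$ by an explicit nonsplit $\cA^{\tmf}$-module extension of $\Sigma^4 N_1$ by $\Sigma^{12}\Z/3$ and shows this extension cannot be pulled back from an extension of $N_2$ because that would violate $(\cP^1)^3=0$ in $\cA^{\tmf}$; exactness then gives $\partial(\alpha y)\ne 0$. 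So your route trades a short hands-on argument for a dependency on an external computation; to reach the same level of self-containment you would need either to supply the extension argument or to actually carry out the $\Ext_{\cA_3}(C\nu)$ comparison you allude to. (Your parenthetical injectivity check for transporting the hidden action from $C\nu$ to $N_1$ is unnecessary: the relation pushes forward along the module map $\Ext_{\cA^{\tmf}}(C\nu)\to\Ext_{\cA^{\tmf}}(N_1)$ with no injectivity required.)
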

We draw this in \cref{2qn_fig}, bottom left.
\begin{proof}
As in \cref{nu_bdry,qn_bdry}, apart from $\partial(\pm x)$, $\partial(\pm \alpha y)$, and $\partial(\pm \beta x)$,
the boundary map vanishes for degree reasons, and we infer $\partial(x) = \pm \alpha w$ because this is the only
way for $\Ext^{0,4}(N_2) = \Hom(N_2, \Sigma^4 \Z/3)$ to vanish. And since $\alpha(\alpha y) = \beta x$, as we
discussed in \cref{hidden_alpha}, it remains only to prove $\partial(\alpha y) = \pm\beta w$; then $\partial(\beta
x) = \alpha\beta w$ follows from $\Ext(\Z/3)$-linearity; and since
$\Ext_{\cA^{\tmf}}^{2,12}(\textcolor{RedOrange}{\Z/3})$ is one-dimensional, to show $\partial(\alpha y) = \pm \beta
w$ it suffices to show $\partial(\alpha y)$ is nonzero.

To compute $\partial(\alpha y)$, we use the characterization of
$\Ext_{\cA^\tmf}^{1, t}(M, N)$ as a set of equivalence classes of $\cA^{\tmf}$-module extensions $0\to \Sigma^t
N\to L\to M\to 0$. We will represent $\alpha y$ as an explicit extension of $\Sigma^4 N_1$ by $\Sigma^{12}\Z/3$ and
then show this extension cannot be the pullback of an extension of $N_2$ by $\Sigma^{12}\Z/3$, which implies
$\partial(\alpha y)\ne 0$ by exactness. Up to isomorphism, there is only one non-split extension of $\Sigma^4N_1$
by $\Sigma^{12}\Z/3$, with $\alpha y$ and $-\alpha y$ distinguished by a sign in the extension maps; we draw this
extension in \cref{alpha_y_explicit}, left. In \cref{alpha_y_explicit}, right, we illustrate what goes wrong if we
try to obtain this extension as the pullback of an extension of $N_2$: the relation $(\cP^1)^3 = 0$ in $\cA^{\tmf}$
is violated. Thus $\partial(\alpha y)\ne 0$.
\end{proof}

\begin{figure}[h!]
\begin{subfigure}[c]{0.45\textwidth}
\begin{tikzpicture}[scale=0.5]
	\PoneL(0, 4);
	\begin{scope}[Fuchsia!50!white]
		\foreach \x in {-5, 0} {
			\tikzpt{\x}{6}{}{};
		}
		\draw[->, thick] (-4.5, 6) -- (-0.5, 6);
	\end{scope}
	\begin{scope}[Fuchsia!75!black]
		\foreach \x in {0, 5} {
			\tikzpt{\x}{2}{}{};
			\tikzpt{\x}{4}{}{};
			\tikzpt{\x}{4.5}{}{};
			\PoneL(\x, 2);
			\threebock(\x, 4);
		}
		\draw[->, thick] (0.5, 2) -- (4.5, 2);
	\end{scope}
	\node[below=2pt] at (-5, 0) {$\Sigma^{12}\Z/3$};
	\node[below=2pt] at (5, 0) {$\Sigma^4 N_1$};
\end{tikzpicture}
\end{subfigure}
\begin{subfigure}[c]{0.45\textwidth}
\begin{tikzpicture}[scale=0.5]
	\PoneL(0, 4);
	\begin{scope}[Fuchsia!50!white]
		\foreach \x in {-5, 0} {
			\tikzpt{\x}{6}{}{};
		}
		\draw[->, thick] (-4.5, 6) -- (-0.5, 6);
	\end{scope}
	\begin{scope}[Fuchsia!75!black]
		\foreach \x in {0, 5} {
			\tikzpt{\x}{0}{}{};
			\tikzpt{\x}{2}{}{};
			\tikzpt{\x}{4}{}{};
			\tikzpt{\x}{4.5}{}{};
			\PoneL(\x, 2);
			\threebock(\x, 4);
		}
		\draw[->, thick] (0.5, 2) -- (4.5, 2);
		\begin{scope}[dashed]
			\PoneL(0, 0);
		\end{scope}
		\PoneL(5, 0);
	\end{scope}
	\node[below=2pt] at (-5, 0) {$\Sigma^{12}\Z/3$};
	\node[below=2pt] at (5, 0) {$N_2$};
\end{tikzpicture}
\end{subfigure}

\caption{Left: an extension of $\cA(1)$-modules representing the class $\alpha
y\in\Ext_{\cA^{\tmf}}^{1,12}(\Sigma^4 N_1)$. Right: if we try to form an analogous extension of $N_2$, we are
obstructed by the fact that $(\cP^1)^3 = 0$ in $\cA^{\tmf}$. This is part of the proof of \cref{2qn_bdry}.}
\label{alpha_y_explicit}
\end{figure}
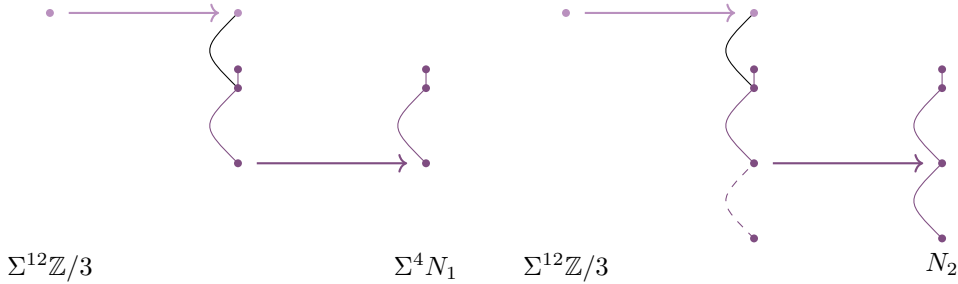

\begin{figure}[h!]
\begin{subfigure}[c]{0.4\textwidth}
\begin{tikzpicture}[scale=0.5]
        \PoneL(0, 0);
        \begin{scope}[RawSienna]
                \foreach \x in {-5, 0} {
                        \tikzpt{\x}{2}{}{};
                        \tikzpt{\x}{4}{}{};
                        \tikzpt{\x}{4.5}{}{};
                        \PoneL(\x, 2);
                        \threebock(\x, 4);
                }
                \draw[->, thick] (-4.5, 2) -- (-0.5, 2);
        \end{scope}
        \begin{scope}[RedOrange]
                \foreach \x in {0, 5} {
                        \tikzpt{\x}{0}{}{};
                }
                \draw[->, thick] (0.5, 0) -- (4.5, 0);
        \end{scope}
        \node[below=2pt] at (-5, 0) {$\Sigma^4 N_1$};
        \node[below=2pt] at (0, 0) {$N_2$};
        \node[below=2pt] at (5, 0) {$\Z/3$};
\end{tikzpicture}
\end{subfigure}

\begin{subfigure}[c]{0.48\textwidth}
\includegraphics{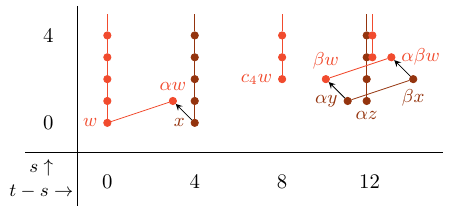}
\end{subfigure}
\begin{subfigure}[c]{0.48\textwidth}
%        \printpage[name=twoqnthree, page=2]
\includegraphics{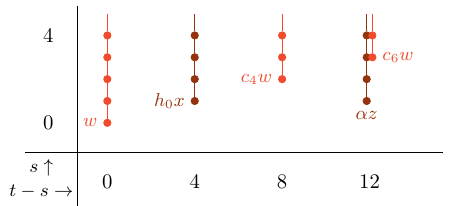}
\end{subfigure}
\caption{Top: the short exact sequence~\eqref{2qn_SES} of $\cA^{\tmf}$-modules. Lower left: the induced long exact
sequence in Ext. We compute the boundary maps in \cref{2qn_bdry}. Lower right: $\Ext_{\cA^{\tmf}}(N_2)$ as computed by the long exact sequence.}
%The gray lines
%joining $\alpha z$ and $c_4x$, and $c_4y$ and $c_6x$, indicate nonzero $h_0$-actions not visible to this
%computation.}
\label{2qn_fig}
\end{figure}

%\TODO: where did I get $\Ext_{\cA^\tmf}(\Z/3)$? Then, argue the boundary maps in \cref{cnu_fig,qn_fig,2qn_fig}, as
%well as the single extension in \cref{qn_fig} that we need.
\begin{figure}[h!]
\begin{subfigure}[c]{0.3\textwidth}
        \begin{tikzpicture}[scale=0.6, every node/.style = {font=\tiny}]
                \foreach \y in {0, 2, ..., 12} {
                        \node at (-2, \y/2) {$\y$};
                }
                \begin{scope}[BrickRed]
                        \tikzpt{0}{0}{$U$}{};
                        \tikzpt{0}{2}{}{};
                        \tikzpt{0}{4}{}{};
                        \tikzpt{0}{4.5}{}{};
                        \PoneL(0, 0);
                        \PoneL(0, 2);
                        \threebock(0, 4);
                \end{scope}
                \begin{scope}[Green]
                        \tikzpt{2}{4}{$Ux^2$}{};
                        \tikzpt{2}{6}{}{};
                        \tikzpt{2}{6.5}{}{};
                        \PoneL(2, 4);
                        \threebock(2, 6);
                \end{scope}
                \begin{scope}[MidnightBlue]
                        \tikzpt{4}{4}{$Uy$}{};
                        \tikzpt{4}{6}{}{};
                        \tikzpt{4}{6.5}{}{};
                        \PoneL(4, 4);
                        \threebock(4, 6);
                \end{scope}
        \end{tikzpicture}
\end{subfigure}
\qquad
\begin{subfigure}[h!]{0.6\textwidth}
%\begin{sseqdata}[name=hettmf, Adams grading, classes=fill, xrange={0}{11}, yrange={0}{4}, scale=0.5, tick step=2,
%        class labels = { right = 0.07em, font=\small },
%        x label = {$\displaystyle{s\uparrow \atop t-s\rightarrow}$},
%        x label style = {font = \small, xshift = -25.5ex, yshift=6ex}, >=stealth]
%\begin{scope}[BrickRed]
%        \class(0, 0)\AdamsTower{}
%        \class(4, 1)\AdamsTower{}
%        \class(8, 2)\AdamsTower{}
%\end{scope}
%\begin{scope}[draw=none, fill=none]
%        \class(8, 0)\class(8, 1)
%\end{scope}
%\begin{scope}[Green]
%        \class(8, 0)\AdamsTower{}
%\end{scope}
%\begin{scope}[MidnightBlue]
%        \class(8, 0)\AdamsTower{}
%\end{scope}
%\end{sseqdata}
%\printpage[name=hettmf, page=2]
\includegraphics{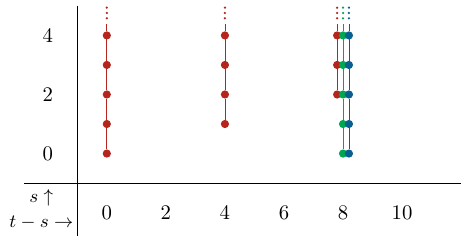}
\end{subfigure}
\caption{Left: the $\cA^{\tmf}$-module structure on $H_{\tmf}^*(M)$ in low degrees; the pictured submodule contains
all elements in degrees $11$ and below. Right: the $E_2$-page of the Adams spectral sequence computing
$\pi_*(M)_3^\wedge$, which as we discuss in the proof of \cref{heterotic_at_3} is isomorphic to the $3$-completion
of the twisted string bordism groups relevant for $E_8\times E_8$ heterotic string theory.}
\label{twisted_tmf_Adams}
\end{figure}

Now that we know the Ext groups of all $\cA^{\tmf}$-modules appearing in~\eqref{tmf_decomp}, we can draw the
$E_2$-page of the Adams spectral sequence computing $\pi_*(M^\tmf f_{0,c_1+c_2})_3^\wedge$ in
\cref{twisted_tmf_Adams}, right (and hence, as noted above, the corresponding twisted string bordism groups in degrees $15$ and below). For degree reasons, this spectral sequence collapses at $E_2$ in degrees $t-s\le
11$; since $h_0$-actions lift to multiplication by $3$, there is no $3$-torsion in this range, and we conclude.
\end{proof}
\begin{rem}
Other examples of twisted string structures appear in the math and physics literature; see
Dierigl-Oehlmann-Schimmanek~\cite[\S 3.4]{DOS23} for another $3$-primary example.
\end{rem}
\begin{rem}
Just as in \cref{stabdiff}, Kreck's modified surgery gives a classification of some closed, smooth $8$-manifolds up
to stable diffeomorphism in terms of twisted string bordism. There is work applying this in examples corresponding
to vector bundle twists~\cite{FK96, Fan99, FW10, WW12, CN20}; it would be interesting to apply the $\tmf$-module Adams
spectral sequence to classes of manifolds where the twist is not given by a vector bundle.
\end{rem}

\subsection{$H\Z/2$ as a $\ku$-module Thom spectrum}\label{sub:kumod}
Devalapurkar uses methods from chromatic homotopy theory to prove the following result. We will reprove it using
the tools in this paper.
\begin{thm}[{Devalapurkar~\cite[Remark 2.3.16]{Dev}}]
\label{sanath_thm}
There is a map of $E_1$-spaces $f\colon \U_2\to B\GL_1(\ku)$ and a $2$-local equivalence of $E_1$-ring spectra
$Mf\simeq H\Z/2$.
\end{thm}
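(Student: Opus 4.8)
The plan is to exhibit $H\Z/2$ as a $\ku$-module Thom spectrum by first guessing the twist $f\colon \U_2\to B\GL_1(\ku)$ and then verifying the equivalence $Mf\simeq H\Z/2$ $2$-locally via the Baker--Lazarev Adams spectral sequence. Since $\U_2$ is connected but not simply connected, its fundamental group is $\Z$ (via the determinant $\U_2\to \U_1\simeq S^1$), and $H^*(\U_2;\Z/2)\cong \Lambda_{\Z/2}(x_1,x_3)$ with $|x_1|=1$, $|x_3|=3$. The class $x_1$ gives an element $a\in H^1(\U_2;\Z/2)$, and we have (integrally) the generator of $H^3(\U_2;\Z)\cong\Z$, whose mod-$2$ reduction is $x_3$; call the integral class $c$. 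My first step is to take $f = f_{a,c}\colon \U_2\to B\GL_1(\ku)$ to be the fake vector bundle twist from \cref{splitting_spinc,thom_module_calc}(2) associated to this $a$ and $c$ (with $a$ the generator of $H^1$ and $c$ the generator of $H^3$). The point of using a non-vector-bundle twist is precisely that $c$ is not $2$-torsion, so this twist cannot come from a vector bundle --- which is exactly why the earlier literature could not easily access this example, and why \cref{thom_module_calc} is the right tool.

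The second step is to compute $H_\ku^*(Mf_{a,c})$ as an $\cE(1)$-module using \cref{thom_module_calc}(2): it is the free $H^*(\U_2;\Z/2)$-module on $U$ with
\begin{equation}
Q_0(Ux) = U(ax + Q_0 x), \qquad Q_1(Ux) = U((c\bmod 2 + a^3)x + Q_1 x).
\end{equation}
Now $H^*(\U_2;\Z/2) = \Lambda(a, x_3)$ is $4$-dimensional over $\Z/2$, with basis $1, a, x_3, ax_3$. One checks that the $Q_0$- and $Q_1$-actions on the rank-$1$ free module make $H_\ku^*(Mf_{a,c})$ into a \emph{free} $\cE(1)$-module of rank $1$: indeed $Q_0(U) = Ua \neq 0$, and $Q_1(U) = U(x_3 + a^3)$; since $a^3 = 0$ in $\Lambda(a,x_3)$ (as $a^2=0$), we get $Q_1(U) = Ux_3\neq 0$, and $Q_0Q_1(U) = Q_0(Ux_3) = U a x_3 \neq 0$. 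So $U, Q_0U = Ua, Q_1U = Ux_3, Q_0Q_1 U = U ax_3$ form a $\Z/2$-basis, exhibiting $H_\ku^*(Mf_{a,c})\cong \cE(1)$ as an $\cE(1)$-module. This is the heart of the calculation.

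The third step is to feed this into \cref{little_sibling_Adams_corollary}: the $E_2$-page of the Baker--Lazarev Adams spectral sequence computing $\pi_*(Mf_{a,c})_2^\wedge$ is
\begin{equation}
E_2^{s,t} = \Ext_{\cE(1)}^{s,t}(\cE(1), \Z/2) = \Ext_{\cE(1)}^{0,0}(\cE(1),\Z/2) = \Z/2,
\end{equation}
concentrated in bidegree $(s,t) = (0,0)$, since $\cE(1)$ is free over itself. Hence the spectral sequence collapses with no room for differentials or extensions, and $\pi_*(Mf_{a,c})_2^\wedge \cong \Z/2$ concentrated in degree $0$. Since $Mf_{a,c}$ is connective (it is a Thom spectrum over a connected space) and a $\ku$-module, its homotopy groups are bounded below and the Adams spectral sequence converges; together with an argument that the higher homotopy groups have no odd torsion (e.g.\ because $\ku_*(\U_2)$ and the twisted version are finitely generated in each degree, or by a rational/localization check showing $Mf\otimes\Z[1/2]$ is contractible --- the rationalization of the twist is detected by $c\in H^3(\U_2;\Q)$ which is nonzero, killing rational homotopy), we conclude $Mf_{a,c}$ is $2$-locally equivalent to $H\Z/2$. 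The equivalence itself is realized by the unique (up to homotopy) $\ku$-module map $Mf_{a,c}\to \tau_{\le 0}(Mf_{a,c})\simeq H\Z/2$, which is a $2$-local equivalence by the homotopy computation.

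The main obstacle I expect is the convergence and odd-primary bookkeeping: the Baker--Lazarev spectral sequence only computes the $2$-completion, so I need an independent argument that there is no odd torsion (and no free part) in $\pi_* Mf_{a,c}$ in order to upgrade ``$2$-completion is $H\Z/2$'' to ``$2$-localization is $H\Z/2$.'' The cleanest route is probably to rationalize: $\ku_\Q \simeq \bigvee_{n} \Sigma^{2n} H\Q$, and the twist $f_{a,c}$ becomes, rationally, the twist of $H\Q$-homology by $c \in H^3(\U_2;\Q)\neq 0$; since twisting ordinary rational cohomology of $\U_2$ by a nonzero degree-$3$ class gives an acyclic complex (the twisted cohomology of $S^1\times S^3$ with a nontrivial $d_3$), $Mf_{a,c}\otimes\Q \simeq 0$, so $\pi_* Mf_{a,c}$ is all torsion; a parallel argument at odd primes $p$ (where $\ku/p$-module structure and $\cE(1)$ replaced by an odd analogue, or just the fact that the relevant cohomology is again acyclic against the twist) shows there is no $p$-torsion for $p$ odd. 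I would present this odd-primary vanishing carefully, as it is the one place where \cref{thom_module_calc} alone does not suffice.
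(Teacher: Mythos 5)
Your proof is correct and follows the paper's approach exactly: the same fake vector bundle twist $(a,c) = (\overline b_1, b_3)$ on $\U_2$, the same identification $H^*_\ku(Mf_{a,c})\cong\cE(1)$ as a free rank-one $\cE(1)$-module (using that $a^2 = 0$ and $\Sq^1 x_3 = 0$, the latter because $x_3$ reduces from an integral class), and the same immediate collapse of the Baker--Lazarev Adams spectral sequence. Your final paragraph's concern about odd torsion is, however, a red herring: since the claim is only a $2$-local equivalence, odd torsion in $\pi_*(Mf)$ is automatically killed by $2$-localization and poses no obstruction. The genuine issue in passing from the $2$-completed homotopy groups (which the Adams spectral sequence computes) to the $2$-local homotopy type is to rule out free $\Z$-summands, which would contribute $\Z_2$ rather than $\Z_{(2)}$; your rationalization argument does this, but it is simpler to observe that $\pi_*(Mf)$ is finitely generated in each degree ($\U_2$ is a finite CW complex and $\pi_*\ku$ is finitely generated), so the absence of any $\Z_2$ in the $2$-completion already rules out free parts and identifies $\pi_*(Mf)_{(2)}$ with $\pi_*(Mf)^\wedge_2$. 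The paper's ``conclude by Whitehead'' line is implicitly using exactly this finite-generation observation.
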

We will prove \cref{sanath_thm} in a sequence of steps. First, we establish an additive equivalence:
\begin{prop}
\label{additive_sanath_thm}
There is a map $f\colon \U_2\to B\GL_1(\ku)$ and a $2$-local equivalence $Mf\simeq H\Z/2$.
\end{prop}
\begin{thm}[{Borel~\cite[Théorèmes 8.2 et 8.3]{Bor54}}]
\label{borel_unitary}
Let $A$ be $\Z$ or $\Z/2$.
\begin{enumerate}
    \item $H^*(B\U_n; A)\cong A[c_1,\dotsc,c_n]$, with $\abs{c_i} = 2i$.
    \item\label{borel2} $H^*(\U_n; A)\cong \Lambda_A(b_1,\dotsc,b_n)$ with $\abs{b_i} = 2i-1$.
    \item The same is true with $\SU_n$ in place of $\U_n$, except that we leave out $c_1$.
 %   \item The transgression map in the Serre spectral sequence for the fibration $\U_n\to E\U_n\to B\U_n$ sends $b_i\mapsto c_i$ for $1\le i\le n$.
\end{enumerate}
The inclusion maps $\U_{n-1}\hookrightarrow\U_n$ and $B\U_{n-1}\to B\U_n$ send $b_i\mapsto b_i$, resp.\ $c_i\mapsto c_i$ (and likewise with $\SU$ in place of $\U$). Moreover, these statements are true for $n = \infty$ (so for $B\U$, $\U$, $B\SU$, and $\SU$).
\end{thm}
Here $\Lambda_A(\dots)$ denotes an exterior $A$-algebra on the specified generators. Below, we will use $b_i$ and $c_i$ to denote the $\Z$-cohomology classes and $\overline b_i$ and $\overline c_i$ to denote the $\Z/2$-cohomology classes.

%\TODO: is this the right reference re: all of these statements? %\TODO: who first showed that $\cA$ acts trivially on $H^*(\U_2;\Z/2)$?

\begin{proof}[Proof of \cref{additive_sanath_thm}]
%We will need to use \cref{borel_unitary} with both $\Z$ and $\Z/2$ coefficients; thus, use $b_i$ to denote one of the generators of $H^*(\U_2;\Z)$ as in part~\eqref{borel2} of \cref{borel_unitary}, and use $\overline b_i$ to denote the corresponding mod $2$ cohomology class.
%Borel~\cite[Théorèmes 8.2 et 8.3]{Bor54} proved that $H^*(\U_2;%\Z)\cong \Z[b_1, b_3]/(b_1^2, b_3^2)$ and
%$H^*(\U_2;\Z/2)\cong\Z/2[\overline b_1, \overline b_3]/(\overline b_1{}^2, \overline b_3{}^2)$, where $\overline
%b_i = b_i\bmod 2$, $\abs{b_1} = \abs{\overline b_1} = 1$ and $\abs{b_3} = \abs{\overline b_3} = 3$; $\Sq^1$ and $\Sq^2$ act trivially on the mod $2$ cohomology ring.
Let $f\colon \U_2\to B\GL_1(\ku)$ be the fake vector bundle twist given by $(\overline b_1, b_3)$ (see \S\ref{spinc_twists} for the definition of this class of twists). Borel's theorems that we cited in \cref{borel_unitary} can be used to show that $\cA$ acts trivially on $H^*(\U_2;\Z/2)$.\footnote{In fact, Miller~\cite{Mil80} showed that the triviality of $H^*(\U_2;\Z/2)$ as an $\cA$-algebra lifts to a wedge sum decomposition of $\Sigma^\infty\U_2$ itself; see also~\cite{Jam59, Cra87}.}
%
%\Cref{spinc_twists} shows us that we can specify a map $f\colon \U_2\to B\GL_1(\ku)$ by specifying a map to
%$B\O/B\Spin^c\simeq K(\Z/2, 1)\times K(\Z, 3)$, then composing with $B\O/B\Spin^c\to B\GL_1(\MTSpin^c)\to
%B\GL_1(\ku)$. Choose the map to $K(\Z/2, 1)\times K(\Z, 3)$ 
\Cref{thom_module_calc} shows that $H_\ku^*(Mf)$ is isomorphic to $H^*(\U_2;\Z/2)$ as $\Z/2$-vector spaces, and
that the $\cE(1)$-action is twisted by $Q_0(U) = U\overline b_1$ and $Q_1(U) = U\overline b_3$. This and the Cartan
rule imply $H_\ku^*(Mf)\cong\cE(1)$ as $\cE(1)$-modules, so $\Ext_{\cE(1)}(H_\ku^*(Mf), \Z/2)$ consists of a single
$\Z/2$ in bidegree $(0, 0)$ and vanishes elsewhere. Thus the $\ku$-module Adams spectral sequence immediately
collapses, and we learn $\pi_0(Mf)_2^\wedge\cong\Z/2$ and all other homotopy groups vanish. This property
characterizes $H\Z/2$ up to $2$-local equivalence (e.g.\ it implies $H^0(Mf;\Z/2)\cong\Z/2$, giving a map $Mf\to
H\Z/2$ which is an isomorphism on $2$-completed homotopy groups, allowing us to conclude by Whitehead).
\end{proof}
The rest of the proof is:
\begin{prop}
\label{is_loop_map}
There is a map $F\colon B\U_2\to B(B\O/B\Spin^c)$ such that $\Omega F\simeq f$.
\end{prop}
Before we prove \cref{is_loop_map} we must identify the space $B(B\O/B\Spin^c)$. Recall the space $\SK(4)$ from~\eqref{sk_defn}, which represents $\SH^4$ (degree-$4$ supercohomology, defined in \cref{supercoh_defn}).
\begin{prop}
\label{is_SK}
There is a homotopy equivalence $B(B\O/B\Spin^c)\overset\simeq\to \SK(4)$ of spaces.
%, and if $f,f'$ are any two such homotopy
%equivalences, $f^{-1}\circ f'$ acts by either $1$ or $-1$ on homotopy groups.
\end{prop}
\begin{proof}
Since $B\O/B\Spin^c$ is an abelian $\infty$-group, $B(B\O/B\Spin^c)\simeq \Sigma (B\O/B\Spin^c)$. Thus, to obtain the homotopy groups of $B(B\O/B\Spin^c)$, we shift up the homotopy groups of $B\O/B\Spin^c$ that we obtained from \cref{splitting_spinc}. Thus $B(B\O/B\Spin^c)$ has two
nonzero homotopy groups, $\pi_2\cong\Z/2$ and $\pi_4\cong\Z$. By definition, $\SK(4)$ also has $\pi_2\cong\Z/2$ and $\pi_4\cong\Z$, so to establish that $B(B\O/B\Spin^c)\simeq \SK(4)$, it suffices to show their $k$-invariants are
equal. In the text around~\eqref{sk_defn}, we chose the $k$-invariant of $\SK(4)$ to be $\beta\circ\Sq^2$, where $\beta\colon
H^*(\bl;\Z/2)\to H^{*+1}(\bl;\Z)$ is the Bockstein, and by~\cite[Corollary 4.9]{BLM23}, the $k$-invariant of $B\O/B\Spin^c$ is also $\beta\circ\Sq^2$.\footnote{Beardsley-Luecke-Morava phrase their results in terms of the \term{Picard spectrum} $\mathrm{Pic}(\KU)$; the relation to $B\O/B\Spin^c$ appears in (\textit{ibid.}, \S 5.2) for twisted spin and string structures, and the story for twisted \spinc structures is analogous.} %The claim about the action on homotopy groups follows because $\mathrm{Aut}(\Z/2) = \set 1$ and $\mathrm{Aut}(\Z) = \set{\pm 1}$.
\end{proof}
%This proof did not construct $\varphi$, only showed existence, so choose some homotopy equivalence $\varphi$ for what follows. This choice is not unique, even up to homotopy, but that will not cause any problems.
By applying the loop space functor and \cref{SK_loop}, we also get:
\begin{cor}
\label{SH3}
There is a homotopy equivalence of spaces $B\O/B\Spin^c\simeq \SK(3)$.
\end{cor}
%\TODO: express this cleaner: we won't pick one but it's OK bc one is fixed when we loop
%\begin{defn}
%\label{F_defn}
%Recall from \cref{super_lambda} that Freed~\cite[Proposition 1.9(i)]{Fre08} showed that there is a class $\lambda\in\SH^4(B\SO)$ such that $2\lambda$ is the
%image of the first Pontrjagin class in supercohomology. Let $r\colon B\U_2\to B\U\to B\SO$ be stabilization, followed by forgetting the complex structure. Let $G\colon B\U_2\to B(B\O/B\Spin)$ be the map defined by the class
%$r^*(\lambda)\in \SH^4(B\U_2)$, considered as a map
%\begin{equation}
%\label{to_loop}
%	B\U_2 \overset{r^*\lambda} \longrightarrow \SK(4) \overset{\varphi}{\underset{\eqref{is_SK}}{\longrightarrow}}
%	B(B\O/B\Spin^c).
%\end{equation}
%\end{defn}
%We will show that $G$ loops to either $F$ or $-F$, where $-F$ is the composition of $F$ with the map $-1\colon B\O/B\Spin\to B\O/B\Spin$ that comes from its $H$-space structure. Before we get there, though, we need a few more steps in which we understand the loop functor on simpler maps than $G$.
\begin{rem}
\label{abelian_loop}
Using the equivalence of $\infty$-categories between infinite loop spaces and connective spectra, one can prove that on the sub-$\infty$-category of \emph{connected} abelian $\infty$-groups, the functor $\Sigma\Omega$ is naturally isomorphic to the identity. Thus if $f\colon X\to Y$ is a map of connected abelian $\infty$-groups, $\Omega f\colon \Omega X\to\Omega Y$ is the unique homotopy class of maps whose suspension is $f$.
\end{rem}
\begin{lem}
\label{b1_loop}
Regard $\overline b_1\in H^1(\U_2;\Z/2)$ as a map $\overline b_1\colon \U_2\to K(\Z/2, 1)$, and likewise for $\overline c_1\colon B\U_2\to K(\Z/2, 2)$. Then $\Omega \overline c_1 \simeq\overline b_1$.
\end{lem}
\begin{proof}
By \cref{borel_unitary}, the pullback maps $H^1(\U_2;\Z/2)\to H^1(\U_1;\Z/2)$ and $H^2(B\U_2;\Z/2)\to H^2(B\U_1;\Z/2)$ are isomorphisms, so it suffices to prove this result with $\U_2$ replaced with $\U_1$. The map $\overline c_1\colon B\U_1\to K(\Z/2, 2)$ is a map of abelian $\infty$-groups (heuristically, the characteristic class $\overline c_1$ is additive in tensor products of line bundles), which implies the lemma by \cref{abelian_loop}.
\end{proof}
\begin{lem}
\label{b3_loop}
Regard $b_3\in H^3(\U_2;\Z/2)$ as a map $b_3\colon \U_2\to K(\Z, 3)$, and likewise for $c_2\colon B\U_2\to K(\Z, 4)$. Then there is some $\lambda\in\Z$ such that $\Omega (\pm c_2+\lambda c_1^2) \simeq b_3$.
\end{lem}
\begin{proof}
Let $i\colon \U_2\hookrightarrow \U$ and $j\colon\SU\hookrightarrow\U$ be the usual inclusions. Then we have a commutative diagram
\begin{equation}
\label{three_loop_diagram}
% https://q.uiver.app/#q=WzAsNixbMCwwLCJIXjQoQlxcVV8yO1xcWikiXSxbMSwwLCJIXjQoQlxcVTtcXFopIl0sWzIsMCwiSF40KEJcXFNVO1xcWikiXSxbMCwxLCJIXjMoXFxVXzI7XFxaKSJdLFsxLDEsIkheMyhcXFU7XFxaKSJdLFsyLDEsIkheMyhcXFNVO1xcWikiXSxbMSwwLCIoQmkpXioiLDJdLFsxLDIsIihCaileKiJdLFs0LDUsImpeKiJdLFs0LDMsImleKiIsMl0sWzAsMywiXFxPbWVnYSIsMl0sWzEsNCwiXFxPbWVnYSIsMl0sWzIsNSwiXFxPbWVnYSIsMl1d
\begin{tikzcd}
	{H^4(B\U_2;\Z)} & {H^4(B\U;\Z)} & {H^4(B\SU;\Z)} \\
	{H^3(\U_2;\Z)} & {H^3(\U;\Z)} & {H^3(\SU;\Z),}
	\arrow["\Omega"', from=1-1, to=2-1]
	\arrow["{(Bi)^*}"', from=1-2, to=1-1]
	\arrow["{(Bj)^*}", from=1-2, to=1-3]
	\arrow["\Omega"', from=1-2, to=2-2]
	\arrow["\Omega"', from=1-3, to=2-3]
	\arrow["{i^*}"', from=2-2, to=2-1]
	\arrow["{j^*}", from=2-2, to=2-3]
\end{tikzcd}
\end{equation}
and by \cref{borel_unitary}, $i^*$ and $(Bi)^*$ are isomorphisms and $j^*$ and $(Bj)^*$ are surjective. Specifically, we learn that if $x\in H^4(B\U;\Z)$ is such that $(Bj)^*(x) = c_2$, then $x = c_2 + \lambda c_1^2$ for some $\lambda\in\Z$. Passing through the isomorphisms $i^*$ and $(Bi)^*$, we have the analogous fact for $B\U_2$ in place of $B\U$.

Since $B\SU$ has the direct sum abelian $\infty$-group structure, the Whitney sum formula shows that $c_2\colon B\SU\to K(\Z, 4)$ is a morphism of connected $\infty$-groups. Alternatively, one may identify this map with the cofiber of the forgetful map $B\U\ang 6\to B\SU$, which is a map of abelian $\infty$-groups (see also the text around~\eqref{OSpinc_SES}).

By \cref{abelian_loop}, $c_2\colon B\SU\to K(\Z, 4)$ loops to a generator of $H^3(\SU;\Z)$, which must be $\pm b_3$. Chase this fact across~\eqref{three_loop_diagram} to finish the proof.
\end{proof}

\begin{proof}[Proof of \cref{is_loop_map}]
We claim that here is a commutative diagram of long exact sequences
% https://q.uiver.app/#q=WzAsMTIsWzAsMCwiXFxkb3RzYiJdLFsxLDAsIkheNChCXFxVXzI7XFxaKSJdLFsyLDAsIlxcU0heNChCXFxVXzIpIl0sWzMsMCwiSF4yKEJcXFVfMjtcXFovMikiXSxbNCwwLCJIXjUoQlxcVV8yO1xcWikiXSxbNSwwLCJcXGRvdHNiIl0sWzAsMSwiXFxkb3RzYiJdLFsxLDEsIkheMyhcXFVfMjtcXFopIl0sWzIsMSwiXFxTSF4zKFxcVV8yKSJdLFszLDEsIkheMShcXFVfMjtcXFovMikiXSxbNCwxLCJIXjQoXFxVXzI7XFxaKSJdLFs1LDEsIlxcZG90c2IiXSxbMCwxXSxbMSwyXSxbMiwzLCJ0Il0sWzMsNCwiXFxiZXRhXFxjaXJjXFxTcV4yIl0sWzQsNV0sWzYsN10sWzcsOF0sWzgsOSwidCJdLFs5LDEwLCJcXGJldGFcXGNpcmNcXFNxXjIiXSxbMTAsMTFdLFsxLDcsIlxcT21lZ2EiLDJdLFsyLDgsIlxcT21lZ2EiLDJdLFszLDksIlxcT21lZ2EiLDJdLFs0LDEwLCJcXE9tZWdhIiwyXV0=
\begin{equation}
\label{loop_SH_LES}
\begin{tikzcd}
	\dotsb & {H^4(B\U_2;\Z)} & {\SH^4(B\U_2)} & {H^2(B\U_2;\Z/2)} & {H^5(B\U_2;\Z)} & \dotsb \\
	\dotsb & {H^3(\U_2;\Z)} & {\SH^3(\U_2)} & {H^1(\U_2;\Z/2)} & {H^4(\U_2;\Z)} & \dotsb
	\arrow["{\beta\circ\Sq^2}", from=1-1, to=1-2]
	\arrow[from=1-2, to=1-3]
	\arrow["\Omega"', from=1-2, to=2-2]
	\arrow["t", from=1-3, to=1-4]
	\arrow["\Omega"', from=1-3, to=2-3]
	\arrow["{\beta\circ\Sq^2}", from=1-4, to=1-5]
	\arrow["\Omega"', from=1-4, to=2-4]
	\arrow[from=1-5, to=1-6]
	\arrow["\Omega"', from=1-5, to=2-5]
	\arrow["{\beta\circ\Sq^2}", from=2-1, to=2-2]
	\arrow[from=2-2, to=2-3]
	\arrow["t", from=2-3, to=2-4]
	\arrow["{\beta\circ\Sq^2}", from=2-4, to=2-5]
	\arrow[from=2-5, to=2-6]
\end{tikzcd}\end{equation}
where the vertical arrows are the loop space functor. Specifically, the interpretation of the loop space functor as a map $\Omega\colon H^n(X; A)\to H^{n-1}(\Omega X; A)$ is just as in \cref{b1_loop,b3_loop}; the interpretation on supercohomology is completely analogous, using that $\Omega\SK(n)\simeq\SK(n-1)$ (\cref{SK_loop}). 

The commutative diagram in~\eqref{loop_SH_LES} exists essentially because the loop space functor, applied to a cofiber sequence of connected infinite loop spaces, returns a cofiber sequence of infinite loop spaces.
%[\TODO: say something useful about why there is a comm diag of LESes]

We claim that all four maps labeled $\beta\circ\Sq^2$ in~\eqref{loop_SH_LES} vanish. For all of them except $\beta\circ\Sq^2\colon H^2(B\U_2;\Z/2)\to H^5(B\U_2;\Z)$, this follows because $\Sq^2$ vanishes on classes in degrees less than $2$. To see that the remaining $\beta\circ\Sq^2$ vanishes, check on the generator $\overline c_1\coloneqq c_1\bmod 2$ (\cref{borel_unitary}): $\Sq^2(\overline c_1) = \overline c_1^2$ for degree reasons, but $\overline c_1^2 = c_1^2\bmod 2$, so $\beta(\overline c_2^2) = 0$. Thus~\eqref{loop_SH_LES} simplifies to a map of short exact sequences:
% https://q.uiver.app/#q=WzAsMTAsWzAsMCwiMCJdLFsxLDAsIkheNChCXFxVXzI7XFxaKSJdLFsyLDAsIlxcU0heNChCXFxVXzIpIl0sWzMsMCwiSF4yKEJcXFVfMjtcXFovMikiXSxbNCwwLCIwIl0sWzAsMSwiMCJdLFsxLDEsIkheMyhcXFVfMjtcXFopIl0sWzIsMSwiXFxTSF4zKFxcVV8yKSJdLFszLDEsIkheMShcXFVfMjtcXFovMikiXSxbNCwxLCIwIl0sWzAsMV0sWzEsMl0sWzIsMywidCJdLFszLDRdLFs1LDZdLFs2LDddLFs3LDgsInQiXSxbOCw5XSxbMSw2LCJcXE9tZWdhIiwyXSxbMiw3LCJcXE9tZWdhIiwyXSxbMyw4LCJcXE9tZWdhIiwyXV0=
\begin{equation}\begin{tikzcd}
	0 & {H^4(B\U_2;\Z)} & {\SH^4(B\U_2)} & {H^2(B\U_2;\Z/2)} & 0 \\
	0 & {H^3(\U_2;\Z)} & {\SH^3(\U_2)} & {H^1(\U_2;\Z/2)} & 0
	\arrow[from=1-1, to=1-2]
	\arrow[from=1-2, to=1-3]
	\arrow["\Omega_1"', from=1-2, to=2-2]
	\arrow["t", from=1-3, to=1-4]
	\arrow["\Omega_2"', from=1-3, to=2-3]
	\arrow[from=1-4, to=1-5]
	\arrow["\Omega_3"', from=1-4, to=2-4]
	\arrow[from=2-1, to=2-2]
	\arrow[from=2-2, to=2-3]
	\arrow["t", from=2-3, to=2-4]
	\arrow[from=2-4, to=2-5]
\end{tikzcd}\end{equation}
Here we give the loop space functor maps different names $\Omega_i$ to distinguish them. By \cref{b1_loop}, $\Omega_3$ is an isomorphism, and by \cref{b3_loop}, $\Omega_1$ is surjective (since the generator of $H^3(\U_2;\Z)\cong\Z$ is in the image of $\Omega_1$). Therefore by the four lemma, $\Omega_2$ is surjective. Reinterpreting this fact as in \cref{is_SK,SH3}, we have that the map
\begin{equation}
    \Omega\colon [B\U_2, B(B\O/B\Spin^c)] \longrightarrow [\U_2, B\O/B\Spin^c]
\end{equation}
is surjective, which suffices to prove the proposition.
%By construction in \cref{supercoh_defn}, $\Omega\SK(n)\simeq\SK(n-1)$, similarly to Eilenberg-Mac Lane spaces. Therefore, given a class $\mu\in\SH^n(X)$, we obtain a class $\Omega\mu\in \SH^{n-1}(\Omega X)$. The first step of the proof is to identify $\Omega(r^*\lambda)\in \SH^3(\U_2)$.
%
%The long exact sequence~\eqref{} is functorial with respect to the loop map, reducing the question to homology. It is well-known that the loop map on cohomology induces isomorphisms $H^i(B\U_2;\Z)\overset\cong\to H^{i-1}(\U_2;\Z)$ for $i = 2,3$ and $H^2(B\U_2;\Z/2)\overset\cong\to H^1(\U_2;\Z/2)$ [\TODO: explain and/or relate to transgression, if possible! and say more about exactness], so exactness implies the loop map is [\TODO: but the loop map is not an isomorphsm!]
%
%We will show that $\Omega G\simeq\pm f$, so either $F = G$ or $F =-G$ proves the proposition.
%
%\TODO: possible proof: looping a class in $BG$ is the same as transgressing it, because this is definitionally true for the tautological class in an EM space, and then argue by naturality under pullback. [next: double-check it]
% first, show that t(F) loops to t(f) = \overline b1...
% then show that c2 -> 2F, and c2 loops to 2b3. no ambiguity bc Z
\end{proof}
\begin{proof}[Proof of \cref{sanath_thm}]
By \cref{cofiber_twists}, the map $T\colon B\O/B\Spin^c\to B\GL_1(\ku)$ is a map of abelian $\infty$-groups, and by the
recognition principle, since $f\simeq\Omega F$ by \cref{is_loop_map}, $f$ is a map of $E_1$-spaces. Thus
$T\circ f$ is also $E_1$, which by~\cite[Theorem 1.7]{ABG11} implies that its Thom spectrum is an
$E_1$-$\ku$-algebra. We identified this Thom spectrum as $H\Z/2$, which has a unique $E_1$-ring structure, in \cref{additive_sanath_thm}.
\end{proof}
\newcommand{\etalchar}[1]{$^{#1}$}


\begin{thebibliography}{ABG{\etalchar{+}}14b}

\bibitem[ABG10]{ABG10}
Matthew Ando, Andrew~J. Blumberg, and David Gepner.
\newblock Twists of {$K$}-theory and {TMF}.
\newblock In {\em Superstrings, geometry, topology, and {$C^\ast$}-algebras},
  volume~81 of {\em Proc. Sympos. Pure Math.}, pages 27--63. Amer. Math. Soc.,
  Providence, RI, 2010.
\newblock \url{https://arxiv.org/abs/1002.3004}.

\bibitem[ABG{\etalchar{+}}14a]{ABGHR14a}
Matthew Ando, Andrew~J. Blumberg, David Gepner, Michael~J. Hopkins, and Charles
  Rezk.
\newblock An {$\infty$}-categorical approach to {$R$}-line bundles,
  {$R$}-module {T}hom spectra, and twisted {$R$}-homology.
\newblock {\em J. Topol.}, 7(3):869--893, 2014.
\newblock \url{https://arxiv.org/abs/1403.4325}.

\bibitem[ABG{\etalchar{+}}14b]{ABGHR14b}
Matthew Ando, Andrew~J. Blumberg, David Gepner, Michael~J. Hopkins, and Charles
  Rezk.
\newblock Units of ring spectra, orientations and {T}hom spectra via rigid
  infinite loop space theory.
\newblock {\em J. Topol.}, 7(4):1077--1117, 2014.
\newblock \url{https://arxiv.org/abs/1403.4320}.

\bibitem[ABG18]{ABG11}
Matthew Ando, Andrew~J. Blumberg, and David Gepner.
\newblock Parametrized spectra, multiplicative {T}hom spectra and the twisted
  {U}mkehr map.
\newblock {\em Geom. Topol.}, 22(7):3761--3825, 2018.
\newblock \url{https://arxiv.org/abs/1112.2203}.

\bibitem[ABP67]{ABP67}
D.~W. Anderson, E.~H. Brown, Jr., and F.~P. Peterson.
\newblock The structure of the {S}pin cobordism ring.
\newblock {\em Ann. of Math. (2)}, 86:271--298, 1967.

\bibitem[ABP69]{ABP69}
D.~W. Anderson, E.~H. Brown, Jr., and F.~P. Peterson.
\newblock Pin cobordism and related topics.
\newblock {\em Comment. Math. Helv.}, 44:462--468, 1969.

\bibitem[ABS64]{ABS64}
M.F. Atiyah, R.~Bott, and A.~Shapiro.
\newblock Clifford modules.
\newblock {\em Topology}, 3(Supplement 1):3--38, 1964.

\bibitem[Abs21]{Abs21}
Dominik Absmeier.
\newblock {\em Strictly Commutative Complex Orientations of {$Tmf_1(N)$}}.
\newblock PhD thesis, Ruhr University Bochum, 2021.
\newblock \url{https://arxiv.org/abs/2108.03079}.

\bibitem[Ada61]{adams_1961}
J.~F. Adams.
\newblock On {C}hern characters and the structure of the unitary group.
\newblock {\em Mathematical Proceedings of the Cambridge Philosophical
  Society}, 57(2):189–199, 1961.

\bibitem[Ada23]{adamyk2021classifying}
Katharine Adamyk.
\newblock Classifying and extending {$Q_0$} local {$\cA(1)$}-modules.
\newblock {\em New York J. Math.}, 29:939--980, 2023.
\newblock \url{https://arxiv.org/abs/2107.02837}.

\bibitem[AHR10]{AHR10}
Matthew Ando, Michael~J. Hopkins, and Charles Rezk.
\newblock Multiplicative orientations of {$\KO$}-theory and of the spectrum of
  topological modular forms.
\newblock 2010.
\newblock \url{https://faculty.math.illinois.edu/~mando/papers/koandtmf.pdf}.

\bibitem[AS04]{AS04}
Michael Atiyah and Graeme Segal.
\newblock Twisted {$K$}-theory.
\newblock {\em Ukr. Mat. Visn.}, 1(3):287--330, 2004.
\newblock \url{https://arxiv.org/abs/math/0407054}.

\bibitem[AS24]{AS24}
Hassan Abdallah and Andrew Salch.
\newblock Products in {spin$^c$}-cobordism.
\newblock 2024.
\newblock \url{https://arxiv.org/abs/2407.10045}.

\bibitem[Ati61]{Ati61}
Michael Atiyah.
\newblock Bordism and cobordism.
\newblock {\em Proc. Cambridge Philos. Soc.}, 57:200--208, 1961.

\bibitem[Bak20]{baker2020homotopy}
Andrew Baker.
\newblock Homotopy theory of modules over a commutative {$S$}-algebra: some
  tools and examples.
\newblock 2020.
\newblock \url{https://arxiv.org/abs/2003.12003}.

\bibitem[Bas99]{Bas99}
M.~Basterra.
\newblock André-{Q}uillen cohomology of commutative {$S$}-algebras.
\newblock {\em J. Pure Appl. Algebra}, 144(2):111--143, 1999.

\bibitem[BBB{\etalchar{+}}20]{BBBCX20}
A.~Beaudry, M.~Behrens, P.~Bhattacharya, D.~Culver, and Z.~Xu.
\newblock On the {$E_2$}-term of the bo-{A}dams spectral sequence.
\newblock {\em J. Topol.}, 13(1):356--415, 2020.
\newblock \url{https://arxiv.org/abs/1702.00230}.

\bibitem[BBB{\etalchar{+}}21]{BBBCX21}
Agnès Beaudry, Mark Behrens, Prasit Bhattacharya, Dominic Culver, and Zhouli
  Xu.
\newblock The telescope conjecture at height 2 and the tmf resolution.
\newblock {\em J. Topol.}, 14(4):1243--1320, 2021.
\newblock \url{https://arxiv.org/abs/1909.13379}.

\bibitem[BC18]{BC18}
Agnès Beaudry and Jonathan~A. Campbell.
\newblock A guide for computing stable homotopy groups.
\newblock In {\em Topology and quantum theory in interaction}, volume 718 of
  {\em Contemp. Math.}, pages 89--136. Amer. Math. Soc., [Providence], RI,
  [2018] \copyright 2018.
\newblock \url{https://arxiv.org/abs/1801.07530}.

\bibitem[BDDM24]{BDDM}
Ivano Basile, Arun Debray, Matilda Delgado, and Miguel Montero.
\newblock Global anomalies \& bordism of non-supersymmetric strings.
\newblock {\em J. High Energy Phys.}, (2):Paper No. 92, 71, 2024.
\newblock \url{https://arxiv.org/abs/2310.06895}.

\bibitem[Bea17]{Bea17}
Jonathan Beardsley.
\newblock Relative {T}hom spectra via operadic {K}an extensions.
\newblock {\em Algebr. Geom. Topol.}, 17(2):1151--1162, 2017.
\newblock \url{https://arxiv.org/abs/1601.04123}.

\bibitem[Bea23]{Bea18}
Jonathan Beardsley.
\newblock On bialgebras, comodules, descent data and {T}hom spectra in
  {$\infty$}-categories.
\newblock {\em Homology Homotopy Appl.}, 25(2):219--242, 2023.
\newblock \url{https://arxiv.org/abs/1810.00734}.

\bibitem[Beh06]{behrens2006modular}
Mark Behrens.
\newblock A modular description of the {$K(2)$}-local sphere at the prime 3.
\newblock {\em Topology}, 45(2):343--402, 2006.
\newblock \url{https://arxiv.org/abs/math/0507184}.

\bibitem[Beh07]{Beh07}
Mark Behrens.
\newblock Buildings, elliptic curves, and the {$K(2)$}-local sphere.
\newblock {\em Amer. J. Math.}, 129(6):1513--1563, 2007.
\newblock \url{https://arxiv.org/abs/math/0510026}.

\bibitem[BFCC24]{CC21}
Joey Beauvais-Feisthauer, Hood Chatham, and Dexter Chua.
\newblock Spectralsequences (computer software), 2024.
\newblock \url{https://github.com/spectralsequences/sseq/}.

\bibitem[BG87a]{BG87a}
Anthony Bahri and Peter Gilkey.
\newblock The eta invariant, {$\Pin^c$} bordism, and equivariant {$\Spin^c$}
  bordism for cyclic $2$-groups.
\newblock {\em Pacific J. Math.}, 128(1):1--24, 1987.

\bibitem[BG87b]{BG87b}
Anthony Bahri and Peter Gilkey.
\newblock {$\Pin^c$} cobordism and equivariant {$\Spin^c$} cobordism of cyclic
  2-groups.
\newblock {\em Proceedings of the American Mathematical Society},
  99(2):380--382, 1987.

\bibitem[BG97]{BG97}
Boris Botvinnik and Peter Gilkey.
\newblock The {G}romov-{L}awson-{R}osenberg conjecture: the twisted case.
\newblock {\em Houston J. Math.}, 23(1):143--160, 1997.

\bibitem[BL01]{baker2001adams}
Andrew Baker and Andrey Lazarev.
\newblock On the {A}dams spectral sequence for {$R$}-modules.
\newblock {\em Algebr. Geom. Topol.}, 1:173--199, 2001.
\newblock \url{https://arxiv.org/abs/math/0105079}.

\bibitem[BLM23]{BLM23}
Jonathan Beardsley, Kiran Luecke, and Jack Morava.
\newblock Brauer-{W}all groups and truncated {P}icard spectra of {$K$}-theory.
\newblock 2023.
\newblock \url{https://arxiv.org/abs/2306.10112}.

\bibitem[Bor54]{Bor54}
Armand Borel.
\newblock Sur l'homologie et la cohomologie des groupes de {L}ie compacts
  connexes.
\newblock {\em Amer. J. Math.}, 76:273--342, 1954.

\bibitem[BOSS19]{BOSS19}
M.~Behrens, K.~Ormsby, N.~Stapleton, and V.~Stojanoska.
\newblock On the ring of cooperations for 2-primary connective topological
  modular forms.
\newblock {\em J. Topol.}, 12(2):577--657, 2019.
\newblock \url{https://arxiv.org/abs/1501.01050}.

\bibitem[Bou79]{Bou79}
A.~K. Bousfield.
\newblock The localization of spectra with respect to homology.
\newblock {\em Topology}, 18(4):257--281, 1979.

\bibitem[BP25]{BP25}
Robert Burklund and Piotr Pstrągowski.
\newblock Quivers and the {A}dams spectral sequence.
\newblock {\em Adv. Math.}, 471:Paper No. 110270, 76, 2025.
\newblock \url{https://arxiv.org/abs/2305.08231}.

\bibitem[BR78]{BR78}
Thomas Bier and Nigel Ray.
\newblock Detecting framed manifolds in the {$8$} and {$16$} stems.
\newblock In {\em Geometric applications of homotopy theory ({P}roc. {C}onf.,
  {E}vanston, {I}ll., 1977), {I}}, volume 657 of {\em Lecture Notes in Math.},
  pages 32--39. Springer, Berlin-New York, 1978.

\bibitem[BR21]{BR21}
Robert~R. Bruner and John Rognes.
\newblock {\em The {A}dams spectral sequence for topological modular forms},
  volume 253 of {\em Mathematical Surveys and Monographs}.
\newblock American Mathematical Society, Providence, RI, 2021.

\bibitem[Bro82]{Bro82}
Edgar~H. Brown, Jr.
\newblock The cohomology of {$\mathrm{BSO}_{n}$} and {$\mathrm{BO}_{n}$} with
  integer coefficients.
\newblock {\em Proc. Amer. Math. Soc.}, 85(2):283--288, 1982.

\bibitem[Bru22]{Bru18}
Robert Bruner.
\newblock Ext (computer software), 2022.
\newblock \url{http://www.rrb.wayne.edu/papers/index.html\#code}.

\bibitem[BS58]{BS58}
Raoul Bott and Hans Samelson.
\newblock Applications of the theory of {M}orse to symmetric spaces.
\newblock {\em Amer. J. Math.}, 80:964--1029, 1958.

\bibitem[Cam17]{Cam17}
Jonathan~A. Campbell.
\newblock Homotopy theoretic classification of symmetry protected phases.
\newblock 2017.
\newblock \url{https://arxiv.org/abs/1708.04264}.

\bibitem[Car54]{Car54}
Henri Cartan.
\newblock Sur les groupes d'{E}ilenberg-{M}ac {L}ane. {II}.
\newblock {\em Proc. Nat. Acad. Sci. U.S.A.}, 40:704--707, 1954.

\bibitem[CL24]{CL24}
Shachar Carmeli and Kiran Luecke.
\newblock The spectrum of units of algebraic {$K$}-theory.
\newblock 2024.
\newblock \url{https://arxiv.org/abs/2410.10126}.

\bibitem[CN25]{CN20}
Diarmuid Crowley and Csaba Nagy.
\newblock The smooth classification of 4-dimensional complete intersections.
\newblock {\em Geom. Topol.}, 29(1):269--311, 2025.
\newblock \url{https://arxiv.org/abs/2003.09216}.

\bibitem[Cra87]{Cra87}
M.~C. Crabb.
\newblock On the stable splitting of {${\rm U}(n)$} and {$\Omega {\rm U}(n)$}.
\newblock In {\em Algebraic topology, {B}arcelona, 1986}, volume 1298 of {\em
  Lecture Notes in Math.}, pages 35--53. Springer, Berlin, 1987.

\bibitem[Cul21]{Cul21}
D.~Culver.
\newblock The {A}dams spectral sequence for 3-local {$\mathrm {tmf}$}.
\newblock {\em J. Homotopy Relat. Struct.}, 16(1):1--40, 2021.
\newblock \url{https://arxiv.org/abs/1902.04230}.

\bibitem[Dav87]{Dav87}
Donald~M. Davis.
\newblock The {$b{\rm o}$}-{A}dams spectral sequence: some calculations and a
  proof of its vanishing line.
\newblock In {\em Algebraic topology ({S}eattle, {W}ash., 1985)}, volume 1286
  of {\em Lecture Notes in Math.}, pages 267--285. Springer, Berlin, 1987.

\bibitem[DDHM24]{DDHM23}
Arun Debray, Markus Dierigl, Jonathan~J. Heckman, and Miguel Montero.
\newblock The chronicles of {IIB}ordia: dualities, bordisms, and the swampland.
\newblock {\em Adv. Theor. Math. Phys.}, 28(3):805--1025, 2024.
\newblock \url{https://arxiv.org/abs/2302.00007}.

\bibitem[Deb21]{Deb21}
Arun Debray.
\newblock Invertible phases for mixed spatial symmetries and the fermionic
  crystalline equivalence principle.
\newblock 2021.
\newblock \url{https://arxiv.org/abs/2102.02941}.

\bibitem[Deb24]{Deb23}
Arun Debray.
\newblock Bordism for the 2-group symmetries of the heterotic and {CHL}
  strings.
\newblock In {\em Higher structures in topology, geometry, and physics}, volume
  802 of {\em Contemp. Math.}, pages 227--297. Amer. Math. Soc., [Providence],
  RI, [2024] \copyright 2024.
\newblock \url{https://arxiv.org/abs/2304.14764}.

\bibitem[Dev19]{Dev19}
Sanath~K. Devalapurkar.
\newblock The {Ando-Hopkins-Rezk} orientation is surjective.
\newblock 2019.
\newblock \url{https://arxiv.org/abs/1911.10534}.

\bibitem[Dev22]{Dev22}
Sanath~K. Devalapurkar.
\newblock A {$\String$}-analogue of {$\Spin^{\mathbf C}$}.
\newblock 2022.
\newblock
  \url{https://sanathdevalapurkar.github.io/files/string-analogue-spinc.pdf}.

\bibitem[Dev23]{Dev}
Sanath~K. Devalapurkar.
\newblock Topological {H}ochschild homology, truncated {B}rown-{P}eterson
  spectra, and a topological {S}en operator.
\newblock 2023.
\newblock \url{https://arxiv.org/abs/2303.17344}.

\bibitem[Dev24]{Dev20}
Sanath~K. Devalapurkar.
\newblock Higher chromatic {T}hom spectra via unstable homotopy theory.
\newblock {\em Algebr. Geom. Topol.}, 24(1):49--108, 2024.
\newblock \url{https://arxiv.org/abs/2004.08951}.

\bibitem[DFHH14]{DFHH14}
Christopher~L Douglas, John Francis, André~G Henriques, and Michael~A {Hill
  (eds.)}.
\newblock {\em Topological modular forms}, volume 201 of {\em Mathematical
  Surveys and Monographs}.
\newblock American Mathematical Soc., 2014.

\bibitem[DHH11]{DHH11}
Christopher~L. Douglas, André~G. Henriques, and Michael~A. Hill.
\newblock Homological obstructions to string orientations.
\newblock {\em Int. Math. Res. Not. IMRN}, (18):4074--4088, 2011.
\newblock \url{https://arxiv.org/abs/0810.2131}.

\bibitem[DK70]{DK70}
P.~Donovan and M.~Karoubi.
\newblock Graded {B}rauer groups and {$K$}-theory with local coefficients.
\newblock {\em Inst. Hautes Études Sci. Publ. Math.}, (38):5--25, 1970.

\bibitem[DM78]{DM78}
Donald~M. Davis and Mark Mahowald.
\newblock The immersion conjecture for {$RP\sp{8l+7}$} is false.
\newblock {\em Trans. Amer. Math. Soc.}, 236:361--383, 1978.

\bibitem[DOS23]{DOS23}
Markus Dierigl, Paul-Konstantin Oehlmann, and Thorsten Schimannek.
\newblock The discrete {G}reen-{S}chwarz mechanism in 6{D} {F}-theory and
  elliptic genera of non-critical strings.
\newblock {\em J. High Energy Phys.}, (3):Paper No. 90, 2023.
\newblock \url{https://arxiv.org/abs/2212.04503}.

\bibitem[Dou06]{Dou06}
Christopher~L. Douglas.
\newblock On the twisted {$K$}-homology of simple {L}ie groups.
\newblock {\em Topology}, 45(6):955--988, 2006.
\newblock \url{https://arxiv.org/abs/math/0402082}.

\bibitem[DP15]{DP15}
Marius Dadarlat and Ulrich Pennig.
\newblock Unit spectra of {$K$}-theory from strongly self-absorbing
  {$C^*$}-algebras.
\newblock {\em Algebr. Geom. Topol.}, 15(1):137--168, 2015.
\newblock \url{https://arxiv.org/abs/1306.2583}.

\bibitem[DY24a]{DY24}
Arun Debray and Matthew Yu.
\newblock Type {IIA} string theory and tmf with level structure.
\newblock 2024.
\newblock \url{https://arxiv.org/abs/2411.07299}.

\bibitem[DY24b]{DY22}
Arun Debray and Matthew Yu.
\newblock What bordism-theoretic anomaly cancellation can do for {U}.
\newblock {\em Comm. Math. Phys.}, 405(7):Paper No. 154, 28, 2024.
\newblock \url{https://arxiv.org/abs/2210.04911}.

\bibitem[EKMM97]{EKMM97}
A.~D. Elmendorf, I.~Kriz, M.~A. Mandell, and J.~P. May.
\newblock {\em Rings, modules, and algebras in stable homotopy theory},
  volume~47 of {\em Mathematical Surveys and Monographs}.
\newblock American Mathematical Society, Providence, RI, 1997.
\newblock With an appendix by M. Cole.

\bibitem[Fan99]{Fan99}
Fuquan Fang.
\newblock Topology of {D}upin hypersurfaces with six distinct principal
  curvatures.
\newblock {\em Math. Z.}, 231(3):533--555, 1999.

\bibitem[FH21]{FH21a}
Daniel~S. Freed and Michael~J. Hopkins.
\newblock Reflection positivity and invertible topological phases.
\newblock {\em Geom. Topol.}, 25(3):1165--1330, 2021.
\newblock \url{https://arxiv.org/abs/1604.06527}.

\bibitem[FHT10]{FHT10}
Daniel~S. Freed, Michael~J. Hopkins, and Constantin Teleman.
\newblock Consistent orientation of moduli spaces.
\newblock In {\em The many facets of geometry}, pages 395--419. Oxford Univ.
  Press, Oxford, 2010.
\newblock \url{https://arxiv.org/abs/0711.1909}.

\bibitem[FK96]{FK96}
Fuquan Fang and Stephan Klaus.
\newblock Topological classification of {$4$}-dimensional complete
  intersections.
\newblock {\em Manuscripta Math.}, 90(2):139--147, 1996.

\bibitem[Fre08]{Fre08}
Daniel~S. Freed.
\newblock Pions and generalized cohomology.
\newblock {\em J. Differential Geom.}, 80(1):45--77, 2008.
\newblock \url{https://arxiv.org/abs/hep-th/0607134}.

\bibitem[F{\"{u}}h22]{Fuh22}
Sven F{\"{u}}hring.
\newblock Bordism and projective space bundles.
\newblock {\em Münster J. Math.}, 15(2):355--388, 2022.
\newblock \url{https://arxiv.org/abs/2006.15394}.

\bibitem[FW99]{FW99}
Daniel~S. Freed and Edward Witten.
\newblock Anomalies in string theory with {D}-branes.
\newblock {\em Asian J. Math.}, 3(4):819--851, 1999.
\newblock \url{https://arxiv.org/abs/hep-th/9907189}.

\bibitem[FW10]{FW10}
Fuquan Fang and Jianbo Wang.
\newblock Homeomorphism classification of complex projective complete
  intersections of dimensions 5, 6 and 7.
\newblock {\em Math. Z.}, 266(3):719--746, 2010.

\bibitem[Gia73a]{Gia73b}
V.~Giambalvo.
\newblock Cobordism of line bundles with a relation.
\newblock {\em Illinois J. Math.}, 17:442--449, 1973.

\bibitem[Gia73b]{Gia73a}
V.~Giambalvo.
\newblock Pin and {$\mathrm{Pin}'$} cobordism.
\newblock {\em Proc. Amer. Math. Soc.}, 39:395--401, 1973.

\bibitem[Gia76]{Gia76}
V.~Giambalvo.
\newblock Cobordism of spin manifolds with involution.
\newblock {\em Quart. J. Math. Oxford Ser. (2)}, 27(106):241--252, 1976.

\bibitem[GJF19]{GJF19}
Davide Gaiotto and Theo Johnson-Freyd.
\newblock Symmetry protected topological phases and generalized cohomology.
\newblock {\em Journal of High Energy Physics}, 2019, 2019.
\newblock \url{https://arxiv.org/abs/1712.07950}.

\bibitem[G{\'{o}}m10]{Gom10}
José~Manuel G{\'{o}}mez.
\newblock On the nonexistence of higher twistings over a point for {B}orel
  cohomology of {$K$}-theory.
\newblock {\em J. Geom. Phys.}, 60(4):678--685, 2010.
\newblock \url{https://arxiv.org/abs/0905.3151}.

\bibitem[Goz77]{Goz77}
N.~Ja. Gozman.
\newblock Self-conjugate cobordism theory.
\newblock {\em Mat. Zametki}, 22(6):885--896, 1977.

\bibitem[Gra23]{Gra23}
Elliot Granath.
\newblock {\em Scalar Curvature and Transfer Maps in {$\Spin$} and {$\Spin^c$}
  bordism}.
\newblock PhD thesis, University of Oregon, 2023.
\newblock \url{https://arxiv.org/abs/2509.02793}.

\bibitem[GW14]{GW14}
Zheng-Cheng Gu and Xiao-Gang Wen.
\newblock Symmetry-protected topological orders for interacting fermions:
  Fermionic topological nonlinear $\ensuremath{\sigma}$ models and a special
  group supercohomology theory.
\newblock {\em Phys. Rev. B}, 90:115141, Sep 2014.
\newblock \url{https://arxiv.org/abs/1201.2648}.

\bibitem[Har69]{Har69}
Bruno Harris.
\newblock {$J$}-homomorphisms and cobordism groups.
\newblock {\em Invent. Math.}, 7:313--320, 1969.

\bibitem[Hil07]{Hil07}
Michael~A. Hill.
\newblock The 3-local {${\rm tmf}$}-homology of {$B\Sigma_3$}.
\newblock {\em Proc. Amer. Math. Soc.}, 135(12):4075--4086, 2007.
\newblock \url{https://arxiv.org/abs/math/0511649}.

\bibitem[Hil09]{Hil09}
Michael~A. Hill.
\newblock The {S}tring bordism of {$BE_8$} and {$BE_8\times BE_8$} through
  dimension 14.
\newblock {\em Illinois J. Math.}, 53(1):183--196, 2009.
\newblock \url{https://arxiv.org/abs/0807.2095}.

\bibitem[HJ20]{HJ20}
Fabian Hebestreit and Michael Joachim.
\newblock Twisted spin cobordism and positive scalar curvature.
\newblock {\em J. Topol.}, 13(1):1--58, 2020.
\newblock \url{https://arxiv.org/abs/1311.3164}.

\bibitem[HL16]{HL16}
Michael Hill and Tyler Lawson.
\newblock Topological modular forms with level structure.
\newblock {\em Invent. Math.}, 203(2):359--416, 2016.
\newblock \url{https://arxiv.org/abs/1312.7394}.

\bibitem[HM14]{HM14}
Michael~J. Hopkins and Mark Mahowald.
\newblock From elliptic curves to homotopy theory.
\newblock In {\em Topological modular forms}, volume 201 of {\em Math. Surveys
  Monogr.}, pages 261--285. Amer. Math. Soc., Providence, RI, 2014.

\bibitem[HS20]{HS20}
Fabian Hebestreit and Steffen Sagave.
\newblock Homotopical and operator algebraic twisted {$K$}-theory.
\newblock {\em Math. Ann.}, 378(3-4):1021--1059, 2020.
\newblock \url{https://arxiv.org/abs/1904.01872}.

\bibitem[HSS20]{HSS20}
Fabian Hebestreit, Steffen Sagave, and Christian Schlichtkrull.
\newblock Multiplicative parametrized homotopy theory via symmetric spectra in
  retractive spaces.
\newblock {\em Forum Math. Sigma}, 8:Paper No. e16, 84, 2020.
\newblock \url{https://arxiv.org/abs/1904.01824}.

\bibitem[Jam59]{Jam59}
I.~M. James.
\newblock Spaces associated with {S}tiefel manifolds.
\newblock {\em Proc. London Math. Soc. (3)}, 9:115--140, 1959.

\bibitem[Jen05]{Jen05}
Jerome~A. Jenquin.
\newblock Classical {C}hern-{S}imons on manifolds with spin structure.
\newblock 2005.
\newblock \url{https://arxiv.org/abs/math/0504524}.

\bibitem[JF19]{TJF}
Theo Johnson-Freyd.
\newblock Example of a finite group {$G$} with low dimensional cohomology not
  generated by {S}tiefel-{W}hitney classes of flat vector bundles over {$BG$},
  2019.
\newblock MathOverflow answer. \url{https://mathoverflow.net/q/323019/}.

\bibitem[JF20]{JF20}
Theo Johnson-Freyd.
\newblock Topological {M}athieu moonshine.
\newblock 2020.
\newblock \url{https://arxiv.org/abs/2006.02922}.

\bibitem[JFT20]{JFT20}
Theo Johnson-Freyd and David Treumann.
\newblock {${\rm H}^4({\rm Co}_0; {\bf Z}) = {\bf Z}/24$}.
\newblock {\em Int. Math. Res. Not. IMRN}, (21):7873--7907, 2020.
\newblock \url{https://arxiv.org/abs/1707.07587}.

\bibitem[Joa04]{Joa04}
Michael Joachim.
\newblock Higher coherences for equivariant {$K$}-theory.
\newblock In {\em Structured ring spectra}, volume 315 of {\em London Math.
  Soc. Lecture Note Ser.}, pages 87--114. Cambridge Univ. Press, Cambridge,
  2004.

\bibitem[Koc93]{Koc93}
Stanley~O. Kochman.
\newblock Symplectic cobordism and the computation of stable stems.
\newblock {\em Mem. Amer. Math. Soc.}, 104(496):x+88, 1993.

\bibitem[Kre99]{Kre99}
Matthias Kreck.
\newblock Surgery and duality.
\newblock {\em Ann. of Math. (2)}, 149(3):707--754, 1999.
\newblock \url{https://arxiv.org/abs/math/9905211}.

\bibitem[Kri93]{Kri93}
Igor Kriz.
\newblock Towers of {$E_\infty$}-ring spectra with an application to {BP}.
\newblock 1993.

\bibitem[KT90]{KT90}
R.~C. Kirby and L.~R. Taylor.
\newblock {${\rm Pin}$} structures on low-dimensional manifolds.
\newblock In {\em Geometry of low-dimensional manifolds, 2 ({D}urham, 1989)},
  volume 151 of {\em London Math. Soc. Lecture Note Ser.}, pages 177--242.
  Cambridge Univ. Press, Cambridge, 1990.

\bibitem[KT17]{KT17}
Anton Kapustin and Ryan Thorngren.
\newblock Fermionic {SPT} phases in higher dimensions and bosonization.
\newblock {\em Journal of High Energy Physics}, 2017(10):80, Oct 2017.
\newblock \url{https://arxiv.org/abs/1701.08264}.

\bibitem[Kur25]{Kur25}
Naoki Kuroda.
\newblock Computations of {Spin-Sp(4)}, {Spin-SU(8)}, and {Spin-Spin(16)}
  bordism groups in dimensions up to 7.
\newblock 2025.
\newblock \url{https://arxiv.org/abs/2504.15014}.

\bibitem[Lau04]{Lau04}
Gerd Laures.
\newblock {$K(1)$}-local topological modular forms.
\newblock {\em Invent. Math.}, 157(2):371--403, 2004.

\bibitem[Lau16]{Lau16}
Gerd Laures.
\newblock Characteristic classes in {$TMF$} of level {$\Gamma_1(3)$}.
\newblock {\em Trans. Amer. Math. Soc.}, 368(10):7339--7357, 2016.
\newblock \url{https://arxiv.org/abs/1304.3588}.

\bibitem[Law18]{Law18}
Tyler Lawson.
\newblock Secondary power operations and the {B}rown-{P}eterson spectrum at the
  prime 2.
\newblock {\em Ann. of Math. (2)}, 188(2):513--576, 2018.
\newblock \url{https://arxiv.org/abs/1703.00935}.

\bibitem[LM87]{LM87}
Wolfgang Lellmann and Mark Mahowald.
\newblock The {$b{\rm o}$}-{A}dams spectral sequence.
\newblock {\em Trans. Amer. Math. Soc.}, 300(2):593--623, 1987.

\bibitem[LMSM86]{LMSM86}
L.~G. Lewis, Jr., J.~P. May, M.~Steinberger, and J.~E. McClure.
\newblock {\em Equivariant stable homotopy theory}, volume 1213 of {\em Lecture
  Notes in Mathematics}.
\newblock Springer-Verlag, Berlin, 1986.
\newblock \url{http://www.math.uchicago.edu/~may/BOOKS/equi.pdf}.

\bibitem[LO16]{LO16}
Gerd Laures and Martin Olbermann.
\newblock {$TMF_0(3)$}-characteristic classes for string bundles.
\newblock {\em Math. Z.}, 282(1-2):511--533, 2016.
\newblock \url{https://arxiv.org/abs/1403.7301}.

\bibitem[LO18]{LO18}
Gerd Laures and Martin Olbermann.
\newblock Cannibalistic classes of string bundles.
\newblock {\em Manuscripta Math.}, 156(3-4):273--298, 2018.
\newblock \url{https://arxiv.org/abs/1701.01287}.

\bibitem[LS19]{LS19}
Gerd Laures and Björn Schuster.
\newblock Towards a splitting of the {$K(2)$}-local string bordism spectrum.
\newblock {\em Proc. Amer. Math. Soc.}, 147(1):399--410, 2019.
\newblock \url{https://arxiv.org/abs/1710.03427}.

\bibitem[Lur17]{Lurie:HA}
Jacob Lurie.
\newblock Higher algebra.
\newblock 2017.
\newblock \url{https://www.math.ias.edu/~lurie/papers/HA.pdf}.

\bibitem[Mah81]{Mah81}
Mark Mahowald.
\newblock {$b{\rm o}$}-resolutions.
\newblock {\em Pacific J. Math.}, 92(2):365--383, 1981.

\bibitem[Mat16]{Mat16}
Akhil Mathew.
\newblock The homology of tmf.
\newblock {\em Homology Homotopy Appl.}, 18(2):1--29, 2016.
\newblock \url{https://arxiv.org/abs/1305.6100}.

\bibitem[May77]{MQRT77}
J.~Peter May.
\newblock {\em {$E\sb{\infty }$} ring spaces and {$E\sb{\infty }$} ring
  spectra}.
\newblock Lecture Notes in Mathematics, Vol. 577. Springer-Verlag, Berlin-New
  York, 1977.
\newblock With contributions by Frank Quinn, Nigel Ray, and Jørgen Tornehave.
  \url{http://www.math.uchicago.edu/~may/BOOKS/e_infty.pdf}.

\bibitem[Mei23]{Mei21}
Lennart Meier.
\newblock Connective models for topological modular forms of level {$n$}.
\newblock {\em Algebr. Geom. Topol.}, 23(8):3553--3586, 2023.
\newblock \url{https://arxiv.org/abs/2104.12649}.

\bibitem[MG95]{MG95}
Mark Mahowald and Vassily Gorbounov.
\newblock Some homotopy of the cobordism spectrum {$M{\rm O}\langle 8\rangle$}.
\newblock In {\em Homotopy theory and its applications ({C}ocoyoc, 1993)},
  volume 188 of {\em Contemp. Math.}, pages 105--119. Amer. Math. Soc.,
  Providence, RI, 1995.

\bibitem[MH02]{MH02}
Mark Mahowald and Mike Hopkins.
\newblock The structure of 24 dimensional manifolds having normal bundles which
  lift to {$B{\rm O}[8]$}.
\newblock In {\em Recent progress in homotopy theory ({B}altimore, {MD},
  2000)}, volume 293 of {\em Contemp. Math.}, pages 89--110. Amer. Math. Soc.,
  Providence, RI, 2002.

\bibitem[Mil58]{Mil58}
John Milnor.
\newblock The {S}teenrod algebra and its dual.
\newblock {\em Ann. of Math. (2)}, 67:150--171, 1958.

\bibitem[{Mil}63]{Mil63}
John~W. {Milnor}.
\newblock Spin structures on manifolds.
\newblock {\em {Enseign. Math. (2)}}, 9:198--203, 1963.

\bibitem[Mil85]{Mil80}
Haynes Miller.
\newblock Stable splittings of {S}tiefel manifolds.
\newblock {\em Topology}, 24(4):411--419, 1985.

\bibitem[MM81]{MM81}
J.~P. May and R.~J. Milgram.
\newblock The {B}ockstein and the {A}dams spectral sequences.
\newblock {\em Proc. Amer. Math. Soc.}, 83(1):128--130, 1981.

\bibitem[MM02]{MM02}
M.~A. Mandell and J.~P. May.
\newblock Equivariant orthogonal spectra and {$S$}-modules.
\newblock {\em Mem. Amer. Math. Soc.}, 159(755):x+108, 2002.
\newblock \url{http://www.math.uchicago.edu/~may/PAPERS/MMMFinal.pdf}.

\bibitem[MMSS01]{MMSS:2001}
M.~A. Mandell, J.~P. May, S.~Schwede, and B.~Shipley.
\newblock Model categories of diagram spectra.
\newblock {\em Proceedings of the London Mathematical Society}, 82(2):441--512,
  03 2001.
\newblock \url{http://www.math.uchicago.edu/~may/PAPERS/mmssLMSDec30.pdf}.

\bibitem[MR09]{MR09}
Mark Mahowald and Charles Rezk.
\newblock Topological modular forms of level 3.
\newblock {\em Pure Appl. Math. Q.}, 5(2, Special Issue: In honor of Friedrich
  Hirzebruch. Part 1):853--872, 2009.
\newblock \url{https://arxiv.org/abs/0812.2009}.

\bibitem[MR22]{MR22}
Jacob McNamara and Matthew Reece.
\newblock Reflections on parity breaking.
\newblock 2022.
\newblock \url{https://arxiv.org/abs/2212.00039}.

\bibitem[MS06]{MS06}
J.~P. May and J.~Sigurdsson.
\newblock {\em Parametrized homotopy theory}, volume 132 of {\em Mathematical
  Surveys and Monographs}.
\newblock American Mathematical Society, Providence, RI, 2006.
\newblock \url{http://www.math.uchicago.edu/~may/EXTHEORY/MaySig.pdf}.

\bibitem[MV19]{MV19}
Jacob McNamara and Cumrun Vafa.
\newblock Cobordism classes and the {S}wampland.
\newblock 2019.
\newblock \url{https://arxiv.org/abs/1909.10355}.

\bibitem[Ped17]{Ped17}
Riccardo Pedrotti.
\newblock Stable classification of certain families of four-manifolds.
\newblock Master's thesis, Max Planck Institute for Mathematics, 2017.

\bibitem[Pen83]{Pen83}
David~J. Pengelley.
\newblock {$H^{\ast} (M{\rm O}\langle 8\rangle ;\,{\bf Z}/2)$} is an extended
  {$A^{\ast} \sb{2}$}-coalgebra.
\newblock {\em Proc. Amer. Math. Soc.}, 87(2):355--356, 1983.

\bibitem[Pen16]{Pen16}
Ulrich Pennig.
\newblock A non-commutative model for higher twisted {$K$}-theory.
\newblock {\em J. Topol.}, 9(1):27--50, 2016.
\newblock \url{https://arxiv.org/abs/1502.02807}.

\bibitem[Pet68]{Pet68}
F.~P. Peterson.
\newblock {\em Lectures on Cobordism Theory}.
\newblock Lectures in Mathematics. Kinokuniya Book Store Co., Ltd., 1968.

\bibitem[Pet19]{Pet19}
Eric Peterson.
\newblock {\em Formal geometry and bordism operations}, volume 177 of {\em
  Cambridge Studies in Advanced Mathematics}.
\newblock Cambridge University Press, Cambridge, 2019.
\newblock With a foreword by Matthew Ando.

\bibitem[Rav84]{Rav84}
Douglas~C. Ravenel.
\newblock Localization with respect to certain periodic homology theories.
\newblock {\em Amer. J. Math.}, 106(2):351--414, 1984.

\bibitem[Ray71]{Ray71}
Nigel Ray.
\newblock The symplectic {$J$}-homomorphism.
\newblock {\em Invent. Math.}, 12:237--248, 1971.

\bibitem[Ray74]{Ray74}
Nigel Ray.
\newblock Bordism {$J$}-homomorphisms.
\newblock {\em Illinois J. Math.}, 18:290--309, 1974.

\bibitem[Ros89]{Ros89}
Jonathan Rosenberg.
\newblock Continuous-trace algebras from the bundle theoretic point of view.
\newblock {\em J. Austral. Math. Soc. Ser. A}, 47(3):368--381, 1989.

\bibitem[Sat10]{Sat10}
Hisham Sati.
\newblock Geometric and topological structures related to {M}-branes.
\newblock In {\em Superstrings, geometry, topology, and {$C^\ast$}-algebras},
  volume~81 of {\em Proc. Sympos. Pure Math.}, pages 181--236. Amer. Math.
  Soc., Providence, RI, 2010.
\newblock \url{https://arxiv.org/abs/1001.5020}.

\bibitem[Sat11]{Sat11}
Hisham Sati.
\newblock Geometric and topological structures related to {M}-branes {II}:
  {T}wisted string and string{$^c$} structures.
\newblock {\em J. Aust. Math. Soc.}, 90(1):93--108, 2011.
\newblock \url{https://arxiv.org/abs/1007.5419}.

\bibitem[Sch01]{Sch01}
Stefan Schwede.
\newblock {$S$}-modules and symmetric spectra.
\newblock {\em Math. Ann.}, 319(3):517--532, 2001.

\bibitem[Sen24]{Sen24}
Andrew Senger.
\newblock The {B}rown-{P}eterson spectrum is not {$\Bbb{E}_{2 ( p^2 + 2 )}$} at
  odd primes.
\newblock {\em Adv. Math.}, 458(part B):Paper No. 109996, 33, 2024.
\newblock \url{https://arxiv.org/abs/1710.09822}.

\bibitem[Ser52]{Ser52}
Jean-Pierre Serre.
\newblock Sur les groupes d'{E}ilenberg-{M}ac{L}ane.
\newblock {\em C. R. Acad. Sci. Paris}, 234:1243--1245, 1952.

\bibitem[Spe22]{Speyer}
David~E Speyer.
\newblock Is there a representation of {$\mathrm{SU}_8/\{\pm 1\}$} that doesn't
  lift to a spin group?, 2022.
\newblock MathOverflow answer. \url{https://mathoverflow.net/q/430180}.

\bibitem[SS19]{SS19}
Hisham Sati and Hyung-bo Shim.
\newblock String structures associated to indefinite {L}ie groups.
\newblock {\em J. Geom. Phys.}, 140:246--264, 2019.
\newblock \url{https://arxiv.org/abs/1504.02088}.

\bibitem[SSS12]{SSS12}
Hisham Sati, Urs Schreiber, and Jim Stasheff.
\newblock Twisted differential string and fivebrane structures.
\newblock {\em Comm. Math. Phys.}, 315(1):169--213, 2012.
\newblock \url{https://arxiv.org/abs/0910.4001}.

\bibitem[Sto63]{Stong63}
Robert~E. Stong.
\newblock Determination of {$H^{\ast} ({\rm BO}(k,\cdots,\infty ),Z_{2})$} and
  {$H^{\ast} ({\rm BU}(k,\cdots,\infty ),Z_{2})$}.
\newblock {\em Trans. Amer. Math. Soc.}, 107:526--544, 1963.

\bibitem[Sto88]{Sto88}
Stephan Stolz.
\newblock Exotic structures on {$4$}-manifolds detected by spectral invariants.
\newblock {\em Invent. Math.}, 94(1):147--162, 1988.

\bibitem[Sto92]{Sto92}
Stephan Stolz.
\newblock Simply connected manifolds of positive scalar curvature.
\newblock {\em Ann. of Math. (2)}, 136(3):511--540, 1992.

\bibitem[Sto94]{Sto94}
Stephan Stolz.
\newblock Splitting certain {$M\, {\rm Spin}$}-module spectra.
\newblock {\em Topology}, 33(1):159--180, 1994.

\bibitem[Sto12]{Sto12}
Vesna Stojanoska.
\newblock Duality for topological modular forms.
\newblock {\em Doc. Math.}, 17:271--311, 2012.
\newblock \url{https://arxiv.org/abs/1105.3968}.

\bibitem[SW16]{Seiberg:2016rsg}
Nathan Seiberg and Edward Witten.
\newblock Gapped boundary phases of topological insulators via weak coupling.
\newblock {\em PTEP}, 2016(12):12C101, 2016.
\newblock \url{https://arxiv.org/abs/1602.04251}.

\bibitem[Tei93]{Tei93}
Peter Teichner.
\newblock On the signature of four-manifolds with universal covering spin.
\newblock {\em Math. Ann.}, 295(4):745--759, 1993.
\newblock
  \url{https://people.mpim-bonn.mpg.de/teichner/Math/ewExternalFiles/Signature.pdf}.

\bibitem[Tho54]{ThomThesis}
René Thom.
\newblock {\em Quelques propriétés globales des variétés differentiables}.
\newblock PhD thesis, University of Paris, 1954.

\bibitem[Til16]{Til16}
Sean Tilson.
\newblock Power operations in the {Künneth} spectral sequence and commutative
  {$\mathrm H\mathbb F_p$}-algebras.
\newblock 2016.
\newblock \url{https://arxiv.org/abs/1602.06736}.

\bibitem[Tok24]{Tok24}
Leonard Tokic.
\newblock {$K(2)$}-local splittings of finite {G}alois extensions of
  {$MU\langle6\rangle$} and {$MString$}.
\newblock 2024.
\newblock \url{https://arxiv.org/abs/2411.16280}.

\bibitem[TY23a]{TY23ASD}
Yuji Tachikawa and Mayuko Yamashita.
\newblock Anderson self-duality of topological modular forms, its
  differential-geometric manifestations, and vertex operator algebras.
\newblock 2023.
\newblock \url{https://arxiv.org/abs/2305.06196}.

\bibitem[TY23b]{TY21}
Yuji Tachikawa and Mayuko Yamashita.
\newblock Topological modular forms and the absence of all heterotic global
  anomalies.
\newblock {\em Comm. Math. Phys.}, 402(2):1585--1620, 2023.
\newblock \url{https://arxiv.org/abs/2108.13542}.

\bibitem[TY25]{TY25}
Yuji Tachikawa and Kazuya Yonekura.
\newblock On invariants of two-dimensional minimally supersymmetric field
  theories.
\newblock 2025.
\newblock \url{https://arxiv.org/abs/2508.04916}.

\bibitem[Wal60]{Wal60}
C.~T.~C. Wall.
\newblock Determination of the cobordism ring.
\newblock {\em Ann. of Math. (2)}, 72:292--311, 1960.

\bibitem[Wan08]{Wan08}
Bai-Ling Wang.
\newblock Geometric cycles, index theory and twisted {$K$}-homology.
\newblock {\em J. Noncommut. Geom.}, 2(4):497--552, 2008.
\newblock \url{https://arxiv.org/abs/0710.1625}.

\bibitem[WG20]{WG20}
Qing-Rui Wang and Zheng-Cheng Gu.
\newblock Construction and classification of symmetry-protected topological
  phases in interacting fermion systems.
\newblock {\em Phys. Rev. X}, 10:031055, Sep 2020.
\newblock \url{https://arxiv.org/abs/1811.00536}.

\bibitem[Whi42]{Whitehead42}
George~W. Whitehead.
\newblock On the homotopy groups of spheres and rotation groups.
\newblock {\em Annals of Mathematics}, 43(4):634--640, 1942.

\bibitem[Wil15]{Wil15}
Dylan Wilson.
\newblock Orientations and topological modular forms with level structure.
\newblock 2015.
\newblock \url{https://arxiv.org/abs/1507.05116}.

\bibitem[WW12]{WW12}
Jian~Bo Wang and Yu~Yu Wang.
\newblock Diffeomorphism classification of smooth weighted complete
  intersections.
\newblock {\em Acta Math. Sin. (Engl. Ser.)}, 28(2):299--312, 2012.

\bibitem[WW19]{WW19}
Zheyan Wan and Juven Wang.
\newblock Higher anomalies, higher symmetries, and cobordisms {I}:
  classification of higher-symmetry-protected topological states and their
  boundary fermionic/bosonic anomalies via a generalized cobordism theory.
\newblock {\em Annals of Mathematical Sciences and Applications}, 4:107--311,
  2019.
\newblock \url{https://arxiv.org/abs/1812.11967}.

\bibitem[WWW19]{WWW19}
Juven Wang, Xiao-Gang Wen, and Edward Witten.
\newblock A new {$\rm SU(2)$} anomaly.
\newblock {\em J. Math. Phys.}, 60(5):052301, 23, 2019.
\newblock \url{https://arxiv.org/abs/1810.00844}.

\bibitem[WWZ20]{WWZ20}
Zheyan Wan, Juven Wang, and Yunqin Zheng.
\newblock Higher anomalies, higher symmetries, and cobordisms {II}: {L}orentz
  symmetry extension and enriched bosonic/fermionic quantum gauge theory.
\newblock {\em Ann. Math. Sci. Appl.}, 5(2):171--257, 2020.
\newblock \url{https://arxiv.org/abs/1912.13504}.

\end{thebibliography}
\end{document}